\documentclass[a4paper,10pt]{article}
\usepackage[T1]{fontenc}
\usepackage{lmodern}
\usepackage{amsmath,amsthm,amssymb,amsfonts}
\usepackage[dvipsnames,svgnames,table]{xcolor}
\usepackage{pgf,tikz}
\usetikzlibrary{calc}
\usetikzlibrary{arrows.meta}
\usepackage{enumitem}
\usepackage{fullpage}
\usepackage{mathtools}
\usepackage{nicematrix}
\usepackage{bigdelim}
\usepackage{booktabs}
\usepackage[title]{appendix}
\usepackage{thmtools,thm-restate}
\usepackage[colorlinks=true,linkcolor=RoyalBlue,urlcolor=RoyalBlue,citecolor=PineGreen,hypertexnames=false]{hyperref}
\usepackage[nameinlink]{cleveref}
\usepackage[export]{adjustbox}
\usepackage{authblk}

\usepackage{imakeidx}
\makeindex[]

\usepackage{multicol}
\usepackage{relsize}
\usepackage{float}
\usepackage{tikz,pgf}
\usepackage{tikz-cd}
\usepackage{wasysym}
\usepackage{graphicx}
\usepackage{fancyhdr}
\usepackage{verbatim}

\usepackage{pgfplots}
\pgfplotsset{compat=1.15}
\usepackage{mathrsfs}
\usetikzlibrary{arrows}
\usepackage{algorithm}
\usepackage{algpseudocode}

\newcommand{\Ccal}{\mathcal{C}}

\numberwithin{equation}{section}

\theoremstyle{plain}
\newtheorem{theorem}{Theorem}[section]
\newtheorem{proposition}[theorem]{Proposition}
\newtheorem{corollary}[theorem]{Corollary}
\newtheorem{lemma}[theorem]{Lemma}
\newtheorem{conjecture}[theorem]{Conjecture}
\newtheorem{claim}[theorem]{Claim}
\newtheorem{introthm}{Theorem}

\theoremstyle{definition}
\newtheorem{definition}[theorem]{Definition}

\newtheorem{remark}[theorem]{Remark}

\newtheorem{example}[theorem]{Example}
\newtheorem{question}[theorem]{Question}
\newenvironment{claimproof}[1][\proofname]
	{
		\proof[#1]%
			
	}
	{
		\endproof
	}

 \newcommand\restr[2]{{
   \left.\kern-\nulldelimiterspace 
   #1 
   \vphantom{\small|} 
   \right|_{#2} 
   }}

\newcommand{\vf}{\mathbf{v}}

\newcommand{\F}{\mathcal{F}}
\newcommand{\HH}{\mathcal{H}}

\makeatletter
\def\mathcenterto#1#2{\mathclap{\phantom{#1}\mathclap{#2}}\phantom{#1}}
\let\old@widetilde\widetilde
\def\widetildeto#1#2{\mathcenterto{#2}{\old@widetilde{\mathcenterto{#1}{#2\,}}}}
\let\old@widehat\widehat
\def\widehatto#1#2{\mathcenterto{#2}{\old@widehat{\mathcenterto{#1}{#2\,}}}}
\makeatother

\newcommand{\OW}{\delta_{\textup{OW}}(\vf)}

\newcommand{\size}[1]{\left| #1 \right|} 

\DeclareMathOperator{\rank}{rk}

\def\dim{\operatorname{dim}}
\DeclareMathOperator{\supp}{supp}

\DeclareMathOperator{\Lat}{Lat}

\DeclareMathOperator{\CC}{\mathbb{C}}

\DeclareMathOperator{\QQ}{\mathbb{Q}}
\DeclareMathOperator{\RR}{\mathbb{R}}

\setlist[itemize]{noitemsep}

\tikzstyle{vertex}=[fill=black,circle,inner sep=0pt, minimum size=4pt]
\tikzstyle{edge}=[line width=1.5pt,black]
\tikzstyle{bedge}=[line width=1.5pt,RoyalBlue]
\tikzstyle{gedge}=[line width=1.5pt,black!15]
\tikzstyle{dedge}=[line width=1.5pt,black,dashed]

\definecolor{myorange}{RGB}{239,128,108}
\definecolor{mypurple}{RGB}{117,0,235}
\tikzstyle{oedge}=[line width=1.5pt,myorange]
\tikzstyle{pedge}=[line width=1.5pt,mypurple]

 \usepackage[maxbibnames=99]{biblatex}
\title{Matroid liftability and Oxley--Wang derived matroids, with connections to scene analysis}
\author[1]{Sean Dewar}
\author[2]{Emiliano Liwski}
\author[1]{Signe Lundqvist}
\author[1,2]{Fatemeh Mohammadi}
\date{}

\affil[1]{Department of Computer Science, KU Leuven}
\affil[2]{Department of Mathematics, KU Leuven}

\begin{document}
\maketitle
\begin{abstract}
Liftability was first introduced for paving matroids, where it was used to systematically construct polynomials in the ideals of the associated matroid 
varieties.
In this paper, we generalise the notion of liftability to matroids with realisable truncations. 
We additionally introduce the notion of generic realisability of a matroid; a strong notion of realisability that implies both realisability and liftability of the matroid. 
We study the behaviour of liftability and generic realisability under the weak order of matroids within the lattice of matroids with fixed truncation. 
We develop a unified framework for studying liftability of matroids through Oxley--Wang derived matroid of generic matroid realisations.
We characterise liftability and generic realisability in terms of Oxley--Wang derived matroids. 
Moreover, we give an algebraic description of the bases of the generic Oxley--Wang derived matroids in terms of coordinate sections of matroid realisation spaces and relate our constructions to slack realisation spaces of matroids. 

As an application of our framework, we obtain a combinatorial characterisation of liftable and generically realisable paving matroids and connect it with Whiteley's theory of scene analysis. In particular, our approach gives a new proof, via Oxley--Wang derived matroids, of Whiteley's characterisation of hypergraphs admitting non-trivial (sharp) scenes. 
Finally, we compute the generic Oxley--Wang derived matroid for a special class of sparse paving matroids.
\end{abstract}

\section{Introduction}\label{sec:intro}

Determining when a rank-$d$ matroid $M$ with ground set $[n]$ has a \emph{realisation} -- a tuple of vectors $(v_1, \ldots, v_n)$ in $\mathbb{C}^d$ which represent $M$ -- is a central problem in matroid theory.
The space of such realisations of $M$ forms a quasi-affine variety $\Gamma_M \subset (\mathbb{C}^d)^n$ called the \emph{realisation space of $M$}. The {\em matroid variety} $V_{M}$ is defined as the Zariski closure of $\Gamma_{M}$ within $(\mathbb{C}^d)^n$.
The equations defining the realisation space can be relaxed to form the \emph{circuit variety} $V_{\mathcal{C}(M)} \subset (\mathbb{C}^d)^n$: the set of vector tuples $(v_1,\ldots, v_n)$ for which a subfamily of vectors $(v_e : e \in E)$ are linearly dependent if $E$ is dependent in $M$.
From an algebraic geometry perspective, it is often easier to work with circuit variety: it is a non-empty Zariski-closed set, its defining equations are given by the circuits of $M$, and any irreducible component of $V_{M}$ is an irreducible component of $V_{\mathcal{C}(M)}$.

Matroid and circuit varieties are well-studied objects lying at the intersection of 
algebraic geometry and combinatorics~\cite{gelfand1987combinatorial,clarke2021matroid,
sidman2021geometric,Fatemeh4,brandt2019slack}. Their complexity is shown by the 
Mn\"{e}v Universality Theorem~\cite{mnev1988universality}, which asserts that every 
singularity type of a semi-algebraic set can be realised within a matroid variety. 
The ideals of these varieties arise naturally as primary components in the decomposition 
of several families of ideals, including conditional independence 
ideals~\cite{clarke2020conditional,clarke2022conditional,
alexandersson2026decomposing} and determinantal 
ideals~\cite{bruns2003determinantal,herzog2010binomial}. The notion of \emph{matroid 
liftability} was introduced in~\cite{Fatemeh4} for paving matroids,
which formalised and generalised the concepts explored in \cite{clarke2024liftable} for rank-3 matroids.
There liftability served as a systematic method to construct polynomials in the ideals of the associated matroid 
varieties. In this paper, we generalise liftability beyond the paving setting to 
arbitrary matroids.

\begin{definition}
    Let $M$ be a rank-$d$ matroid with ground set $[n]$ whose \emph{truncation} -- the rank $(d-1)$ matroid $T(M)$ whose independent sets are exactly the independent sets of $M$ with size $d-1$ or less -- is realisable over $\mathbb{C}$.
    A \emph{lifting} of a realisation $\vf = (v_1, v_2, \ldots, v_n) \in \Gamma_{T(M)}$ of $T(M)$ to a point in $V_{\mathcal{C}(M)}$ is a tuple 
    \begin{equation*}
        \widetilde{\vf} = \big( (v_1 ,z_1), (v_2 ,z_2) \ldots,  (v_n ,z_n) \big) \in (\mathbb{C}^d)^n
    \end{equation*}
    which is contained in the circuit variety $V_{\mathcal{C}(M)}$;
    moreover, a lifting $\widetilde{\vf}$ is \emph{non-trivial} if the linear span of $\widetilde{\vf}$ is $d$-dimensional.
    The matroid $M$ is said to be {\em liftable} if every realisation of $T(M)$ has a non-trivial lifting to a point in $V_{\mathcal{C}(M)}$.
\end{definition}

In the case where $M$ is a rank-$d$ \emph{paving matroid} -- a matroid where every non-spanning circuit has size $d$ -- the truncation of $M$ is the rank-$(d-1)$ uniform matroid.
In this case, the realisation space $\Gamma_{T(M)}$ consists of all $n$-tuples of vectors in $\mathbb{C}^{d-1}$ that are in \emph{general position} (i.e., any $d-1$ vectors of the tuple are linearly independent).
Our definition thus aligns with the concept of liftability for paving matroids previously defined in \cite{clarke2024liftable,Fatemeh4}.

To explore the links between liftability and realisability, we additionally introduce the following stronger notion of realisability.
Here a realisation $\vf$ of a matroid $M$ is \emph{generic} if, roughly speaking, $\vf$ does not satisfy additional rational algebraic constraints than what are required to be a point in the irreducible component of $\Gamma_M$ containing it (see \Cref{def:generic} for a rigorous definition).

\begin{definition}\label{def:genericallyrealisable}
    Let $M$ be a rank-$d$ matroid with ground set $[n]$.
    The matroid $M$ is said to be {\em generically realisable} 
    if every generic realisation of $T(M)$ has a 
    lifting that represents $M$.
\end{definition}

\subsection{Our contribution}

\subsubsection{1-elevation lattice structure}

A \emph{1-elevation} of a matroid $N$\footnote{Also known in the literature as an \emph{erection} of $N$.} is any matroid $M$ with $T(M)=N$.
Crapo proved that the set of 1-elevations along with the matroid $N$ forms a lattice $\Lat(N)$ of matroids under the \emph{weak order of matroids}; the partial ordering of matroids on a shared ground set where $M_1 \leq M_2$ if and only if every independent set of $M_1$ is an independent set of $M_2$.
Each realisation $\mathbf{v}$ of $N$ generates two subsets of $\Lat(N)$:
\begin{enumerate}[label=(\roman*)]
    \item $L_{\mathbf{v}} \subseteq \Lat(N)$ is the collection of matroids where for each $M \in L_{\mathbf{v}}$ there exists a non-trivial lifting $\widetilde{\mathbf{v}}$ of $\mathbf{v}$.
    \item $R_{\mathbf{v}} \subseteq \Lat(N)$ is the collection of matroids where for each $M \in R_{\mathbf{v}}$ there exists a non-trivial lifting $\widetilde{\mathbf{v}}$ of $\mathbf{v}$ that is also a realisation of $M$.
\end{enumerate}
It is clear that $R_{\mathbf{v}} \subseteq L_{\mathbf{v}}$.
Additionally, we can characterise the liftable, realisable and generically realisable matroids in $\Lat(N)$ using these sets as follows (see \Cref{prop:genrealimplieslift}):
\begin{center}
\begin{tikzcd}
\begin{array}{c}
    \text{Realisable} \\
     \bigcup_{\mathbf{v} \in \Gamma_N} R_{\mathbf{v}}
\end{array}
& 
\begin{array}{c}
    \text{Generically realisable} \\
     \bigcap_{\mathbf{v} \in \Gamma_N \text{generic}} R_{\mathbf{v}}
\end{array} 
\arrow[l, Rightarrow] \arrow[r, Rightarrow] & 
\begin{array}{c}
    \text{Liftable} \\
     \bigcap_{\mathbf{v} \in \Gamma_N} L_{\mathbf{v}}
\end{array}
\end{tikzcd}
\end{center}
We additionally show that neither liftability nor realisability imply generic realisability (even when combined), and neither realisability nor liftability imply the other;
see \Cref{subsec:rvl}.

We prove that the sets of liftable and generically realisable matroids are `closed' in a meaningful sense.

\begin{introthm}[Lattice structure for liftability]\label{thm:A}
    Let $N$ be a realisable rank $(d-1)$-matroid on $[n]$.
    Then the set of liftable matroids in $\Lat(N)$ is upwards-closed,
    and the set of generically realisable matroids in $\Lat(N)$ is join-closed.
    
    Moreover, if $L_{\mathbf{v}} = L_{\mathbf{w}}$ for all generic $\mathbf{v},\mathbf{w} \in \Gamma_N$ (e.g., $V_N$ is irreducible),
    then the minimal liftable matroids in $\Lat(N)$ are generically realisable.
\end{introthm}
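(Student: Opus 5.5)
The plan is to treat the three claims separately. All of them rest on one observation: for a fixed $\vf\in\Gamma_N$, the set of admissible last coordinates forms a linear space. For $M\in\Lat(N)$ and $\vf\in\Gamma_N$, put
\[
W_{\vf}(M)=\{z\in\CC^n : ((v_1,z_1),\dots,(v_n,z_n))\in V_{\Ccal(M)}\}.
\]
Since $T(M)=N$ and $\vf$ realises $N$, every subset of size at most $d-1$ already has the correct (in)dependence after lifting. Hence the only conditions come from the $d$-subsets $B$ that are non-bases of $M$. Each such condition is $\ell_B(z)=0$, where $\ell_B(z)=\det[\widetilde{\vf}|_B]$ is linear in $z$ by cofactor expansion along the last row. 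So $W_{\vf}(M)$ is a linear subspace. It always contains the $(d-1)$-dimensional row space $R_{\vf}$ of $\vf$, which consists exactly of the trivial liftings. Moreover, $(\vf,z)$ realises $M$ if and only if $\ell_B(z)\neq 0$ for every basis $B$ of $M$.

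\emph{Upwards-closed.} Suppose $M_1\le M_2$ in $\Lat(N)$ with $M_1$ liftable. Every dependent set of $M_2$ is dependent in $M_1$, so $V_{\Ccal(M_1)}\subseteq V_{\Ccal(M_2)}$. A non-trivial lifting of $\vf$ into $V_{\Ccal(M_1)}$ is therefore also one into $V_{\Ccal(M_2)}$, and $M_2$ is liftable.

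\emph{Join-closed.} Let $M_1,M_2$ be generically realisable and let $\vf$ be a generic realisation of $N$. Choose $z^1,z^2$ such that $(\vf,z^i)$ realises $M_i$, and set $z=z^1+tz^2$ for a general $t\in\CC$. For each $d$-subset $B$ we have $\ell_B(z)=\ell_B(z^1)+t\,\ell_B(z^2)$. This vanishes for general $t$ exactly when $B$ is a non-basis of both $M_1$ and $M_2$. So $(\vf,z)$ realises a matroid $M'$ whose bases are the union of the bases of $M_1$ and $M_2$. Its truncation is $N$ because $\vf$ realises $N$, and it has rank $d$, so $M'\in\Lat(N)$. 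Any $M\in\Lat(N)$ with $M\ge M_1,M_2$ contains all these bases, so $M'\le M$. Also $M'\ge M_1,M_2$, hence $M'=M_1\vee M_2$. This gives a lifting of $\vf$ realising the join, so the join is generically realisable.

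\emph{Minimal liftable implies generically realisable.} Let $M$ be minimal liftable and $\vf$ generic. Take a general $z\in W_{\vf}(M)$; it is non-trivial because $\dim W_{\vf}(M)\ge d$. Then $(\vf,z)$ realises the matroid $M_{\vf}$ whose non-bases are the $B$ with $\ell_B|_{W_{\vf}(M)}\equiv 0$. These include the non-bases of $M$, so $M_{\vf}\le M$ and $M_{\vf}\in R_{\vf}\subseteq L_{\vf}$.

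The main obstacle is showing that $M_{\vf}$ is liftable at \emph{every} point of $\Gamma_N$, not only at generic ones; once this holds, minimality forces $M_{\vf}=M$ and we are done. By hypothesis $M_{\vf}\in L_{\mathbf w}$ for all generic $\mathbf w$. The set of $\mathbf w\in\Gamma_N$ admitting a non-trivial lifting for $M_{\vf}$ is the set where $\dim W_{\mathbf w}(M_{\vf})\ge d$. This is the condition that the coefficient matrix of the linear forms $\{\ell_B\}$, whose entries are polynomial in $\mathbf w$, has rank at most $n-d$. That is the vanishing of minors, a Zariski-closed condition on $\Gamma_N$. Generic points are dense in every irreducible component of $\Gamma_N$, so this closed set is all of $\Gamma_N$. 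Thus $M_{\vf}$ is liftable, and $M_{\vf}=M$ shows that $\vf$ lifts to a realisation of $M$.
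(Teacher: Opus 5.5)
Your argument is correct and is essentially the paper's. Upward closure comes from $V_{\Ccal(M_1)}\subseteq V_{\Ccal(M_2)}$, join-closure from the pencil of liftings $z^1+tz^2$ (as in \Cref{lem:realisablereal}), and the minimal case from the lifted matroid $M_{\vf}\le M$ lying in $L_{\vf}$ together with the closedness argument behind \Cref{thm:genericlifting} (as in \Cref{lem:realisablelift}); you only phrase everything with $d$-subset determinants $\ell_B$ instead of circuit vectors $r_c$.

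One justification needs repair: lifting does \emph{not} automatically keep small $N$-dependent sets dependent, so cutting out $W_{\vf}(M)$ by the $\ell_B$ alone needs a further observation. Trivial $z$ preserve all dependencies of $\vf$. For non-trivial $z$ with every $\ell_B(z)=0$, an $M$-dependent set that became independent would extend to a lifted basis of size $d$ that is a non-basis of $M$, which is a contradiction.
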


\subsubsection{Linking liftability to Oxley--Wang matroids}

We prove the following link between liftability and \emph{generic Oxley--Wang derived matroids} \cite{oxley2019dependencies,freij2023combinatorial} (see \Cref{def: generic oxley wang} for full terminology):

\begin{introthm}[Liftability via Oxley--Wang matroid]\label{thm:B}
Let $M$ be a rank-$d$ matroid on $[n]$ such that $T(M)$ is realisable.
Suppose that $T(M)$ has generic Oxley--Wang derived matroids $\delta_1 T(M),\ldots, \delta_k T(M)$\footnote{This should not be confused with the \emph{derived sequence} introduced in \cite{oxley2019dependencies}.}. 
Then, $M$ is liftable if and only if
\[
\max_{i=1,\ldots,k}\rank_{\delta_{i}T(M)}
   ( \mathcal{C}^{\text{ns}}(M) )
   \leq n-d, 
\]
where $\mathcal{C}^{\text{ns}}(M)$ denotes the collection of non-spanning circuits of $M$.
\end{introthm}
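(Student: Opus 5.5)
We first reduce liftability of $M$ at a fixed realisation $\vf=(v_1,\dots,v_n)\in\Gamma_{T(M)}$ to a linear-algebra condition. A lifting $\widetilde{\vf}=((v_i,z_i))_i$ is determined by $z=(z_1,\dots,z_n)\in\mathbb{C}^n$, which we view as a linear functional on $\mathbb{C}^n$. The lifting is non-trivial exactly when $z\notin \mathrm{row}(\vf)$, the $(d-1)$-dimensional row space of the $(d-1)\times n$ matrix with columns $v_i$; this space has dimension $d-1$ because $\vf$ spans $\mathbb{C}^{d-1}$.

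Next we translate the circuit conditions. Let $C$ be a circuit of $M$. If $C$ is spanning, then $|C|=d+1$, so $(v_i,z_i)_{i\in C}$ is automatically dependent in $\mathbb{C}^d$. If $C$ is non-spanning, then $|C|\le d$ and $C$ is dependent in $T(M)$. Hence $(v_i)_{i\in C}$ is dependent, and since $C$ is a circuit of $T(M)$ as well (being a minimal dependent set), the space of linear dependencies of $(v_i)_{i\in C}$ is one-dimensional. It is spanned by a vector $\lambda^C\in\ker(\vf)\subseteq\mathbb{C}^n$ with support exactly $C$.

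The lifted tuple $(v_i,z_i)_{i\in C}$ is then dependent iff some dependency of the $v_i$ also kills $z$, that is, iff $\langle \lambda^C,z\rangle=0$. Here we need the fact that every minimal dependency of the lifted set projects to a dependency of $(v_i)_{i\in C}$, which is forced to be a multiple of $\lambda^C$.

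We also need that the circuit variety condition only requires checking circuits, and that every dependent set contains a circuit. With this, $\widetilde{\vf}\in V_{\mathcal C(M)}$ iff $z\in W^\perp$, where $W=\mathrm{span}\{\lambda^C : C\in\mathcal{C}^{\mathrm{ns}}(M)\}\subseteq\ker\vf$. Since $\mathrm{row}(\vf)=(\ker\vf)^\perp\subseteq W^\perp$, a non-trivial lifting exists iff $W^\perp\supsetneq\mathrm{row}(\vf)$. This is equivalent to $W\subsetneq\ker\vf$, i.e. $\dim W\le n-d$, because $\dim\ker\vf=n-d+1$.

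The next step is to identify $\dim W$ with the Oxley--Wang rank. The Oxley--Wang derived matroid $\delta(\vf)=\OW$ is the matroid on the circuits of $T(M)$ realised by the circuit vectors $\lambda^C$. So $\dim W=\rank_{\delta(\vf)}(\mathcal{C}^{\mathrm{ns}}(M))$, and $M$ is liftable iff $\rank_{\delta(\vf)}(\mathcal{C}^{\mathrm{ns}}(M))\le n-d$ for every $\vf\in\Gamma_{T(M)}$. We use that $\mathcal{C}^{\mathrm{ns}}(M)\subseteq\mathcal C(T(M))$.

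It remains to pass from all realisations to the generic ones, and this is the main obstacle. We need, for each irreducible component $Z$ of $\Gamma_{T(M)}$, that the rank function of $\delta(\vf)$ is maximised at generic $\vf\in Z$, and that every $\vf$ lies in some component. This gives
\[
\max_{\vf}\rank_{\delta(\vf)}(\mathcal C^{\mathrm{ns}}(M))=\max_i \rank_{\delta_i T(M)}(\mathcal C^{\mathrm{ns}}(M)).
\]
The semicontinuity argument runs as follows. On $Z$ we choose the $\lambda^C$ algebraically, for instance by taking the entries of $\lambda^C$ to be signed maximal minors of $(v_i)_{i\in C}$ completed suitably. These vectors are nonzero on $\Gamma_{T(M)}$, because $C$ is a circuit there.

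The rank of a fixed family $\{\lambda^C\}$ is then a maximum over nonvanishing of minors, which are polynomial in $\vf$. Thus $\{\vf:\dim W\ge r\}$ is Zariski open, and if it is nonempty in an irreducible $Z$ it contains all generic points of $Z$. Therefore the generic rank on $Z$ dominates the rank at every point of $Z$.

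One subtlety is that local choices of $\lambda^C$ must be well defined up to nonzero scaling. This is harmless, since scaling by a unit does not change spans. The other is that \Cref{def:generic} must guarantee that generic points avoid every proper subvariety defined over the relevant field. Assuming both, the two conditions coincide, and this proves the theorem.
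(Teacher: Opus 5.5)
Your proposal is correct and follows essentially the same route as the paper. Your fixed-realisation computation ($W\subsetneq\ker\vf$ iff $\dim W\le n-d$) is \Cref{cor:realisationliftable} and your semicontinuity step is \Cref{lem:genericliftmatroid}; the paper's proof of the theorem is then \Cref{thm:genericlifting} together with the observation that the generic liftability matroids are the restrictions of the generic Oxley--Wang matroids of $T(M)$ to $\mathcal{C}^{\text{ns}}(M)$. The two points you leave as assumptions are also supplied in the paper: \Cref{lem:cont} gives local Cramer's-rule choices of the circuit vectors, defined by rational-coefficient expressions on Zariski-open neighbourhoods, and \Cref{def:genericpoint} then guarantees that generic points avoid the zero set of any such minor that does not vanish identically on the component.
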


We can also translate the property of generic realisability of $M$ to a condition on the generic Oxley--Wang matroids of $T(M)$.

\begin{introthm}[Generic realisability via Oxley--Wang matroid]
    \label{thm:C}
    Let $M$ be a rank-$d$ matroid on $[n]$ such that $T(M)$ is realisable.
Suppose that $T(M)$ has generic Oxley--Wang derived matroids $\delta_1 T(M),\ldots, \delta_k T(M)$. Then, $M$ is generically realisable if and only if 
\[\text{$\mathcal{C}^{\text{ns}}(M)$ is a flat on each of the matroids $\delta_1 T(M),\ldots, \delta_k T(M)$.}\]
\end{introthm}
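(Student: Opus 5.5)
We first fix the linear-algebraic setup behind the Oxley--Wang derived matroid. For a realisation $\vf=(v_1,\ldots,v_n)$ of $N=T(M)$ in $\mathbb{C}^{d-1}$ (spanning, since $N$ has rank $d-1$), a lifting $\widetilde{\vf}=((v_i,z_i))_i$ is determined by $z=(z_1,\ldots,z_n)\in\mathbb{C}^n$. For a circuit $C$ of $N$ there is, up to scaling, a unique dependency $\lambda^C\in\mathbb{C}^n$ with $\supp\lambda^C=C$ and $\sum_i\lambda^C_i v_i=0$. The Oxley--Wang derived matroid $\delta_{\textup{OW}}(\vf)$ is the matroid on the circuits of $N$ given by linear independence of these vectors $\lambda^C$ inside the $(n-d+1)$-dimensional dependency space $D(\vf)=\ker[v_1\cdots v_n]$.

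Next we translate the lifting condition. A non-spanning circuit $C$ of $M$ has $|C|\le d-1$ in rank $d-1$ of $N$, so it is a circuit of $N$. Moreover, $\widetilde{\vf}$ keeps $C$ dependent exactly when $\lambda^C\cdot z=0$, because the only dependency among $\{v_i\}_{i\in C}$ is $\lambda^C$. Spanning circuits of $M$ and dependent sets of size $>d$ impose no condition. Every non-spanning dependent set of $M$ contains a non-spanning circuit, and so do the dependent sets of size $\le d$ of rank $<d$. Hence $\widetilde{\vf}\in V_{\mathcal{C}(M)}$ if and only if $z\perp W:=\operatorname{span}\{\lambda^C: C\in\mathcal{C}^{\text{ns}}(M)\}$.

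The two notions then become conditions on $z$.
\begin{itemize}
\item \emph{Non-triviality.} $\widetilde{\vf}$ spans $\mathbb{C}^d$ exactly when $z$ is not in the row space $R=D(\vf)^\perp$ of $\vf$.
\item \emph{Realising $M$.} $\widetilde{\vf}$ realises $M$ exactly when every circuit $C$ of $N$ that is not a non-spanning circuit of $M$ (i.e.\ $C$ is independent in $M$) satisfies $\lambda^C\cdot z\neq0$. Bases of $M$ of size $d$ also have to be independent. Any $d$-set $B$ that is a basis of $M$ contains circuits of $N$, all of which are independent in $M$. Then $\widetilde{\vf}|_B$ is independent iff some dependency $\mu$ of $\vf|_B$ has $\mu\cdot z\neq 0$, and the $\lambda^{C}$ for $C\subseteq B$ span these dependencies. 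So this reduces to the circuit conditions.
\end{itemize}

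\medskip\noindent\textbf{Liftability.} We claim $M$ is liftable iff $W^\perp\not\subseteq R$ for every $\vf\in\Gamma_N$. Since $W\subseteq D(\vf)$, we have $W^\perp\supseteq R$, so the condition is $\dim W<\dim D(\vf)=n-d+1$, i.e.\ $\rank_{\delta_{\textup{OW}}(\vf)}(\mathcal{C}^{\text{ns}}(M))\le n-d$. To pass from all $\vf$ to the generic ones, note that rank of a fixed set of $\lambda$-vectors is lower semicontinuous. Its maximum over each irreducible component of $\Gamma_N$ is therefore attained on the generic points, whose derived matroids are $\delta_1T(M),\ldots,\delta_kT(M)$ (\Cref{def: generic oxley wang}). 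This gives \Cref{thm:B}.

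\medskip\noindent\textbf{Generic realisability.} For \Cref{thm:C}, fix a generic $\vf$ and let $\mathcal{F}$ be the closure of $\mathcal{C}^{\text{ns}}(M)$ in $\delta_{\textup{OW}}(\vf)$, i.e.\ the set of circuits $C$ of $N$ with $\lambda^C\in W$.
\begin{itemize}
\item If $\mathcal{C}^{\text{ns}}(M)=\mathcal{F}$, each remaining $\lambda^C$ lies outside $W$, so $\{z\in W^\perp:\lambda^C\cdot z\ne0\}$ is a nonempty Zariski-open subset of $W^\perp$. We also need $z\notin R$. If $W\ne D(\vf)$, then $R$ is a proper subspace of $W^\perp$. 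If $W=D(\vf)$, every $\lambda^C$ lies in $W$, so the flat condition forces all circuits of $N$ to be non-spanning in $M$. Then $M$ has rank $<d$, which is impossible; so $W\neq D(\vf)$. Finitely many nonempty opens in the irreducible space $W^\perp$ intersect, giving a lifting that realises $M$.
\item Conversely, if some $C\in\mathcal{F}\setminus\mathcal{C}^{\text{ns}}(M)$ exists, then $\lambda^C\cdot z=0$ for every valid $z$, so no lifting realises $M$.
\end{itemize}
Running over generic $\vf$ on each component gives the statement.

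The main obstacle is the bookkeeping of the translation step: showing that the non-spanning circuit conditions capture all of $V_{\mathcal{C}(M)}$, and that the realisation condition reduces exactly to non-vanishing on $N$-circuits outside $\mathcal{C}^{\text{ns}}(M)$.
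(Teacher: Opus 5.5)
Your proof is correct and follows the paper's route. Your two bullets under \emph{Generic realisability} are exactly \Cref{lem: flat in oxley wang}: a lifting realising $M$ exists iff $\mathcal{C}^{\text{ns}}(M)$ is a flat of $\delta_{\textup{OW}}(\vf)$, because the circuits of $T(M)$ outside $\mathcal{C}^{\text{ns}}(M)$ are precisely the bases of $M$, and you then pick $z\in W^\perp$ avoiding finitely many hyperplanes. The only difference is the last step, where you use directly that $\delta_{\textup{OW}}(\vf)=\delta_i T(M)$ for every generic $\vf$ in the $i$-th component rather than invoking \Cref{cor:rv_is_generic}, which is if anything more direct; also, a non-spanning circuit of $M$ has rank at most $d-1$ (so size at most $d$), not size at most $d-1$, though this slip does not affect the argument.
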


Moreover, in~\Cref{thm: fixed support} we give an algebraic description of the bases of the generic Oxley--Wang derived matroids in terms of coordinate sections of matroid realisation spaces and relate our constructions to slack realisation spaces of matroids in~\Cref{sec: slack}.

\subsubsection{Linking liftability to scene analysis}\label{subsub:lifttoscene}

We then prove the following link between liftability and \emph{scene analysis} for any \emph{paving matroid}; a matroid $M$ where every circuit contains at least $\rank(M)$ elements.
Using Whiteley's characterisation of non-trivial and sharp scenes for generic pictures \cite{whiteley1989matroid},
each result can be expressed using the $(1,d\!-\!1)$-sparsity matroid for $d$-uniform hypergraphs (see \Cref{def: sparse}), which we denote here by $\mathcal{S}_{(1,d-1)}$.
More background on scene analysis can be found in \Cref{sec:sceneanalysisbackground}.

We prove the following result in two different ways:
our first proof uses an equivalence between liftability and scene analysis and then employs Whiteley's characterisation of non-trivial scenes for generic pictures (\Cref{thm:indepA});
for our second proof, we compute the unique generic Oxley--Wang matroid of a uniform matroid using \Cref{thm:B} and Hall's marriage theorem (\Cref{thm:hall}).
This second method additionally provides an alternative proof of Whiteley's picture theorem (\Cref{thm:exists_sharp_scene}).

\begin{introthm}[Liftability as scene analysis]\label{thm:D}
    Let $M$ be a rank-$d$ paving matroid on $[n]$, and let $\mathcal{C}_{d}(M)$ denote the collection of circuits of size $d$ of $M$. Then the matroid $M$ is liftable if and only if
  \[
  \rank_{\mathcal{S}_{(1,d-1)}}\!\bigl(\mathcal{C}_{d}(M)\bigr) \leq n-d,
  \]
  where $\mathcal{S}_{(1,d-1)}$ denotes the $(1,d\!-\!1)$-sparsity matroid, as defined in~\Cref{def: sparse}.
\end{introthm}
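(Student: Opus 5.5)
We derive \Cref{thm:D} from \Cref{thm:B}, specialised to the paving case. If $M$ is a rank-$d$ paving matroid on $[n]$, then $T(M)$ is the uniform matroid $U_{d-1,n}$. Its realisation space is the set of configurations in general position in $\mathbb{C}^{d-1}$, which is an irreducible open subset of $(\mathbb{C}^{d-1})^n$. Hence $T(M)$ has a unique generic Oxley--Wang derived matroid $\delta U_{d-1,n}$. Moreover, the non-spanning circuits of $M$ are exactly the $d$-element circuits, so $\mathcal{C}^{\text{ns}}(M)=\mathcal{C}_d(M)$. By \Cref{thm:B}, $M$ is liftable if and only if $\rank_{\delta U_{d-1,n}}(\mathcal{C}_d(M))\le n-d$. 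It therefore suffices to show that the generic Oxley--Wang matroid $\delta U_{d-1,n}$, restricted to $d$-subsets of $[n]$, coincides with $\mathcal{S}_{(1,d-1)}$ on the complete $d$-uniform hypergraph on $[n]$.

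To compute $\delta U_{d-1,n}$, fix a generic $\vf=(v_1,\dots,v_n)$ in $\mathbb{C}^{d-1}$. Each $d$-set $X\subseteq[n]$ is a circuit of $U_{d-1,n}$, so it carries a unique (up to scale) linear dependency $\lambda^X\in\mathbb{C}^n$. This vector has support exactly $X$, and its entries are signed maximal minors of the $(d-1)\times d$ matrix $[v_x]_{x\in X}$ (Cramer's rule). The derived matroid is the linear matroid of the vectors $\{\lambda^X\}$ inside the $(n-d+1)$-dimensional kernel of $[v_1\cdots v_n]$. We need to show that a family $\mathcal{H}$ of $d$-sets has independent $\lambda$-vectors exactly when $|\mathcal{H}'|\le |V(\mathcal{H}')|-(d-1)$ for every nonempty $\mathcal{H}'\subseteq\mathcal{H}$.

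\emph{Necessity.} If $\mathcal{H}'$ spans the vertex set $S$, then all $\lambda^X$ with $X\in\mathcal{H}'$ lie in the kernel of the restricted configuration $(v_s)_{s\in S}$. Since $|S|\ge d-1$ and the points are generic, this kernel has dimension $|S|-(d-1)$. So independence forces the sparsity count.

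\emph{Sufficiency.} This is the main obstacle. By genericity it suffices to exhibit one configuration $\vf$ for which the $\lambda^X$, $X\in\mathcal{H}$, are independent, because independence is a Zariski-open condition on $\vf$. The plan is to use \Cref{thm:hall}. The sparsity condition with $d-1$ removed is a Hall-type condition: after adjoining a suitable set, or by the standard reformulation of $(1,k)$-sparsity, each edge $X\in\mathcal{H}$ can be assigned a distinct vertex $\phi(X)\in X$ such that one further vertex set of size $d-1$ stays unused. Concretely, I would use the Hall form of $(1,d-1)$-sparsity: every subfamily satisfies $|\mathcal{H}'|\le |V(\mathcal{H}')|-(d-1)$, which yields an injection $\phi$ avoiding a fixed $(d-1)$-set $B$ within each component.

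Next, choose coordinates so that $v_b$ for $b\in B$ is the standard basis of $\mathbb{C}^{d-1}$. Order the remaining vertices compatibly with $\phi$ and argue a triangular structure. The coefficient $\lambda^X_{\phi(X)}$ is a nonzero maximal minor, while the submatrix of the $\lambda$'s in columns $\phi(\mathcal{H})$ has nonzero determinant. The determinant is a polynomial in the coordinates of $\vf$, and its expansion contains the term $\prod_X \lambda^X_{\phi(X)}$. I would show that this term is not cancelled, for instance by a degree or leading-monomial argument, using that distinct edges are assigned distinct vertices. This cancellation-freeness is the delicate point. If it resists, the fallback is to combine the scene-analysis route with \Cref{thm:indepA} and to cite Whiteley's characterisation directly, which gives the same identification of $\delta U_{d-1,n}$ with $\mathcal{S}_{(1,d-1)}$.

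With the identification in hand, the rank of $\mathcal{C}_d(M)$ in the two matroids agrees, and \Cref{thm:B} yields \Cref{thm:D}.
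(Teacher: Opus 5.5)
Your reduction is right and is exactly the paper's second (Oxley--Wang) proof. For paving $M$ one has $T(M)=U_{d-1,n}$ with irreducible realisation space and $\mathcal{C}^{\text{ns}}(M)=\mathcal{C}_d(M)$, so \Cref{thm:B} reduces everything to showing $\delta U_{d-1,n}=\mathcal{S}_{(1,d-1)}$. Your kernel-dimension argument for necessity is \Cref{lem: necessary for independent}. The gap is sufficiency, which is the whole content of Whiteley's theorem, and your determinant sketch does not prove it. The entries $\lambda^X_y=\pm\det(v_{X\setminus y})$ are not free indeterminates but maximal minors linked by Pl\"ucker relations, so products from different permutations really do cancel. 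Take $d=3$, the $(1,2)$-sparse family $\mathcal{H}=\{\{1,2,3\},\{1,2,4\},\{3,4,5\}\}$, $B=\{4,5\}$, and $\phi$ sending the three edges to $1,2,3$. Writing $[ij]=\det(v_i,v_j)$, the relevant minor is $[45]\bigl([13][24]-[14][23]\bigr)=[45][12][34]$. After normalising $v_4=e_1$, $v_5=e_2$, one of the two monomials of the ``diagonal'' product $-[14][23]$ is cancelled by the off-diagonal product $[13][24]$. Showing that some monomial of $\prod_X\lambda^X_{\phi(X)}$ always survives would need a specific term order and a combinatorial argument you have not supplied. As written, the decisive step is only an assertion.

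The paper avoids this with \Cref{thm: fixed support}. A family $A=\{c_1,\dots,c_{n-d+1}\}$ of $d$-sets is a basis of $\delta U_{d-1,n}$ if and only if some realisation of the dual $U_{n-d+1,n}$ has its $i$-th row supported exactly on $c_i$. On the dual side the support pattern can be filled with algebraically independent entries, so no cancellation can occur and the column matroid is the transversal matroid of $A$. It is uniform if and only if every $(n-d+1)$-set $D$ is a transversal of $A$. That is precisely your Hall computation, with the avoided $(d-1)$-set taken to be $[n]\setminus D$ (\Cref{thm:picturethm_OW}). So you had the right combinatorial ingredient, but applied it to the primal matrix, whose entries are constrained, instead of the dual one, whose entries are free. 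Your fallback is valid: dehomogenise to $v_i=(x_i,1)$ (column scaling does not change the row matroid), so the $\lambda^X$ become the rows of $A(H_M,x)$, then cite \Cref{thm:indepA}. This is exactly the paper's first proof via \Cref{lem:matrices_coincide} and \Cref{lem:linkingscenesandliftings}, but along that route the Oxley--Wang computation adds nothing beyond \Cref{thm:B}.
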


We also provide an exact combinatorial characterisation of the generically realisable paving matroids, 
including a stronger characterisation for \emph{sparse paving matroids}: paving matroids whose dual is also a paving matroid.

\begin{introthm}[Generic realisability for paving matroids]\label{thm:E}
Let $M$ be a rank-$d$ paving matroid on $[n]$. Then the matroid $M$ is generically realisable if and only if $\mathcal{C}_d(M)$ is a flat of the matroid $\mathcal{S}_{(1,d-1)}$.

Furthermore, if $M$ is sparse and $d\geq 2$ then $M$ is generically realisable if and only if the associated hypergraph $H = ([n],C_d(M))$ is $(1,d)$-sparse.
\end{introthm}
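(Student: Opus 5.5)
The plan is to deduce the first part from \Cref{thm:C}, and the second part from the first by a direct sparsity computation.

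For a paving matroid $M$ of rank $d$, the truncation $T(M)$ is the uniform matroid $U_{d-1,n}$. Its realisation space is the irreducible open set of $n$-tuples in general position in $\mathbb{C}^{d-1}$. So there is a single generic Oxley--Wang derived matroid $\delta T(M)$. Its ground set consists of the circuits of $U_{d-1,n}$, namely the $d$-subsets of $[n]$. The non-spanning circuits of $M$ are exactly $\mathcal{C}_d(M)$. By \Cref{thm:C}, $M$ is generically realisable if and only if $\mathcal{C}_d(M)$ is a flat of $\delta U_{d-1,n}$.

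The first statement therefore reduces to the identification $\delta U_{d-1,n}=\mathcal{S}_{(1,d-1)}$. This is the computation promised in \Cref{subsub:lifttoscene}, via \Cref{thm:B} and Hall's theorem (\Cref{thm:hall}), and I will take it as established there. Concretely, a $d$-subset $C$ corresponds to the unique (up to scale) dependency vector $\lambda_C\in\mathbb{C}^n$ of the generic tuple $\vf$ restricted to $C$; it is supported exactly on $C$. These vectors all lie in the $(n-d+1)$-dimensional kernel of the $(d-1)\times n$ matrix of $\vf$. For a generic configuration, a family of such vectors is independent exactly when the family satisfies the Hall-type count $|F|\le |\bigcup F|-(d-1)$ on every nonempty subfamily, which is the $(1,d-1)$-sparsity condition. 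Necessity of the count holds because vectors supported on $\bigcup F$ inside the kernel span a space of dimension at most $|\bigcup F|-(d-1)$. Sufficiency comes from a generic-support (Hall marriage) argument: the sparsity count gives a transversal-type assignment that produces a nonvanishing maximal minor. With this identification the first part is immediate.

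For the sparse case, assume $d\ge 2$ and that $\mathcal{C}_d(M)$ is the edge set $E$ of $H$. Sparse paving means any two distinct $d$-circuits meet in at most $d-2$ elements.

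First, suppose $H$ is $(1,d)$-sparse. Take $e\notin E$ and a subfamily $F\subseteq E$, and consider whether $F\cup\{e\}$ can be tight in the $(1,d-1)$ count.
\begin{itemize}
\item If $F\cup\{e\}$ violates the $(1,d)$ count, then $|F|\ge |V(F)|-d$, where $V(F)=\bigcup F$. Adding $e$ raises the vertex count by at most $d$.
\item The aim is to show $e$ lies outside the closure of $E$, i.e.\ that $e$ is not in the span of a circuit of $\mathcal{S}_{(1,d-1)}$ formed with edges of $E$.
\item Such a circuit would be a set $F\cup\{e\}$ with $|F\cup\{e\}|=|V|-d+2$ that is minimally violating. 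Then $F$ alone has $|F|=|V|-d+1$, contradicting $(1,d)$-sparsity unless $V(F)\ne V$. In that case the extra vertices of $e$ must be accounted for, and this can be handled by a short counting argument.
\end{itemize}

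Conversely, suppose $H$ is not $(1,d)$-sparse. Take a minimal violating $F$, so $|F|\ge|V(F)|-d+1$. Because $M$ is sparse paving, each dependent set of $M$ of size $d$ contains at most one element per hyperedge pattern. Using this, I want to exhibit a $d$-set $e\subseteq V(F)$ with $e\notin E$ and $e$ in the closure of $F$. The candidate: every $d$-subset of $V(F)$ lies in the closure of a tight set. Take $F'\subseteq F$ that is tight, with $|F'|=|V(F')|-d+1$; then $V(F')$ spans a "complete" flat of $\mathcal{S}_{(1,d-1)}$, whose closure contains all $d$-subsets of $V(F')$. Sparse paving ensures that not all of these are in $E$: two edges differing in one element are forbidden, so $e$ exists.

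The main obstacle is the converse direction. The key facts to check are that a tight set's closure contains all $d$-subsets of its vertex set, and that $|V(F')|\ge d+1$, so that a non-edge really exists there; I would verify both using circuit elimination in the count matroid.
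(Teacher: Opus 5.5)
Your proposal is correct and takes essentially the paper's route. The first part is \Cref{thm:C} combined with the identification $\delta U_{d-1,n}=\mathcal{S}_{(1,d-1)}$ from \Cref{thm:picturethm_OW}. For the sparse case, a $(1,d)$-sparse $H$ remains $(1,d-1)$-sparse after adding any non-edge; conversely, a minimal violator $F$ of the $(1,d)$ count is $(1,d-1)$-tight with $|V(F)|\ge d+1$, so it spans $\binom{V(F)}{d}$.

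Two small remarks. In the forward direction there is no separate case $V(F)\neq V$ to handle: $|F|=|V|-d+1\ge|V(F)|-d+1$ already violates the $(1,d)$ count when $|F|\ge 2$, and $|F|\le 1$ is ruled out by $e\notin E$. In the converse, your explicit appeal to sparse paving to find a non-edge inside $\binom{V(F)}{d}$ is exactly the step needed. The paper's counting only shows $F\neq\binom{V(F)}{d}$, which does not by itself produce an element of $\binom{V(F)}{d}\setminus\mathcal{C}_d(M)$.
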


It was proven by Streinu and Theran \cite{streinu2009sparse_pebble_game} that the rank of a $d$-uniform hypergraph with $n$ vertices and $m$ hyperedges can be computed in $O((2d-1)dnm)$ time using $O(n+m)$ space.
Thus it can be determined in polynomial time using polynomial space (both with respect to $n$ and $d$) whether or not a rank-$d$ paving matroid on $[n]$ is liftable.
It is currently open whether there exists a polynomial-time algorithm (in $n$ and $d$) for determining if the same such matroid is generically realisable.
As determining whether or not a $d$-uniform hypergraph is $(1,d)$-sparse is NP-Hard with respect to $n$ and $d$ (as shown by Berg and Jord\'{a}n \cite[Theorem 5]{berg2003algorithms}),
it seems likely that no such algorithm exits.

We additionally show that the following implications\footnote{Our implication diagram uses `$\exists$ generic (sharp) scene' as shorthand for `there exists a non-trivial (sharp) scene that lifts a generic picture' and `$\exists$ general position (sharp) scene' as shorthand for `there exists a non-trivial (sharp) scene that lifts some general position picture'.} hold for any sparse paving matroid $M$ on $[n]$ and its associated hypergraph $H = ([n],E)$ with hyperedges given by the non-spanning circuits of $M$:

\vspace{3mm}
\adjustbox{center,valign=m,scale=0.95}{%
\begin{tikzcd}
\text{$M$ realisable} \arrow[d, Rightarrow]             & \text{$\exists$ generic sharp scene of $H$} \arrow[d, Rightarrow] \arrow[ld, Rightarrow] \arrow[l, Rightarrow] & \text{$M$ generically realisable} \arrow[d, Rightarrow] \arrow[l, Leftrightarrow] \\
\text{$\exists$ general position sharp scene of $H$} \arrow[rd, Rightarrow] & \text{$\exists$ generic scene of $H$} \arrow[d, Rightarrow] \arrow[r, Leftrightarrow]                  & \text{$M$ liftable} \arrow[d, Rightarrow]                                     \\
                                                    & \text{$\exists$ general position scene of $H$}                                                                      & \text{$\exists$ non-trivial lifting of $M$} \arrow[l, Leftrightarrow]               
\end{tikzcd}%
}
\vspace{3mm}

Any implication not shown in the above diagram does not hold;
see \Cref{rem:counterexamplelist} for counter-examples.

\subsubsection{Characterising 
bases of the Oxley-Wang derived matroid 
for a special family of sparse paving matroids}

The next most simple family of generic Oxley--Wang derived matroids to study are those constructed from sparse paving matroids.
We study the following special family of sparse paving matroids:

\begin{definition}
    A sparse paving matroid has \emph{maximally intersecting circuits} 
    if the union of any two distinct non-spanning circuits is the entire ground set.
    Equivalently,
    a sparse paving matroid has maximally intersecting circuits if and only if its dual has pairwise-disjoint non-spanning circuits.
\end{definition}

Every sparse paving matroid $M$ with maximally intersecting circuits has a unique generic Oxley--Wang derived matroid, here denoted $\delta M$ (see \Cref{prop:irreduciblematroidvar}).
We are able to exactly compute the bases of $\delta M$ using our following result.

\begin{introthm}\label{thm:F}
Let $M$ be a rank-$d$ sparse paving matroid with maximally intersecting circuits.
    Then $A \subseteq \mathcal{C}(M)$ is a basis of $\delta M$ if and only if $|A|=n-d$ and all proper subsets $B \subsetneq A$ with $|B| \geq 2$ we satisfy
    \[
    \size{B} \le
    \begin{cases}
        \size{V(B)} - d -1 &\text{if $V(B)$ contains a non-spanning circuit $c$ of $M$ not contained in $B$}, \\
        \size{V(B)} - d &\text{otherwise},
    \end{cases}
    \]
    where $V(B) \subseteq [n]$ denotes the union of the circuits in $B$.
\end{introthm}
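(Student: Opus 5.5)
We will work directly with a generic realisation $\vf=(v_1,\dots,v_n)$ of $M$ in $\CC^d$. For each circuit $c$, let $\lambda_c\in\ker\vf\subseteq\CC^n$ be its dependency vector, unique up to scaling and with support exactly $c$. Then $A\subseteq\mathcal C(M)$ is independent in $\delta M$ exactly when $\{\lambda_c:c\in A\}$ is linearly independent. Since $\dim\ker\vf=n-d$, bases have size $n-d$, which explains the condition $|A|=n-d$.

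\emph{Basic observation.} For any $B\subseteq\mathcal C(M)$, the vectors $\lambda_c$ with $c\in B$ lie in $\ker(\vf|_{V(B)})$. This space has dimension $|V(B)|-\rank_M(V(B))$. Since $M$ is sparse paving, every circuit has size $d$ or $d+1$. For $|B|\ge 2$ we have $|V(B)|\ge d+1$ and hence $\rank_M V(B)=d$. Thus $\operatorname{span}\{\lambda_c : c\in B\}$ always has dimension at most $|V(B)|-d$.

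\textbf{Necessity.} Assume $A$ is a basis and let $B\subsetneq A$ with $|B|\ge 2$. The observation gives $|B|\le|V(B)|-d$. For the sharper bound, suppose $V(B)$ contains a non-spanning circuit $c\notin B$. I would show $|B|\le |V(B)|-d-1$ by contradiction: if $|B|=|V(B)|-d$, then $\{\lambda_e:e\in B\}$ spans all of $\ker(\vf|_{V(B)})$, so $\lambda_c$ lies in their span.
\begin{itemize}
\item If $c\notin A$, pick $e\in A\setminus B$. Replacing $A$ by a suitable basis exchange keeps $B$ and contains $c$. This reduces to the case $c\in A$, and I expect the exchange to be harmless.
\item If $c\in A$, then $B\cup\{c\}\subseteq A$ is dependent, a contradiction.
\end{itemize}
Either way, a set of $|V(B)|-d+1$ independent vectors would sit in a space of dimension $|V(B)|-d$.

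The delicate point is when $c\notin A$. There I would argue by genericity: a dependency of the form $\lambda_c\in\operatorname{span}(\lambda_e:e\in B)$, with $B\not\ni c$ and $|B|=|V(B)|-d$, must be shown to force a closed condition on the realisation that fails generically. If that fails, I would reformulate the bound as "$B\cup\{c\}$ has at most $|V(B)|-d$ independent elements", which is what the sufficiency proof actually needs.

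\textbf{Sufficiency.} This is the main obstacle. I would exploit the fact that $M^*$ has pairwise disjoint non-spanning circuits, so its realisation space is an irreducible, explicitly parametrisable variety (cf.\ \Cref{prop:irreduciblematroidvar}). Write $\ker\vf$ as the row space of an $(n-d)\times n$ matrix $W$ realising $M^*$. Then $\lambda_c$ is the unique row-space vector vanishing on the cocircuit $[n]\setminus c$. By Cramer's rule its coordinates are maximal minors of $W$ restricted to columns meeting $c$.

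Independence of $A$ is therefore the non-vanishing of one polynomial in the parameters of $W$. It suffices to find a single specialisation, possibly a degenerate one, where it is nonzero.

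I would do this by induction on $n$ in Henneberg style. Choose an element $x$ lying in few circuits of $A$; the count conditions with $|V(B)|$ should force some element to lie in at most one or two members of $A$. Then delete or contract $x$ and check that the count conditions persist for the smaller maximally intersecting sparse paving matroid. Finally, lift the independence back by sending the column of $x$ to a special position, where the determinant factors as a nonzero term times the smaller determinant.

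The two-case bound is exactly what is needed when $x$ is removed from a set whose union contains a further non-spanning circuit. Since two non-spanning circuits already cover $[n]$, at most one such extra circuit can occur, which keeps the case analysis finite. As a fallback, I would reduce the determinant to a transversal or Hall-type matching condition, as in the proof of \Cref{thm:hall}, and check that the stated counts are precisely Hall's condition.
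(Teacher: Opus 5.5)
The genuine gap is the case $c\notin A$ in your necessity argument. It cannot be closed, because the ``only if'' direction of the statement is false.

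If $B$ is independent with $|B|=|V(B)|-d$, your own dimension count gives $\lambda_c\in\operatorname{span}\{\lambda_e:e\in B\}$ for \emph{every} realisation, i.e.\ $c\in\operatorname{cl}_{\delta M}(B)$. So no basis contains $B\cup\{c\}$, and the proposed exchange is impossible. There is also no exceptional closed condition for genericity to rule out. When $c\notin A$ this dependency contradicts nothing.

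A concrete counterexample: take $d=3$, $n=6$, with non-spanning circuits $\{1,2,3\}$ and $\{4,5,6\}$ (sparse paving, maximally intersecting). Let $A=\{\{1,2,4,5\},\{1,3,4,5\},\{4,5,6\}\}$. The three circuit vectors restricted to the columns $\{2,3,6\}$ form a diagonal matrix with non-zero diagonal, so $A$ is a basis of $\delta_{\mathrm{OW}}(\vf)$ for every realisation $\vf$. Yet $B=\{\{1,2,4,5\},\{1,3,4,5\}\}\subsetneq A$ has $V(B)=[5]\supseteq\{1,2,3\}$ and $|B|=2>|V(B)|-d-1$.

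The paper derives this direction from \Cref{lem: necessary for independent special case2}, which the same $B$ violates: two circuits with different supports are always independent in $\delta_{\mathrm{OW}}(\vf)$. Its last step is where it breaks. The right-kernel vector built from $z$ is also non-zero on $[n]\setminus V(B)$; its support is the whole circuit $[n]\setminus c$ of $M^*$, so no proper dependent subset of $[n]\setminus c$ is produced. Of your two cases only $c\in A$ is sound, and that is just \Cref{lem: necessary for independent} applied to $B\cup\{c\}$.

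Your sufficiency plan is a sketch, not an argument. Nothing shows that the counts yield an element lying in at most two members of $A$. Nothing shows that deleting or contracting it stays within maximally intersecting sparse paving matroids with the counts preserved. Nothing shows that the ``special position'' remains in $V_{M^*}$ without the Cramer-rule vectors $\lambda_c$ degenerating.

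The paper avoids induction altogether:
\begin{itemize}
\item By \Cref{thm: fixed support} it suffices to produce a realisation $\mathbf{w}$ of $M^*$ whose $i$-th row has support exactly $c_i$.
\item The only equations are $\det\mathbf{w}_\kappa=0$ for the pairwise disjoint circuit-hyperplanes $\kappa$ of $M^*$, so the relevant variety is a product.
\item When $[n]\setminus\kappa\notin A$, the two-case bound together with \Cref{thm:frobenius} makes $\det\mathbf{w}_\kappa$ irreducible. Hence forgetting any single column of $\kappa$ is dominant.
\item So the entries in the columns of any other $(n-d)$-set $D$ are generic, and independence of $D$ becomes the transversal condition of \Cref{lem:transversal_harder}.
\end{itemize}
Your Hall fallback is that last step. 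It overlooks that the entries are not free---they must satisfy $\det\mathbf{w}_\kappa=0$---and this is exactly where the extra $-1$ is used.

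Since $B=A$ is excluded, the stated ``if'' direction also needs $V(A)=[n]$. For $U_{3,6}$ (vacuously in the class), the set $\{\{1,2,3,4\},\{1,2,3,5\},\{1,2,4,5\}\}$ meets every stated count. But it lies in the $2$-dimensional space of dependencies on $[5]$, as your basic observation with $B=A$ shows. The same happens for $d=4$, $n=7$ with circuit-hyperplanes $\{1,2,3,7\},\{4,5,6,7\}$ and three $5$-subsets of $[6]$. The paper's appeal to \Cref{lem:transversal_harder} tacitly uses the count for $B=A$, which \Cref{thm:picturethm_OW} does include.
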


\subsection{Structure of paper}

In \Cref{section 2},
we recall the necessary background on matroids, realisation spaces, truncations, and scene analysis. In~\Cref{sec:liftabilitynonpaving}, we introduce the notion of circuit-matrices for studying liftability and generic realisability. In~\Cref{sec:matroidlattice}, we study the $1$-elevation lattice of a matroid $N$, consisting of all matroids that truncate to $N$, and investigate how liftability and generic realisability behave under the weak order. In~\Cref{sec:liftabilityviaOW}, we connect liftability and generic realisability with Oxley--Wang derived matroids. We characterise liftability and generic realisability in terms of Oxley--Wang derived matroids, and give an algebraic description of their bases through coordinate sections of matroid realisation spaces and relate our constructions to slack realisation spaces of matroids. In~\Cref{sec:applications}, 
we apply our framework to paving matroids and scene analysis, obtaining a combinatorial characterisation of liftability and generic realisability for paving matroids and 
a new proof of Whiteley's characterisation of hypergraphs that admit non-trivial scenes. Finally, in~\Cref{sec:simple_paving_OW}, we compute the generic Oxley--Wang derived matroids for a class of sparse paving matroids with maximally non-intersecting circuits.

\section{Preliminaries}\label{section 2}

\subsection{Matroid terminology}

A \emph{matroid} $M$ consists of a \emph{ground set} $[n]$ together with a collection $\mathcal{I}(M)$ of subsets of $[n]$, called \emph{independent sets}, satisfying the following axioms:
\begin{enumerate}[label=(\roman*)]
\item $\emptyset \in \mathcal{I}(M)$;
\item if $I \in \mathcal{I}(M)$ and $I' \subseteq I$, then $I' \in \mathcal{I}(M)$;
\item if $I_1, I_2 \in \mathcal{I}(M)$ with $|I_1| < |I_2|$, then there exists $e \in I_2 \setminus I_1$ such that $I_1 \cup \{e\} \in \mathcal{I}(M)$.
\end{enumerate}
Any set that is not independent is said to be \emph{dependent}.
We also recall the following matroid definitions.
\begin{itemize}
\item A \emph{circuit} is a minimally dependent subset of $[n]$. The set of all circuits is denoted by $\mathcal{C}(M)$.
\item The \emph{rank} of a subset $F \subseteq [n]$, denoted $\rank(F)$, is the size of the largest independent subset contained in $F$. The \emph{rank of the matroid} $M$, denoted $\rank(M)$, is defined as $\rank([n])$.
\item A \emph{basis} is a maximal independent subset of $[n]$. The collection of all bases is denoted by $\mathcal{B}(M)$.
\item A set $F \subseteq [n]$ is a \emph{flat} if $\rank(F') > \rank (F)$ for all proper supersets $F \subsetneq F' \subseteq [n]$.
Any flat of rank $\rank(M)-1$ is a \emph{hyperplane}.
We denote the set of hyperplanes of $M$ by $\mathcal{H}(M)$.
\item A set is \emph{spanning} if it has rank $\rank(M)$, and \emph{non-spanning} otherwise.
\item A {\em non-spanning circuit} is any circuit $c$ with $\rank(c) \leq \rank(M) -1$, or equivalently with $\size{c}\leq \rank(M)$. We denote the collection of non-spanning circuits of $M$ by $\Ccal^{\text{ns}}(M)$.
\end{itemize}

There are two important operations that can be applied to a matroid $M$ with ground set $[n]$ and rank $d \geq 1$ to produce a new matroid with ground set $[n]$:
\begin{itemize}
    \item The {\em truncation} of $M$, here denoted $T(M)$, is the matroid of rank $d-1$ on $[n]$ whose independent sets are those of $M$ of size at most $d-1$.
    \item The {\em dual} of $M$, here denoted $M^*$, is the matroid whose basis sets are exactly the complements of basis sets of $M$.
\end{itemize}

Let $M$ and $N$ be matroids on $[n]$. We write $M\geq N$ if $\mathcal{I}(M)\supseteq \mathcal{I}(N)$. This defines a partial order on the family of matroids with common ground set, called the {\em weak order}.
We observe here that, if $N=T(M)$, then $M \geq N$.

\subsection{Matroid realisation spaces and circuit varieties}

Here we introduce the notions of matroid realisation spaces and circuit varieties; see \cite{clarke2021matroid,Fatemeh4} for further details.

Let $M$ be a matroid of rank $d$ on $[n]$. A \emph{realisation} of $M$ is a tuple of vectors $\vf = (v_{1}, \ldots, v_{n}) \in (\CC^{d})^{n}$ such that
\[
\{i_{1}, \ldots, i_{k}\} \text{ is dependent in } M 
\;\Longleftrightarrow\;
\{v_{i_{1}}, \ldots, v_{i_{k}}\} \text{ are linearly dependent}.
\]
Equivalently, $\vf$ may be viewed as a matrix in $\CC^{d \times n}$ whose columns are $v_{1}, \ldots, v_{n}$.
The \emph{realisation space} of $M$ is defined by
\[
\Gamma_{M} = \left\{ \vf \in (\CC^{d})^{n} \;\middle|\; \vf \text{ is a realisation of } M \right\}.
\]
The matroid $M$ is said to be \emph{realisable} if $\Gamma_{M} \neq \emptyset$. 
The \emph{matroid variety} $V_{M}$ is defined as the Zariski closure of $\Gamma_{M}$ in $\CC^{dn}$.

It is often difficult to work with realisation spaces or matroid varieties, and so we instead wish to work with a larger and more-easily definable set.
This motivates the following definition.
Let $M$ be a matroid of rank $d$ on $[n]$. 
A tuple of vectors $\vf = (v_1, \ldots, v_n) \in (\CC^d)^n$ is said to \emph{include the dependencies} of $M$ if
\begin{equation}\label{includes dependencies}
\{i_1, \ldots, i_k\} \text{ is dependent in } M 
\;\Longrightarrow\;
\{v_{i_1}, \ldots, v_{i_k}\} \text{ is linearly dependent}.
\end{equation}
The \emph{circuit variety} of $M$ is defined by
\[
V_{\Ccal(M)}
= \left\{ \vf \in (\CC^d)^n \;\middle|\; \vf \text{ includes the dependencies of } M \right\}.
\]
Each irreducible component of the matroid variety is also an irreducible component of the circuit variety,
and so the matroid variety sits nicely inside the circuit variety.

We will later require the following two technical results. For a tuple of vectors $\vf=(v_{1},\ldots,v_{n})\in (\CC^{d})^{n}$, we denote by $\mathcal{M}(\vf)$ the unique matroid on $[n]$ that $\mathbf{v}$ is a realisation of.

\begin{lemma}\label{lem:circvar1}
    Let $M$ be a matroid and let $\vf=(v_{1},\ldots,v_{n})\in (\CC^{d})^{n}$. Then, $\mathbf{v} \in V_{\Ccal(M)}$ if and only if $\mathcal{M}(\mathbf{v}) \leq M$.
\end{lemma}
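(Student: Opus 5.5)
The plan is to unwind both sides into statements about dependent sets, using the definition of $\mathcal{M}(\vf)$ as the matroid realised by $\vf$. A subset $\{i_1,\ldots,i_k\}$ is dependent in $\mathcal{M}(\vf)$ exactly when $\{v_{i_1},\ldots,v_{i_k}\}$ is linearly dependent. By definition, $\vf \in V_{\Ccal(M)}$ means that every dependent set of $M$ is a linearly dependent family in $\vf$. So the condition becomes: every dependent set of $M$ is dependent in $\mathcal{M}(\vf)$.

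Next I translate the weak order. By definition, $\mathcal{M}(\vf) \le M$ means $\mathcal{I}(\mathcal{M}(\vf)) \subseteq \mathcal{I}(M)$. Taking complements in the power set of $[n]$, this is equivalent to every dependent set of $M$ being dependent in $\mathcal{M}(\vf)$, since both matroids share the ground set $[n]$. This is exactly the condition from the first paragraph, which gives both directions at once.

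One subtlety must be checked: the weak order was defined for matroids on a common ground set, with no rank condition imposed. Here $\vf \in (\CC^d)^n$ and $M$ is arbitrary. If $\rank(M)$ differs from $d$, the notion of a realisation in $\CC^d$ still defines $\mathcal{M}(\vf)$ as the linear matroid of the columns, of rank at most $d$. So no issue arises, and the equivalence is purely set-theoretic.

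I expect no real obstacle. The only care needed is to state precisely that "including the dependencies" quantifies over all dependent sets, not only circuits. If one prefers the circuit formulation, note that supersets of linearly dependent families are linearly dependent, so checking circuits suffices and the two readings agree.
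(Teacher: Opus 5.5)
Your proposal is correct and matches the paper's proof: both rewrite $\vf \in V_{\Ccal(M)}$ as ``every dependent set of $M$ is dependent in $\mathcal{M}(\vf)$'', then pass to complements in the power set of $[n]$ to get $\mathcal{I}(\mathcal{M}(\vf)) \subseteq \mathcal{I}(M)$, i.e.\ $\mathcal{M}(\vf) \le M$. Your remarks about the rank of $M$ possibly differing from $d$, and about checking circuits instead of all dependent sets, are harmless extra care that the paper leaves implicit.
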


\begin{proof}
It follows from~\eqref{includes dependencies} that $\vf\in V_{\Ccal(M)}$ if and only if every dependent set in $M$ is dependent in $\mathcal{M}(\vf)$, which is equivalent to $\mathcal{I}(\mathcal{M}(\vf))\subseteq \mathcal{I}(M)$, which is equivalent to $\mathcal{M}(\mathbf{v}) \leq M$ by definition.
\end{proof}

\begin{lemma}\label{lem:circvar2}
    Let $M, N$ be matroids on the ground set $[n]$.
    If $M \leq N$, then $V_{\Ccal(M)} \subseteq V_{\Ccal(N)}$.
\end{lemma}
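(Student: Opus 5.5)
The plan is to reduce the statement to \Cref{lem:circvar1} together with the transitivity of the weak order.

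Fix an arbitrary $\vf \in V_{\Ccal(M)}$. By \Cref{lem:circvar1}, this membership is equivalent to $\mathcal{M}(\vf) \leq M$. Since $M \leq N$ by hypothesis, and the weak order is a partial order on matroids over $[n]$ (in particular transitive, as it is defined via inclusion of the families of independent sets), we obtain $\mathcal{M}(\vf) \leq N$. Explicitly,
\[
\mathcal{I}(\mathcal{M}(\vf)) \subseteq \mathcal{I}(M) \subseteq \mathcal{I}(N).
\]
Applying \Cref{lem:circvar1} again, now with $N$ in place of $M$, gives $\vf \in V_{\Ccal(N)}$. As $\vf$ was arbitrary, $V_{\Ccal(M)} \subseteq V_{\Ccal(N)}$.

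One point needs care: \Cref{lem:circvar1} is stated for tuples in $(\CC^d)^n$, where $d$ is the ambient dimension. The circuit varieties $V_{\Ccal(M)}$ and $V_{\Ccal(N)}$ must therefore be taken in the same ambient space $(\CC^d)^n$ for the inclusion to make sense. The definition of "includes the dependencies" only refers to linear dependence of subfamilies, so it is meaningful for any $d$, and the lemma applies in that common ambient space. Alternatively, one can argue directly: every dependent set of $N$ is dependent in $M$, because $\mathcal{I}(M) \subseteq \mathcal{I}(N)$. Hence any $\vf$ making all $M$-dependent sets linearly dependent also makes all $N$-dependent sets linearly dependent.

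There is no substantial obstacle here. The only thing to check is this ambient-space convention.
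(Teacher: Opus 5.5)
Your proposal is correct and matches the paper's proof: both take $\vf \in V_{\Ccal(M)}$, apply \Cref{lem:circvar1} to get $\mathcal{M}(\vf) \leq M \leq N$, and apply it again to conclude $\vf \in V_{\Ccal(N)}$. Your remarks on the common ambient space $(\CC^d)^n$ and the direct dependent-set argument are valid extras; the paper leaves the ambient-space point implicit, and it is harmless there because the lemma is only applied to 1-elevations of equal rank.
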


\begin{proof}
Let $\vf\in V_{\Ccal(M)}$. Applying \Cref{lem:circvar1} with $\vf$ and $M$, it follows that $\mathcal{M}(\vf)\leq M\leq N$.
Applying the lemma again with $\vf$ and $N$, we obtain $\vf\in V_{\Ccal(N)}$. This proves the inclusion.
\end{proof}

As we observe in the following remark,
matroid varieties and realisation spaces are well-behaved with respect to matroid duality.

\begin{remark}\label{rem:dual}
    If $\vf$ is a realisation of a rank-$d$ matroid $M$ on $[n]$,
    then any matrix $\mathbf{w} \in \mathbb{C}^{(n-d) \times d}$ with rank $n-d$ satisfying $\vf \mathbf{w}^\top = \mathbf{0}_{d \times (n-d)}$ is a realisation of the dual matroid $M^*$.
    Since the family of rank-($n-d$) matrices satisfying $\vf \mathbf{w}^\top = \mathbf{0}_{d \times (n-d)}$ for a given realisation $\vf$ of $M$ all form realisations of $M^*$, it follows that each irreducible component $V$ of $\Gamma_M$ has a unique corresponding irreducible component $V^*$ of $\Gamma_{M^*}$ for which the following holds:
    for each $\vf \in V$, there exists $\mathbf{w} \in V^*$ satisfying $\vf \mathbf{w}^\top = \mathbf{0}_{d \times (n-d)}$. In particular, if $M$ is irreducible, then so is $M^*$
\end{remark}

\subsection{Generic matroid realisations}

There are many different definitions for a `generic' point of an algebraic variety.
In this paper we opt for one of the stronger definitions.

\begin{definition}\label{def:genericpoint}
    Let $V \subset \mathbb{C}^m$ be a \emph{rationally-defined} variety; that is, $V$ is the locus of a finite family of rational-coefficient polynomials.
    If $V$ is irreducible, a point $x \in V$ is said to be a \emph{generic point of $V$} if for every rational-coefficient polynomial $f \in \mathbb{Q}[X_1,\ldots,X_{m}]$ we have $f(x)=0$ if and only if $f(y)=0$ for all $y \in V$.
    If $V$ is reducible, then a point in $V$ is a \emph{generic point of $V$} if it is a generic point of any irreducible component of $V$ containing it.
\end{definition}

It is a standard exercise in algebraic geometry to show that every rationally-defined variety contains a dense subset of generic points, and a generic point of a variety is contained within exactly one irreducible component. As the circuit variety, matroid variety and realisation space of a matroid are all defined by rational-coefficient polynomials,
this strong definition of `genericity' can be applied to our setting.

\begin{definition}\label{def:generic}
    A realisation $\vf$ of a matroid $M$ is \emph{generic} if 
    it is a generic point of $V_M$.
\end{definition}

\subsection{The fundamentals of scene analysis}
\label{sec:sceneanalysisbackground}
In this section we describe \emph{scene analysis}, a classical topic in combinatorial geometry.

\begin{remark}
  All the results of this section are presented over $\mathbb{R}$, as is usually done in the literature about scene analysis. However, all results remain valid over $\CC$.  
\end{remark}

For a $d$-uniform hypergraph $H=(V,E)$, $d \geq 3$, with at least 1 hyperedge, we define the following:
\begin{itemize}
    \item A \emph{picture} of $H$ is a map $x: V \to \mathbb{R}^{d-2}$, $v \mapsto x_v = (x_{v,1},\ldots,x_{v,d-2})$. 
    We say $x$ is \emph{generic} if the multiset $(x_{v,i} : v \in V,~ i \in [d-2])$ is algebraically independent over $\mathbb{Q}$.
    Similarly, $x$ is a \emph{general position picture} if no $(d-1)$-tuple of points $x_{v_1},\ldots,x_{v_{d-1}}$ are contained in an affine hyperplane of $\mathbb{R}^{d-2}$.
    \item A \emph{scene} of $H$ is a map $y: V \to \mathbb{R}^{d-1}$, $v \mapsto y_v$, such that if $\{v_1,\ldots,v_d\}$ is a hyperedge of $H$, then the points $y_{v_1}, \ldots, y_{v_d}$ are contained in an affine hyperplane of $\mathbb{R}^{d-1}$.
    \item A scene $y$ of $H$ is \emph{sharp} if the points $\{y_{v_1},\ldots,y_{v_d}\}$ corresponding to a hyperedge $e=\{v_1,\ldots,v_d\}$ are contained in a unique affine hyperplane $L_e$,
    and $L_e \neq L_f$ for any two distinct hyperedges $e,f \in E$.
    \item A scene $y$ of $H$ is \emph{trivial} if the points $\{y_v: v \in V\}$ are contained in an affine hyperplane.
    \item A scene $y$ of $H$ \emph{lifts} a picture $x$ if for each $v \in V$ there exists $z_v \in \RR$ such that $y_{v} = (x_v,z_v)$.
\end{itemize}
The set of scenes that lift a picture $x$ form an affine subspace of $(\mathbb{R}^{d-1})^V$.
Moreover, if the points $\{x_v: v \in V\}$ are not contained in any affine hyperplane of $\mathbb{R}^{d-2}$, then the set of trivial scenes that lift $x$ form a $(d-1)$-dimensional affine subspace of the space of scenes that lift $x$.
We say that a picture $x$ of $H$ is \emph{flat} if every lift of $x$ is trivial.

\begin{figure}[H]
    \centering
    \includegraphics[width=1\textwidth, trim=0 0 0 0, clip]{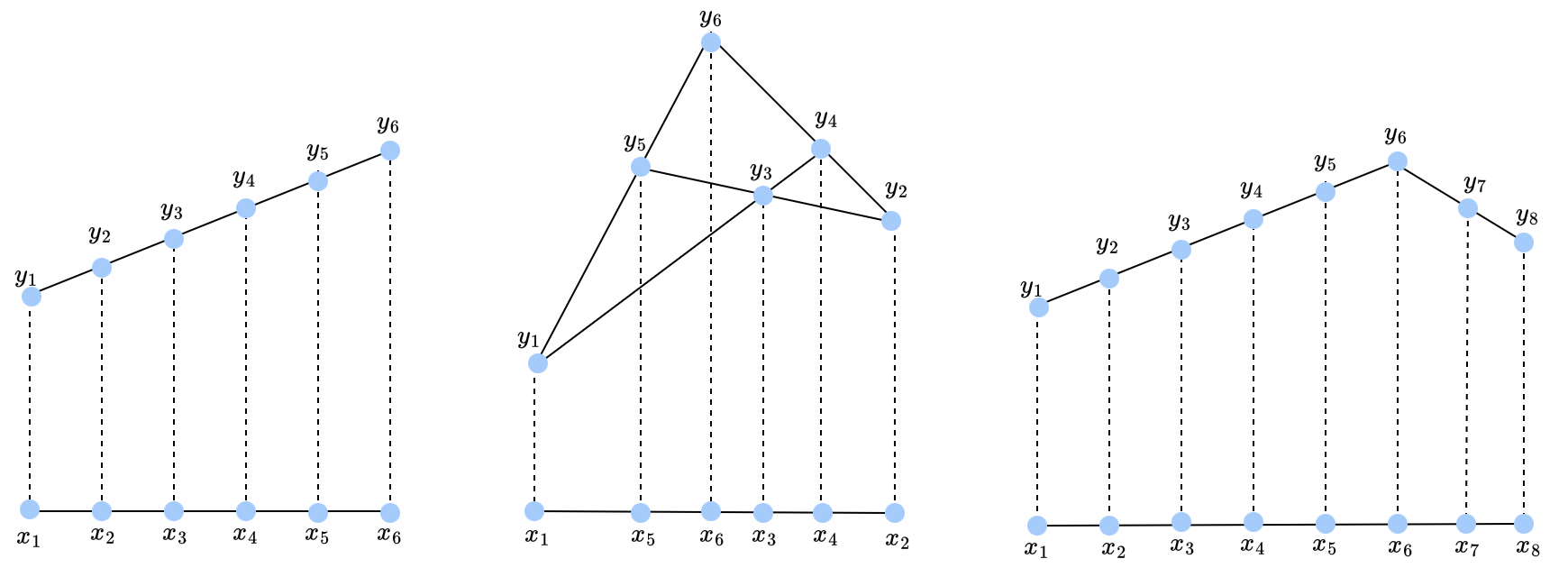}
    \caption{(a) A trivial scene; (b) A sharp scene; (c) A scene which is neither trivial nor sharp.}
    \label{figure: pictures and scenes}
\end{figure}

\begin{example}
    Consider the $3$-uniform hypergraph $H$ on the vertex set $[6]$ with hyperedge set $E:=\{\{1,5,6\},\{1,3,4\},\{2,3,5\},\{2,4,6\}\}$. \Cref{figure: pictures and scenes} (a) and (b) show two lifts of a picture of $H$, corresponding respectively to a trivial scene and a sharp scene. \Cref{figure: pictures and scenes} (c) shows a scene that is neither trivial nor sharp with respsect to the hypergraph $H'$ on $[8]$ with edge set $E\cup\{6,7,8\}$.
\end{example}

The main question of scene analysis is the following:

\begin{question}
    Suppose that $x$ is a picture of a $d$-uniform hypergraph $H=(V,E)$. Is there a sharp scene that lifts $x$?
    \label{q:scene_analysis}
\end{question}

To better understand \Cref{q:scene_analysis}, 
we recall the following observation from \cite{whiteley1989matroid}.
Let $H=(V,E)$ be a $d$-uniform hypergraph with picture $x$.
Then a map $y :V \rightarrow \mathbb{R}^{d-1}$ with $y_v=(x_v,z_v)$ is a scene that lifts $x$ if and only if for each hyperedge $e=\{v_1,\ldots,v_d\} \in E$ we have
\begin{equation*}
    \det \begin{pmatrix}
x_{v_{1},1} & \cdots & x_{v_{d},1} \\
\vdots & \vdots & \vdots \\
x_{v_{1},d-2} & \cdots & x_{v_{d},d-2} \\
z_{v_{1}} & \cdots & z_{v_{d}} \\
1 & \cdots & 1
\end{pmatrix} = 0.
\end{equation*}
Alternatively, if we define for each hyperedge $e=\{v_1,\ldots,v_d\}$ with an arbitrary ordering $v_1, \ldots , v_d$ the matrix
\begin{equation*}
    X(e) :=    
    \begin{pmatrix}
    x_{v_{1},1} & \cdots & x_{v_{d},1} \\
    \vdots & \vdots & \vdots \\
    x_{v_{1},d-2} & \cdots & x_{v_{d},d-2}\\
    1 & \cdots & 1
    \end{pmatrix}
\end{equation*}
and set $X_j(e )$ to be the matrix formed from $X(e)$ by deleting the column $j$,
then $y$ is a scene that lifts $x$ if and only if 
\begin{equation}\label{eq:scenelinear}
    \sum_{j=1}^d \left((-1)^j\det (X_j(e )) \right)  z_{v_j} = 0 \qquad \text{for each $e=\{v_1,\ldots,v_d\} \in E$}.
\end{equation}
We observe that the left hand side of each equation in \eqref{eq:scenelinear} is a linear polynomial that is independent of our choice of ordering of $e$ (up to multiplication of the linear polynomial by $-1$).

Now let $H=([n],E)$ be a $d$-uniform hypergraph $H=(V,E)$ with hyperedges $e_1,\ldots,e_m$.
We now consider each hyperedge to be an ordered $d$-tuple $(v_1,\ldots,v_d)$ with $v_1 < v_2 < \cdots < v_d$.
Given a picture $x$ of $H$,
let $A(H,x)$ denote the $|E| \times n$-coefficient matrix of the system of linear equations given in \eqref{eq:scenelinear}. 
Explicitly, $A(H,x)$ is defined by
\begin{equation*}
A(H,x)_{i,j} =
\begin{cases}
(-1)^k \, \det(X_k(e_{i})) & \text{if } j = v_k \in e_{i} = (v_1,\ldots,v_d), \\
0 & \text{otherwise.}
\end{cases}
\end{equation*}
As noted above, the affine space of lifts of $x$ are in one-to-one correspondence with the kernel of $A(H,x)$.
Thus $x$ has a non-trivial lift if and only if the kernel of $A(H,x)$ has dimension $d$ or greater.

Whiteley proved the following characterisation of when the rows of $A(H,x)$ are independent for a generic picture $x$:

\begin{theorem}[\cite{whiteley1989matroid}]\label{thm:indepA}
Let $H=(V,E)$ be a $d$-uniform hypergraph, and let $x$ be a generic $(d-1)$-picture of $H$. 
The rows of $A(H,x)$ are independent if and only if 
\[
|E'| \leq V(E')-(d-1)
\]
for all non-empty subsets $E' \subseteq E$.
\end{theorem}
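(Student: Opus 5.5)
The plan is to prove necessity by a dimension count on trivial lifts, and sufficiency by exhibiting a non-vanishing maximal minor of $A(H,x)$, combining Hall's theorem with an induction that uses special positions.

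\emph{Necessity.} Fix a non-empty $E' \subseteq E$ and let $A'$ be the submatrix of $A(H,x)$ with rows $E'$. Every column of $A'$ outside $V(E')$ is zero, so it suffices to look at the columns indexed by $V(E')$. For any affine function $\ell(p) = a\cdot p + b$ on $\mathbb{R}^{d-2}$, the vector $z_v = \ell(x_v)$, $v \in V(E')$, lies in the kernel of $A'$. Indeed, each row of $A'$ is the cofactor expansion of a determinant whose $z$-row is then a linear combination of the coordinate rows and the all-ones row, so the determinant vanishes.

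Since $x$ is generic and $|V(E')| \geq d > d-2$, the points $\{x_v : v \in V(E')\}$ affinely span $\mathbb{R}^{d-2}$. Hence $\ell \mapsto (\ell(x_v))_{v\in V(E')}$ is injective, and these trivial lifts form a $(d-1)$-dimensional subspace of $\ker A'$. Therefore $\rank A' \leq |V(E')|-(d-1)$. If the rows of $A(H,x)$ are independent, then $|E'| = \rank A'$, which gives the count.

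\emph{Sufficiency: reduction to a square minor.} Suppose $|E'| \leq |V(E')|-(d-1)$ for all non-empty $E'\subseteq E$. Because $x$ is generic, the rank of $A(H,x)$ equals its maximum rank over all pictures. So it suffices to find one picture $x^0$, together with $|E|$ columns, on which the corresponding $|E|\times|E|$ minor is non-zero.

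First I would apply Hall's theorem to the bipartite incidence graph between $E$ and $V$. For any fixed set $S$ of $d-1$ vertices, the count gives $|E'| \leq |V(E')\setminus S|$ for every $E'$. Hall's condition therefore holds for the incidence graph with $S$ deleted, so there is an injection $\phi: E\to V\setminus S$ with $\phi(e)\in e$. Let $A_\phi$ be the square submatrix on the columns $\phi(E)$. Expanding $\det A_\phi$ gives a signed sum, over all such injections $\psi$ with image $\phi(E)$, of products $\prod_e \pm\det X_{\psi(e)}(e)$. Each factor is the $(d-2)$-dimensional affine volume of the simplex on $e\setminus\{\psi(e)\}$.

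\emph{Sufficiency: showing the minor is non-zero.} To show $\det A_\phi\not\equiv 0$, I would argue by induction on $|E|$ using a special position. Since $d|E| \leq d(n-d+1) < dn$, some vertex $v$ has degree at most $d-1$. I would delete $v$ together with its edges $e_1,\dots,e_k$, where $k\le d-1$; the remaining hypergraph still satisfies the count, so its rows are independent by induction. To add back the $k$ rows, I would choose the $k$ edges' matched columns appropriately (for instance, matching one of them to $v$ itself). I would then degenerate $x_v$, or the position of a neighbour, so that the new rows become triangular with respect to the old ones. One option is to send $x_v$ to infinity along a generic direction, so that the entries in column $v$ and in the other new matched columns dominate, and then read off the leading term of the minor. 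Concretely, the leading term should factor as the old minor times a $k\times k$ determinant. That small determinant only involves the $d-1$ edges through $v$ and coordinates of their other vertices, and I expect it to be non-zero because its entries are generic volumes of distinct simplices.

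\emph{Main obstacle.} The hard step is this last extension: in general the new rows interact with many old columns, and the count alone is not visibly enough to force the $k\times k$ block to be non-singular. If the degeneration argument fails, I would fall back on the approach indicated in the introduction. That approach identifies $A(H,x)$ with a circuit-matrix of a generic realisation of the uniform matroid $U_{d-1,n}$, and computes its Oxley--Wang derived matroid with a Hall/Rado-type argument, in which independence is exhibited via a monomial in the $x_{v,i}$ that occurs in exactly one term of the expansion of $\det A_\phi$.
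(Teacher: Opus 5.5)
Your necessity argument is correct: the affine functions restricted to $V(E')$ give a $(d-1)$-dimensional space of trivial lifts in the kernel of the rows $E'$. This is essentially the dimension count behind \Cref{lem: necessary for independent}.

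The gap is in sufficiency: nothing you write shows $\det A_\phi\not\equiv 0$. Hall's theorem only says that the zero pattern of $A_\phi$ has a perfect matching. That would suffice if the non-zero entries were independent indeterminates. They are not: they are the cofactors $\pm\det X_v(e)$, every row is orthogonal to $(x_{v,1})_v,\ldots,(x_{v,d-2})_v$ and to $(1,\ldots,1)$, and terms of the expansion can cancel. For example, with $d=3$ the rows $\{1,2,3\},\{1,2,4\},\{1,3,4\}$ on the columns $\{2,3,4\}$ have a pattern with a perfect matching, yet that minor vanishes for every picture.

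Your induction does not rule out such cancellation.
\begin{itemize}
\item In a new row $e_i$, the entry in column $v$ is $\pm\det X_v(e_i)$, which does not involve $x_v$ at all.
\item The entries in every other column of $e_i$, including columns already used by the old minor, are affine in $x_v$.
\item The old rows are in general non-zero on the $k-1$ further columns you must add.
\end{itemize}
So letting $x_v\to\infty$ gives no block-triangular leading term, and the factorisation ``old minor times a $k\times k$ determinant'' is not available. The non-singularity of that block is only conjectured.

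Nor does the extension step visibly use the count for subsets containing some $e_i$, although it has to. An independent $H-v$ together with $\deg v\le d-1$ does not give independence: for $d=3$, add to $\{1,2,3\},\{1,2,4\}$ a vertex $5$ with edges $\{1,2,5\},\{3,4,5\}$, giving four edges on five vertices. Your fallback names the right setting, but it replaces the argument with an unverified claim about a uniquely occurring monomial.

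The missing idea is the duality of \Cref{thm: fixed support}, which removes the structured entries altogether. This is how the paper argues, via \Cref{thm:picturethm_OW} applied to $U_{d-1,n}$.
\begin{itemize}
\item Take $|E|=n-d+1$ and let $\mathbf{w}\in\mathbb{C}^{E\times[n]}$ have row $e$ supported exactly on $e$, with generic entries on the support. Its column matroid is the transversal matroid of $E$.
\item Your Hall computation, run with $D=[n]\setminus S$ for every $(d-1)$-set $S$, shows precisely that every $(n-d+1)$-set of columns is a transversal. Hence $\mathbf{w}$ realises $U_{n-d+1,n}=U_{d-1,n}^*$.
\item If the rows of $\mathbf{v}\in\mathbb{C}^{(d-1)\times n}$ form a basis of $\ker\mathbf{w}$, then $\mathbf{v}$ realises $U_{d-1,n}$. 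The unique vector of support $e$ in $\ker\mathbf{v}=\operatorname{rowspan}(\mathbf{w})$ is row $e$ of $\mathbf{w}$, so the circuit vectors $r_e$, $e\in E$, are independent at $\mathbf{v}$.
\item This independence then holds at a generic realisation by \Cref{lem:genericliftmatroid}. It holds for a generic picture by \Cref{lem:matrices_coincide} and the argument of \Cref{lem:linkingscenesandliftings}(ii).
\item A $(1,d-1)$-sparse $E$ with $|E|<n-d+1$ is handled by extending it to a tight set, using that $\mathcal{S}_{(1,d-1)}$ is a matroid of rank $n-d+1$.
\end{itemize}
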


\Cref{thm:indepA} directly implies the following characterisation of when generic pictures of $d$-uniform hypergraphs lift to sharp scenes, answering \Cref{q:scene_analysis} for generic pictures:

\begin{theorem}[The picture theorem for uniform hypergraphs \cite{whiteley1989matroid}]
    Let $H=(V,E)$ be a $d$-uniform hypergraph, and let $x$ be a generic picture of $H$.
    There is a sharp scene that lifts $x$ if and only if
    \[
    |E'| \leq V(E')-d
    \]
    for all subsets $E' \subseteq E$ with $|E'| \geq 2$.
    \label{thm:exists_sharp_scene}
\end{theorem}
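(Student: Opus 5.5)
We derive \Cref{thm:exists_sharp_scene} from \Cref{thm:indepA}, using that the scenes lifting $x$ are exactly the kernel of $A(H,x)$ (in the $z$-coordinates), which always contains the $(d-1)$-dimensional space of trivial lifts $z_v=\langle a,x_v\rangle+b$.

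\textbf{Necessity.} Let $y$ be a sharp scene lifting $x$ and let $E'\subseteq E$ with $|E'|\geq 2$. Restrict $y$ to $V(E')$. Sharpness gives distinct affine hyperplanes $L_e$ for $e\in E'$, so $y$ is non-trivial on $V(E')$. Hence the kernel of $A(H',x)$, with $H'=(V(E'),E')$, has dimension at least $d$, and so $\rank A(H',x)\le |V(E')|-d$. It remains to turn this rank bound into a bound on $|E'|$, i.e.\ to show the rows of $A(H',x)$ are independent. Suppose they are not. By \Cref{thm:indepA} there is a non-empty $E''\subseteq E'$ with $|E''|>|V(E'')|-(d-1)$. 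Pass to a minimal such dependent set, a circuit of the row matroid. Its rows are dependent, so its lifts have only the trivial $(d-1)$ degrees of freedom, and every scene is planar on $V(E'')$. If $|E''|\ge2$, this forces two hyperedges of $E''$ to share a plane, contradicting sharpness. If $|E''|=1$, the inequality reads $1>d-(d-1)$, which is false, so this case cannot occur. Therefore the rows are independent and $|E'|\le |V(E')|-d$.

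\textbf{Sufficiency.} Assume the counting condition for all $|E'|\ge2$. Then for non-empty $E'$ we have $|E'|\le |V(E')|-(d-1)$; for singletons this holds with equality. By \Cref{thm:indepA} the rows of $A(H,x)$ are independent. The sharpness requirement splits into two conditions:
\begin{enumerate}[label=(\roman*)]
\item each hyperedge $e$ spans a unique hyperplane, i.e.\ $y$ restricted to $e$ is not contained in a codimension-2 affine subspace;
\item $L_e\neq L_f$ for distinct $e,f\in E$.
\end{enumerate}
Each condition fails only on a proper linear subspace of the kernel, provided the subspace is proper. The kernel is irreducible, so a finite union of proper subspaces cannot cover it, and a generic kernel element is then sharp.

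For (ii), $L_e=L_f$ means the points of $e\cup f$ are coplanar. This is equivalent to $y$ satisfying the extra hyperedge constraints of all $d$-subsets of $e\cup f$. We must show that some kernel element avoids it. Add to $E$ a hyperedge $g\subseteq e\cup f$ not in $\{e,f\}$; this exists since $|e\cup f|\ge d+1$. If $E\cup\{g\}$ still satisfies the $(d-1)$-count, then row $g$ is independent of the others. Adding it drops the kernel dimension, so some kernel element of $A(H,x)$ does not satisfy $g$, and hence is not planar on $e\cup f$. To check the count, take $E'\ni g$. Then $|E'\setminus\{g\}\cup\{e,f\}|\le |V|-d$ by hypothesis, using the set with $\ge2$ edges and the same vertex union. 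Therefore $|E'|\le |V(E')|-d+1$, as needed.

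For (i), if the points of $e$ lie in a codimension-2 affine subspace, then every $d$-set containing $d-1$ points of $e$ plus another vertex gives an extra constraint. The same augmentation argument applies to $g=(e\setminus\{v\})\cup\{w\}$, using the hypothesis on $\{e\}\cup\ldots$. The main obstacle is handling this degenerate case, and the case where $V=e$, cleanly. Alternatively, use that $x$ is generic, so any $d-1$ points of $x$ are affinely independent, and the projection onto $x$ already prevents degeneracy of $e$. I would try this projection argument first.
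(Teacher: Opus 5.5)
Your strategy is the one the paper intends: read everything off the kernel of $A(H,x)$ and invoke \Cref{thm:indepA}. The paper calls the statement a direct consequence of \Cref{thm:indepA}, and your sharpness mechanism is the closure argument from the proof of \Cref{thm:E}.

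The necessity half works, but its justification needs fixing. Dependence of the rows of $E''$ does not by itself rule out non-trivial lifts; minimality does. A minimal $E''$ violating the $(1,d-1)$-count has $|E''|\ge 2$ and $|E''|=|V(E'')|-d+2$. Also, $E''\setminus\{e\}$ is independent by \Cref{thm:indepA}. So the rows of $E''$ have rank $|V(E'')|-(d-1)$, and the lifts on $V(E'')$ are exactly the trivial ones.

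The sufficiency half has two steps that fail as written.

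First, $g$ must be a $d$-subset of $e\cup f$ with $g\notin E$, not merely $g\notin\{e,f\}$. If $g\in E$, nothing is added and the kernel does not drop. Such a $g$ exists: $\binom{e\cup f}{d}\subseteq E$ would force $\binom{m}{d}\le m-d$ with $m=|e\cup f|\ge d+1$, which is false.

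Second, your count check is wrong. The set $V\bigl((E'\setminus\{g\})\cup\{e,f\}\bigr)=V(E')\cup e\cup f$ is generally strictly larger than $V(E')$. The hypothesis then only gives
$|E'|\le|V(E')|-d+1+\bigl(|(e\cup f)\setminus V(E')|-|\{e,f\}\setminus E'|\bigr)$.
The bracket can be positive; for example, if $e\cap f=\emptyset$ and $E'=\{g,h\}$ with $h\cap(e\cup f)=\emptyset$, it equals $d-2$. Check the count directly instead. Write $E'=\{g\}\cup E''$ with $E''\subseteq E$.
\begin{itemize}
\item If $|E''|\ge 2$, then $|E'|\le|V(E'')|-d+1\le|V(E')|-d+1$.
\item If $E''=\{h\}$ with $h\ne g$, then $|h\cup g|\ge d+1$ gives $|E'|=2\le|V(E')|-d+1$.
\item The case $E'=\{g\}$ is trivial.
\end{itemize}
This shows $E+g$ is $(1,d-1)$-sparse for every non-edge $g$, i.e.\ $E$ is a flat of $\mathcal{S}_{(1,d-1)}$, as in the paper's proof of \Cref{thm:E}.

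Finally, commit to the projection argument for (i) and drop the augmentation idea. Any $d-1$ of the $x_v$ are affinely independent, so the $y_v$ with $v\in e$ affinely span a $(d-2)$-flat. Hence $L_e$ is unique and non-vertical for every scene lifting $x$. This also makes $L_e=L_f$ the linear condition that $z|_{e\cup f}$ agrees with an affine function of $x$. With these repairs the proof is complete.
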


\begin{example}
    Consider the hypergraph $H$ pictured in \Cref{figure: pictures and scenes} (b). It has $4$ hyperedges and $6$ vertices, so $|E|=|V|-2$. By \Cref{thm:exists_sharp_scene} generic pictures of $H$ (b) do not admit sharp scenes. Hence the picture in \Cref{figure: pictures and scenes} (b) has to be non-generic.
\end{example}

We extend \Cref{thm:exists_sharp_scene} to non-trivial lifts by introducing the following matroid.

\begin{definition}\label{def: sparse}
A $d$-uniform hypergraph $H=(V,E)$ is \emph{$(k,l)$-sparse} if 
\[
|E'| \leq k|V(E')|-l
\]
for all non-empty subsets of edges $E' \subseteq E$. We say that $H$ is \emph{$(k,l)$-tight} if $H$ is $(k,l)$-sparse, and $|E|=k|V|-l$. The $(k,l)$-sparse hypergraphs with vertex set $[n]$ are the independent sets of a matroid on $\binom{[n]}{k}$, which is non-empty provided that $0 \leq l \leq kd-1$ (see~\cite{whiteley1996some}). We refer to this matroid as the \emph{$(k,l)$-sparsity matroid}, and denote it by $\mathcal{S}_{(k,l)}$. 
\end{definition}

\begin{corollary}
    Let $H=(V,E)$ be a $d$-uniform hypergraph on $n$ vertices. Then generic pictures of $H$ have liftings to non-trivial scenes if and only if $\rank_{\mathcal{S}_{(1,d-1)}}(E) \leq n-d $.
    \label{cor:non-triv.scene}
\end{corollary}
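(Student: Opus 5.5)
Throughout, fix a generic picture $x$ of $H$. By the discussion preceding \Cref{thm:indepA}, the scenes lifting $x$ correspond exactly to vectors $z \in \ker A(H,x)$. The trivial lifts, namely those with $z_v$ an affine function of $x_v$, form a $(d-1)$-dimensional subspace of this kernel. This uses that a generic $x$ has its points affinely spanning $\mathbb{R}^{d-2}$ whenever $n \geq d-1$; if $n$ is smaller, both sides of the claimed equivalence fail trivially and we handle that edge case separately. Hence $x$ has a non-trivial lift if and only if $\dim\ker A(H,x) \geq d$, which by rank--nullity is the condition $\rank A(H,x) \leq n-d$. The whole proof therefore reduces to identifying $\rank A(H,x)$ with $\rank_{\mathcal{S}_{(1,d-1)}}(E)$.

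The key step is to read \Cref{thm:indepA} as a statement about the row matroid of $A(H,x)$. Take any subset $E' \subseteq E$ and restrict to the subhypergraph $H' = (V,E')$. The rows of $A(H,x)$ indexed by $E'$ are exactly the matrix $A(H',x)$, with the zero columns of vertices outside $V(E')$ being harmless. The picture restricted to $H'$ is still generic. So \Cref{thm:indepA} applied to $H'$ says these rows are linearly independent if and only if $|E''| \leq |V(E'')|-(d-1)$ for all non-empty $E'' \subseteq E'$. By \Cref{def: sparse} with $k=1$, $l=d-1$, this is precisely the statement that $E'$ is independent in $\mathcal{S}_{(1,d-1)}$.

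It follows that the row matroid of $A(H,x)$ on $E$ coincides with the restriction of $\mathcal{S}_{(1,d-1)}$ to $E$. Consequently $\rank A(H,x)$, being the largest size of a linearly independent set of rows, equals $\rank_{\mathcal{S}_{(1,d-1)}}(E)$. Combining this with the first paragraph gives the corollary. The argument holds for every generic picture, so the phrase ``generic pictures'' is unambiguous.

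The only delicate point is the bookkeeping in the triviality count. We must check that trivial lifts of a generic picture really form a $d-1$ dimensional subspace of $\ker A(H,x)$: the map $(a,b) \mapsto (a\cdot x_v + b)_v$ is injective precisely because the points $x_v$ affinely span $\mathbb{R}^{d-2}$, which holds generically when $n \geq d-1$. We also have to deal with degenerate cases, such as $n < d$, where neither side of the equivalence can hold.
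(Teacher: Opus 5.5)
Your proof is correct and follows the same route as the paper. The paper simply invokes \Cref{thm:indepA} together with the observation that a generic picture has a non-trivial lift exactly when $\rank A(H,x)\le n-d$. You additionally make explicit the step the paper leaves implicit: applying \Cref{thm:indepA} to every sub-hypergraph $(V,E')$ identifies the row matroid of $A(H,x)$ with the restriction of $\mathcal{S}_{(1,d-1)}$ to $E$, which is what is needed to pass from independence of rows to equality of ranks.
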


\begin{proof}
    This is an easy corollary of \Cref{thm:indepA}, using that a generic picture $x$ of $H$ has a lifting to a non-trivial scene if and only if $\rank(A(H,x)) \leq n-d$.
\end{proof}

\begin{remark}
\Cref{thm:indepA} says that the rows of $A(H,x)$, where $H$ a $d$-uniform hypergraph and $x$ is a generic picture, are independent if and only if $H$ is $(1,d-1)$-sparse. The parameters $k=1$ and $l=d-1$ from \Cref{thm:indepA} are in the range where $(k,l)$-sparsity defines a matroid. The parameters $k=1$ and $l=d$ from \Cref{thm:exists_sharp_scene} are not, which is why the sparsity condition is required for all subsets of at least two edges. As a slight abuse of notation, we will also refer to the $d$-unform hypergraphs that satisfy the counting condition in \Cref{thm:exists_sharp_scene} as $(1,d)$-sparse, even though they are not the independent sets of a matroid.
\end{remark}

\section{Liftability of matroids}
\label{sec:liftabilitynonpaving}

The notion of {\em liftability} of matroids, has already appeared and been studied in several other papers, see \cite{clarke2024liftable,Fatemeh4}. It has been studied only in the case of paving matroids, for which it has been proven to be a useful tool to construct polynomials within the ideals of matroid varieties of paving matroids, see \cite{Fatemeh4}. Liftability provides a systematic way of constructing such polynomials, called {\em liftability polynomials}. In this section, we generalise these notion to matroids with realisable truncations.

\subsection{Circuit matrices}\label{subsec:circuitmatrices}

Let $N$ be a realisable matroid of rank $d-1$ on $[n]$, and let $\vf = (v_1, \ldots, v_n)$ be a tuple of vectors in $\CC^{d-1}$ realising the matroid $N$. 
For each circuit $c$ of $N$, let $r_c = (r_{c,1}, \ldots,r_{c,n}) \in \CC^n$ be the unique vector, up to a nonzero scalar, satisfying
\begin{equation}\label{eq:rcrm}
\operatorname{supp}(r_c) = c \qquad \text{and} \qquad \sum_{i \in c} r_{c,i} v_i = \mathbf{0}.
\end{equation}
For a set $S$ of circuits of $N$,
we define the {\em circuit matrix} $\mathcal{R}_{\vf}(N;S)$ associated with $N$ and $\vf$ to be the matrix whose columns and rows are indexed by $[n]$ and $S$ respectively, where where the row indexed by the circuit $c \in S$ is the row vector $r_c$.  

If we consider $\vf$ as a $(d-1) \times n$-matrix,
then, by construction, we have that $\mathcal{R}_{\vf}(N;S) \vf^\top = \mathbf{0}_{|S| \times (d-1)}$.
Since our choice of $\vf$ will always be a realisation of $N$ (and hence $\vf$ has rank $d-1$ when considered as a matrix),
it follows that $\rank \mathcal{R}_{\vf}(N;S) \leq n - d+1$.

\begin{lemma}\label{lem: rmatrix kernel and liftings trivial}
Let $N$ be a rank-$(d-1)$ matroid with ground set $[n]$, and let $\vf \in \Gamma_{N}$.
Then the linear space
\begin{equation}\label{eq: rmatrix kernel and liftings trivial}
    \Big\{ ~ z = (z_1,\ldots,z_n) ~ : ~ (v_1,z_1),\ldots,(v_n,z_n) \text{ are linearly dependent}  ~ \Big\}
\end{equation}
is $(d-1)$-dimensional and contained in $\ker \mathcal{R}_{\vf}(N;S)$.
\end{lemma}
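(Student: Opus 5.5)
The plan is to identify the space in \eqref{eq: rmatrix kernel and liftings trivial} with the row space of the $(d-1)\times n$ matrix $\vf$, and then use the identity $\mathcal{R}_{\vf}(N;S)\vf^\top = \mathbf{0}$ already observed above. Here "linearly dependent" has to be read as "the vectors $(v_1,z_1),\ldots,(v_n,z_n)$ span a space of dimension at most $d-1$", i.e.\ the lifted $d\times n$ matrix has rank at most $d-1$; this is the reading for which the statement makes sense, since $n$ vectors in $\CC^d$ with $n>d$ are always dependent.

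First, since $\vf\in\Gamma_N$ and $N$ has rank $d-1$, the matrix $\vf$ has rank $d-1$, so its rows $\rho_1,\ldots,\rho_{d-1}\in\CC^n$ are linearly independent and span a $(d-1)$-dimensional subspace $W$. Form the $d\times n$ matrix $\widetilde{\vf}$ by appending the row $z$ to $\vf$. Its rank is at least $d-1$, because the first $d-1$ rows are independent. Its rank equals $d-1$ exactly when $z$ lies in the span of $\rho_1,\ldots,\rho_{d-1}$, that is, when $z\in W$. So the space in \eqref{eq: rmatrix kernel and liftings trivial} is precisely $W$. In particular it is a linear space, and it has dimension $d-1$.

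Second, each row $r_c$ of $\mathcal{R}_{\vf}(N;S)$ satisfies $\sum_i r_{c,i}v_i=\mathbf{0}$ by \eqref{eq:rcrm}. Equivalently, $r_c\cdot \rho_j=0$ for every $j$, which is the identity $\mathcal{R}_{\vf}(N;S)\vf^\top=\mathbf{0}$. Hence every $z=\sum_j \lambda_j\rho_j\in W$ satisfies $\mathcal{R}_{\vf}(N;S)z^\top=\sum_j\lambda_j\mathcal{R}_{\vf}(N;S)\rho_j^\top=\mathbf{0}$, which gives $W\subseteq\ker\mathcal{R}_{\vf}(N;S)$.

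The argument is short. The only real obstacle is fixing the intended meaning of "linearly dependent" as "rank at most $d-1$", and checking that the rank of the lifted matrix can never drop below $d-1$, which uses that $\vf$ realises a rank-$(d-1)$ matroid. No choice of circuit set $S$ matters, since the containment holds row by row.
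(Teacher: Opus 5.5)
Your proof is correct and follows the paper's argument: the paper also identifies the space of $z$ giving linearly dependent lifted vectors with the row space of the $(d-1)\times n$ matrix $\vf$, and gets the containment in $\ker \mathcal{R}_{\vf}(N;S)$ from $\mathcal{R}_{\vf}(N;S)\vf^\top=\mathbf{0}$. Your reading of ``linearly dependent'' as ``the lifted vectors span at most a $(d-1)$-dimensional space'' is the intended one, matching the paper's notion of a trivial lifting, and your rank argument spells out the step the paper leaves implicit.
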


\begin{proof}
    The vectors contained in the linear space described in \eqref{eq: rmatrix kernel and liftings trivial} are exactly the vectors contained in the row space of $\vf$ when the latter is considered to be a $(d-1) \times n$ matrix.
    Thus linear space described in \eqref{eq: rmatrix kernel and liftings trivial} is $(d-1)$-dimensional and contained in $\ker \mathcal{R}_{\vf}(N;S)$.
\end{proof}

We showcase the geometric properties of row matroids for circuit matrices with the following two results.

\begin{lemma}\label{lem:cont}
    Let $N$ be a realisable rank-$(d-1)$ matroid with ground set $[n]$.
    Then, given the projective variety
    \begin{equation*}
        \mathbb{P}(\Gamma_{N}) := \left\{ [\vf] := \big([v_1], \ldots, [v_n] \big)_{i \in [n]} ~ : ~ \mathbf{v} = (v_1,\ldots,v_n) \in \Gamma_{N}  \right\} \subset \prod_{i \in [n]} \mathbb{P}(\mathbb{C}^{d-1}),
    \end{equation*}
    and a set $S$ of circuits of $N$,
    the map
    \begin{equation*}
        f_S : \mathbb{P}(\Gamma_{N}) \longrightarrow \prod_{c \in S} \mathbb{P}(\mathbb{C}^{n}), ~ [\mathbf{v}] \longmapsto \big([r_c(\mathbf{v})] \big)_{c \in S},
    \end{equation*}
    where each $r_c(\mathbf{v})$ is the unique-up-to-scalar vector satisfying \eqref{eq:rcrm},
    is continuous with respect to the Zariski topology.
    Moreover, the preimage of any quasi-projective set defined by rational-coefficient polynomials is a quasi-projective set defined by rational-coefficient polynomials.
\end{lemma}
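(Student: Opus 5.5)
The plan is to write down an explicit formula for $r_c(\vf)$ as a tuple of polynomials in the coordinates of $\vf$. Fix a circuit $c$ of $N$, with $|c| = k$ and $\rank_N(c) = k-1$. For $\vf \in \Gamma_N$, the vectors $(v_i : i \in c)$ span a $(k-1)$-dimensional subspace of $\mathbb{C}^{d-1}$. Every $(k-1)$-subset of them is linearly independent, and the only linear dependency among them, up to scalar, has full support $c$.

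To get explicit coordinates, choose a set $J$ of $k-1$ row indices and form the $(k-1)\times k$ submatrix $\vf_{J,c}$, whose columns are the $v_i$, $i\in c$, restricted to the rows in $J$. By Cramer's rule, the vector of signed maximal minors
\[
  \rho^J_c(\vf)_i = (-1)^{\mathrm{pos}(i)} \det \vf_{J, c\setminus\{i\}} \quad (i \in c), \qquad \rho^J_c(\vf)_i = 0 \quad (i\notin c),
\]
lies in the kernel of $\vf_{J,c}$. If $\vf_{J,c}$ has rank $k-1$, then $\rho^J_c(\vf)$ is nonzero. In that case I would show it is also a dependency of the full vectors $v_i$, hence proportional to $r_c(\vf)$. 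The argument: the kernel of the full $(d-1)\times k$ matrix $\vf_{[d-1],c}$ is one-dimensional and is contained in $\ker \vf_{J,c}$, which is also one-dimensional. So the two kernels coincide. Each $\rho^J_c$ is a tuple of integer-coefficient polynomials that are multihomogeneous in the $v_i$, so it descends to projective classes, giving $[\rho^J_c(\vf)] = [r_c(\vf)]$ wherever it is defined.

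Let $U_J = \{[\vf] \in \mathbb{P}(\Gamma_N) : \rho^J_c(\vf) \neq 0\}$. This set is Zariski open, and since $\rank \vf_{[d-1],c} = k-1$ for every $\vf\in\Gamma_N$, the sets $U_J$ cover $\mathbb{P}(\Gamma_N)$. On each $U_J$ the map $[\vf]\mapsto[r_c(\vf)]$ is given by the nowhere-simultaneously-vanishing multihomogeneous polynomials $\rho^J_c$, so it is a morphism and in particular Zariski continuous. Continuity is local, and the local formulas agree on overlaps by the proportionality argument above, so each component map is continuous on $\mathbb{P}(\Gamma_N)$. Hence $f_S$, being a product of finitely many such maps, is continuous.

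For the rationality claim, take a quasi-projective set $Z \subseteq \prod_{c\in S}\mathbb{P}(\mathbb{C}^n)$ cut out by rational multihomogeneous equations $g=0$ and inequations $h\neq0$. On a common chart $\bigcap_{c} U_{J_c}$, substituting $\rho^{J_c}_c(\vf)$ into $g$ and $h$ gives rational-coefficient multihomogeneous polynomials in $\vf$. So $f_S^{-1}(Z)\cap \bigcap_c U_{J_c}$ is defined over $\mathbb{Q}$. Taking the union over the finitely many choices of $(J_c)_{c \in S}$, each of these pieces is a constructible set defined over $\mathbb{Q}$. The step needing most care is checking that this union is again quasi-projective and rationally defined, rather than merely constructible. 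For this I would use that on overlaps the defining equations agree up to nonzero scalar factors, which are themselves minors with integer coefficients. It follows that the vanishing of $g$ is equivalent, on all of $\mathbb{P}(\Gamma_N)$, to the simultaneous vanishing of all substituted polynomials $g(\rho^{J_c}_c)$ as $(J_c)$ varies; off $U_J$ these substitutions vanish automatically and impose nothing. The inequation part is handled in the same way, by requiring that some substituted $h(\rho^{J_c}_c)$ is nonzero on the appropriate chart.

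Finally, $\mathbb{P}(\Gamma_N)$ is itself defined by integer minors vanishing or not vanishing, so these preimages are quasi-projective sets given by rational polynomials.
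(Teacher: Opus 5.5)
Your strategy is the paper's: cover by the Zariski-open sets on which some $(k-1)\times(k-1)$ minor of the columns indexed by $c$ is non-zero, and write $r_c$ there by Cramer's rule. Your vector of signed maximal minors $\rho^J_c(\vf)$ is, up to sign, the paper's normalised solution (with $r_{c,k}=1$) multiplied by the non-vanishing minor $\det B_{\vf}$. Your handling of the ``moreover'' part (intersecting the substituted equations over all charts, taking the union of the substituted inequations) is more careful than the paper's one-line remark.

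One step, however, is false as written: ``each $\rho^J_c$ is multihomogeneous in the $v_i$, so it descends to projective classes''. The $i$-th entry of $\rho^J_c$ has degree $0$ in $v_i$ and degree $1$ in each $v_j$ with $j\in c\setminus\{i\}$. So rescaling $v_j\mapsto\lambda_j v_j$ multiplies that entry by $\lambda_i^{-1}\prod_{j\in c}\lambda_j$, which is not a common factor; a tuple of polynomials of different multidegrees does not define a map out of a product of projective spaces.

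This is not a defect of your formula, because $r_c$ itself transforms this way: $\sum_i r_iv_i=0$ if and only if $\sum_i(r_i/\lambda_i)(\lambda_iv_i)=0$. For example, take $N=U_{2,3}$, $c=\{1,2,3\}$ and $\vf=((1,0),(0,1),(1,1))$. Then $[r_c]=[1:1:-1]$, but replacing $v_1$ by $2v_1$ gives $[1:2:-2]$. So $[r_c(\vf)]$ is not a function of $([v_1],\ldots,[v_n])$, and $f_S$ is not actually well defined on $\mathbb{P}(\Gamma_N)\subset\prod_i\mathbb{P}(\mathbb{C}^{d-1})$ as literally stated.

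The paper's proof silently works on $\Gamma_N$ itself: it proves continuity of $\vf\mapsto r_c(\vf)$ on a neighbourhood of each $\mathbf{w}\in\Gamma_N$, and that affine version is all \Cref{lem:genericliftmatroid} uses. You should do the same. Regard $\vf\mapsto([r_c(\vf)])_{c\in S}$ as a map on $\Gamma_N$; on each set $\{\vf\in\Gamma_N:\rho^J_c(\vf)\neq 0\}$ it is the morphism $\vf\mapsto[\rho^J_c(\vf)]$. Alternatively, projectivise $\Gamma_N$ by a single global scalar; then all entries of $\rho^J_c$ are homogeneous of degree $k-1$ and the descent is legitimate. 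With that change the rest of your argument (the covering, agreement on overlaps, and rationality of the glued preimages) goes through unchanged.
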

\begin{proof}
    It suffices to show that for a circuit $c$, we can choose a vector $r_c(\mathbf{v})$ for each $\mathbf{v} \in \Gamma_{N}$ so that for all $\mathbf{w} \in \Gamma_{N}$ the map $\mathbf{v} \mapsto r_c(\mathbf{v})$ is continuous on a Zariski-open neighbourhood of $\mathbf{w}$ in $\Gamma_{N}$.

    Fix $\mathbf{w} \in \Gamma_{N}$.
    By reordering the elements of $n$,
    we may suppose that $c= [k]$ for some $k \leq d$.
    Fix the matrix $A_{\mathbf{v}} = ( v_1 | \cdots | v_{k-1})$ for each $\mathbf{v} \in \Gamma_{N}$.
    As $c$ is a circuit with rank $k-1$,
    each matrix $A_{\mathbf{v}}$ has rank $k-1$.
    By reordering rows, we may suppose that the first $k-1$ rows of $A_{\mathbf{w}}$ are linearly independent.
    With this, fix $B_{\mathbf{v}}$ to be the submatrix formed from $A_{\mathbf{v}}$ by taking only the first $k-1$ rows.
    Here we note that there exists a Zariski-open subset $U \subset \Gamma_{N}$ containing $\mathbf{w}$ for which $B_{\mathbf{v}}$ is invertible for all $\mathbf{v}$.

    Choose any $\mathbf{v} \in U$.
    As $B_{\mathbf{v}}$ is invertible, there exists a unique solution $x = x_{\mathbf{v}}$ to the equation $B_{\mathbf{v}}x = -(v_{k,i})_{i \in [k-1]}$.
    We observe here that $x_{\mathbf{v}}$ can construct $r_c(\mathbf{v})$ as follows:
    \begin{equation*}
        r_c(\mathbf{v})_i =
        \begin{cases}
            x_{\mathbf{v},i} &\text{if } i \in [k-1],\\
            1 &\text{if } i=k,\\
            0 &\text{otherwise.}
        \end{cases}
    \end{equation*}
    If we set $B_{\mathbf{v}}^{(i)}$ to be the matrix formed by replacing column $i$ with the vector $-(v_{k,i})_{i \in [k-1]}$,
    then Cramer's rule implies that
    \begin{equation*}
        x_{\mathbf{v}} = \frac{1}{\det B_{\mathbf{v}}} \Big( \det B_{\mathbf{v}}^{(1)} ~ ,~ \ldots ~,~ B_{\mathbf{v}}^{(k-1)} \Big)
    \end{equation*}
    Since $B_{\mathbf{v}}$ is non-singular on $U$,
    the map $\mathbf{v} \mapsto x_{\mathbf{v}}$, $\mathbf{v} \in U$, is continuous in the Zariski topology. This now gives rise to our desired continuous map on a Zariski-open neighbourhood of $\mathbf{v}$.
    Moreover, as the coordinates of $x_\vf$ are defined using rational-coefficient polynomials,
    the preimage of any quasi-projective set defined by rational-coefficient polynomials is a quasi-projective set defined by rational-coefficient polynomials.
\end{proof}

\begin{lemma}\label{lem:genericliftmatroid}
    Let $N$ be a realisable rank-$(d-1)$ matroid with ground set $[n]$.
    If $\mathbf{v}, \mathbf{w}$ are realisations of $N$ in the same irreducible component of $\Gamma_{N}$ and $\mathbf{w}$ is generic,
    then for any set of circuits $S$ of $N$, the row matroid of $\mathcal{R}_{\vf}(N;S)$ is less than or equal to the row matroid of $\mathcal{R}_{\mathbf{w}}(N;S)$ in the weak order of matroids.
\end{lemma}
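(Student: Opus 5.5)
To prove the lemma, I would show that every set of rows which is dependent in $\mathcal{R}_{\mathbf{w}}(N;S)$ is also dependent in $\mathcal{R}_{\vf}(N;S)$. Equivalently, every set of rows independent for $\vf$ is independent for $\mathbf{w}$, which is exactly the weak-order inequality between the two row matroids.

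The key is to express dependence of a subset $S' \subseteq S$ of rows by the vanishing of rational polynomials in the coordinates of the realisation. First, fix $S' \subseteq S$ and suppose the rows indexed by $S'$ are dependent in $\mathcal{R}_{\mathbf{w}}(N;S)$, so every $|S'|\times|S'|$ minor of the submatrix $\mathcal{R}_{\mathbf{w}}(N;S')$ vanishes. The rows $r_c$ are only defined up to a nonzero scalar, but rescaling a row does not change the vanishing of these minors. The condition is therefore well defined on $\mathbb{P}(\Gamma_N)$, and it is the pullback under $f_S$ of a rationally defined (multihomogeneous) closed set.

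Next, I would use the Cramer's-rule description from the proof of \Cref{lem:cont}. Fix the irreducible component $V$ containing $\mathbf{w}$. On a Zariski-open subset $U$ of $\Gamma_N$ on which the chosen $(k-1)\times(k-1)$ submatrices $B_{\mathbf{u}}$ are invertible for each $c \in S'$, the entries of $r_c(\mathbf{u})$ are rational functions with rational coefficients. Clearing the denominators $\det B_{\mathbf{u}}$, each minor becomes a polynomial $g \in \mathbb{Q}[X]$ in the entries of $\mathbf{u}$. On $U$ the minor vanishes if and only if $g$ does.

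Since $\mathbf{w}$ is generic, it lies in $U$ for a suitable choice of invertible submatrix. If the minor vanishes at $\mathbf{w}$, then $g(\mathbf{w})=0$, and \Cref{def:genericpoint} gives $g \equiv 0$ on $V$. Hence the minor vanishes on $U\cap V$, which is dense in $V$ because $V$ is irreducible and $U \cap V \ni \mathbf{w}$ is nonempty and open. Finally, \Cref{lem:cont} says $f_S$ is continuous and the vanishing locus of the minors is closed in $\prod \mathbb{P}(\mathbb{C}^n)$. The preimage of that closed set under $f_S$ is therefore closed in $\Gamma_N$ and contains $U\cap V$, so it contains its closure, all of $V$. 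In particular it contains $\vf$, so the rows $S'$ are dependent for $\vf$.

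The main obstacle is the choice of the open set $U$. The submatrix $B$ was chosen relative to a base point in \Cref{lem:cont}, so I would choose it so that $\det B_{\mathbf{w}} \neq 0$, which is possible since $c$ has rank $k-1$ at $\mathbf{w}$. A second point to handle is genericity. \Cref{def:generic} defines genericity with respect to $V_N$ rather than $\Gamma_N$, so I must note that irreducible components of $\Gamma_N$ are dense open in components of $V_N$, and polynomial identities transfer between them. With these two points settled, the density-plus-closedness argument finishes the proof.
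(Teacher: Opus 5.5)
Your proposal is correct and is essentially the paper's argument run in contrapositive form. The paper starts from a non-vanishing minor at $\vf$ and uses \Cref{lem:cont} to see that its vanishing locus is a proper $\mathbb{Q}$-defined closed subset of the component, which a generic $\mathbf{w}$ avoids; you instead propagate vanishing from the generic $\mathbf{w}$ to all of $V$ via genericity, density of $U\cap V$, and closedness of the dependence locus, and your explicit choice of the Cramer's-rule chart with $\det B_{\mathbf{w}}\neq 0$ makes precise a step the paper leaves implicit.
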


\begin{proof}
    For an $m$-tuple $\mathbf{x} = ([x_i])_{i \in [m]}$ with $[x_i] \in \mathbb{P}(\mathbb{C}^{k})$,
    we fix $\mathcal{M}(\mathbf{x})$ to be the matroid generated by the linear dependencies of the projective vectors $[x_1],\ldots,[x_m]$.
    With this, we have that the row matroid of $\mathcal{R}_{\mathbf{w}}(N;S)$ is the same as the matroid $\mathcal{M}\big([r_c(\mathbf{w})] :c \in S \big)$ for any $\mathbf{w} \in \Gamma_N$.
    
    Choose any realisation $\vf$ of $N$, and fix $V$ to be the irreducible component of $\Gamma_M$ that contains $\vf$.
    Now choose any independent set $I = \{c_1,\ldots,c_k\} \subset S$ in the row matroid of $\mathcal{R}_{\vf}(N;S)$.
    Then there exists a subset $J = \{j_1,\ldots,j_k\} \subset [n]$ for which the determinant of the submatrix of $\mathcal{R}_{\vf}(N;S)$ with rows $I$ and columns $J$ is non-zero.
    By \Cref{lem:cont},
    the set of realisations $\mathbf{w} \in V$ for which
    \begin{equation*}
        \det 
        \begin{pmatrix}
            r_{c_1}(\mathbf{w})_{j_1} & \cdots & r_{c_1}(\mathbf{w})_{j_k}\\
            \vdots & \ddots & \vdots\\
            r_{c_{k}}(\mathbf{w})_{j_1} & \cdots & r_{c_k}(\mathbf{w})_{j_k}
        \end{pmatrix}
        = 0
    \end{equation*}
    forms a Zariski closed subset of $V$ that is defined by rational-coefficient polynomials.
    As the row matroid of $\mathcal{R}_{\mathbf{w}}(N;S)$ is the same as the matroid $\mathcal{M}\big([r_c(\mathbf{\mathbf{w}})] :c \in S \big)$ for any $\mathbf{w} \in \Gamma_N$, 
    it follows that $I$ is an independent of the row matroid of $\mathcal{R}_{\mathbf{w}}(N;S)$ for generic $\mathbf{w} \in V$. 
\end{proof}

\begin{remark}\label{rem:rowvcolmatroid}
    If we choose $S$ to be the set of all circuits of a matroid $N$ (or indeed a set for which $\rank \mathcal{R}_\vf(N;S) = n-d$),
    then the column matroid of $\mathcal{R}_\vf(N;S)$ is exactly the dual matroid $N^*$;
    in particular, $\rank \mathcal{R}_\vf(N;S) = \rank N^*$.
\end{remark}

\subsection{Liftability matrices and matroids}

We begin with the following observation: if $c$ is a non-spanning circuit of a matroid $M$,
then it is also a circuit (now possibly spanning) of the truncation $T(M)$.
This simple observation implies that the following is well-defined.

\begin{definition}\label{def: liftability matrix general}
    Let $M$ be a rank-$d$ matroid on $[n]$.
    Suppose that the truncation of $M$ has a realisation $\vf \in \Gamma_{T(M)}$.
    We define the {\em liftability matrix} associated with $M$ and $\vf$ to be the matrix $\mathcal{L}_\vf(M) := \mathcal{R}_\vf(T(M);\mathcal{C}^{\mathrm{ns}}(M))$.
\end{definition}

The following proposition establishes a connection between the liftability matrix and the space of liftings of a given tuple of vectors.

\begin{proposition}\label{prop: kernel and liftings 3}
Let $M$ be a rank-$d$ matroid with ground set $[n]$.
Then the following equivalence holds for any realisation $\vf \in \Gamma_{T(M)}$:
\[
(z_{1}, \ldots, z_{n}) \in \ker \mathcal{L}_{\vf}(M)
\quad \Longleftrightarrow \quad
\widetilde{\mathbf{v}} = \big( (v_{1},z_1), \ldots, (v_{n} , z_{n}) \big) \in V_{\Ccal(M)}.
\]
\end{proposition}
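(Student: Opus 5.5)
We prove the equivalence by reducing membership in $V_{\Ccal(M)}$ to a condition on each non-spanning circuit of $M$, and then reading that condition off row by row in $\mathcal{L}_\vf(M)$.

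\textbf{Reduction to non-spanning circuits.} Since $\widetilde{\vf}$ lives in $(\CC^d)^n$ and $M$ has rank $d$, every dependent set of $M$ of size greater than $d$ is automatically linearly dependent in $\widetilde{\vf}$. A dependent set of size at most $d$ contains a circuit of size at most $d$, i.e.\ a non-spanning circuit. Hence $\widetilde{\vf} \in V_{\Ccal(M)}$ if and only if $(\widetilde{v}_i : i \in c)$ is linearly dependent for every $c \in \Ccal^{\text{ns}}(M)$. It therefore suffices to fix one $c \in \Ccal^{\text{ns}}(M)$ and show
\[
(\widetilde{v}_i : i\in c) \text{ is linearly dependent} \iff \sum_{i\in c} r_{c,i} z_i = 0 .
\]

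\textbf{The single-circuit equivalence.} Recall that $c$ is a circuit of $T(M)$, since every proper subset of $c$ has size at most $d-1$ and is independent in $M$. As $\vf$ realises $T(M)$, the vectors $(v_i : i \in c)$ satisfy a unique-up-to-scalar linear relation, whose coefficient vector is $r_c$ with full support $c$.

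For ($\Leftarrow$): if $\sum_{i \in c} r_{c,i} z_i = 0$, then $\sum_{i \in c} r_{c,i}(v_i, z_i) = 0$. This is a nontrivial relation, so the lifted vectors are dependent.

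For ($\Rightarrow$): suppose $\sum_{i \in c} \lambda_i (v_i, z_i) = 0$ with $\lambda \neq 0$. Projecting to the first $d-1$ coordinates gives $\sum_{i \in c} \lambda_i v_i = 0$. Uniqueness of the relation forces $\lambda = t\, r_c$ for some $t \neq 0$. The last coordinate then gives $t\sum_{i \in c} r_{c,i} z_i = 0$, so $\sum_{i \in c} r_{c,i} z_i = 0$.

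\textbf{Conclusion.} Ranging over all $c \in \Ccal^{\text{ns}}(M)$, the conditions $\sum_{i\in c} r_{c,i} z_i = 0$ are exactly the equations $\mathcal{L}_\vf(M) z = 0$. Combined with the reduction step, this gives the claimed equivalence. The main point requiring care is the uniqueness of the dependency of $(v_i : i \in c)$, which is what makes the ($\Rightarrow$) direction work. It follows because $c$ is a circuit of the matroid $T(M)$ realised by $\vf$, so $(v_i : i\in c)$ spans a space of dimension $|c|-1$ and has a one-dimensional space of linear relations.
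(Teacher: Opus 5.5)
Your proposal is correct and follows essentially the same argument as the paper. Both reduce membership in $V_{\Ccal(M)}$ to the non-spanning circuits, since spanning circuits are automatically dependent in $\CC^d$, and then use uniqueness of the dependency $r_c$ among $(v_i)_{i\in c}$ to get the per-circuit equivalence $\sum_{i\in c} r_{c,i} z_i = 0$. You are more explicit than the paper about the reduction step and about the $(\Rightarrow)$ direction via projection, but the route is the same.
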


\begin{proof}
    For each $c \in \Ccal^{\mathrm{ns}}(M)$,
    choose $r_c = (r_{c,1},\ldots,r_{c,n}) \in \CC^n$ satisfying the properties described in \eqref{eq:rcrm}.
    Since $\vf$ is a realisation of $T(M)$,
    the restriction of $r_c$ to indices in $c$ describes the unique (up to scaling) non-zero linear dependence between the points $(v_i)_{i \in c}$ for each non-spanning circuit $c$ of $M$.
    Thus for each $c \in \Ccal^{\mathrm{ns}}(M)$,
    the following equivalent conditions hold:
    \begin{equation}\label{eq:equiv kernel}
        (v_i,z_i)_{i \in c} \, \, \text{ linearly dependent}\quad  \iff \quad \sum_{i\in c} r_{c,i} (v_i,z_i) = \mathbf{0} \quad \iff \quad \sum_{i\in c} r_{c,i} z_i = 0.
    \end{equation}
    The left hand side of \eqref{eq:equiv kernel} holds for all $c \in \Ccal^{\mathrm{ns}}(M)$ if and only if $\widetilde{\vf} \in V_{\mathcal{C}(M)}$,
    since the vectors $(v_i,z_i)_{i \in c}$ are automatically linearly dependent when $|c| = d+1$ (i.e.~$c$ is spanning).
    The right hand side of \eqref{eq:equiv kernel} holds for all $c \in \Ccal^{\mathrm{ns}}(M)$ if and only if $z \in \ker \mathcal{L}_\vf(M)$.
    This completes the proof.
\end{proof}

\begin{corollary}\label{cor:realisationliftable}
    Let $M$ be a rank-$d$ matroid with ground set $[n]$, and let $\vf \in \Gamma_{T(M)}$.
    Then $\rank \mathcal{L}_{\vf}(M) \leq n-d + 1$.
    Moreover, $\vf$ has a non-trivial lifting if and only if $\rank \mathcal{L}_{\vf}(M) \leq n-d$.
\end{corollary}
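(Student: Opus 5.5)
We prove both claims of \Cref{cor:realisationliftable} by combining \Cref{prop: kernel and liftings 3} with \Cref{lem: rmatrix kernel and liftings trivial}. The key observation is that the kernel of $\mathcal{L}_\vf(M)$ is exactly the space of liftings of $\vf$ lying in $V_{\Ccal(M)}$. Inside this kernel, the trivial liftings form a fixed $(d-1)$-dimensional subspace.

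For the first claim, let $W$ denote the linear space in \eqref{eq: rmatrix kernel and liftings trivial}. The proof of \Cref{lem: rmatrix kernel and liftings trivial} identifies $W$ with the row space of $\vf$, viewed as a $(d-1)\times n$ matrix. It is therefore $(d-1)$-dimensional and contained in $\ker \mathcal{R}_\vf(T(M);S)$ for every set $S$ of circuits of $T(M)$. We take $S=\Ccal^{\mathrm{ns}}(M)$, which is allowed because every non-spanning circuit of $M$ is a circuit of $T(M)$. Then $\dim\ker\mathcal{L}_\vf(M)\geq d-1$, and rank–nullity on the $n$ columns gives $\rank\mathcal{L}_\vf(M)\le n-d+1$.

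For the second claim, note that the span of $\widetilde{\vf}$ is the column space of the $d\times n$ matrix obtained by stacking $\vf$ over the row $z$. Since $\vf$ has rank $d-1$, this span is $d$-dimensional exactly when $z$ is not in the row space of $\vf$, that is, when $z\notin W$. So a lifting is trivial precisely when $z\in W$.

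By \Cref{prop: kernel and liftings 3}, the liftings in $V_{\Ccal(M)}$ correspond to $z\in\ker\mathcal{L}_\vf(M)$. Hence a non-trivial lifting exists if and only if $\ker\mathcal{L}_\vf(M)\supsetneq W$. Because $W\subseteq\ker\mathcal{L}_\vf(M)$ and $\dim W=d-1$, this happens if and only if $\dim\ker\mathcal{L}_\vf(M)\ge d$, which by rank–nullity means $\rank\mathcal{L}_\vf(M)\le n-d$.

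The only step needing care is the identification of the trivial liftings with $W$ via the row space of $\vf$, that is, reading ``$(v_i,z_i)$ linearly dependent'' as ``the augmented $d\times n$ matrix has rank less than $d$''. Everything else is linear algebra.
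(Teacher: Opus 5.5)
Your proof is correct and follows essentially the same route as the paper. The paper uses \Cref{lem: rmatrix kernel and liftings trivial} for the $(d-1)$-dimensional subspace of trivial liftings inside $\ker \mathcal{L}_{\vf}(M)$, which gives the rank bound, and then \Cref{prop: kernel and liftings 3} for the equivalence; your explicit check that a lifting is trivial exactly when $z$ lies in the row space of $\vf$ spells out a step the paper leaves implicit.
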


\begin{proof}
    Since the vectors $z$ that induce a trivial lifting of $\vf$ form a $(d-1)$-dimensional subspace of $\ker \mathcal{L}_{\vf}(M)$ by \Cref{lem: rmatrix kernel and liftings trivial}, it follows that $\rank \mathcal{L}_{\vf}(M) \leq n-d + 1$.
    It now follows from \Cref{prop: kernel and liftings 3} that $\vf$ has a non-trivial lifting if and only if $\rank \mathcal{L}_{\vf}(M) \leq n-d$.
\end{proof}

By \Cref{lem:genericliftmatroid},
if $\vf$ and $\mathbf{w}$ are both generic realisations of $T(M)$ in the same irreducible component of $V_{T(M)}$, then the row matroids of $\mathcal{L}_\vf(M)$ and $\mathcal{L}_{\mathbf{w}}(M)$ coincide.
This now motivates the following definition.

\begin{definition}\label{def: generic liftability matroids}
Let $M$ be a matroid with a realisable truncation. A \emph{generic liftability matroid} of $M$ is defined as the row matroid of the liftability matrix $\mathcal{L}_{\mathbf{v}}(M)$ corresponding to a generic point $\mathbf{v}$ of one of the irreducible components $\Gamma_{T(M)}$.
\end{definition}

Many matroids have a unique generic liftability matroid.
For example, if $\Gamma_{T(M)}$ is irreducible (e.g., $T(M)$ is uniform), then there exists a unique generic liftability matroid.
It is currently open whether any matroid has more than one generic liftability matroid.

\begin{conjecture}\label{conj:genlift}
    Every realisable matroid has a unique generic liftability matroid.
\end{conjecture}

We are now ready to state the following result.

\begin{theorem}\label{thm:genericlifting}
    Let $M$ be a rank-$d$ matroid with ground set $[n]$. If $T(M)$ is realisable, then $M$ is liftable if and only if every generic liftability matroid has rank at most $n-d$.
\end{theorem}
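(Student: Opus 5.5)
By \Cref{cor:realisationliftable}, the statement reduces to comparing ranks of liftability matrices. A realisation $\vf\in\Gamma_{T(M)}$ has a non-trivial lifting if and only if $\rank \mathcal{L}_\vf(M)\le n-d$. Hence $M$ is liftable if and only if $\rank\mathcal{L}_\vf(M)\le n-d$ for every $\vf\in\Gamma_{T(M)}$. The task is therefore to show that this maximum over all realisations equals the maximum of the ranks of the generic liftability matroids.

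\emph{Forward direction.} Suppose $M$ is liftable. Each generic liftability matroid is, by \Cref{def: generic liftability matroids}, the row matroid of $\mathcal{L}_{\mathbf{w}}(M)$ for some generic point $\mathbf{w}$ of an irreducible component of $\Gamma_{T(M)}$. Such a point $\mathbf{w}$ is a realisation of $T(M)$. Liftability and \Cref{cor:realisationliftable} then give $\rank \mathcal{L}_{\mathbf{w}}(M)\le n-d$. The rank of the row matroid equals the matrix rank, so this direction is immediate.

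\emph{Reverse direction.} Suppose every generic liftability matroid has rank at most $n-d$, and let $\vf\in\Gamma_{T(M)}$ be arbitrary.
\begin{enumerate}
\item Choose an irreducible component $V$ of $\Gamma_{T(M)}$ containing $\vf$.
\item Pick a generic point $\mathbf{w}$ of $V$. One exists because generic points are dense in a rationally-defined irreducible variety. Strictly, \Cref{def:generic} refers to generic points of $V_{T(M)}$. The irreducible components of $\Gamma_{T(M)}$ are open dense pieces of those of $V_{T(M)}$, so a generic point of the closure of $V$ that lies in $\Gamma_{T(M)}$ will serve.
\item Apply \Cref{lem:genericliftmatroid} with $S=\mathcal{C}^{\mathrm{ns}}(M)$. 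It says the row matroid of $\mathcal{L}_\vf(M)$ is at most the row matroid of $\mathcal{L}_{\mathbf{w}}(M)$ in the weak order.
\item In the weak order, every independent set of the smaller matroid is independent in the larger one. So $\rank\mathcal{L}_\vf(M)\le \rank\mathcal{L}_{\mathbf{w}}(M)\le n-d$.
\item By \Cref{cor:realisationliftable}, $\vf$ has a non-trivial lifting. Since $\vf$ was arbitrary, $M$ is liftable.
\end{enumerate}

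The main obstacle is making step 2 precise. One must check that the generic point $\mathbf{w}$ lies in $\Gamma_{T(M)}$ itself, not merely in its closure. One must also check that it lies in the same irreducible component as $\vf$, as \Cref{lem:genericliftmatroid} requires. The plan is to use that $\Gamma_{T(M)}$ is Zariski open in $V_{T(M)}$, being cut out by non-vanishing of rational-coefficient minors. A generic point of the closure $\overline{V}$ of $V$ then satisfies every non-vanishing condition that holds somewhere on $\overline{V}$. These conditions hold at $\vf$, so $\mathbf{w}$ lies in $\Gamma_{T(M)}\cap\overline{V}$, as needed.
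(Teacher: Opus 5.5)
Your proposal is correct and follows essentially the same argument as the paper: the forward direction uses \Cref{cor:realisationliftable} at generic realisations, and the reverse direction bounds $\rank \mathcal{L}_\vf(M) \le \rank \mathcal{L}_{\mathbf{w}}(M) \le n-d$ for a generic $\mathbf{w}$ in the same irreducible component via \Cref{lem:genericliftmatroid}. Your extra check that such a generic $\mathbf{w}$ actually lies in $\Gamma_{T(M)}$ and in the component containing $\vf$ is sound, and the paper leaves it implicit.
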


\begin{proof}
    If $M$ is liftable then $\rank \mathcal{L}_{\mathbf{v}}(M) \leq n-d$ for all $\mathbf{v}$ by \Cref{cor:realisationliftable},
    which implies each generic liftability matroid has rank at most $n-d$.
    Suppose instead that every generic liftability matroid has rank at most $n-d$.
    Choose any $\mathbf{v} \in \Gamma_{T(M)}$. 
    Now choose any realisation $\mathbf{w}$ that is a generic point in the same irreducible component of $\Gamma_{T(M)}$ as $\mathbf{v}$.
    Then by \Cref{lem:genericliftmatroid} we have 
    \begin{equation*}
        \rank \mathcal{L}_{\mathbf{v}}(M) \leq \rank \mathcal{L}_{\mathbf{w}}(M) \leq n-d.
    \end{equation*}
    It now follows from \Cref{cor:realisationliftable} that $M$ is liftable.
\end{proof}

\subsection{Realisability vs liftability}\label{subsec:rvl}

We first observe that realisability and liftability do not imply each other.

\begin{figure}[H]
    \centering
    \includegraphics[width=1\textwidth, trim=0 0 0 0, clip]{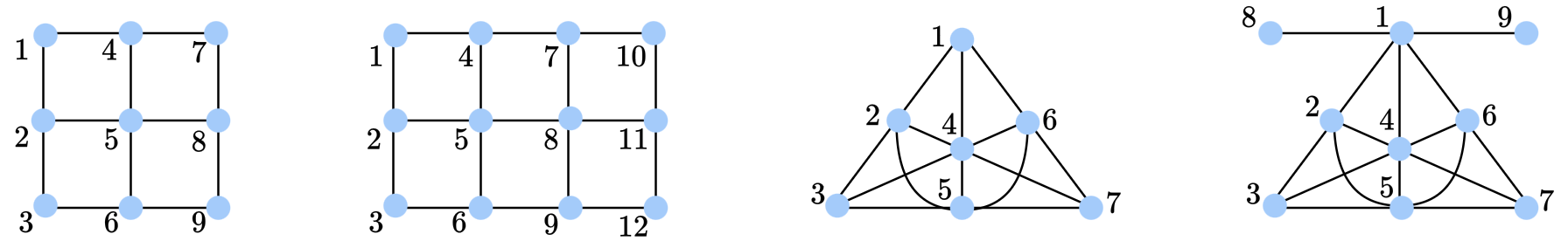}
    \caption{(a) $3\times 3$ grid; (b) $3\times 4$ grid; (c) Fano plane; (d) Fano plane with extra line.}
    \label{Figure1}
\end{figure}

\begin{example}
Consider the four rank-$3$ paving matroids depicted in \Cref{Figure1}, with points representing ground set elements and lines representing circuits. 

It is easy to check that any matroid stemming from a grid (e.g., (a) and (b) of \Cref{Figure1}) is realisable.
As the Fano plane is not realisable, neither (c) nor (d) of \Cref{Figure1} are realisable.
The liftability or non-liftability of these matroids can be determined using \Cref{thm:D}. 
 
More precisely:
\begin{itemize}
\item The matroid in \Cref{Figure1} (a) is realisable and liftable.
\item The matroid in \Cref{Figure1} (b) is realisable but non-liftable.
\item The matroid in \Cref{Figure1} (c) is non-realisable and non-liftable.
\item The matroid in \Cref{Figure1} (d) is liftable but non-realisable.
\end{itemize}
\end{example}

Fortunately, generic realisability (as introduced in \Cref{def:genericallyrealisable}) implies both realisability and liftability.

\begin{proposition}\label{prop:genrealimplieslift}
    Every generically realisable matroid is liftable.
\end{proposition}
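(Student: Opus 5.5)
The plan is to reduce the statement to \Cref{thm:genericlifting}. That theorem says $M$ is liftable exactly when every generic liftability matroid has rank at most $n-d$. So it suffices to check the rank of $\mathcal{L}_{\mathbf{w}}(M)$ only at generic realisations $\mathbf{w}$ of $T(M)$, one in each irreducible component of $\Gamma_{T(M)}$.

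Let $M$ be generically realisable and let $\mathbf{w}\in\Gamma_{T(M)}$ be generic. By \Cref{def:genericallyrealisable}, $\mathbf{w}$ has a lifting $\widetilde{\mathbf{w}}=((w_1,z_1),\ldots,(w_n,z_n))$ that realises $M$. Since $M$ has rank $d$, this lifting spans $\mathbb{C}^d$, so it is non-trivial. A realisation of $M$ includes the dependencies of $M$, so $\widetilde{\mathbf{w}}\in V_{\Ccal(M)}$; alternatively, $\mathcal{M}(\widetilde{\mathbf{w}})=M\le M$ and \Cref{lem:circvar1} applies. By \Cref{prop: kernel and liftings 3}, $z\in\ker\mathcal{L}_{\mathbf{w}}(M)$. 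Then \Cref{cor:realisationliftable} gives $\rank\mathcal{L}_{\mathbf{w}}(M)\le n-d$.

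This holds for every generic $\mathbf{w}$, and in particular for a generic point of each irreducible component of $\Gamma_{T(M)}$. So every generic liftability matroid of $M$ has rank at most $n-d$. \Cref{thm:genericlifting} then says $M$ is liftable. Equivalently, one can argue directly: given an arbitrary $\mathbf{v}\in\Gamma_{T(M)}$, choose a generic $\mathbf{w}$ in the same component, so that \Cref{lem:genericliftmatroid} gives $\rank\mathcal{L}_{\mathbf{v}}(M)\le\rank\mathcal{L}_{\mathbf{w}}(M)\le n-d$.

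The main subtlety is matching the notions of genericity. \Cref{def:generic} defines a generic realisation as a generic point of $V_{T(M)}$, while \Cref{def: generic liftability matroids} and \Cref{lem:genericliftmatroid} refer to generic points of the irreducible components of $\Gamma_{T(M)}$. I would note that $\Gamma_{T(M)}$ is Zariski-open and dense in $V_{T(M)}$, so its irreducible components are dense open subsets of the components of $V_{T(M)}$ and the two notions agree. Hence every component of $\Gamma_{T(M)}$ contains a point to which generic realisability applies. This step needs care but involves no real difficulty.
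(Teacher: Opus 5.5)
Your proposal is correct and follows the paper's proof. For each generic realisation of $T(M)$, the lifting that realises $M$ is a non-trivial point of $V_{\mathcal{C}(M)}$, and \Cref{thm:genericlifting} then yields liftability; you simply spell out what the paper leaves implicit (non-triviality via rank $d$, the rank bound via \Cref{prop: kernel and liftings 3} and \Cref{cor:realisationliftable}, and why generic points of $V_{T(M)}$ and of the components of $\Gamma_{T(M)}$ give the same notion).
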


\begin{proof}
    Let $M$ be a generically realisable matroid,
    and let $\vf$ be a generic realisation of $T(M)$.
    As $M$ is generically realisable,
    there exists a lifting $\widetilde{\vf}$ of $\vf$ for which $\mathcal{M}(\widetilde{\vf}) = M$,
    and thus $\widetilde{\vf} \in V_{\mathcal{C}(M)}$.
    As this holds for arbitrary generic realisations of $T(M)$,
    the result now follows from \Cref{thm:genericlifting}.
\end{proof}

It is tempting to believe that any matroid which is both realisable and liftable is also generically realisable.
However, this is not the case.

\begin{figure}[H]
    \centering
    \includegraphics[scale=0.5]{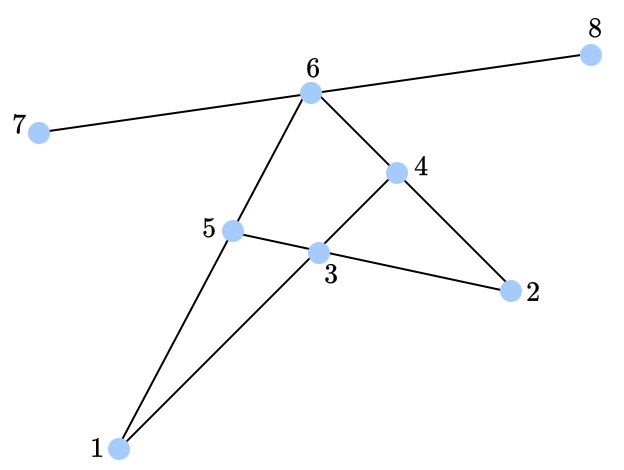}
    \caption{A matroid that is realisable and liftable but not generically realisable.}
    \label{Figure2}
\end{figure}

\begin{example}
We now present an example of a matroid which is liftable and realisable, but not generically realisable. 
Let $M$ be a matroid on $[8]$ depicted in \Cref{Figure2}. A realisation of $M$ can be seen in \Cref{Figure2}, so $M$ is realisable. 
For any realisation $\vf$ of $T(M)$, 
the matrix $\mathcal{L}_{\vf}(M)$ has size $5\times 8$, and hence its kernel is of dimension at least $3$. 
Thus, $M$ is liftable.

To show that $M$ is not generically realisable, we first note that the restriction of $M$ to the elements $\{1,2,3,4,5,6\}$ is the graphic matroid $M(K_4)$.
However, the matroid $M(K_4)$ is not liftable by \Cref{thm:D}. Alternatively, the non-liftability of $M(K_4)$ can be deduced from \cite[\S 4]{clarke2024liftable}. Hence, in any lifting of a generic realisation $\vf \in \Gamma_{T(M)}$, the points $\{1,2,3,4,5,6\}$ lie on a common line, so such a lifting cannot be a realisation of $M$.
\end{example}

\section{Lattice properties of liftability and realisability}
\label{sec:matroidlattice}

Instead of starting with a matroid $M$, passing to its truncation $T(M)$, and then lifting realisations of $T(M)$ to points of the circuit variety of $M$, it is natural to reverse this approach. In this section, we instead fix a matroid $N$ and ask how its realisations can be lifted to realisations of \emph{some} matroid $M$ satisfying $T(M)=N$. We focus on the elevation lattice, which provides a natural framework to study such liftings.

\subsection{The 1-elevation lattice and its subsets}

We begin with the following result of Crapo.

\begin{theorem}[Crapo \cite{crapo1970erecting}]\label{thm:crapo}
    For any matroid $N$ on ground set $[n]$,
    the set
    \begin{equation*}
        \Lat(N) := \{N\} \cup \{M \text{ matroid on } [n] : T(M)=N \}
    \end{equation*}
    forms a lattice with respect to the weak ordering of matroids.
\end{theorem}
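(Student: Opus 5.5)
The plan is the standard order-theoretic reduction. $\Lat(N)$ is a finite poset, and it has a least element. Indeed, if $T(M)=N$ then every independent set of $N$ is independent in $M$, so $N\leq M$ for every $M\in\Lat(N)$. A finite poset with a least element in which every pair has a join is a lattice: the meet of $M_1,M_2$ is the join of the finite, non-empty set of their common lower bounds, which contains $N$. So it suffices to show that any two $M_1,M_2\in\Lat(N)$ have a least upper bound in $\Lat(N)$. If one of them is $N$ this is trivial, so assume both have rank $d$ and $T(M_i)=N$, where $\rank N=d-1$.

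First I will encode $1$-elevations by their hyperplanes. Let $M$ be a rank-$d$ matroid with $T(M)=N$. Then the flats of $M$ of rank at most $d-2$ coincide with those of $N$, because ranks agree on sets of rank at most $d-2$. Each hyperplane $H$ of $M$ is a set of $N$-rank $d-1$. Moreover, every basis $B$ of $N$ (a $(d-1)$-set independent in $M$) lies in exactly one hyperplane of $M$, namely $\mathrm{cl}_M(B)$. Finally, two distinct hyperplanes of $M$ meet in a flat of $N$ that contains no basis of $N$.

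Conversely, I will show the following. Let $\mathcal H$ be a family of subsets of $[n]$, each spanning in $N$, such that every basis of $N$ lies in exactly one member of $\mathcal H$, and distinct members meet in a set of $N$-rank at most $d-2$ that is a flat of $N$. Then $\mathcal H$ is the hyperplane family of a unique $M\in\Lat(N)$. I will check this via the hyperplane axioms, or equivalently by declaring a $d$-set independent iff it is $N$-spanning and not contained in a member of $\mathcal H$, and verifying basis exchange. In this language, $M_1\leq M_2$ holds iff every hyperplane of $M_2$ is contained in some hyperplane of $M_1$. This is because $M_1\le M_2$ means every $d$-set lying in a hyperplane of $M_2$ lies in one of $M_1$, and the member of $\mathcal H_1$ containing a basis of $N$ is unique.

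With this dictionary, the candidate join of $M_1,M_2$ is the common refinement
\[
\mathcal H=\{H_1\cap H_2 : H_i\in\mathcal H(M_i),\ \rank_N(H_1\cap H_2)=d-1\}.
\]
Each basis $B$ of $N$ lies in a unique $H_1$ and a unique $H_2$, hence in exactly one member $H_1\cap H_2$ of $\mathcal H$. Two distinct members meet inside $H_1\cap H_1'$ or $H_2\cap H_2'$ with $H_1\neq H_1'$ or $H_2\ne H_2'$, so the intersection has $N$-rank at most $d-2$. It is also an intersection of $N$-flats, hence itself an $N$-flat. Thus $\mathcal H$ defines some $M\in\Lat(N)$, and the refinement property gives $M\geq M_1,M_2$. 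If $M'\ge M_1,M_2$, then each hyperplane of $M'$ lies in some $H_1$ and some $H_2$, hence in $H_1\cap H_2$; this set has $N$-rank $d-1$ because it contains a hyperplane of $M'$. Therefore $M'\ge M$, and $M$ is the join.

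The main obstacle is the converse direction of the dictionary. I must show that the axioms above on $\mathcal H$ (unique containment of each $N$-basis, pairwise intersections being small $N$-flats) really yield a matroid. I would first try the basis exchange axiom directly. Take bases $B,B'$ of the candidate matroid and $x\in B\setminus B'$. Then $B-x$ is an $N$-basis lying in a unique $H\in\mathcal H$. Since $B'\not\subseteq H$, I would pick $y\in B'\setminus H$ making $(B-x)+y$ independent. The care needed is in the case where $(B-x)+y$ fails to be $N$-spanning; there I will use the flat condition on intersections to rule out the bad configurations.
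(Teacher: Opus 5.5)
There is a genuine gap: the converse half of your hyperplane dictionary is false as stated, and the whole argument rests on it.

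The other parts are fine: the reduction to pairwise joins, the forward half of the dictionary, the order criterion ($M_1\le M_2$ iff each hyperplane of $M_2$ lies in one of $M_1$), and the upper-bound and least-upper-bound check. There is one slip: if $H_1=H_1'$, then $H_1$ is not an $N$-flat. So justify that $H_1\cap(H_2\cap H_2')$ is an $N$-flat by viewing it as an intersection of $M_1$-flats of $M_1$-rank at most $d-2$.

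Here is a counterexample to the converse. Take $N=U_{1,2}\oplus U_{1,2}$ on $\{a,a',b,b'\}$ with parallel classes $\{a,a'\}$ and $\{b,b'\}$, so $d=3$, and let $\mathcal{H}=\{\{a,b,b'\},\{a',b,b'\}\}$.
\begin{itemize}
\item Each member is $N$-spanning.
\item Each of the four $N$-bases lies in exactly one member.
\item The two members meet in the rank-one $N$-flat $\{b,b'\}$.
\end{itemize}
Yet $\Lat(N)=\{N\}$: a rank-$3$ matroid $M$ with $T(M)=N$ would have $\{a,a'\}$ and $\{b,b'\}$ dependent, hence rank at most $2$.

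Your basis-exchange plan fails at its first step, ``$B-x$ is an $N$-basis''. The $3$-set $\{a,a',b\}$ is $N$-spanning and lies in no member, but removing $b$ leaves the $N$-dependent set $\{a,a'\}$. Exchange actually holds in this example. However, the resulting matroid (with $a,a'$ coloops) makes $\{a,a'\}$ independent, so it neither truncates to $N$ nor has $\mathcal{H}$ as its hyperplanes. The real difficulty is not the case you flag. It is that your axioms never force a $d$-set containing a small $N$-circuit to be covered by a member.

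The missing hypothesis is a covering axiom: for every $N$-flat $F$ of rank $d-2$ and every $e\notin F$, some member of $\mathcal{H}$ contains $F\cup\{e\}$. Add this to properness and your intersection condition. You then get the hyperplane axioms directly, and unique containment of $N$-bases follows. One checks that the flats of the resulting matroid are exactly the $N$-flats of rank at most $d-2$, the members of $\mathcal{H}$, and $[n]$. This gives rank $d$ and $T(M)=N$. Your common refinement does satisfy the covering axiom, because $F\cup\{e\}\subseteq \mathrm{cl}_{M_1}(F\cup\{e\})\cap\mathrm{cl}_{M_2}(F\cup\{e\})$ and this intersection is $N$-spanning. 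So your route is repairable.

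The paper gives no proof of this statement; it only cites Crapo. The dictionary can in fact be bypassed entirely. Suppose $M_1,M_2$ have rank $d$ and $T(M_1)=T(M_2)=N$. Then $\mathcal{B}(M_1)\cup\mathcal{B}(M_2)$ satisfies basis exchange:
\begin{itemize}
\item Let $B'\in\mathcal{B}(M_i)$ and $x\in B\setminus B'$.
\item The set $B-x$ is $N$-independent, hence $M_i$-independent.
\item Augmenting it from $B'$ in $M_i$ gives $y\in B'\setminus B$ (since $x\notin B'$) with $B-x+y\in\mathcal{B}(M_i)$.
\end{itemize}
The resulting matroid has independent sets $\mathcal{I}(M_1)\cup\mathcal{I}(M_2)$. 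Its independent sets of size at most $d-1$ are exactly those of $N$, so it lies in $\Lat(N)$, and it is therefore the join.
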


We say that every element of $\Lat(N)$ is a \emph{1-elevation} of $N$ (with $N$ itself being the \emph{trivial 1-elevation}). We call $\Lat(N)$ the \emph{1-elevation lattice} of $N$. 

Let $N$ be a realisable rank-$(d-1)$ matroid on $[n]$ with realisation $\mathbf{v}$.
Given $N$ and $\mathbf{v}$, we define the following two sets of matroids.
\begin{enumerate}[label=(\roman*)]
    \item $L_{\mathbf{v}} \subseteq \Lat(N)$ is the collection of matroids where for each $M \in L_{\mathbf{v}}$ there exists a non-trivial lifting $\widetilde{\mathbf{v}} \in V_{\mathcal{C}(M)}$ of $\mathbf{v}$.
    \item $R_{\mathbf{v}} \subseteq \Lat(N)$ is the collection of matroids where for each $M \in R_{\mathbf{v}}$ there exists a non-trivial lifting $\widetilde{\mathbf{v}} \in \Gamma_{M}$ of $\mathbf{v}$.
\end{enumerate}
By \Cref{thm:genericlifting},
$L_{\vf} = L_{\mathbf{w}}$ for any two generic realisations $\vf , \mathbf{w}$ of $T(M)$ that are contained in the same irreducible component.
We will later prove that $R_{\vf} = R_{\mathbf{w}}$ also for any two generic realisations $\vf , \mathbf{w}$ of $T(M)$ that are contained in the same irreducible component;
see \Cref{cor:rv_is_generic}.

To better understand the collections $L_{\vf}$ and $R_{\vf}$, we define the \emph{elevation matrix} $\mathcal{E}_{\mathbf{v}}(N) := \mathcal{R}_{\mathbf{v}}(N;\Ccal^{\text{ns}}(N))$ for any realisation $\mathbf{v}$ of a matroid $N$ which is not uniform. Note that $\Ccal^{\text{ns}}(N)$ is non-empty if and only if $N$ is not uniform, which is why we only define $\mathcal{E}_{\mathbf{v}}(N)$ for matroids which are not uniform. If $N$ is the uniform $U_{d,n}$, we set $\mathcal{R}_{\mathbf{v}}(N;\Ccal^{\text{ns}}(N))=[0,...,0] \in \mathbb{R}^{n}$.

\begin{lemma}\label{lem:elevation}
    Let $N$ be a rank-$(d-1)$ matroid on $[n]$, and suppose that $\mathbf{v}=(v_1,\ldots,v_n)$ is a realisation of $N$.
    Then the following properties are equivalent for any 1-elevation $M$ of $N$:
    \begin{enumerate}[label=(\roman*)]
        \item \label{lem:elevation:item1} $M \in R_{\mathbf{v}}$;
        \item \label{lem:elevation:item2} there exists $z \in \ker \mathcal{E}_{\mathbf{v}}(N)$ such that $\mathcal{M}(\widetilde{\mathbf{v}}) = M$, where $\widetilde{\mathbf{v}}=\big((v_1,z_1),\ldots,(v_n,z_n) \big)$.
    \end{enumerate}
\end{lemma}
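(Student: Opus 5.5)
The plan is to prove both implications directly. The tool is the same kernel criterion as in \Cref{prop: kernel and liftings 3}, now applied to the non-spanning circuits of $N$ rather than of $M$. The key observation is that for any lifting $\widetilde{\vf}=((v_1,z_1),\ldots,(v_n,z_n))$ and any circuit $c$ of $N$ we have the chain of equivalences
\[
(v_i,z_i)_{i\in c}\ \text{linearly dependent} \iff \sum_{i\in c} r_{c,i}(v_i,z_i)=\mathbf{0} \iff \sum_{i\in c} r_{c,i}z_i=0 .
\]
This holds because $\vf$ realises $N$, so $r_c$ spans the space of linear dependences among $(v_i)_{i\in c}$. Any dependence of the lifted vectors projects to a dependence of the $v_i$, hence is a multiple of $r_c$.

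For (i)$\Rightarrow$(ii), suppose $M\in R_{\vf}$ and take a non-trivial lifting $\widetilde{\vf}\in\Gamma_M$, so $\mathcal{M}(\widetilde{\vf})=M$.
\begin{itemize}
\item Let $c\in\Ccal^{\text{ns}}(N)$. Then $|c|\le d-1<d=\rank M$, so $c$ has size at most $d-1$ and is dependent in $N=T(M)$.
\item Independent sets of $M$ of size at most $d-1$ coincide with those of $N$, so $c$ is dependent in $M$.
\item Hence $(v_i,z_i)_{i\in c}$ is linearly dependent, and the equivalence above gives $z\cdot r_c=0$.
\end{itemize}
Therefore $z\in\ker\mathcal{E}_{\vf}(N)$, with $\mathcal{M}(\widetilde{\vf})=M$ as required. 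If $N$ is uniform, $\mathcal{E}_{\vf}(N)$ is the zero row and this direction is immediate.

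For (ii)$\Rightarrow$(i), take $z\in\ker\mathcal{E}_{\vf}(N)$ with $\mathcal{M}(\widetilde{\vf})=M$. It suffices to check that $\widetilde{\vf}$ is a non-trivial lifting lying in $\Gamma_M$.
\begin{itemize}
\item Membership $\widetilde{\vf}\in\Gamma_M$ is exactly the hypothesis $\mathcal{M}(\widetilde{\vf})=M$.
\item Non-triviality means the span of $\widetilde{\vf}$ is $d$-dimensional. Since $M$ is a 1-elevation of $N$ distinct from $N$, it has rank $d$. So $\mathcal{M}(\widetilde{\vf})=M$ forces $\rank\widetilde{\vf}=d$.
\end{itemize}

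The one point to handle carefully is the trivial elevation $M=N$. There $\rank N=d-1$, so $\mathcal{M}(\widetilde{\vf})=N$ is compatible with (ii) via a trivial lifting, yet $N\notin R_{\vf}$ under the non-triviality requirement. I would either note that ``1-elevation'' in the statement is meant to exclude $N$, or adopt the convention that $N$ is represented by trivial liftings. I expect this bookkeeping to be the only real obstacle. The condition $z\in\ker\mathcal{E}_{\vf}(N)$ plays no role in the reverse direction, which is why the equivalence is essentially formal.
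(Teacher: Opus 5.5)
Your proof is correct and matches the paper's argument. In (i)$\Rightarrow$(ii) you use the same equivalence between linear dependence of $(v_i,z_i)_{i\in c}$ and $z\perp r_c$ for $c\in\Ccal^{\text{ns}}(N)$, while the paper dismisses (ii)$\Rightarrow$(i) as holding ``by definition''; your explicit non-triviality check via $\rank M=d$, and your observation that the trivial elevation $M=N$ satisfies (ii) (take $z=\mathbf{0}$) but not (i) and so must be excluded, are valid refinements the paper leaves implicit.
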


\begin{proof}
    Note that the statement is trivial for uniform matroids, so suppose that $M$ is not uniform.
    \ref{lem:elevation:item2} implies \ref{lem:elevation:item1} by definition.
    Suppose \ref{lem:elevation:item1} holds for realisation $\widetilde{\mathbf{v}}=\big((v_1,z_1),\ldots,(v_n,z_n) \big)$ of $M$.
    For each $c \in \Ccal^{\mathrm{ns}}(N)$,
    choose $r_c = (r_{c,1},\ldots,r_{c,n}) \in \CC^n$ satisfying the properties described in \eqref{eq:rcrm}.
    Since $\vf$ is a realisation of $N$,
    the restriction of $r_c$ to indices in $c$ describes the unique (up to scaling) non-zero linear dependence between the points $(v_i)_{i \in c}$ for each non-spanning circuit $c$ of $N$.
    Thus for each $c \in \Ccal^{\mathrm{ns}}(N)$,
    we have
    \begin{equation}\label{eq:equiv kernel 2}
        (v_i,z_i)_{i \in c} \, \, \text{ linearly dependent}\quad  \iff \quad \sum_{i\in c} r_i (v_i,z_i) = \mathbf{0} \quad \iff \quad \sum_{i\in c} r_i z_i = 0.
    \end{equation}
    The left hand side of \eqref{eq:equiv kernel 2} holds for all $c \in \Ccal^{\mathrm{ns}}(N)$ as $\widetilde{\mathbf{v}} \in \Gamma_M$.
    Thus right hand side of \eqref{eq:equiv kernel 2} holds for all $c \in \Ccal^{\mathrm{ns}}(N)$,
    which is equivalent to $z \in \ker \mathcal{E}_\vf(N)$.
    This completes the proof.
\end{proof}

\begin{corollary}\label{cor: lifting from N}
    Let $N$ be a rank-$(d-1)$ matroid on $[n]$ which is not uniform, and let $\vf \in \Gamma_{N}$.
    Then $\rank \mathcal{E}_{\vf}(N) \leq n-d + 1$.
    Moreover, $\vf$ has a non-trivial lifting $\widetilde{\vf}$ realising a matroid $M$ such that $T(M)=N$ if and only if $\rank \mathcal{E}_{\vf}(N) \leq n-d$.
\end{corollary}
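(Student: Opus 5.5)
The argument runs parallel to \Cref{cor:realisationliftable}, with \Cref{lem:elevation} playing the role of \Cref{prop: kernel and liftings 3}. For the rank bound, I apply \Cref{lem: rmatrix kernel and liftings trivial} with $S=\Ccal^{\text{ns}}(N)$. The trivial liftings, namely the vectors $z$ in the row space of $\vf$ viewed as a $(d-1)\times n$ matrix, form a $(d-1)$-dimensional subspace of $\ker \mathcal{E}_\vf(N)$. Hence $\rank\mathcal{E}_\vf(N)\le n-(d-1)=n-d+1$.

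For the forward direction of the equivalence, suppose $\widetilde\vf$ is a non-trivial lifting realising some $M$ with $T(M)=N$. Then $M\in R_\vf$, and \Cref{lem:elevation} gives that the last coordinate vector $z$ lies in $\ker\mathcal{E}_\vf(N)$. Since the lifting is non-trivial, $z$ is not in the row space of $\vf$. So the kernel has dimension at least $d$, and therefore $\rank\mathcal{E}_\vf(N)\le n-d$.

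For the converse, assume $\rank\mathcal{E}_\vf(N)\le n-d$. Then $\ker\mathcal{E}_\vf(N)$ strictly contains the $(d-1)$-dimensional space $W$ of trivial liftings. I choose $z$ generically in the kernel, meaning $z$ avoids a finite union of proper subspaces specified below. Set $\widetilde\vf=((v_i,z_i))_i$ and $M:=\mathcal{M}(\widetilde\vf)$, and check that $T(M)=N$ and that $\widetilde\vf$ is non-trivial.
\begin{itemize}
\item \emph{Non-trivial.} Take $z\notin W$. Then $\widetilde\vf$ spans $\CC^d$, so $M$ has rank $d$.
\item \emph{Independent sets of $N$ stay independent.} If $I$ is independent in $N$, then $(v_i)_{i\in I}$ is independent, so $((v_i,z_i))_{i\in I}$ is independent too.
\item \emph{Non-spanning circuits of $N$ stay dependent.} Let $c$ be a circuit of $N$ with $|c|\le d-1$. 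Then $z\in\ker\mathcal{E}_\vf(N)$ gives $\sum_{i\in c} r_{c,i}z_i=0$. Since $\sum_{i\in c} r_{c,i}v_i=0$ as well, the lifted vectors $((v_i,z_i))_{i\in c}$ are dependent.
\end{itemize}
These last two points say that $M$ and $N$ have exactly the same dependent sets of size at most $d-1$. Spanning circuits of $N$ have size $d$ and play no role at that size. Since $M$ has rank $d$, the independent sets of $M$ of size at most $d-1$ are exactly those of $N$, which is the statement $T(M)=N$. So $M$ is a 1-elevation of $N$ realised by the non-trivial lifting $\widetilde\vf$.

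The main obstacle is confirming that a generic choice of $z$ is needed only to avoid $W$. From the three checks above, the conditions defining $T(M)=N$ hold for every $z\in\ker\mathcal{E}_\vf(N)$, so the only exceptional set is $W$. No further genericity is required, and the proof then follows directly from \Cref{lem:elevation} and \Cref{lem: rmatrix kernel and liftings trivial}.
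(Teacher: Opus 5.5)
Your proof is correct and follows the paper's route. The rank bound comes from \Cref{lem: rmatrix kernel and liftings trivial} applied to the $(d-1)$-dimensional space of trivial liftings, and the equivalence comes from \Cref{lem:elevation}. Your explicit check that every $z \in \ker \mathcal{E}_{\vf}(N)$ outside the row space of $\vf$ gives $T(\mathcal{M}(\widetilde{\vf}))=N$ fills in the converse direction, which the paper leaves implicit in its one-line appeal to \Cref{lem:elevation}.
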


\begin{proof}
    By \Cref{lem: rmatrix kernel and liftings trivial}, $\rank \mathcal{E}_{\vf}(N) \leq n-d + 1$, since the vectors $z$ that induce a trivial lifting of $\vf$ form a $(d-1)$-dimensional subspace of $\ker \mathcal{E}_{\vf}(N)$.
    It now follows from \Cref{lem:elevation} that $\vf$ has a non-trivial lifting $\widetilde{\mathbf{v}} \in \Gamma_{M}$ of $\mathbf{v}$, for some matroid $M$ such that $T(M)=N$, if and only if $\rank \mathcal{E}_{\vf}(N) \leq n-d$.
\end{proof}

It is immediate that the set of liftable matroids in $\Lat(N)$ is exactly the set $\bigcap_{\vf \in \Gamma_N} L_\vf$ and the set of generically realisable matroids in $\Lat(N)$ is exactly the set $\bigcap_{\vf \in \Gamma_N \text{ generic}} R_\vf$.
To describe the realisable matroids in $\Lat(N)$,
we first require the following lemma.

\begin{lemma}\label{lem:projweakorder}
    Let $\widetilde{\mathbf{v}}$ be a realisation of a rank-$d$ matroid $M$ on $[n]$, let $A \in \mathbb{C}^{(d-1)\times d}$ be any matrix, and let $\mathbf{v} = (A\widetilde{v}_i)_{i \in [n]}$.
    Then $\mathcal{M}(\mathbf{v}) \leq T(M)$,
    with equality for almost all matrices $A \in \mathbb{C}^{(d-1)\times d}$.
\end{lemma}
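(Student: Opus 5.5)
The proof has two parts. First we show the inequality $\mathcal{M}(\mathbf{v}) \leq T(M)$ for every $A$. Then we show that equality holds on a nonempty Zariski-open set of matrices $A$.

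\textbf{Inequality.} By definition of the weak order, it suffices to show that every dependent set of $T(M)$ is dependent in $\mathcal{M}(\mathbf{v})$. Let $D \subseteq [n]$ be dependent in $T(M)$. There are two cases.
\begin{itemize}
\item If $|D| \geq d$, then $D$ gives at least $d$ vectors $A\widetilde{v}_i$ in $\mathbb{C}^{d-1}$, so it is dependent.
\item If $|D| \leq d-1$, then $D$ is dependent in $M$, since the independent sets of $T(M)$ of size at most $d-1$ are exactly those of $M$. So there is a nontrivial relation $\sum_{i\in D}\lambda_i \widetilde{v}_i = 0$. Applying the linear map $A$ gives $\sum_{i \in D} \lambda_i A\widetilde{v}_i = 0$, so $D$ is dependent in $\mathcal{M}(\mathbf{v})$.
\end{itemize}
Hence $\mathcal{I}(\mathcal{M}(\mathbf{v})) \subseteq \mathcal{I}(T(M))$, which is the inequality.

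\textbf{Equality for almost all $A$.} We must show that every independent set $I$ of $T(M)$ stays independent after applying $A$, for generic $A$. Since $I$ is independent in $M$ and $|I| \leq d-1$, the vectors $\widetilde{v}_i$, $i \in I$, are linearly independent in $\mathbb{C}^d$. Stack them into a $d \times |I|$ matrix $W_I$ of full column rank.

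The condition that $AW_I$ has rank $|I|$ is the nonvanishing of at least one $|I|\times|I|$ minor. These minors are polynomials in the entries of $A$, so the condition defines a Zariski-open subset $U_I \subseteq \mathbb{C}^{(d-1)\times d}$.

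The main point is that $U_I$ is nonempty. Extend the columns of $W_I$ to a basis of $\mathbb{C}^d$. Take $A$ to be the linear map sending these $|I|$ vectors to the first $|I|$ standard basis vectors of $\mathbb{C}^{d-1}$, which is possible because $|I| \leq d-1$, and sending the remaining basis vectors anywhere. Then $AW_I$ has full rank, so $A \in U_I$.

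There are finitely many independent sets of $T(M)$. The intersection $\bigcap_I U_I$ is therefore a nonempty Zariski-open, hence dense, subset of the irreducible space $\mathbb{C}^{(d-1)\times d}$. For every $A$ in this set, every independent set of $T(M)$ is independent in $\mathcal{M}(\mathbf{v})$, giving $\mathcal{M}(\mathbf{v}) \geq T(M)$. Combined with the inequality, $\mathcal{M}(\mathbf{v}) = T(M)$ for almost all $A$.
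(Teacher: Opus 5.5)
Your proof is correct and follows essentially the same route as the paper. Your inequality argument is the contrapositive of the paper's, which shows that an independent set of $\mathcal{M}(\mathbf{v})$ is independent in $M$ and has size at most $d-1$. For equality, the paper relies on the one-line observation that $A$ preserves independence of sets of size at most $d-1$, stated without further justification; your argument makes precise why this holds for almost all $A$, via the nonempty Zariski-open sets $U_I$ and their finite intersection.
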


\begin{proof}
    Choose any independent set $I$ of $\mathcal{M}(\mathbf{v})$.
    Then by definition, the vectors $\{A\widetilde{v}_i: i\in I\}$ are linearly independent, and thus the vectors $\{\widetilde{v}_i: i\in I\}$ are independent.
    Since $M=\mathcal{M}(\widetilde{\mathbf{v}})$,
    $I$ is an independent set of $M$.
    As $\rank A \leq d-1$, it follows that $\size{I}\leq d-1$.
    Given that $I$ is independent in $M$ and $\size{I}\leq d-1$, it follows that $I$ is an independent set for $T(M)$ as desired.
    
    The equality between the matroids $\mathcal{M}(\mathbf{v})$ and $T(M)$ for almost all matrices $A$ now follows from the observation that, for every subset $I\subseteq [n]$ with $\size{I}\leq d-1$, the vectors $\{A\widetilde{v}_i: i\in I\}$ are linearly independent if and only if the vectors $\{\widetilde{v}_i: i\in I\}$ are linearly independent.
\end{proof}

Since the truncation of a realisable matroid is realisable,
it follows that there are no realisable matroids in $\Lat(N)$ if $N$ is not itself realisable. Given a realisable matroid $N$, the following result describes the realisable matroids in $\Lat(N)$ exactly.

\begin{proposition}
    Let $N$ be a realisable rank-$(d-1)$ matroid on $[n]$.
    Then the set of realisable matroids in $\Lat(N)$ is exactly the set $\bigcup_{\vf \in \Gamma_N} R_\vf$.
    \label{prop:R_v}
\end{proposition}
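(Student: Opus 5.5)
The proof is a double inclusion.

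For the inclusion $\bigcup_{\vf \in \Gamma_N} R_\vf \subseteq \{\text{realisable } M \in \Lat(N)\}$, suppose $M \in R_\vf$ for some $\vf \in \Gamma_N$. By definition there is a non-trivial lifting $\widetilde{\vf} \in \Gamma_M$, so $\Gamma_M \neq \emptyset$ and $M$ is realisable. One subtlety is $M = N$, the trivial 1-elevation. For $N \in R_\vf$ we would need a lifting with $d$-dimensional span realising a rank-$(d-1)$ matroid, which is impossible. So $N \notin R_\vf$, and the inclusion holds under either convention, provided $N$ is excluded from the realisable side or treated consistently. I would state this convention explicitly.

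For the reverse inclusion, let $M \in \Lat(N)$ with $M \neq N$ be realisable, so $T(M) = N$ and $M$ has rank $d$. Pick a realisation $\widetilde{\mathbf{u}} = (\widetilde u_1, \ldots, \widetilde u_n) \in \Gamma_M$. By \Cref{lem:projweakorder}, for almost every $A \in \mathbb{C}^{(d-1)\times d}$ the projection $\vf = (A\widetilde u_i)_{i}$ realises $T(M) = N$.

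Fix such an $A$ and additionally require it to have rank $d-1$, which is also a generic condition. Complete $A$ to an invertible $d \times d$ matrix $G$ by appending a row $a \in \mathbb{C}^{d}$ not in the row space of $A$. Set $\widetilde{\vf} = (G\widetilde u_i)_i$. Since $G$ is invertible, $\mathcal{M}(\widetilde{\vf}) = \mathcal{M}(\widetilde{\mathbf{u}}) = M$, so $\widetilde{\vf} \in \Gamma_M$. Its span is $G$ applied to $\mathbb{C}^d$, hence $d$-dimensional. Moreover $G\widetilde u_i = (A\widetilde u_i, a\cdot\widetilde u_i) = (v_i, z_i)$, so $\widetilde{\vf}$ is a non-trivial lifting of $\vf$ in $\Gamma_M$. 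Therefore $M \in R_\vf$ with $\vf \in \Gamma_N$.

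The only point needing care is that the coordinate change $G$ preserves the matroid, which is immediate from invertibility. The main potential obstacle is bookkeeping around the trivial elevation $N$ itself. I expect no deeper difficulty, since \Cref{lem:projweakorder} supplies the key genericity statement.
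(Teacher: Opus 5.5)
Your proof is correct and is essentially the paper's argument: the paper chooses $B$ with $AB=(I_{d-1}\mid \mathbf{0})$ and uses $(B^{-1}w_i)_i$, which is your $G$ with $B=G^{-1}$. Your caveat about the trivial elevation is also right: $N$ is realisable but lies in no $R_\vf$, so the statement must be read over $\Lat(N)\setminus\{N\}$, a point the paper's proof passes over silently.
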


\begin{proof}
    If $M \in \bigcup_{\vf \in \Gamma_N} R_\vf$ then $M$ is realisable by definition of $R_\vf$.
    Suppose that $M$ is realisable with realisation $\mathbf{w} = (w_1,\ldots,w_n)$.
    By \Cref{lem:projweakorder},
    we can choose a matrix $A \in \mathbb{C}^{(d-1) \times d}$ so that the tuple $\mathbf{v}$ with $v_i = Aw_i$ for each $i \in [n]$ is a realisation of $N$.
    Now let $B \in \mathbb{C}^{d \times d}$ be any matrix for which $AB = (I_{d-1} | \mathbf{0})$ (the existence of $B$ stemming from $A$ having rank $d-1$),
    and fix $\widetilde{\mathbf{v}} = (B^{-1}w_i)_{i \in [n]}$.
    Then $\mathcal{M}(\widetilde{\mathbf{v}}) = \mathcal{M}(\mathbf{w}) = M$ and
    $\widetilde{\mathbf{v}} = \big((v_i,z_i)\big)_{i \in [n]}$ for some $z=(z_1,\ldots,z_n) \in \mathbb{C}^n$.
    Thus $M \in R_{\vf}$ as required.
\end{proof}

 Since the truncation of a realisable matroid is realisable, it follows from \Cref{prop:R_v} that any realisable matroid $M$ belongs to $R_{\vf}$ for some choice of realisation $\vf \in \Gamma_{T(M)}$.

We recall the following terminology for a finite lattice $(L,\prec)$. The \emph{join} of two elements $a,b$ in a lattice is the unique minimal element $c$ satisfying $a \preceq c$ and $b \preceq c$.
A subset $S \subset L$ is said to be \emph{join-closed} if the join of any two elements in $S$ is contained in $S$.
A subset $S \subset L$ is said to be \emph{upwards-closed} if $a \in S$ and $a \preceq b$ implies $b \in S$.
Every upwards-closed set is join-closed, but the converse is not necessarily true.

\begin{lemma}\label{lem:realisablelift}
    Let $N$ be a realisable rank-$(d-1)$ matroid on $[n]$ with realisation $\mathbf{v}$.
    Then $L_{\mathbf{v}}$ is an upwards-closed subset of $\Lat(N)$.
    Moreover, any minimal element of $L_{\mathbf{v}}$ is realisable (and hence is contained in $R_{\mathbf{v}}$).
\end{lemma}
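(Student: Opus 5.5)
The plan is to prove both claims directly from the definitions, using \Cref{lem:circvar1}, \Cref{lem:circvar2} and \Cref{lem:projweakorder}. Throughout, the elements of $\Lat(N)$ that can lie in $L_{\mathbf{v}}$ are the rank-$d$ elevations, since a non-trivial lifting lives in $(\mathbb{C}^d)^n$.

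\emph{Upwards-closure.} Let $M \in L_{\mathbf{v}}$ and let $M' \in \Lat(N)$ with $M \leq M'$. By definition there is a non-trivial lifting $\widetilde{\mathbf{v}}$ of $\mathbf{v}$ with $\widetilde{\mathbf{v}} \in V_{\mathcal{C}(M)}$. Since $M \leq M'$, \Cref{lem:circvar2} gives $V_{\mathcal{C}(M)} \subseteq V_{\mathcal{C}(M')}$. Hence $\widetilde{\mathbf{v}}$ is also a non-trivial lifting of $\mathbf{v}$ lying in $V_{\mathcal{C}(M')}$, and so $M' \in L_{\mathbf{v}}$.

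\emph{Minimal elements are realisable.} Let $M$ be minimal in $L_{\mathbf{v}}$, and let $\widetilde{\mathbf{v}} = ((v_i,z_i))_{i\in[n]} \in V_{\mathcal{C}(M)}$ be a non-trivial lifting of $\mathbf{v}$. Set $M'' := \mathcal{M}(\widetilde{\mathbf{v}})$. We proceed in four steps.
\begin{enumerate}[label=(\arabic*)]
    \item By \Cref{lem:circvar1}, $M'' \leq M$.
    \item Because the lifting is non-trivial, $\widetilde{\mathbf{v}}$ spans $\mathbb{C}^d$, so $M''$ has rank $d$.
    \item We show $T(M'') = N$. Let $A = (I_{d-1} \mid \mathbf{0})$ be the projection onto the first $d-1$ coordinates. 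Then $A\widetilde{v}_i = v_i$ for each $i$. Applying \Cref{lem:projweakorder} gives $N = \mathcal{M}(\mathbf{v}) \leq T(M'')$. Conversely, every independent set of $M''$ of size at most $d-1$ is independent in $M$ (as $M'' \leq M$), hence independent in $T(M) = N$. So $T(M'') \leq N$, and therefore $T(M'') = N$.
    \item Consequently $M'' \in \Lat(N)$. Moreover $\widetilde{\mathbf{v}} \in \Gamma_{M''} \subseteq V_{\mathcal{C}(M'')}$ is a non-trivial lifting of $\mathbf{v}$, so $M'' \in L_{\mathbf{v}}$.
\end{enumerate}
Minimality of $M$ together with $M'' \leq M$ forces $M'' = M$. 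Thus $\widetilde{\mathbf{v}}$ is a non-trivial lifting of $\mathbf{v}$ that realises $M$, which shows that $M$ is realisable and $M \in R_{\mathbf{v}}$.

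The only step needing care is step (3), identifying $T(M'')$ with $N$. It uses \Cref{lem:projweakorder} for the inequality in one direction and the weak-order inequality $M'' \leq M$ for the other.
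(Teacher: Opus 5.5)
Your proof is correct and follows the paper's argument essentially step for step. You get upwards-closure from \Cref{lem:circvar2}; for a minimal element you show $\mathcal{M}(\widetilde{\mathbf{v}}) \leq M$ via \Cref{lem:circvar1}, pin $T(\mathcal{M}(\widetilde{\mathbf{v}}))$ to $N$ using \Cref{lem:projweakorder} and $T(M)=N$, and conclude by minimality. You also check explicitly that $\mathcal{M}(\widetilde{\mathbf{v}})$ has rank $d$, which \Cref{lem:projweakorder} requires and which the paper only implies through the phrase ``distinct from $T(M)$''.
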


\begin{proof}
    Choose any $M \in L_{\mathbf{v}}$, and let $\widetilde{\mathbf{v}} \in V_{\mathcal{C}(M)}$ be a non-trivial lifting of $\mathbf{v}$.
    Then by \Cref{lem:circvar2},
    $\widetilde{\mathbf{v}} \in V_{\Ccal (M')}$ for all $M' \geq M$.
    Hence $L_{\mathbf{v}}$ is upwards-closed.
    
    Now additionally assume that $M$ is minimal in $L_{\mathbf{v}}$.
    The matroid $\mathcal{M}(\widetilde{\mathbf{v}})$ is realisable and distinct from $T(M)$.
    As the projection of $\widetilde{\mathbf{v}}$ is $\mathbf{v}$, we have that $T(\mathcal{M}(\widetilde{\mathbf{v}})) \geq N$ by \Cref{lem:projweakorder}.
    By \Cref{lem:circvar1},
    $\mathcal{M}(\widetilde{\mathbf{v}}) \leq M$.
    By comparing rank functions we see that $T(\mathcal{M}(\widetilde{\mathbf{v}})) \leq T(M) = N$,
    and thus $T(\mathcal{M}(\widetilde{\mathbf{v}})) = N$.
    Since $\mathcal{M}(\widetilde{\mathbf{v}})$ is contained in $L_{\mathbf{v}}$, we have now that $M = \mathcal{M}(\widetilde{\mathbf{v}})$ by the minimality of $M$.
    Hence $M$ is realisable.
\end{proof}

\begin{figure}[H]
    \centering
    \includegraphics[scale=0.5]{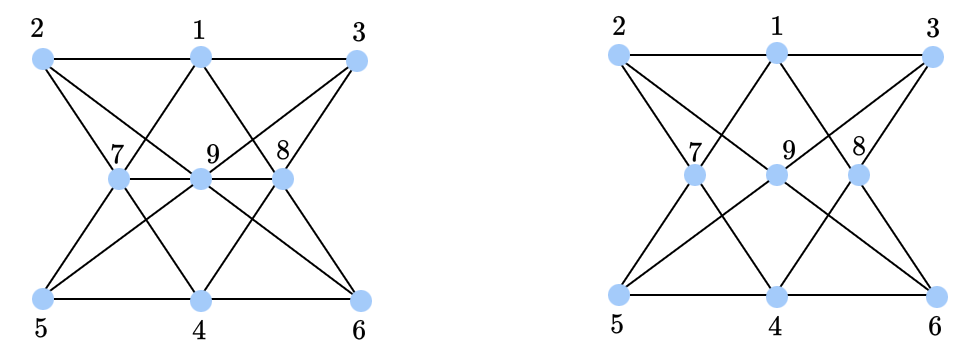}
    \caption{(a) Pappus matroid; (b) Non-Pappus matroid.}
    \label{Figure: Pappus-NonPappus}
\end{figure}

The set $R_{\mathbf{v}}$ is not upwards-closed in general. For instance, consider the Pappus matroid $M$ shown in~\Cref{Figure: Pappus-NonPappus} (a). Choose any realisation of $M$ and project it to obtain a tuple $\vf$ realising $T(M)$. Then, by construction, $M\in R_{\vf}$. On the other hand, the non-Pappus matroid $M'$ depicted in~\Cref{Figure: Pappus-NonPappus} (b), is not realisable, so it can not belong to $R_{\vf}$. Since $M<M'$, this shows that $R_{\mathbf{v}}$ is not upwards-closed.
However, the next lemma says that $R_{\mathbf{v}}$ is still somewhat well-behaved with respect to the weak order.

\begin{lemma}\label{lem:realisablereal}
    Let $N$ be a realisable rank-$(d-1)$ matroid on $[n]$ with realisation $\mathbf{v}$.
    Then $R_{\mathbf{v}}$ is join-closed.
\end{lemma}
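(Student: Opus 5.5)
The approach is to realise the join $J := M_1 \vee M_2$ directly as a generic linear combination of liftings realising $M_1$ and $M_2$. Fix $M_1, M_2 \in R_{\mathbf{v}}$. By \Cref{lem:elevation} there are vectors $z^1, z^2 \in \ker \mathcal{E}_{\mathbf{v}}(N)$ such that the liftings $\widetilde{\mathbf{v}}^{(j)} = \big((v_i,z^j_i)\big)_{i\in[n]}$ satisfy $\mathcal{M}(\widetilde{\mathbf{v}}^{(j)}) = M_j$ for $j=1,2$. For $(a,b)\in\mathbb{C}^2$ set $z = a z^1 + b z^2$ and $\widetilde{\mathbf{v}}_{a,b} = \big((v_i,z_i)\big)_{i\in[n]}$. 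The goal is to show that $\mathcal{M}(\widetilde{\mathbf{v}}_{a,b}) = J$ for a suitable choice of $(a,b)$.

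\emph{Upper bound.} By \Cref{lem:circvar2}, $\widetilde{\mathbf{v}}^{(j)} \in V_{\Ccal(M_j)} \subseteq V_{\Ccal(J)}$. By \Cref{prop: kernel and liftings 3} this means $z^1, z^2 \in \ker \mathcal{L}_{\mathbf{v}}(J)$; here $J$ is a $1$-elevation of $N$ with $T(J)=N$, so $\mathcal{L}_{\mathbf{v}}(J)$ is defined. This kernel is a linear subspace, so $z \in \ker\mathcal{L}_{\mathbf{v}}(J)$ for every $(a,b)$. Applying \Cref{prop: kernel and liftings 3} again gives $\widetilde{\mathbf{v}}_{a,b}\in V_{\Ccal(J)}$, and then \Cref{lem:circvar1} gives $\mathcal{M}(\widetilde{\mathbf{v}}_{a,b}) \le J$.

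\emph{Lower bound.} Fix any independent set $I$ of $M_1$ or of $M_2$. Independence of $\{(v_i,z_i): i \in I\}$ is equivalent to the non-vanishing of some maximal minor of this set of vectors. That minor is a polynomial in $(a,b)$, and it is nonzero at $(1,0)$ or at $(0,1)$, so it is not identically zero. There are only finitely many such sets $I$, so we can choose $(a,b)$ outside the union of finitely many proper Zariski-closed subsets of $\mathbb{C}^2$. For this choice $\mathcal{I}(\mathcal{M}(\widetilde{\mathbf{v}}_{a,b})) \supseteq \mathcal{I}(M_1)\cup\mathcal{I}(M_2)$, that is, $\mathcal{M}(\widetilde{\mathbf{v}}_{a,b}) \ge M_1, M_2$.

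\emph{Conclusion.} It remains to check that $\mathcal{M}(\widetilde{\mathbf{v}}_{a,b})$ actually lies in $\Lat(N)$, so that the join property applies. We claim its truncation is $N$.
\begin{itemize}[label=$\circ$]
\item Since $\mathbf{v}$ is the projection of $\widetilde{\mathbf{v}}_{a,b}$ and realises $N$, \Cref{lem:projweakorder} gives $N \le T(\mathcal{M}(\widetilde{\mathbf{v}}_{a,b}))$.
\item Conversely, any set of size at most $d-1$ that is dependent in $N$ contains a circuit of $N$ of size at most $d-1$. Such a circuit is non-spanning in $N$, which has rank $d-1$.
\item Because $z \in \ker\mathcal{E}_{\mathbf{v}}(N)$, the computation \eqref{eq:equiv kernel 2} shows that this circuit stays dependent in the lifting. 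Hence the reverse inequality $T(\mathcal{M}(\widetilde{\mathbf{v}}_{a,b})) \le N$ holds.
\end{itemize}
The lifting is non-trivial, because its matroid dominates $M_1$, which has rank $d$. Therefore $\mathcal{M}(\widetilde{\mathbf{v}}_{a,b})$ is an upper bound of $M_1$ and $M_2$ in $\Lat(N)$, so it is $\ge J$. Combined with the upper bound, $\mathcal{M}(\widetilde{\mathbf{v}}_{a,b}) = J$, and so $J\in R_{\mathbf{v}}$.

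The main subtlety I expect is the upper bound step: showing that the combination does not exceed $J$. The key observation is that the liftings lying in $V_{\Ccal(J)}$ form the linear space $\ker\mathcal{L}_{\mathbf{v}}(J)$, which is closed under linear combinations.
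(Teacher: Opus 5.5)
Your proof is correct. It rests on the same core construction as the paper: a generic linear combination of the two lifting vectors $z^1,z^2\in\ker\mathcal{E}_{\mathbf v}(N)$. The paper uses the affine pencil $tz_1+(1-t)z_2$ where you use $az^1+bz^2$. Where the two arguments differ is in how they certify that the resulting matroid is the join.

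\emph{The paper's route.} It works circuit by circuit over $\mathcal{C}(N)$. A circuit $C$ is dependent in the combined lifting if and only if $z\perp r_C$. This holds for every $t$ when $C$ is dependent in both $M_1$ and $M_2$, and fails for all but finitely many $t$ otherwise. So for generic $t$ the combined matroid has non-spanning circuits exactly $\mathcal{C}^{\mathrm{ns}}(M_1)\cap\mathcal{C}^{\mathrm{ns}}(M_2)$. The paper then reads this off as the join, tacitly using that $M\le M'$ in $\Lat(N)$ if and only if $\mathcal{C}^{\mathrm{ns}}(M')\subseteq\mathcal{C}^{\mathrm{ns}}(M)$.

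\emph{Your route.} You take the join $J$ abstractly from Crapo's theorem and sandwich the combined matroid. The upper bound comes from $\ker\mathcal{L}_{\mathbf v}(J)$ being a linear space that contains $z^1$ and $z^2$. The lower bound comes from avoiding finitely many lines in $(a,b)$-space.

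\emph{What each buys.} Your version needs no explicit description of the join. The paper's circuit-level argument, once that order/circuit correspondence is spelled out, identifies the join concretely as the 1-elevation with non-spanning circuits $\mathcal{C}^{\mathrm{ns}}(M_1)\cap\mathcal{C}^{\mathrm{ns}}(M_2)$.

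One small simplification: your explicit truncation check is redundant. Once you have $M_1\le\mathcal{M}(\widetilde{\mathbf v}_{a,b})\le J$, monotonicity of truncation in the weak order already gives $N=T(M_1)\le T(\mathcal{M}(\widetilde{\mathbf v}_{a,b}))\le T(J)=N$.
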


\begin{proof}
    Choose any two matroids $M_1,M_2 \in R_{\mathbf{v}}$.
    Take two non-trivial liftings $\widetilde{\mathbf{v}}_1 = (\mathbf{v},z_1),\widetilde{\mathbf{v}}_2=(\mathbf{v},z_2)$ of $\mathbf{v}$ where $\mathcal{M}(\widetilde{\mathbf{v}}_1) = M_1$ and $\mathcal{M}(\widetilde{\mathbf{v}}_2)=M_2$.
    With this, we set $\mathbf{w}_t := 
    (\mathbf{v},tz_1 +(1-t)z_2)$ for each $t \in \mathbb{C}$.
    By \Cref{lem:elevation}, $z_1,z_2 \in \ker \mathcal{E}_{\mathbf{v}}(N)$, and thus $tz_1 +(1-t)z_2 \in \ker \mathcal{E}_{\mathbf{v}}(N)$.
    Hence by \Cref{lem:elevation},
    $\mathcal{M}(\mathbf{w}_t) \in R_{\mathbf{v}}$ for each $t \in \mathbb{C}$.

For each circuit $C$ of $N$ and $z=(z_1,\ldots,z_n)\in \CC^{n}$, the vectors $(v_i,z_i)_{i \in C}$ are linearly dependent if and only if $z\perp r_{C}$. Choose any circuit $C$ of $N$. If $C$ is a circuit in both $M_1$ and $M_2$, and since $\widetilde{\vf}_{1}$ and $\widetilde{\vf}_{2}$ realize these matroids, it follows that $z_1\perp r_{C}$ and $z_2\perp r_{C}$ and hence $tz_1+(1-t)z_2\perp r_{C}$ for every $t\in \CC$. Hence, $C$ is dependent in $\mathcal{M}(\mathbf{w}_t)$ for every $t\in \CC$. Now suppose that $C$ is a circuit of $N$ that is independent in $M_1$ or $M_2$. This implies that $z_1\not \perp r_{C}$ or $z_2\not \perp r_{C}$. Consequently, for all but finite choices of $t$ we have $tz_{1}+(1-t)z_{2}\not \perp r_c$, and for these values of $t$ we have that $C$ is independent in $\mathcal{M}(\mathbf{w}_t)$.
    
    Applying the above argument to each circuit of $N$, we see that we can choose $t^* \in \mathbb{C}$ so that $C$ is a circuit of $M:=\mathcal{M}(\mathbf{w}_{t^*})$ if and only if $C$ is a circuit of both $M_1$ and $M_2$.
    Hence $M$ is the join of $M_1,M_2$ and $M \in R_{\mathbf{v}}$ as required.
\end{proof}

We are now ready to prove \Cref{thm:A},
which we recall to be the following:

\begin{theorem}\label{thm: minimal liftables are generic realisable}
    Let $N$ be a realisable rank $(d-1)$-matroid on $[n]$.
    Then the set of liftable matroids in $\Lat(N)$ is upwards-closed,
    and the set of generically realisable matroids in $\Lat(N)$ is join-closed.
    
    Moreover, if $L_{\mathbf{v}} = L_{\mathbf{w}}$ for all generic $\mathbf{v},\mathbf{w} \in \Gamma_N$ (e.g., $V_N$ is irreducible),
    then the minimal liftable matroids in $\Lat(N)$ are generically realisable. 
\end{theorem}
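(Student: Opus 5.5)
The plan is to derive all three claims from the two lattice lemmas just proved, \Cref{lem:realisablelift} and \Cref{lem:realisablereal}, using that intersections of upwards-closed (respectively join-closed) subsets of $\Lat(N)$ are again upwards-closed (respectively join-closed). As noted before \Cref{lem:projweakorder}, the liftable matroids in $\Lat(N)$ form exactly $\bigcap_{\vf \in \Gamma_N} L_\vf$, and the generically realisable ones form exactly $\bigcap_{\vf \text{ generic}} R_\vf$.

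For the first claim, each $L_\vf$ is upwards-closed by \Cref{lem:realisablelift}. If $M$ lies in every $L_\vf$ and $M \leq M'$, then $M'$ lies in every $L_\vf$. Hence the intersection is upwards-closed.

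For the second claim, each $R_\vf$ is join-closed by \Cref{lem:realisablereal}. Suppose $M_1, M_2$ lie in $R_\vf$ for every generic $\vf$. Their join in $\Lat(N)$ does not depend on $\vf$, and it lies in each $R_\vf$, so it lies in the intersection. One point to check is that the trivial elevation $N$ never arises here: a join of two elevations is an elevation, because $N$ is the bottom of $\Lat(N)$.

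For the final claim, let $M$ be a minimal liftable matroid and let $\vf$ be a generic realisation of $N$. The main obstacle is to show that $M$ is minimal in $L_\vf$ itself, and not merely among the liftable matroids. I will use the hypothesis $L_\vf = L_\mathbf{w}$ for all generic $\vf,\mathbf{w}$ to show that the liftable matroids are exactly the generic $L_\vf$. Clearly $\bigcap_{\vf} L_\vf \subseteq L_\vf$. For the reverse inclusion, take $M' \in L_\vf$. Then $M'$ lies in $L_\mathbf{w}$ for every generic $\mathbf{w}$, since these sets all coincide. By \Cref{cor:realisationliftable}, every generic liftability matroid of $M'$ therefore has rank at most $n-d$. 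By \Cref{thm:genericlifting}, $M'$ is then liftable.

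With this identification, $M$ is a minimal element of $L_\vf$. \Cref{lem:realisablelift} then gives $M \in R_\vf$. Since $\vf$ was an arbitrary generic realisation of $N$, $M$ lies in every generic $R_\vf$, and so $M$ is generically realisable.

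Two points remain. If $V_N$ is irreducible, all generic points lie in one component, and the remark after \Cref{def: generic liftability matroids} gives $L_\vf = L_\mathbf{w}$, which justifies the parenthetical example in the statement. I also need \Cref{thm:genericlifting} to apply to $M' \in \Lat(N)$; it does, because $T(M') = N$ is realisable. If $M' = N$ itself, it lies in no $L_\vf$, since a trivial elevation admits no non-trivial lifting, so this case does not occur.
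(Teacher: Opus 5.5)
Your proof is correct and follows essentially the same route as the paper: you write the liftable and generically realisable matroids as intersections of the sets $L_{\mathbf{v}}$ and $R_{\mathbf{v}}$ and apply \Cref{lem:realisablelift} and \Cref{lem:realisablereal}. For the last claim you, like the paper, use the hypothesis with \Cref{thm:genericlifting} to identify the liftable matroids with a single generic $L_{\mathbf{v}}$ before invoking \Cref{lem:realisablelift}. The only difference is that you intersect over all generic realisations rather than one generic point per irreducible component. This matches the definition of generic realisability directly, so you do not need the fact, only established later in \Cref{cor:rv_is_generic}, that $R_{\mathbf{v}}$ is constant on generic points of a component.
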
 

\begin{proof}
    Choose generic realisations $\mathbf{v}_1, \ldots,\mathbf{v}_k$ of $N$, one from each irreducible component of $\Gamma_N$.
    By \Cref{thm:genericlifting}, 
    the set of liftable matroids in $\Lat(N)$ is exactly the intersection $\bigcap_{i=1}^k L_{\mathbf{v}_i}$.
    Likewise, the set of generically realisable matroids in $\Lat(N)$ is $\bigcap_{i=1}^k R_{\mathbf{v}_i}$.
    As the intersection of upwards-closed sets is upwards-closed, the set of liftable matroids is upwards-closed by \Cref{lem:realisablelift}. 
    Similarly, as the intersection of join-closed sets is join-closed, the set of generically realisable matroids is join-closed by \Cref{lem:realisablereal}. 

    Now suppose that $L_{\mathbf{v}_i} = L_{\mathbf{v}_j}$ for all $i,j \in [k]$.
    Importantly, the minimal liftable matroids in $\Lat(N)$ are exactly the minimal elements of $L_{\mathbf{v}_1}$.
    Choose any minimal element $M$ of $L_{\mathbf{v}_1}$.
    By \Cref{lem:realisablelift},
    $M \in R_{\mathbf{v}_i}$ for each $i \in [k]$.
    Thus $M$ is generically realisable as is required.
\end{proof}

The authors believe that the requirement in \Cref{thm:A} that generic realisations provide the same subset $L_{\mathbf{v}}$ is always satisfied.
We observe here that \Cref{conj:genlift} being true would, in conjugation with \Cref{thm:A}, imply the following holds.

\begin{corollary}
    Let $N$ be a realisable matroid.
    If \Cref{conj:genlift} is true, then $L_{\mathbf{v}} = L_{\mathbf{w}}$ for all generic realisations $\vf,\mathbf{w}$ of $N$,
    and the minimal liftable matroids in $\Lat(N)$ are generically realisable.
\end{corollary}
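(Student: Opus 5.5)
The corollary splits into two claims: that $L_{\mathbf{v}} = L_{\mathbf{w}}$ for all generic $\vf,\mathbf{w}\in\Gamma_N$, and that the minimal liftable matroids in $\Lat(N)$ are generically realisable. The second claim follows from the first by the final part of \Cref{thm: minimal liftables are generic realisable}. So the whole task is to derive the first claim from \Cref{conj:genlift}. If $N$ is uniform, $\Gamma_N$ is irreducible and the claim is already contained in the remark following the definition of $L_{\vf}$. I therefore treat general $N$ and fix an arbitrary matroid $M \in \Lat(N)$, with $M \neq N$ so that $T(M) = N$.

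\textbf{Step 1: reduce membership in $L_{\vf}$ to a rank condition.} By \Cref{cor:realisationliftable}, $M \in L_{\vf}$ if and only if $\rank \mathcal{L}_{\vf}(M) \le n-d$. Here $\mathcal{L}_{\vf}(M) = \mathcal{R}_{\vf}(N;\mathcal{C}^{\mathrm{ns}}(M))$, which makes sense because every non-spanning circuit of $M$ is a circuit of $N$.

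\textbf{Step 2: identify the generic rank.} Let $\vf$ be a generic point of an irreducible component $V$ of $\Gamma_N$. By \Cref{lem:genericliftmatroid}, the row matroid of $\mathcal{R}_{\vf}(N;S)$ is the same for all generic $\vf$ in $V$, for every set of circuits $S$. Hence $\rank \mathcal{L}_{\vf}(M)$ is the rank of the generic liftability matroid of $M$ attached to $V$.

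\textbf{Step 3: transfer the conjecture.} I read \Cref{conj:genlift} as saying that the generic row matroid of circuit vectors of $N$ does not depend on the component. Concretely, the row matroid of $\mathcal{R}_{\vf}(N;\mathcal{C}(N))$ is the same for generic points of every component. This is the generic Oxley--Wang derived matroid of $N$.

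For each $M$, the matrix $\mathcal{L}_{\vf}(M)$ is the row-submatrix of $\mathcal{R}_{\vf}(N;\mathcal{C}(N))$ indexed by $\mathcal{C}^{\mathrm{ns}}(M)$. Its rank is therefore the rank of $\mathcal{C}^{\mathrm{ns}}(M)$ in that common matroid, and so it is independent of the component. By Step 1, $M \in L_{\vf}$ if and only if $M \in L_{\mathbf{w}}$ for any two generic $\vf,\mathbf{w}$. Since $M$ was arbitrary, $L_{\vf} = L_{\mathbf{w}}$.

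\textbf{Step 4: conclude.} Invoke \Cref{thm: minimal liftables are generic realisable} to deduce that the minimal liftable matroids in $\Lat(N)$ are generically realisable.

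\textbf{Main obstacle.} The delicate point is Step 3, namely how \Cref{conj:genlift} is read. As literally phrased, it speaks of ``generic liftability matroids'' of a single matroid. That reading only controls $\mathcal{L}_{\vf}(M)$ for each $M$ separately, but this still suffices, since Step 1 needs only the rank of $\mathcal{L}_{\vf}(M)$ for each fixed $M$.

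To be safe, I would apply the conjecture to each $M \in \Lat(N)$ individually. Under that reading, for each $M$ the generic liftability matroid of $M$ is unique, so its rank is common to all components, and Step 1 gives the equality directly. One caveat remains: \Cref{conj:genlift} is stated for realisable matroids, while $M$ need not be realisable. I would note that only the realisability of $T(M) = N$ is needed for the definition, and interpret the conjecture accordingly.
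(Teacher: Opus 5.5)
Your proposal is correct and matches the paper's intended argument. Under \Cref{conj:genlift}, the rank of $\mathcal{L}_{\mathbf{v}}(M)$ at generic points does not depend on the irreducible component, so \Cref{cor:realisationliftable} forces $L_{\mathbf{v}}=L_{\mathbf{w}}$, and the last part of \Cref{thm: minimal liftables are generic realisable} then yields the second claim.

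Your first ``concrete'' reading in Step 3 is really \Cref{conj:gom}, which is strictly stronger than \Cref{conj:genlift}, so the per-$M$ reading you fall back on is the right one. The trivial elevation $N$ that you set aside is harmless: it lies in no $L_{\mathbf{v}}$, because a lifting in which every $d$-subset is dependent is trivial.
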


We close the subsection with the following sufficient condition for generic realisability.

\begin{corollary}
\label{cor:unique_implies_realisable}
    Let $N$ be a realisable matroid on $[n]$. If $M$ is the unique 1-elevation of $N$, then $M$ is generically realisable if and only if $M$ is liftable.
\end{corollary}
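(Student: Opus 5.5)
The forward direction needs no new work: by \Cref{prop:genrealimplieslift}, every generically realisable matroid is liftable. So the plan concentrates on the converse. Assume $M$ is liftable and is the unique 1-elevation of $N$, so that $\Lat(N)=\{N,M\}$. Fix any generic realisation $\mathbf{v}$ of $N$. The goal is to show $M\in R_{\mathbf{v}}$, since the generically realisable matroids in $\Lat(N)$ are exactly $\bigcap_{\mathbf{v}\text{ generic}}R_{\mathbf{v}}$.

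First, liftability gives $M\in L_{\mathbf{v}}$. Concretely, there is a non-trivial lifting $\widetilde{\mathbf{v}}\in V_{\mathcal{C}(M)}$ of $\mathbf{v}$. Next we identify the matroid this lifting realises. The argument in the proof of \Cref{lem:realisablelift} applies verbatim to show $T(\mathcal{M}(\widetilde{\mathbf{v}}))=N$:
\begin{itemize}
\item by \Cref{lem:projweakorder}, since $\widetilde{\mathbf{v}}$ projects onto $\mathbf{v}$, we have $T(\mathcal{M}(\widetilde{\mathbf{v}}))\ge N$;
\item by \Cref{lem:circvar1}, $\mathcal{M}(\widetilde{\mathbf{v}})\le M$, and comparing independent sets of size at most $d-1$ gives $T(\mathcal{M}(\widetilde{\mathbf{v}}))\le T(M)=N$.
\end{itemize}

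Non-triviality means $\widetilde{\mathbf{v}}$ spans $\mathbb{C}^d$. Hence $\mathcal{M}(\widetilde{\mathbf{v}})$ has rank $d$, so it is a genuine 1-elevation of $N$ and not $N$ itself. Uniqueness of the 1-elevation then forces $\mathcal{M}(\widetilde{\mathbf{v}})=M$. Thus $\widetilde{\mathbf{v}}$ is a lifting of $\mathbf{v}$ that realises $M$, i.e.\ $M\in R_{\mathbf{v}}$. Since $\mathbf{v}$ was an arbitrary generic realisation, $M$ is generically realisable.

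This argument is essentially the minimality step of \Cref{lem:realisablelift}: when $M$ is the only non-trivial element of $\Lat(N)$, it is automatically minimal in $L_{\mathbf{v}}$ once it belongs to $L_{\mathbf{v}}$, so one could also simply cite that lemma. The only point needing care is the rank bookkeeping, namely that the non-trivial lifting has rank exactly $d$ and so does not collapse to the trivial element $N$. I do not expect a serious obstacle. In particular, no hypothesis of the form $L_{\mathbf{v}}=L_{\mathbf{w}}$, as in \Cref{thm:A}, is needed, because liftability places $M$ in every $L_{\mathbf{v}}$ directly.
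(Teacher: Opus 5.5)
Your proof is correct and is essentially the paper's argument. The paper deduces the converse from \Cref{thm: minimal liftables are generic realisable} (minimal liftable matroids are generically realisable), whose proof is exactly the minimality step of \Cref{lem:realisablelift} that you carry out inline; doing it inline means you never need that theorem's hypothesis $L_{\mathbf{v}}=L_{\mathbf{w}}$, which the paper invokes without comment though it holds automatically here, and it also shows that every realisation of $N$, not only the generic ones, lifts to a realisation of $M$.
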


\begin{proof}
    Suppose that $M$ is the unique liftable $1$-elevation of $N$. If $M$ is liftable, then in particular $M$ is a minimal liftable matroid of $\Lat(N)$. Hence $M$ is generically realisable, by \Cref{thm: minimal liftables are generic realisable}. On the other hand, if $M$ is not liftable, then $M$ is not generically realisable, by \Cref{prop:genrealimplieslift}.
\end{proof}

\begin{corollary}
\label{cor:elevation_matrix_characterisation}
    Let $N$ be a realisable matroid of rank $d-1$ on $[n]$ which is not uniform. Then the freest $1$-elevation of $N$ is liftable if and only if $\rank \mathcal{E}_{\vf}(N) \leq n-d$ for all generic $\vf \in \Gamma_{N}$.
\end{corollary}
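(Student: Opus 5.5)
The freest 1-elevation $M_f$ of $N$ is the top element of $\Lat(N)$. Its non-spanning circuits are exactly the non-spanning circuits of $N$. The plan is to identify the liftability matrix of $M_f$ with the elevation matrix of $N$ and then apply the criteria from \Cref{sec:liftabilitynonpaving}.

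First, recall the structure of the free erection. In $M_f$ the hyperplanes are the maximal proper flats arising from $N$: a set of size at most $d$ is dependent in $M_f$ exactly when it is dependent in $N$ and has rank at most $d-2$ there. Equivalently, $\mathcal{C}^{\mathrm{ns}}(M_f)=\mathcal{C}^{\mathrm{ns}}(N)$. For the inclusion $\supseteq$: a circuit $c$ of $N$ with $|c|\le d-1$ is also a circuit of any matroid $M$ with $T(M)=N$, since independent sets of size at most $d-1$ coincide; it is non-spanning in $M$ because $|c|\le d-1<d$. For the inclusion $\subseteq$: in the freest elevation, no spanning circuit of $N$ (size $d$) becomes dependent, because the free erection adds as independent every $d$-set that does not contain a circuit of $N$ of size at most $d-1$. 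Hence its non-spanning circuits of size $d$ do not exist beyond those forced. Thus $\mathcal{L}_{\vf}(M_f)=\mathcal{R}_{\vf}(N;\mathcal{C}^{\mathrm{ns}}(M_f))=\mathcal{R}_{\vf}(N;\mathcal{C}^{\mathrm{ns}}(N))=\mathcal{E}_{\vf}(N)$ for every $\vf\in\Gamma_N$. The main obstacle is checking this description of the free erection rigorously, i.e.\ that the top of Crapo's lattice has non-spanning circuits exactly $\mathcal{C}^{\mathrm{ns}}(N)$. I would argue it by noting that $\Lat(N)$ is bounded above by $\{M : \mathcal{C}^{\mathrm{ns}}(M)\supseteq \mathcal{C}^{\mathrm{ns}}(N)\}$ and that Crapo's free erection attains equality. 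Also, since $N$ is assumed non-uniform, some 1-elevation is nontrivial and $M_f\neq N$.

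With this identification, apply \Cref{thm:genericlifting}. $M_f$ is liftable if and only if every generic liftability matroid has rank at most $n-d$. That is, $\rank\mathcal{L}_{\vf}(M_f)\le n-d$ for a generic point $\vf$ of each irreducible component of $\Gamma_N$. By \Cref{lem:genericliftmatroid}, every generic $\vf$ in a given component yields the same row matroid, so this is the same as requiring $\rank\mathcal{E}_{\vf}(N)\le n-d$ for all generic $\vf\in\Gamma_N$.

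For a more self-contained forward direction, note the following. If $M_f$ is liftable, then every $\vf$ has a non-trivial lifting in $V_{\mathcal{C}(M_f)}$, hence $\rank\mathcal{E}_{\vf}(N)=\rank\mathcal{L}_{\vf}(M_f)\le n-d$ by \Cref{cor:realisationliftable}. This holds in particular for generic $\vf$. The converse uses the semicontinuity of \Cref{lem:genericliftmatroid}: for arbitrary $\vf$, pick a generic $\mathbf w$ in the same component, so that $\rank\mathcal{E}_{\vf}(N)\le\rank\mathcal{E}_{\mathbf w}(N)\le n-d$. Then \Cref{cor:realisationliftable} gives a non-trivial lifting, so $M_f$ is liftable.
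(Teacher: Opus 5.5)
\textbf{Genuine gap.} Your proof rests on $\mathcal{C}^{\mathrm{ns}}(M_f)=\mathcal{C}^{\mathrm{ns}}(N)$, and hence on $\mathcal{L}_{\vf}(M_f)=\mathcal{E}_{\vf}(N)$. This is false in general. Only the inclusion $\mathcal{C}^{\mathrm{ns}}(N)\subseteq\mathcal{C}^{\mathrm{ns}}(M_f)$ holds, and your argument for that part is fine. The free erection cannot make every $d$-set that avoids the small circuits of $N$ independent, because circuit elimination between two members of $\mathcal{C}^{\mathrm{ns}}(N)$ can force new $d$-element circuits.

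For example, take $d=4$ and let $N$ be the rank-$3$ matroid on $[6]$ whose only circuits of size at most $3$ are $\{1,2,3\}$ and $\{3,4,5\}$. In every nontrivial 1-elevation both are circuits, so eliminating $3$ shows that $\{1,2,4,5\}$ contains a circuit. All $3$-subsets of $\{1,2,4,5\}$ are independent in $N$, so $\{1,2,4,5\}$ is itself a $4$-element, hence non-spanning, circuit of $M_f$. Here $M_f\neq N$: realise $1,\dots,5$ in a $3$-dimensional subspace of $\mathbb{C}^4$ with $\{v_1,v_2,v_3\}$ and $\{v_3,v_4,v_5\}$ each spanning only a $2$-dimensional space, and put $v_6$ outside that subspace. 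So $\mathcal{L}_{\vf}(M_f)$ has a row that $\mathcal{E}_{\vf}(N)$ lacks. Relatedly, non-uniformity does not give $M_f\neq N$: on $[5]$ with the same two lines, the only candidate basis $\{1,2,4,5\}$ is forced to be dependent, so $\Lat(N)=\{N\}$.

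Your forward direction survives, because it only needs $\rank\mathcal{E}_{\vf}(N)\le\rank\mathcal{L}_{\vf}(M_f)$, which follows from the correct inclusion. The converse, deducing $\rank\mathcal{L}_{\vf}(M_f)\le n-d$ from $\rank\mathcal{E}_{\vf}(N)\le n-d$, depends entirely on the false identification and is unproved as written.

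The missing step is the one the paper uses:
\begin{enumerate}[label=(\roman*)]
\item A vector $z\in\ker\mathcal{E}_{\vf}(N)$ outside the trivial $(d-1)$-dimensional subspace gives a lifting $\widetilde{\vf}$ that realises a nontrivial 1-elevation $M=\mathcal{M}(\widetilde{\vf})$ of $N$ (\Cref{cor: lifting from N}). The reason is that every dependent set of size at most $d-1$ contains some $c\in\mathcal{C}^{\mathrm{ns}}(N)$, and $z\perp r_c$.
\item Since $M\le M_f$, \Cref{lem:circvar2} puts $\widetilde{\vf}$ in $V_{\mathcal{C}(M_f)}$. Equivalently, $M_f\in L_{\vf}$ by the upward-closedness in \Cref{lem:realisablelift}.
\item Your semicontinuity step, or \Cref{thm:genericlifting}, then finishes the proof.
\end{enumerate}
This argument in fact shows $\ker\mathcal{E}_{\vf}(N)=\ker\mathcal{L}_{\vf}(M_f)$ for every $\vf\in\Gamma_N$ when $M_f\neq N$. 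So the rank equality you wanted does hold; rows such as $r_{\{1,2,4,5\}}$ lie in the row space of $\mathcal{E}_{\vf}(N)$. But that equality comes from this kernel argument, not from the circuit sets being equal.
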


\begin{proof}
    If $\rank \mathcal{E}_{\vf}(N) \leq n-d$, then some matroid in the $1$-elevation lattice of $N$ is liftable by \Cref{cor: lifting from N}. Since liftability is upwards-closed in the $1$-elevation lattice of $N$ by \Cref{thm: minimal liftables are generic realisable}, it follows that also the freest $1$-elevation is liftable. Conversely, if $\rank \mathcal{E}_{\vf}(N) = n-(d-1)$ then no $1$-elevation of $N$ is liftable. In particular, the freest $1$-elevation of $N$ is not liftable.
\end{proof}

\subsection{Generic realisability of \texorpdfstring{$M(K_5)$}{M(K5)} }
\label{ex:K5}

We now consider $M(K_5)$, the graphic matroid of the complete graph on $5$ elements. We let $N$ denote the truncation of $M(K_5)$, which is the rank-3 paving matroid on $E(K_5)$ with triangles being the circuits of rank two. As in \Cref{fig:K_5}, label the vertices $V(K_5)$ by $\{v_1,\ldots,v_5\}$ and label the edges $E(K_5)$ as follows:
\begin{equation}\label{labeling of vertices}
(\{v_1,v_2\},\{v_2,v_3\},\{v_3,v_4\},\{v_4,v_5\},\{v_1,v_5\},\{v_1,v_3\},\{v_3,v_5\},\{v_1,v_4\},\{v_2,v_4\},\{v_2,v_5\})=(1,2,\ldots,10).
\end{equation}
Under this labelling, $N$ is the rank-3 paving matroid on $\{1,\ldots,10\}$ with non-spanning circuits given by
\begin{equation*}
\mathcal{C}^{\text{ns}}(N)=
\{\{1,2,6\},\{3,4,7\},\{5,6,7\},\{2,7,10\},\{1,5,10\},\{2,3,9\},\{4,9,10\},\{3,6,8\},\{1,8,9\},\{4,5,8\}\}.
\end{equation*}

\begin{figure}[h]
\centering
\begin{tikzpicture}
\filldraw[black] (-2,-1) circle (2pt);
\filldraw[black] (0,0) circle (2pt);
\filldraw[black] (1,2) circle (2pt);
\filldraw[black] (2,-1) circle (2pt);
\filldraw[black] (-1,2) circle (2pt);

\draw (0,0)--(-2,-1);
\draw (0,0)--(1,2);
\draw (0,0)--(2,-1);
\draw (0,0)--(-1,2);
\draw (1,2)--(-2,-1);
\draw (2,-1)--(-2,-1);
\draw (-1,2)--(-2,-1);
\draw (1,2)--(2,-1);
\draw (-1,2)--(2,-1);
\draw (-1,2)--(1,2);

\node[anchor=south] at (-1,2) {$v_1$};
\node[anchor=south] at (1,2) {$v_2$};
\node[anchor=north] at (0,0) {$v_3$};
\node[anchor=north] at (-2,-1) {$v_4$};
\node[anchor=north] at (2,-1) {$v_5$};

\node[anchor=south] at (0,2) {$1$};
\node[anchor=east] at (-0.6,1.25) {$6$};
\node[anchor=east] at (-1.5,0.5) {$8$};
\node[anchor=west] at (-0.5,1.5) {$5$};

\node[anchor=west] at (0.6,1.25) {$2$};
\node[anchor=east] at (0.5,1.5) {$9$};
\node[anchor=west] at (1.5,0.5) {$10$};

\node[anchor=east] at (0.75,-0.5) {$7$};
\node[anchor=west] at (-0.75,-0.5) {$3$};

\node[anchor=north] at (0,-1) {$4$};
\end{tikzpicture}
\caption{$K_5$ with the labeling of the edges used in \Cref{ex:K5}}
\label{fig:K_5}
\end{figure}

Following \textup{\cite[Procedure~1]{Fatemeh6}}, we find that, up to projective transformation, every realisation of $N$ has the form:

\begin{equation}\label{matrix of N}
\begin{array}{ccccccccccccc}
&1&2&3&4&6& 7& 5& 10& 9 & 8 \\
\\
\ldelim({3}{0.5em} &1 & 0 & 0 & 1 & 1 & 1 & z & 1 & 0 & z-1 & \rdelim){3}{0.5em} \\
& 0 & 1 & 0 & 1 & x & 1 & x(z-1)+1 & x(z-1)+1 & x(z-1) & x(z-1)\\
& 0 & 0 & 1 & 1 & 0 & y & y & y & y-1 & y-1
\end{array}
\end{equation}

where the columns correspond to the elements $(1,2,3,4,6,7,5,10,9,8)$, and $x,y,z\in \CC$ are any scalars such that all minors associated with bases are nonzero. In particular, it follows from this that $\Gamma_N$ is irreducible. 

A generic realisation of $N$ can be obtained from choosing $x,y,z\in \CC$ algebraically independent over $\QQ$. If $\vf$ denotes the corresponding realisation of $N$ for such a choice of parameters, a direct computation shows that
\begin{equation}\label{ex:EK5}
\mathcal{E}_{\mathbf{v}}(N)=
\begin{array}{cccccccccccccccccccccccccc}
&1&2&3&4&6& 7& 5& 10& 9 & 8 \\
\\
\ldelim({10}{0.5em} & 1 & x & 0 & 0 & -1 & 0 & 0 & 0 & 0 & 0 & \rdelim){10}{0.5em}\\
&0 & 0 & y-1 & 1 & 0 & -1 & 0 & 0 & 0 & 0\\
&0 & 0 & 0 & 0 & z-1 & 1 & -1 & 0 & 0 & 0\\
&0 & x(z-1) & 0 & 0 & 0 & 1 & 0 & -1 & 0 & 0\\
&z-1 & 0 & 0 & 0 & 0 & 0 & -1 & 1 & 0 & 0\\
&0 & x(z-1) & y-1 & 0 & 0 & 0 & 0 & 0 & -1 & 0\\
&0 & 0 & 0 & 1 & 0 & 0 & 0 & -1 & 1 & 0\\
&0 & 0 & y-1 & 0 & z-1 & 0 & 0 & 0 & 0 & -1\\
&z-1 & 0 & 0 & 0 & 0 & 0 & 0 & 0 & 1 & -1\\
&0 & 0 & 0 & 1 & 0 & 0 & -1 & 0 & 0 & 1
\end{array},
\end{equation}
where the columns correspond to the elements $(1,2,3,4,6,7,5,10,9,8)$ and the rows correspond to the non-spanning circuits of $N$. 
Direct computation shows that the kernel of $\ker(\mathcal{E}_{\mathbf{v}}(N))$ is spanned by the rows of the matrix \eqref{matrix of N} and the vector $(0,0,0,1,0,1,1,1,0,0)$. Hence, $\dim(\ker(\mathcal{E}_{\mathbf{v}}(N))=4$, which implies that $\rank(\mathcal{E}_{\mathbf{v}}(N))=6$. 

$M(K_5)$ is the unique element in the $1$-elevation lattice of $N$ (see \Cref{app:kn} for a proof). Hence $M(\widetilde{\vf})= M(K_5)$ is generically realisable if and only if it is liftable, by \Cref{cor:unique_implies_realisable}. Since $\rank(\mathcal{E}_{\mathbf{v}}(N))=6=10-4$ and $M(K_5)$ is the freest $1$-elevation of $N$, $M(K_5)$ is liftable by \Cref{cor:elevation_matrix_characterisation}, and therefore generically realisable.

\subsection{Constructing \texorpdfstring{$R_{\vf}$}{Rv} with liftability matrices}

We first note the following simple necessary condition for when a matroid $M$ does not belong to $R_{\vf}$.

\begin{lemma}
    Let $N$ be a realisable matroid on $[n]$ with rank $d-1$. 
    Let $\mathbf{v}$ be a realisation of $N$, and suppose that $M_1 ,M_2$ are 1-elevations of $N$ with $M_1 < M_2$.
    If $\rank \mathcal{L}_{\mathbf{v}}(M_1) =  \rank \mathcal{L}_{\mathbf{v}}(M_2)$,
    then $M_2 \notin R_{\vf}$.
\label{lem:realisability_wo}
\end{lemma}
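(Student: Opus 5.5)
We argue by contradiction: assume $M_2 \in R_{\vf}$ and derive $M_1 = M_2$. The idea is that $M_1 < M_2$ gives an inclusion of the relevant row spaces, and equality of ranks turns this into equality of kernels. So every lifting that is compatible with $M_2$ is automatically compatible with $M_1$, which forces $M_2 \le M_1$.

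First we compare non-spanning circuits. Since $T(M_1)=T(M_2)=N$ and $\mathcal{I}(M_1)\subseteq \mathcal{I}(M_2)$, every non-spanning circuit $c$ of $M_2$ is dependent in $M_1$. Its proper subsets are independent in $M_2$, and hence of size at most $d-1$. They are therefore independent in $N$, and so also in $M_1$. Thus $c$ is a circuit of $M_1$, and it is non-spanning because $|c|\le d$. This gives $\Ccal^{\mathrm{ns}}(M_2)\subseteq \Ccal^{\mathrm{ns}}(M_1)$. Since the rows $r_c$ depend only on $N$ and $\vf$, the rows of $\mathcal{L}_{\vf}(M_2)$ are a subset of the rows of $\mathcal{L}_{\vf}(M_1)$. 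Equal rank then means the two row spaces coincide, so
\[
\ker \mathcal{L}_{\vf}(M_1)=\ker \mathcal{L}_{\vf}(M_2).
\]

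Next, suppose $M_2\in R_{\vf}$. Then there is a lifting $\widetilde{\vf}=((v_i,z_i))_{i\in[n]}$ with $\mathcal{M}(\widetilde{\vf})=M_2$. In particular $\widetilde{\vf}\in V_{\Ccal(M_2)}$, so $z\in\ker\mathcal{L}_{\vf}(M_2)$ by \Cref{prop: kernel and liftings 3}. By the kernel equality, $z\in\ker\mathcal{L}_{\vf}(M_1)$. Applying \Cref{prop: kernel and liftings 3} again gives $\widetilde{\vf}\in V_{\Ccal(M_1)}$. Then \Cref{lem:circvar1} yields $M_2=\mathcal{M}(\widetilde{\vf})\le M_1$, which contradicts $M_1<M_2$. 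Hence $M_2\notin R_{\vf}$.

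The only step needing care is the circuit-inclusion argument. There we must verify that the rows $r_c$ of the two liftability matrices are literally the same vectors, each defined from $N$ and fixed up to scalar. We must also verify that circuits of size $d$ or less of $M_2$ remain circuits of $M_1$, which uses the fact that the two truncations agree. Everything else is a direct application of \Cref{prop: kernel and liftings 3} and \Cref{lem:circvar1}.
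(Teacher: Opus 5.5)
Your proposal is correct and follows the paper's proof essentially step for step. The argument runs from $\Ccal^{\mathrm{ns}}(M_2)\subseteq\Ccal^{\mathrm{ns}}(M_1)$ and equal ranks to equal kernels, then a realising lifting of $M_2$ lies in $V_{\Ccal(M_1)}$, and \Cref{lem:circvar1} gives $M_2\le M_1$, a contradiction. Your explicit verification of the circuit inclusion via $T(M_1)=T(M_2)=N$ fills in a step the paper only asserts.
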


\begin{proof}
    As $M_1 < M_2$ and $M_1$ and $M_2$ are 1-elevations of $N$,
    we have that $\Ccal^{\text{ns}}(M_2) \subseteq \Ccal^{\text{ns}}(M_1)$.
    Since furthermore $\rank \mathcal{L}_{\mathbf{v}}(M_1) =  \rank \mathcal{L}_{\mathbf{v}}(M_2)$, it follows that $\ker \mathcal{L}_{\mathbf{v}}(M_1) = \ker \mathcal{L}_{\mathbf{v}}(M_2)$.
    Suppose for the sake of contradiction that there exists $z = (z_1,\ldots,z_n) \in \mathbb{C}^n$ for which $\widetilde{\vf} = \big( (v_i,z_i) \big)_{i \in [n]}$ is a realisation of $M_2$.
    As $\ker \mathcal{L}_{\mathbf{v}}(M_1) = \ker \mathcal{L}_{\mathbf{v}}(M_2)$,
    it follows that $\widetilde{\vf} \in V_{\mathcal{C}(M_1)}$.
    Thus $\mathcal{M}(\widetilde{\vf}) \leq M_1$ by \Cref{lem:circvar1}.
    However,
    this now implies $\mathcal{M}(\widetilde{\vf}) < M_2$,
    contradicting that $\widetilde{\vf}$ is a realisation of $M_2$.
\end{proof}

Our next result describes a technique for constructing parts of $R_{\mathbf{v}}$ using the rank of liftability matrices.

\begin{proposition}
    Let $N$ be a realisable matroid on $[n]$ with rank $d-1$. 
    Let $\mathbf{v}$ be a realisation of $N$, and suppose that $M_1 ,M_2$ are 1-elevations of $N$ such that:
    \begin{enumerate}[label=(\roman*)]
        \item $M_1 < M_2$;
        \item $M_1 \in R_{\vf}$; and
        \item if $M \in R_{\vf}$ satisfies $M_1 \leq M < M_2$, then $M =M_1$.
    \end{enumerate}  
    Then $M_2 \in R_{\vf}$ if and only if $\rank \mathcal{L}_{\mathbf{v}}(M_2) <  \rank \mathcal{L}_{\mathbf{v}}(M_1)$.
\label{prop:realisability_wo}
\end{proposition}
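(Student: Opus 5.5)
The forward direction is immediate from \Cref{lem:realisability_wo}. Since $M_1<M_2$, we have $\Ccal^{\text{ns}}(M_2)\subseteq\Ccal^{\text{ns}}(M_1)$. Hence $\mathcal{L}_\vf(M_2)$ is a row-submatrix of $\mathcal{L}_\vf(M_1)$, so $\rank\mathcal{L}_\vf(M_2)\le\rank\mathcal{L}_\vf(M_1)$. If equality held, \Cref{lem:realisability_wo} would give $M_2\notin R_\vf$. So $M_2\in R_\vf$ forces the strict inequality. Hypotheses (ii) and (iii) are not needed for this direction.

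For the converse, assume $\rank\mathcal{L}_\vf(M_2)<\rank\mathcal{L}_\vf(M_1)$. Then $K_1:=\ker\mathcal{L}_\vf(M_1)$ is a proper subspace of $K_2:=\ker\mathcal{L}_\vf(M_2)$.
\begin{itemize}
\item Pick $z\in K_2$ generic in the sense that, for every circuit $C$ of $N$ with $r_C\notin K_2^\perp$, we have $z\not\perp r_C$. This avoids a finite union of proper hyperplanes of $K_2$, and in particular it avoids $K_1$.
\item Set $\widetilde\vf=((v_i,z_i))_i$ and $M:=\mathcal{M}(\widetilde\vf)$. By \Cref{prop: kernel and liftings 3}, $\widetilde\vf\in V_{\Ccal(M_2)}$, so $M\le M_2$ by \Cref{lem:circvar1}.
\item Since $M_1\in R_\vf$, there is $z^{(1)}\in K_1$ with $\mathcal{M}((\vf,z^{(1)}))=M_1$. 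Now form $z_t=tz+(1-t)z^{(1)}$, exactly as in the proof of \Cref{lem:realisablereal}.
\end{itemize}

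The identification of matroids goes through circuits of $N$. A circuit $C$ of $N$ is dependent in $\mathcal{M}((\vf,z_t))$ if and only if $z_t\perp r_C$. For all but finitely many $t$, this happens exactly when $C$ is dependent in both $\mathcal{M}((\vf,z))$ and $M_1$. Fix such a $t$ and let $M'=\mathcal{M}((\vf,z_t))$. It is a 1-elevation of $N$ in $R_\vf$; non-triviality and $T(M')=N$ follow as in \Cref{lem:realisablereal} and \Cref{lem:elevation}. It is also the join of $M$ and $M_1$ in $\Lat(N)$, so $M_1\le M'\le M_2$. Here I should double-check that $M$ itself is a genuine (non-trivial) element of $\Lat(N)$; this holds because $z\notin K_1$ is certainly outside the $(d-1)$-dimensional trivial space.

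By (iii), either $M'=M_1$ or $M'=M_2$. The key step is to rule out $M'=M_1$. Take $z_t\in K_2\setminus K_1$, which holds for $t\ne0$ since $z\notin K_1$ and $z^{(1)}\in K_1$. Then some $c\in\Ccal^{\text{ns}}(M_1)$ has $z_t\not\perp r_c$. That $c$ is independent in $M'$, whereas $c$ is a circuit of $M_1$, so $M'\neq M_1$. Hence $M'=M_2\in R_\vf$.

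The main obstacle is verifying that $M'$ really lies in $\Lat(N)$ with $M_1\le M'$. The delicate point is that $\mathcal{M}(\widetilde\vf)$ is determined by which circuits $C$ of $N$ satisfy $z\perp r_C$. I would handle this exactly via the correspondence used in \Cref{lem:realisablereal} and \Cref{lem:elevation}. One can even bypass $M$ and work only with $z_t$ for generic $t$: $M'\ge M_1$ because every circuit of $N$ dependent in $M'$ is dependent in $M_1$, and $M'\le M_2$ because $z_t\in K_2$.
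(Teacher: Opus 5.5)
Your proof is correct and follows essentially the same route as the paper. The forward direction goes via \Cref{lem:realisability_wo}, and the converse moves along the line through a point of $\ker\mathcal{L}_{\vf}(M_2)\setminus\ker\mathcal{L}_{\vf}(M_1)$ and a lifting $z^{(1)}$ realising $M_1$ (the paper's $z+\mu z'$), picking a generic parameter so the resulting matroid lies in $R_{\vf}$ with $M_1<\mathcal{M}\le M_2$ and then invoking (iii); the only cosmetic difference is that you phrase the genericity step through the orthogonality/join argument of \Cref{lem:realisablereal}, whereas the paper checks directly that each independent set of $M_1$ survives for all but one value of $\mu$.
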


\begin{proof}
    As $M_1 < M_2$ and $M_1$ and $M_2$ are 1-elevations of $N$,
    we have that $\Ccal^{\text{ns}}(M_2) \subseteq \Ccal^{\text{ns}}(M_1)$.
    Thus $\rank \mathcal{L}_{\mathbf{v}}(M_2) \leq  \rank \mathcal{L}_{\mathbf{v}}(M_1)$ and $\ker \mathcal{L}_{\mathbf{v}}(M_1) \subseteq \ker \mathcal{L}_{\mathbf{v}}(M_2)$.
    If $\rank \mathcal{L}_{\mathbf{v}}(M_1)  =  \rank \mathcal{L}_{\mathbf{v}}(M_2)$ then $M_2 \notin R_{\vf}$ by \Cref{lem:realisability_wo}.
    
    Suppose instead that $\rank \mathcal{L}_{\mathbf{v}}(M_2) <  \rank \mathcal{L}_{\mathbf{v}}(M_1)$.
    As $\rank \mathcal{L}_{\mathbf{v}}(M_2) <  \rank \mathcal{L}_{\mathbf{v}}(M_1)$,
    we have that $\ker \mathcal{L}_{\mathbf{v}}(M_1)$ is a strict subspace of $\ker \mathcal{L}_{\mathbf{v}}(M_2)$.
    Choose any non-zero $z = (z_1,\ldots,z_n) \in \ker \mathcal{L}_{\mathbf{v}}(M_2) \setminus \ker \mathcal{L}_{\mathbf{v}}(M_1)$,
    and fix $\widetilde{\mathbf{w}} = (w_1,\ldots,w_n)$ with $w_i = (v_i,z_i)$ for each $i \in [n]$.
    By \Cref{prop: kernel and liftings 3},
    $\widetilde{\mathbf{w}} \in V_{\Ccal(M_2)} \setminus V_{\Ccal(M_1)}$,
    and the matroid $\mathcal{M}(\widetilde{\mathbf{w}})$ is contained in $R_{\vf}$.
    By \Cref{lem:circvar1},
    $\mathcal{M}(\widetilde{\mathbf{w}}) \leq M_2$ and $\mathcal{M}(\widetilde{\mathbf{w}}) \in \Lat(N) \setminus \{N\}$.
    It now suffices to show that we can choose $z^*\in \ker \mathcal{L}_{\mathbf{v}}(M_2) \setminus \ker \mathcal{L}_{\mathbf{v}}(M_1)$ so that every independent subset of $M_1$ is independent in $\mathcal{M}(\widetilde{\mathbf{w}})$ when $z=z^*$;
    indeed, if we can pick such a $z^*$, then $M_1 < \mathcal{M}(\widetilde{\mathbf{w}}) \leq M_2$,
    which by assumption implies that $\mathcal{M}(\widetilde{\mathbf{w}}) = M_2$.

    As $M_1 \in R_{\vf}$,
    there exists $z' \in \ker \mathcal{L}_{\vf}(M_1)$ such that $\widetilde{\mathbf{v}}' = \big((v_i,z_i') \big)_{i \in [n]}$ is a realisation of $M_1$.
    Choose any $z\in \ker \mathcal{L}_{\mathbf{v}}(M_2) \setminus \ker \mathcal{L}_{\mathbf{v}}(M_1)$,
    and for each $\mu \in \mathbb{C}$ fix $\widetilde{\mathbf{w}}_\mu := \big((v_i,z_i + \mu z_i') \big)_{i \in [n]}$.
    Since $z' \in \ker \mathcal{L}_{\vf}(M_1)$ and $z\in \ker \mathcal{L}_{\mathbf{v}}(M_2) \setminus \ker \mathcal{L}_{\mathbf{v}}(M_1)$,
    we have $z+ \mu z' \in \ker \mathcal{L}_{\mathbf{v}}(M_2) \setminus \ker \mathcal{L}_{\mathbf{v}}(M_1)$.

    \begin{claim}
        Let $I$ be an independent set in $M_1$.
        Then for all but at most one choice of $\mu$,
        $I$ is independent in $\mathcal{M}(\widetilde{\mathbf{w}}_\mu)$.
    \end{claim}

    \begin{claimproof}
        By relabelling, we may suppose that $I=\{1,\ldots,k\}$ is an independent set of $M_1$. If $k \leq d-1$, then $\{1,\ldots,k\}$ is independent in $N$, and therefore also in $\mathcal{M}(\widetilde{\mathbf{w}}_\mu)$ for any choice of $\mu$. Suppose that $k=d$, and that $\{1,\ldots,d\}$ is an independent set of $M_1$, but not in $\mathcal{M}(\widetilde{\mathbf{w}}_m)$ for some $m \in \mathbb{C}$. 
        Then $\{1,\ldots,d\}$ must be a circuit of $M$, and $\sum_{i=1}^d \lambda_i (v_i,z_i) = \mathbf{0}$, where $\lambda_1,\ldots,\lambda_d$ are the unique non-zero constants such that $\sum_{i=1}^d \lambda v_i = \mathbf{0}$.
        This now implies that for $\mu \neq m$,
        \begin{equation*}
            \sum_{i=1}^d \lambda_i(v_i,z_i +\mu z_i') = \sum_{i=1}^d \lambda_i(v_i,z_i +(\mu - m) z_i') + \sum_{i=1}^d \lambda_i(\mathbf{0},z_i +m z_i') =  \sum_{i=1}^d \lambda_i (v_i, (\mu-m) z_i') \neq \mathbf{0},
        \end{equation*}
        with the inequality following from $\{1,\ldots,k\}$ being independent in $\mathcal{M}(\widetilde{\mathbf{v}}') = M_1$.
        Thus $\{1,\ldots,k\}$ is independent in $\mathcal{M}(\widetilde{\mathbf{w}}_\mu)$ whenever $\mu \neq m$.
    \end{claimproof}

    We now choose $\mu$ so that every independent set in $M_1$ is independent in $\widetilde{\mathbf{w}}_\mu$ to conclude the proof.
\end{proof}

We now present the following algorithm for constructing $R_{\vf}$ for a rank-$(d-1)$ matroid $N$ on $[n]$ with realisation $\vf$:
\begin{algorithm}[H]
\begin{algorithmic}[1]\caption{Algorithm for constructing $R_{\vf}$}\label{algorithm Rv}
    \State Compute $\rank \mathcal{L}_{\vf}(M)$ for every $M \in \Lat(N)$.
    \State $A \gets \{ M \in \Lat(N) : \rank \mathcal{L}_{\vf}(M) \leq n-d , ~ \rank \mathcal{L}_{\vf}(M') = n-d +1 ~ \forall M' < M\}$
    
    \State $B \gets \{M \in \Lat(N) \setminus A : \rank \mathcal{L}_{\vf}(M) \leq n-d\}$.
     \Statex \hspace{\algorithmicindent} \textit{(Observe here that for each matroid $M \in B$ the set $\{X \in A : X < M\}$ is non-empty.)}
     \While{$B \neq \emptyset$}
    \State Choose a minimal element $M \in B$, and choose any maximal element $M'$ of $\{X \in A : X < M\}$. 
    \If{$\rank \mathcal{L}_{\vf}(M) < \rank \mathcal{L}_{\vf}(M')$}
    \State $A \gets A\cup \{M\}$.
    \EndIf
    \State $B \gets B \setminus \{M\}$.
    \EndWhile
    \State\Return $A$.
\end{algorithmic}
\end{algorithm}
It now follows from \Cref{prop:realisability_wo} that $R_{\vf} = A$.
\Cref{prop:realisability_wo} also informs us that \Cref{conj:genlift} being true would guarantee the following.

\begin{corollary}
    Let $N$ be a realisable matroid.
    If \Cref{conj:genlift} is true, then $R_{\vf} = R_{\mathbf{w}}$ for all generic realisations $\vf,\mathbf{w}$ of $N$.
\end{corollary}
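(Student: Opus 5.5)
The plan is to show that \Cref{algorithm Rv} returns the same set for any two generic realisations $\vf,\mathbf{w}$ of $N$, assuming \Cref{conj:genlift}. The algorithm, together with \Cref{prop:realisability_wo}, computes $R_{\vf}$. Its input data are only the numbers $\rank \mathcal{L}_{\vf}(M)$ for $M \in \Lat(N)$, plus the lattice $\Lat(N)$, which does not depend on $\vf$. So it suffices to show that $\rank \mathcal{L}_{\vf}(M) = \rank \mathcal{L}_{\mathbf{w}}(M)$ for every $M \in \Lat(N)\setminus\{N\}$ and all generic $\vf,\mathbf{w}$.

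For $M$ a 1-elevation of $N$ we have $T(M)=N$. Hence $\mathcal{L}_{\vf}(M) = \mathcal{R}_{\vf}(N;\Ccal^{\text{ns}}(M))$, and its row matroid is the generic liftability matroid of $M$ attached to the irreducible component of $\Gamma_N$ containing $\vf$ (\Cref{def: generic liftability matroids}, using \Cref{lem:genericliftmatroid} for independence of the generic point chosen within a component). \Cref{conj:genlift} asserts this matroid is the same for all components. I read the conjecture as applying to each such $M$, i.e.\ to every matroid with a realisable truncation, which is how it is used in the preceding corollary. Therefore the row matroids of $\mathcal{L}_{\vf}(M)$ and $\mathcal{L}_{\mathbf{w}}(M)$ coincide, and in particular so do their ranks. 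For $M=N$ the liftability matrix is not used by the algorithm, or can be treated by the convention for uniform matroids, so nothing changes there.

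With equal rank data, I run \Cref{algorithm Rv} in parallel for $\vf$ and $\mathbf{w}$. The initial sets $A$ and $B$ are defined purely from ranks and the weak order, so they agree. Inductively, the while loop can be run with the same choices of $M$ and $M'$ in both runs, and the test $\rank \mathcal{L}(M)<\rank \mathcal{L}(M')$ has the same outcome. Thus the outputs agree, giving $R_{\vf}=A=R_{\mathbf{w}}$.

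The main subtlety is that the algorithm involves arbitrary choices of minimal $M$ and maximal $M'$. The output is nevertheless independent of these choices, because by \Cref{prop:realisability_wo} it always equals $R_{\vf}$; I will note this so that the parallel-run argument is legitimate. A second point to check is that the equality $R_{\vf}=A$ itself requires each step to satisfy the hypotheses of \Cref{prop:realisability_wo}, which the paper asserts. I take that as given, along with the appropriate reading of the conjecture for all elevations $M$.
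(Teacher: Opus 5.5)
Your proposal is correct and follows the paper's own argument: \Cref{conj:genlift} gives $\rank \mathcal{L}_{\vf}(M)=\rank\mathcal{L}_{\mathbf{w}}(M)$ for every $M\in\Lat(N)$, and \Cref{algorithm Rv} outputs $R_{\vf}$ using only these ranks and the realisation-independent lattice $\Lat(N)$, so the outputs for $\vf$ and $\mathbf{w}$ coincide. One small correction: your claim that the value for $M=N$ is not used is wrong, since step 2 checks all $M'<M$ and $N$ lies below every non-trivial elevation; this does not hurt the argument, because under the natural convention $\mathcal{L}_{\vf}(N)=\mathcal{R}_{\vf}(N;\mathcal{C}(N))$ its rank is $n-d+1$ for every realisation by \Cref{rem:rowvcolmatroid}.
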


\begin{proof}
    Choose any generic $\vf,\mathbf{w}$ of $N$.
    As \Cref{conj:genlift} is true,
    $\rank \mathcal{L}_\vf(M) = \rank \mathcal{L}_{\mathbf{w}}(M)$ for all $M \in \Lat(N)$.
    By applying \Cref{algorithm Rv},
    we see that $R_{\vf} = R_{\mathbf{w}}$.
\end{proof}

We can additionally use \Cref{prop:realisability_wo} to prove the (long overdue) following corollary:

\begin{corollary}\label{cor:rv_is_generic}
    Let $M$ be a rank-$d$ matroid with ground set $[n]$. 
    If $T(M)$ is realisable, then $M$ is generically realisable if and only if each irreducible component of $\Gamma_{T(M)}$ contains a generic realisation that lifts to $M$.
\end{corollary}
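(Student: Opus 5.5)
The forward direction is immediate from the definition: if $M$ is generically realisable, then every generic realisation of $T(M)$ lifts to a realisation of $M$. Since each irreducible component of $\Gamma_{T(M)}$ contains generic points (rationally-defined varieties have dense sets of generic points), each component contains a generic realisation that lifts to $M$. The real content is the converse. Here I will show that, for generic $\vf,\mathbf{w}$ in the same irreducible component $V$ of $\Gamma_{T(M)}$, we have $M\in R_{\vf}$ if and only if $M\in R_{\mathbf{w}}$. Applying this componentwise to the given generic realisation in each component then yields $M\in R_{\vf}$ for every generic $\vf$, which is exactly generic realisability.

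To compare $R_{\vf}$ and $R_{\mathbf{w}}$, I will use \Cref{algorithm Rv}, justified by \Cref{prop:realisability_wo}. The output $R_{\vf}$ depends only on the function $M'\mapsto \rank\mathcal{L}_{\vf}(M')$ on the finite lattice $\Lat(N)$ with $N=T(M)$; in particular it depends on which $M'$ satisfy $\rank \mathcal L_\vf(M')\le n-d$ and on strict comparisons of these ranks. So it suffices to show that $\rank \mathcal{L}_{\vf}(M')=\rank\mathcal{L}_{\mathbf{w}}(M')$ for every $M'\in\Lat(N)$ and all generic $\vf,\mathbf{w}\in V$.

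This follows from \Cref{lem:genericliftmatroid} applied with $S=\Ccal^{\mathrm{ns}}(M')$, a set of circuits of $N$ since $T(M')=N$. Because both $\vf$ and $\mathbf{w}$ are generic in $V$, the lemma applies in both directions. Hence the row matroids of $\mathcal{L}_\vf(M')$ and $\mathcal{L}_{\mathbf{w}}(M')$ are each $\le$ the other, so they are equal and in particular have equal rank. (The trivial element $N$ itself also needs handling, but $N\notin R_\vf$ always, since a realisation of $N$ in $\mathbb{C}^d$ is not a non-trivial lifting, so this is harmless.) With the ranks identical, running \Cref{algorithm Rv} for $\vf$ and for $\mathbf{w}$ makes the same choices at each step, giving $R_{\vf}=R_{\mathbf{w}}$.

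The main obstacle is making sure the algorithm's output is genuinely a function of the rank data alone. One arbitrary choice appears in it, namely the maximal element $M'$ of $\{X\in A: X<M\}$. Correctness of the algorithm, via \Cref{prop:realisability_wo} and join-closedness from \Cref{lem:realisablereal}, means that any such choice gives $R_\vf$, so the ambiguity does not matter. To sidestep this subtlety, I would instead argue by induction along a fixed linear extension of $\Lat(N)$. The base case is the minimal elements of $L_\vf$, which by \Cref{lem:realisablelift} are in $R_\vf$ and are determined by the ranks via \Cref{cor:realisationliftable}. The inductive step uses \Cref{prop:realisability_wo} with $M_1$ the join of the elements of $R_\vf$ below $M_2$; this join lies in $R_\vf$ by \Cref{lem:realisablereal}. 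That shows membership of each $M_2$ is decided identically for $\vf$ and $\mathbf{w}$.
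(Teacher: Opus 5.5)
Your proposal is correct and follows the paper's argument. Membership $M\in R_{\vf}$ is decided by the ranks of the liftability matrices $\mathcal{L}_{\vf}(M')$ over $M'\in\Lat(T(M))$, via \Cref{prop:realisability_wo} and \Cref{algorithm Rv}, and these ranks agree at generic points of the same irreducible component by \Cref{lem:genericliftmatroid}.

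One small point applies only to your linear-extension variant. The join of the elements of $R_{\vf}$ strictly below $M_2$ can equal $M_2$ itself, and \Cref{prop:realisability_wo} needs $M_1<M_2$. In that case $M_2\in R_{\vf}$ follows directly from \Cref{lem:realisablereal}, and this is still decided identically for $\vf$ and $\mathbf{w}$.
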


\begin{proof}
    The result follows immediately from the following consequences to \Cref{prop:realisability_wo} and \Cref{thm:genericlifting}:
    whether or not a realisation of $T(M)$ has a lifting to $M$ is entirely determined by the ranks of some family of liftability matrices, and all liftability matrices of the same matroid have equal rank for generic points in the same irreducible component of $\Gamma_{T(M)}$.
\end{proof}

\subsection{Construction of \texorpdfstring{$\Lat(N)$}{M(K5)} for a family of paving matroids}

In this subsection, we compute $\Lat(N)$ for a sparse paving matroid $N$ of rank $d-1$ with disjoint non-spanning circuits, i.e. its size-$(d-1)$ circuits are pairwise disjoint. We assume $d\geq 4$. Throughout this subsection, we let $N$ be a sparse paving matroid of rank $d-1$ on $[n]$ with disjoint non-spanning circuits given by $\{c_1,\ldots,c_{r}\}\subset [n]$.

The aim of this section is to compute the $1$-elevation lattice of $N$. However, we begin with the following proposition, which in particular asserts that matroids with disjoint non-spanning circuits are realisable. Hence we can always consider liftability of matroids in the $1$-elevation lattice.

\begin{proposition}
    Let $N$ be a matroid with pairwise disjoint non-spanning circuits. Then the realisation space of $N$ is irreducible and non-empty. In particular, $N$ has a unique generic Oxley-Wang derived matroid.
    \label{prop:disj-circuits-irr}
\end{proposition}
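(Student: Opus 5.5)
We would prove the statement by writing down an explicit parametrisation of $\Gamma_N$ by a Zariski-open subset of an affine space; irreducibility then follows because a dominant image of an irreducible set is irreducible. Let $N$ have rank $r$ on $[n]$ and non-spanning circuits $c_1,\ldots,c_k$, pairwise disjoint, with $s_j:=|c_j|\le r$.

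First we record which conditions define $\Gamma_N$. Every set of size $\ge r+1$ is dependent in $N$. A set $X$ with $|X|\le r$ is dependent if and only if it contains a circuit, necessarily of size $\le r$, hence some $c_j$. Consequently $\vf\in(\CC^r)^n$ is a realisation of $N$ if and only if the following hold:
\begin{enumerate}[label=(\alph*)]
\item each $(v_i)_{i\in c_j}$ is linearly dependent;
\item for every $X$ with $|X|\le r$ and $c_j\not\subseteq X$ for all $j$, the vectors $(v_i)_{i\in X}$ are linearly independent.
\end{enumerate}
Condition (b) covers in particular the proper subsets of each $c_j$.

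Next we parametrise. For each $j$ fix an element $e_j\in c_j$. Let $P=(\CC^r)^{[n]\setminus\{e_1,\ldots,e_k\}}\times\prod_j\CC^{c_j\setminus\{e_j\}}$ and define the polynomial map
\[
\phi:P\to(\CC^r)^n,\qquad \phi\big((v_i),(\lambda^{(j)})\big)=\Big(v_i \text{ for } i\neq e_j,\;\; v_{e_j}=\textstyle\sum_{i\in c_j\setminus\{e_j\}}\lambda^{(j)}_i v_i\Big).
\]
The image of $\phi$ satisfies (a). Let $U\subseteq P$ be the set of parameters whose image satisfies (b) and has all $\lambda^{(j)}_i\neq0$. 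Each condition in (b) is the non-vanishing of some maximal minor, which is open, so $U$ is open. Conversely, every realisation lies in $\phi(U)$: since $c_j$ is minimally dependent, $v_{e_j}$ is the unique combination of the other vectors of $c_j$, and all its coefficients are nonzero. Hence $\Gamma_N=\phi(U)$. Because $P$ is irreducible, $U$ is irreducible whenever it is nonempty, and then so is its image $\Gamma_N$.

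The main step is showing $U\neq\emptyset$. Since $P$ is irreducible, it suffices to show that each of the finitely many open conditions is individually nonempty. The conditions $\lambda^{(j)}_i\neq 0$ are clearly so. For a set $X$ as in (b), each $c_j$ has an element $f_j\notin X$, because $c_j\not\subseteq X$. Re-choose the parametrisation with $f_j$ in place of $e_j$. On the locus where all coefficients are nonzero, the two parametrisations describe the same set of tuples: solving for $f_j$ instead of $e_j$ is an invertible change of variables. In the new parametrisation, every vector indexed by $X$ is a free parameter. Taking them generic makes $(v_i)_{i\in X}$ independent, because $|X|\le r$. This shows the condition for $X$ is satisfiable on the common open locus, and the nonemptiness argument is where the disjointness of the $c_j$ is genuinely used.

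Finally, irreducibility of $\Gamma_N$ means all generic realisations lie in the same single component. By \Cref{lem:genericliftmatroid}, generic realisations in the same component give the same row matroid of any circuit matrix. Hence the generic Oxley--Wang derived matroid, which is defined from such circuit matrices at generic points, is unique.
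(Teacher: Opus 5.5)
Your proof is correct, but it follows a different route from the paper.

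The paper's proof is a one-step reduction. From disjointness it identifies the dependent hyperplanes of $N$ with its non-spanning circuits. It concludes that every element lies in at most one dependent hyperplane, so $N$ is nilpotent in the sense of Liwski et al. It then invokes their theorem that nilpotent matroids are realisable with irreducible realisation spaces.

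You instead prove the statement from scratch. Solving for one element $e_j$ of each circuit exhibits $\Gamma_N$ as the polynomial image of a Zariski-open subset $U$ of an affine space. You get $U\neq\emptyset$ by checking each open condition separately. For a given $X$ you re-parametrise through elements $f_j\notin X$, so the vectors indexed by $X$ become free. Translating back only needs the coefficient of $v_{e_j}$ in the new expression for $v_{f_j}$ to be non-zero, which holds on the common locus.

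\textbf{What each route buys.}
\begin{itemize}
\item Your route is self-contained and gives an explicit rational parametrisation of $\Gamma_N$; for instance, algebraically independent parameters yield an explicit generic realisation.
\item It uses nothing beyond disjointness, so it covers the proposition exactly as stated for arbitrary matroids. The paper's identification of dependent hyperplanes with non-spanning circuits relies on the paving-type setting in which the proposition is applied. For example, take rank $3$ on $[4]$ with a parallel pair $\{1,2\}$ as the only non-spanning circuit; then $\{1,2,3\}$ and $\{1,2,4\}$ are both dependent hyperplanes.
\item The paper's route, in exchange, is much shorter and places the result within the larger class of nilpotent matroids.
\end{itemize}

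Two small remarks. Disjointness is already needed to make $\phi$ well defined (the $e_j$ are distinct and no $e_l$ lies in $c_j\setminus\{e_j\}$), not only in the non-emptiness step. Also, the condition $\lambda^{(j)}_i\neq 0$ in the definition of $U$ is already implied by (b).
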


\begin{proof}
    Since the non-spanning circuits of $N$ are pairwise disjoint, it follows that the dependent hyperplanes of $N$ coincide with its non-spanning circuits. In particular the dependent hyperplanes of $N$ are pairwise disjoint. Hence, every element $i \in [n]$ belongs to at most one dependent hyperplane, so $N$ is nilpotent in the sense of \textup{\cite[Definition~3.2]{LIWSKI2026102484}}. The claim now follows from \textup{\cite[Theorem~A]{LIWSKI2026102484}}, which asserts that nilpotent matroids are realisable and have irreducible realisation spaces.
\end{proof}

\begin{definition}
Let $\mathcal{H}=\{H_{1},\ldots,H_k\}$ be a collection of subsets of $[n]$.  We say that $\mathcal{H}$ is $N$-{\em admissible} if it satisfies the following conditions:
\begin{enumerate}[label=(\roman*)]
\item\label{ad1} $\size{H_{i}}\geq d$ for all $i\in [k]$.
\item\label{ad2} $H_i\cap H_j$ contains no basis of $N$ for all distinct $i,j\in[k]$.
\item\label{ad3} For every $i\in[k]$ and $j\in[r]$, if $c_j\setminus\{x\}\subseteq H_i$ for some $x\in c_j$, then $x\in H_i$.
\item\label{ad4} No $H_i$ is of the form $c_j\cup\{x\}$ for any $j\in[r]$ and $x\notin c_j$.
\end{enumerate}
\end{definition}

Let $\HH=\{H_1,\ldots,H_k\}$ be an $N$-admissible subset. Given a family $\mathcal{A}$ of subsets of $n$, let $\min(\mathcal{A})$ denote the collection of inclusion-minimal subsets of $A$. Consider the following collection of subsets:
\begin{equation}\label{mathcalC}\mathcal{C}
:=
\min\left(
\{c_1, \ldots, c_r\} \cup
\binom{H_1}{d} \cup \cdots\cup
\binom{H_k}{d}
\cup
\binom{[n]}{d+1}
\right).\end{equation}

We show in \Cref{Sec:AppB} that $\mathcal{C}$ is the set of circuits of a matroid $M(\HH)$ if and only if $\HH$ is $N$-admissible. For a matroid $N$ with disjoint non-spanning circuits, we get the following explicit description of $\Lat(N)$ in terms of $N$-admissible collections.

\begin{theorem}\label{cor: Lat N}
Let $N$ be a sparse paving matroid with disjoint non-spanning circuits. Then
\begin{equation}\label{latn}\Lat(N)=\{M(\HH): \text{$\HH$ satisfies Conditions~\textup{\ref{ad1}-\ref{ad4}}}\}.\end{equation}
\end{theorem}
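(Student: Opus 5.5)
We prove the two inclusions in \eqref{latn} separately, taking as given the result deferred to \Cref{Sec:AppB}: the family $\mathcal{C}$ in \eqref{mathcalC} is the circuit set of a matroid $M(\HH)$ exactly when $\HH$ is $N$-admissible.

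For the inclusion ``$\supseteq$'', fix an $N$-admissible $\HH$. By the appendix result, $M(\HH)$ is a matroid. We then compute its rank and its truncation. Every $(d+1)$-subset contains a member of $\mathcal{C}$, so $\rank M(\HH)\le d$. A $d$-subset is independent unless it contains some $c_j$ or lies in some $H_i$. Since $N$ has a basis $B$ of size $d-1$, condition \ref{ad2} together with $|H_i|\ge d$ lets us extend $B$ to a $d$-set that is independent; hence the rank is exactly $d$. For the truncation, the independent sets of size $\le d-1$ are those containing no member of $\mathcal{C}$ of size $\le d-1$. The only such members are the $c_j$ (each has size $d-1$, because $N$ is sparse paving of rank $d-1$). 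So these independent sets are precisely the independent sets of $N$, and $T(M(\HH))=N$. The case $\HH=\emptyset$ gives the free erection; we still need to match $N$ itself, the trivial element of $\Lat(N)$, to some $\HH$. We treat it separately: take $\HH=\{[n]\}$, which makes $\mathcal{C}$ equal to the circuits of $N$, and check that the conditions, in particular \ref{ad2}, hold vacuously.

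For the inclusion ``$\subseteq$'', let $M$ be a $1$-elevation of $N$ of rank $d$. Let $H_1,\dots,H_k$ be the hyperplanes of $M$ of rank $d-1$ that contain a $d$-element circuit of $M$, i.e.\ the dependent hyperplanes, excluding those forced only by the $c_j$. Their required properties come from standard hyperplane facts:
\begin{itemize}
\item \ref{ad1}: each $H_i$ has $|H_i|\ge d$, since it contains a $d$-element circuit.
\item \ref{ad2}: two distinct hyperplanes meet in a flat of rank $\le d-2$, so their intersection contains no $(d-1)$-element independent set, i.e.\ no basis of $N$.
\item \ref{ad3}: this is flat-closure. If $c_j\setminus\{x\}\subseteq H_i$, then $x$ lies in the closure of $c_j\setminus\{x\}$, which is contained in $H_i$.
\end{itemize}

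Condition \ref{ad4} is where we expect the main care. We must exclude a hyperplane of the form $c_j\cup\{x\}$. Such a set's $d$-subsets either contain $c_j$ or are of the form $(c_j\setminus\{y\})\cup\{x\}$. The latter would be circuits lying in the closure of $c_j$, which has rank $d-2$ in $M$. Any $d-1$ elements of $c_j$ together with $x$ therefore have rank $d-1$, so they span only the flat $\mathrm{cl}(c_j\cup\{x\})$ and create no new $d$-circuit. Such an $H$ therefore contributes no circuit beyond those already forced by $c_j$ and the $(d+1)$-sets. We discard these hyperplanes from the list; equivalently, we restrict to the dependent hyperplanes not of this form.

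Finally we check that the circuits of $M$ coincide with $\mathcal{C}$:
\begin{itemize}
\item Circuits of size $d-1$ are the $c_j$, because $T(M)=N$.
\item Circuits of size $d$ are non-spanning, so each lies in a hyperplane. By the previous step, any $d$-circuit not containing some $c_j$ lies in some retained $H_i$.
\item Conversely, every $d$-subset of $H_i$ is dependent, since $H_i$ has rank $d-1$.
\item Circuits of size $d+1$ are the minimal $(d+1)$-sets.
\end{itemize}
Taking inclusion-minimal members gives $\mathcal{C}$, so $M=M(\HH)$. The remaining obstacle is bookkeeping about $d$-subsets that contain some $c_j$: these are dependent but not circuits, and $\min(\cdot)$ handles them. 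We also use $d\ge4$ so that the $c_j$ and the $d$-circuits have distinct sizes and behave as claimed.
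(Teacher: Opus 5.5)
\textbf{The main gap is in the inclusion $\supseteq$.} You take as given that $\Ccal$ is the circuit family of a matroid whenever $\HH$ is $N$-admissible. In the paper this is not an independent earlier fact. It is \Cref{prop: admissible subset gives elevation}, proved in the same appendix as the main ingredient of this very theorem. Once you assume it, the inclusion reduces to the easy truncation check you carry out.

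What you still need to supply is circuit elimination for $\Ccal$: for distinct $e_1,e_2\in\Ccal$ and $x\in e_1\cap e_2$, the set $(e_1\cup e_2)\setminus\{x\}$ must contain a member of $\Ccal$. The cases go as follows.
\begin{itemize}
\item If that set has at least $d+1$ elements, this is automatic, since every $(d+1)$-set contains a member of $\Ccal$. This covers every case in which $e_1$ or $e_2$ has size $d+1$.
\item Two distinct $c_j$ are disjoint, so that case is vacuous.
\item If $e_1=c_j$ and $e_2$ is a $d$-subset of $H_i$, minimality gives $c_j\not\subseteq e_2$. The only way $(e_1\cup e_2)\setminus\{x\}$ can have fewer than $d+1$ elements is $c_j\setminus\{y\}\subseteq e_2$ for a single $y$. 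Then \ref{ad3} forces $y\in H_i$, so $(e_1\cup e_2)\setminus\{x\}$ is a $d$-subset of $H_i$ and contains a member of $\Ccal$.
\item For two $d$-subsets of the same $H_i$, the set either has at least $d+1$ elements or is itself a $d$-subset of $H_i$.
\item For $d$-subsets of distinct $H_i,H_j$, condition \ref{ad2} excludes $|e_1\cap e_2|=d-1$. That intersection would be a $(d-1)$-set different from every $c_l$, i.e.\ a basis of $N$ inside $H_i\cap H_j$. Hence $|e_1\cup e_2|\ge d+2$.
\end{itemize}
Only the ``if'' half of the result you quote actually holds, and only that half is needed. Members of $\HH$ of size less than $d$, or of the form $c_j\cup\{x\}$, contribute nothing to $\Ccal$, so non-admissible collections can still give a circuit family.

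\textbf{Two further slips.} First, your claim that $M(\HH)$ always has rank exactly $d$ is false for the admissible collection $\HH=\{[n]\}$, which yields $N$ itself. The extension argument you give for it is not a proof. The claim is also unnecessary: once $M(\HH)$ agrees with $N$ on all sets of size at most $d-1$, it is either $N$ or a $1$-elevation of $N$, so it lies in $\Lat(N)$ either way.

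Second, in your check of \ref{ad4} the sizes are off. We have $|c_j\cup\{x\}|=d$, so its only $d$-subset is itself. The sets $(c_j\setminus\{y\})\cup\{x\}$ have $d-1$ elements and are independent. The correct reason is that $c_j\cup\{x\}$ contains the circuit $c_j$, so it contains no $d$-element circuit and is never in your family; nothing needs to be discarded.

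\textbf{The inclusion $\subseteq$ is sound.} With the \ref{ad4} check corrected, it is essentially the paper's argument. Your hyperplanes containing a $d$-element circuit are exactly the paper's cyclic flats of rank $d-1$. Conditions \ref{ad1}--\ref{ad3} are verified in the same way. Your size-by-size comparison of circuits is the paper's last step, done directly, so this direction does not depend on any result about $\Ccal$.
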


The proof of \Cref{cor: Lat N} can be found in \Cref{Sec:AppB}.\Cref{cor: Lat N} reduces the problem of computing and characterizing $\Lat(N)$ to the more tractable combinatorial problem of determining all $N$-admissible collections, that is, all collections $\HH$ of subsets of $[n]$ satisfying Conditions~\textup{\ref{ad1}--\ref{ad4}}. We now present an algorithm for computing all $N$-admissible collection.

\begin{algorithm}
\caption{Algorithm for computing all $N$-admissible collections}
\label{alg:admissible}
\begin{algorithmic}[1]
\State {\bf Input:} A sparse paving matroid $N$ of rank $d-1$ on $[n]$, with disjoint non-spanning circuits $c_1,\ldots,c_r$.
\State {\bf Output.} The set of all $N$-admissible collections.
\State $\mathscr{S}_N \gets \emptyset$.
\For{each $H\subseteq [n]$ with $\size{H}\geq d$}
  \If{$H$ satisfies Conditions~\ref{ad3} and~\ref{ad4}}
   \State $\mathscr{S}_N \gets \mathscr{S}_N\cup \{H\}$.
   \EndIf
   \EndFor

\State $\mathfrak{A}_N\gets \emptyset$.

\For{each $\mathcal{H}\subseteq \mathscr{S}_N$}
  \If{$H_i\cap H_j$ contains no basis of $N$ for all distinct $H_i,H_j\in \mathcal{H}$}
  \State $\mathfrak{A}_N\gets \mathfrak{A}_N\cup \mathcal{H}$.
  \EndIf
  \EndFor

\State Return $\mathfrak{A}_N$.
\end{algorithmic}
\end{algorithm}

\begin{example}
Let $N$ be a sparse paving matroid of rank $[n]$ with pairwise disjoint non-spanning circuits $\mathcal{C}^{ns}(N)=\{c_1,...,c_r\}$. Two extremal choices of the $N$-admissible collection $\HH$ recover particularly natural elements of $\Lat(N)$. Namely, if $\HH={[n]}$, then the associated matroid $M(\HH)$ is precisely $N$, viewed as the trivial elevation of itself. At the other extreme, if $\HH=\emptyset$, then $M(\emptyset)$ is the \emph{freest $1$-elevation} of $N$, that is, the maximal element of $\Lat(N)$.

\end{example}

\begin{example}\label{ex:1-lattice_disjoint}

    Let $N$ be the matroid of rank $3$ on $[6]$ with a single non-spanning circuit $c=\{1,2,3\}$. To construct the $1$-elevation lattice of $N$ we need to find the $N$-admissible collections $\mathcal{H}$. We start by considering the case that $\mathcal{H}$ consists of a single set. 
    
    The $N$-admissible $4$-element sets are $H_1=\{1,4,5,6\}$, $H_2=\{2,4,5,6\}$ and $H_3=\{3,4,5,6\}$. Any other subset $H$ of $[6]$ with $4$ elements must contain $c$, and therefore be of the form $c \cup \{x\}$ for some $x \in [n]$. 
 The $N$-admissible $5$-element subsets are $H_4=\{1,2,3,4,5\}$, $H_5=\{1,2,3,4,6\}$, and $H_6=\{1,2,3,5,6\}$. Any other subset $H$ of $[6]$ with $5$ elements must contain exactly two of the elements of $c$, and therefore not be $N$-admissible. Finally, $H=\emptyset$ and $H_7=[6]$ are $N$-admissible. Finally, note that the intersection $H_i \cap H_j$ contains a basis of $N$ for all $i \neq j$, so all $N$-admissible collections $\mathcal{H}$ must consist of a single set. The $1$-elevation lattice of $N$ can be found in \Cref{fig:lattice}.

    Now, $M(H_1)$ has non-spanning circuits $\{\{1,2,3\}, \{1,4,5,6\}\}$. Hence $\rank \mathcal{L}_\vf(M(H_1)))\leq 2$ for all realisations $\vf$ of $N$. In fact, since only the row corresponding to $\{1,2,3\}$ is supported in columns $2$ and $3$ and only the row corresponding to $\{1,4,5,6\}$ is supported in columns $4$, $5$ and $6$, it is easy to see that $\rank \mathcal{L}_\vf(M(H_1))) = 2$. Hence $M(H_1)$ is liftable, by \Cref{cor:realisationliftable}. Now, $\mathcal{C}^{ns}(M(H_4))=\{\{1,2,3\}, \{1,2,4,5\}, \{1,3,4,5\}, \{2,3,4,5\}\}$. Since $|\mathcal{C}^{ns}(M(H_4))| \geq 3$, we need to compute $\rank \mathcal{L}_\vf(M(H_4)))$ in order to determine whether $M(H_4)$ is liftable. Suppose that $\widetilde{\vf}$ is a generic realisation of $N$. By applying a projective transformation, we obtain a realisation $\vf$ of $N$ where $\vf_1=(0,1,1)^T$, $\vf_2=(0,-1,1)^T$, $\vf_4=(1,1,1)^T$ and $\vf_5=(1,-1,1)^T$. We then get that  
    \[\vf= 
    \begin{pmatrix}
        0 & 0 & 0 & 1 & 1 & y \\
        1 & -1 & x & 1 & -1 & z \\
        1 & 1 & 1 & 1 & 1 & 1
    \end{pmatrix}    
    \]
    where the set $\{x,y,z\}$ is algebraically independent over $\mathbb{Q}$. With the representation $\vf$ of $N$
    \[\mathcal{L}_\vf(M(H_4)))= 
    \begin{pmatrix}
        x+1 & 1-x & -2 & 0 & 0 & 0 \\
        1 & -1 & 0 & -1 & 1 & 0\\
        2 & 0 & -2 & x-1 & 1-x & 0 \\
        0 & 2 & -2 & x+1 & -x-1 & 0
    \end{pmatrix} .   
    \]
    It is now easy to check that, as $x,y,z$ are algebraically independent over $\mathbb{Q}$, we have $\rank \mathcal{L}_\vf(M(H_4)))=2$, so $\rank \mathcal{L}_\textbf{w}(M(H_4)))=2$ for any generic realisation $\textbf{w} \in \Gamma_N$. Hence $M(H_4)$ is liftable. Now, note that $M(H_5)$ and $M(H_6)$ are isomorphic to $M(H_4)$, and therefore also liftable.

    Hence $M(H_i)$ is a minimal liftable element of $\Lat N$ for all $1 \leq i \leq 6$. Therefore $M(H_i)$ is generically realisable for all $1 \leq i \leq 6$ by \Cref{thm:A}. By \Cref{prop:realisability_wo} it now also follows that $M(\emptyset)$ is generically realisable.
\end{example} 

\begin{figure}
    \centering
        \begin{tikzpicture}
        \node[anchor=south] at (0,0) {$M(\emptyset)$};
    \node at (1,-2) {$M(H_3)$};
    \node at (3,-2) {$M(H_2)$};
    \node at (5,-2) {$M(H_1)$};
    \node at (-1,-2) {$M(H_4)$};
    \node at (-3,-2) {$M(H_5)$};
    \node at (-5,-2) {$M(H_6)$};
    \node at (0,-4) {$N$};
    \draw (0,-0.2)--(1,-1.8);
    \draw (0,-0.2)--(3,-1.8);
    \draw (0,-0.2)--(5,-1.8);
    \draw (0,-0.2)--(-1,-1.8);
    \draw (0,-0.2)--(-3,-1.8);
    \draw (0,-0.2)--(-5,-1.8);
    \draw (0,-3.8)--(1,-2.2);
    \draw (0,-3.8)--(3,-2.2);
    \draw (0,-3.8)--(5,-2.2);
    \draw (0,-3.8)--(-1,-2.2);
    \draw (0,-3.8)--(-3,-2.2);
    \draw (0,-3.8)--(-5,-2.2);
\end{tikzpicture}
    \caption{The $1$-elevation lattice of the matroid from \Cref{ex:1-lattice_disjoint}.}
    \label{fig:lattice}
\end{figure}

\section{Describing liftability via Oxley-Wang matroids}
\label{sec:liftabilityviaOW}

In this section, we connect the properties of liftability and generic realisability with the notion of {\em Oxley--Wang derived} matroids introduced in \cite{oxley2019dependencies}. We introduce {\em generic Oxley--Wang} matroids, and characterise liftability and generic realisability in terms of them. Furthermore, we give an algebraic description of their bases through coordinate sections of realisation spaces, and we relate these constructions to slack realisation spaces of matroids from \cite{brandt2019slack}.

\subsection{Generic Oxley--Wang matroids and liftability}
\label{sec:OWmatroids}

We begin with the section with the following concept introduced by Oxley and Wang in \cite{oxley2019dependencies}.
We define it here using the terminology introduced in \Cref{subsec:circuitmatrices}.

\begin{definition}\label{def: generic oxley wang}
    Let $M$ be a realisable matroid of rank $d$ on $[n]$, and let $\vf = (v_{1}, \ldots, v_{n})$ be a realisation of $M$.
    The \emph{Oxley--Wang derived matroid} $\delta_{\mathrm{OW}}(\vf)$ is defined as the row matroid of $\mathcal{R}_\vf(M;\mathcal{C}(M))$.
Suppose $\Gamma_M$ has irreducible components $V_1,\ldots,V_k$. For each $i\in [k]$, the matroid $\delta_{\mathrm{OW}}(\vf)$ is the same for all generic $\vf\in V_i$ (see \Cref{lem:genericliftmatroid}). We denote this matroid by $\delta_i M$ and call it a {\em generic Oxley--Wang derived matroid} of $M$.
\end{definition}

It follows from \Cref{lem:genericliftmatroid} that a matroid $M$ has finitely many generic Oxley--Wang derived matroids,
which we label $\delta_1 M, \ldots, \delta_k M$.
If $M$ has a single unique generic Oxley--Wang derived matroid,
we say it is \emph{the} generic Oxley--Wang derived matroid and denote it by $\delta M$.
As noted in \Cref{rem:rowvcolmatroid},
every generic Oxley--Wang derived matroid of a rank-$d$ matroid on $[n]$ will have rank $n-d$.

We now prove our second main theorem, \Cref{thm:B},
which we recall is the following:

\begin{theorem}\label{thm: liftable from Oxley wang}
Let $M$ be a rank-$d$ matroid on $[n]$ such that $T(M)$ is realisable.
Suppose that $T(M)$ has generic Oxley--Wang derived matroids $\delta_1 T(M),\ldots, \delta_k T(M)$.
Then, $M$ is liftable if and only if
\[
\max_{i=1,\ldots,k}\rank_{\delta_{i}T(M)}
   ( \mathcal{C}^{\text{ns}}(M) )
   \leq n-d.
\]
\end{theorem}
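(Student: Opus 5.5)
The plan is to reduce the statement to \Cref{thm:genericlifting}, which says that $M$ is liftable if and only if every generic liftability matroid of $M$ has rank at most $n-d$. The work is to identify each generic liftability matroid with the restriction of a generic Oxley--Wang derived matroid of $T(M)$ to the set $\mathcal{C}^{\text{ns}}(M)$.

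First, recall from the observation before \Cref{def: liftability matrix general} that every non-spanning circuit of $M$ is a circuit of $T(M)$. Hence $\mathcal{C}^{\text{ns}}(M) \subseteq \mathcal{C}(T(M))$. For any $\vf \in \Gamma_{T(M)}$, the liftability matrix $\mathcal{L}_\vf(M) = \mathcal{R}_\vf(T(M);\mathcal{C}^{\text{ns}}(M))$ is exactly the submatrix of $\mathcal{R}_\vf(T(M);\mathcal{C}(T(M)))$ formed by the rows indexed by $\mathcal{C}^{\text{ns}}(M)$. The rows $r_c$ are defined only up to nonzero scalars, but this does not affect row matroids. It follows that the row matroid of $\mathcal{L}_\vf(M)$ equals the restriction of $\delta_{\mathrm{OW}}(\vf)$ to $\mathcal{C}^{\text{ns}}(M)$. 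In particular,
\[
\rank \mathcal{L}_\vf(M) = \rank_{\delta_{\mathrm{OW}}(\vf)}\bigl(\mathcal{C}^{\text{ns}}(M)\bigr).
\]

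Next, let $V_1,\ldots,V_k$ be the irreducible components of $\Gamma_{T(M)}$ and take $\vf$ to be a generic point of $V_i$. By \Cref{def: generic oxley wang}, $\delta_{\mathrm{OW}}(\vf) = \delta_i T(M)$. By \Cref{def: generic liftability matroids}, the row matroid of $\mathcal{L}_\vf(M)$ is the generic liftability matroid attached to $V_i$. As $i$ ranges over $[k]$, these are all the generic liftability matroids, and their ranks are $\rank_{\delta_i T(M)}(\mathcal{C}^{\text{ns}}(M))$.

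There is one point to check: \Cref{def: generic oxley wang} calls a point generic if it is generic in $V_i$, while \Cref{def:generic} uses the matroid variety $V_{T(M)}$. The components of $V_{T(M)}$ are the closures of the $V_i$. A rational polynomial vanishes on $V_i$ if and only if it vanishes on $\overline{V_i}$, so the two notions of genericity agree. This bookkeeping is the only step I expect to need any care; the rest is formal.

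Finally, \Cref{thm:genericlifting} gives that $M$ is liftable if and only if $\rank_{\delta_i T(M)}(\mathcal{C}^{\text{ns}}(M)) \leq n-d$ for every $i$. This is precisely the condition that the maximum over $i$ is at most $n-d$.
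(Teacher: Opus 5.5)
Your proposal is correct and follows the paper's own proof. Like the paper, you identify each generic liftability matroid of $M$ as the restriction of the corresponding generic Oxley--Wang derived matroid of $T(M)$ to $\mathcal{C}^{\text{ns}}(M)$ (your bookkeeping on genericity in a component of $\Gamma_{T(M)}$ versus its closure is a harmless detail the paper leaves implicit), and you then invoke \Cref{thm:genericlifting}.
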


\begin{proof}
    The generic liftability matroids of $M$ are exactly the restrictions of the generic Oxley--Wang derived matroids of $T(M)$ to $\mathcal{C}^{\text{ns}}(M)$.
    The result now follows from \Cref{thm:genericlifting}.
\end{proof}

Thus, deciding whether $M$ is liftable reduces to computing the rank of $\mathcal{C}^{\mathrm{ns}}(M)$ in the generic Oxley--Wang derived matroids of $T(M)$. The following lemma gives a necessary condition for independence in the Oxley-Wang derived matroid.

\begin{lemma}\label{lem: necessary for independent}
Let $M$ be a matroid of rank $d$ on $[n]$, and let $\vf = (v_1,\ldots,v_n)$ be a realisation of $M$. If $A \subseteq \mathcal{C}(M)$ is independent in $\delta_{\mathrm{OW}}(\vf)$, then for every non-empty $B \subseteq A$ we have
\[
\size{B} \le \size{V(B)} - \rank_M(V(B)),
\]
where $V(B) \subseteq [n]$ denotes the union of the circuits in $B$.
\end{lemma}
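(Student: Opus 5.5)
Since $A$ is independent in $\delta_{\mathrm{OW}}(\vf)$, every non-empty subset $B \subseteq A$ is also independent. So it suffices to prove the inequality for a single independent set $B$, which we now fix. Write $V(B)$ for the union of its circuits. The plan is to bound $|B|$ by the dimension of a vector space that contains the rows $r_c$ for $c \in B$.

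By definition, the row of $\mathcal{R}_\vf(M;\mathcal{C}(M))$ indexed by $c$ is a vector $r_c$ with $\supp(r_c) = c \subseteq V(B)$ and $\sum_{i} r_{c,i} v_i = \mathbf{0}$. Hence every such row lies in the linear space
\[
K_B := \Big\{ r \in \CC^n : \supp(r) \subseteq V(B),~ \sum_{i \in V(B)} r_i v_i = \mathbf{0} \Big\}.
\]
Independence of $B$ in $\delta_{\mathrm{OW}}(\vf)$ means the rows $\{r_c : c \in B\}$ are linearly independent. We therefore get $|B| \le \dim K_B$.

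Next we compute $\dim K_B$. The space $K_B$ is the kernel of the linear map $\CC^{V(B)} \to \CC^d$ given by $(r_i)_{i \in V(B)} \mapsto \sum_i r_i v_i$. The image of this map is the span of $\{v_i : i \in V(B)\}$. Because $\vf$ realises $M$, this span has dimension $\rank_M(V(B))$. Rank--nullity then gives
\[
\dim K_B = |V(B)| - \rank_M(V(B)),
\]
which is exactly the claimed bound.

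There is no real obstacle here. The only point needing care is that the identification $\dim \operatorname{span}\{v_i : i \in V(B)\} = \rank_M(V(B))$ uses that $\vf$ is a realisation of $M$ itself, not merely a point of the circuit variety.
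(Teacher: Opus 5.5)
Your proof is correct: the rows $r_c$ for $c \in B$ all lie in the space of linear dependencies among $\{v_i : i \in V(B)\}$, which has dimension $|V(B)| - \rank_M(V(B))$, and their linear independence gives the bound. The paper gives no argument of its own and simply cites Freij-Hollanti, Jurrius and Kuznetsova (\S 4, Equation (2)), where the bound rests on this same nullity count, so your proof is a self-contained version of the same approach.
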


\begin{proof}
See \cite[\S~4, Equation~(2)]{freij2023combinatorial}.
\end{proof}

\begin{example}
    Let $M$ be the unique rank-3 paving matroid on $\{1,\ldots,8\}$ with collection of non-spanning circuits: 
    \begin{equation*}
        \mathcal{C}^{\text{ns}}(M)=
        \{\{1,2,8\},\{2,3,5\},\{3,4,6\},\{1,4,7\},\{1,5,6\},\{2,6,7\},\{3,7,8\},\{4,5,8\}\}.
    \end{equation*}
    The realisation space of $M$ has exactly two irreducible components, see \textup{\cite[Table~4.1]{corey2023singular}}. Following the procedure described in \textup{\cite[\S~2.4]{corey2023singular}}, we see that every realisation of $M$ is obtained from one of the following, up to row operations and scaling columns by non-zero scalars:
    \begin{equation*}
        \vf_{1}:=
        \begin{pmatrix}
        1 & 0 & 0 & 1 & 0 & 1 & 1 &1 \\
        0 & 1 & 0 & 1 & 1 & 1 & x_{1} & x_{1}\\
        0 & 0 & 1 & 1 & x_{1} & x_{1} & x_{1} & 0
        \end{pmatrix}, \qquad 
        \vf_{2}:=
        \begin{pmatrix}
        1 & 0 & 0 & 1 & 0 & 1 & 1 &1 \\
        0 & 1 & 0 & 1 & 1 & 1 & x_{2} & x_{2}\\
        0 & 0 & 1 & 1 & x_{2} & x_{2} & x_{2} & 0,
        \end{pmatrix}
    \end{equation*}
    where $x_{1}$ and $x_{2}$ are the roots of the quadratic polynomial $x^{2}-x+1$, and the columns correspond to the elements $\{1,2,3,\ldots,8\}$.
    The realisations $\vf_1,\vf_2$ generate the two irreducible components of $\Gamma_{M}$,
    and so the generic Oxley--Wang matroids of $M$ are $\delta_{\mathrm{OW}}(\vf_{1})$ and $\delta_{\mathrm{OW}}(\vf_{2})$. 
    
    A direct computation shows that 
    \[\mathcal{R}_{\vf_i}(M;\mathcal{C}^{\text{ns}}(M))=
    \begin{pmatrix}
1 & x_i & 0 & 0 & 0 & 0 & 0 & -1\\
0 & 1 & x_i & 0 & -1 & 0 & 0 & 0\\
0 & 0 & x_{i}-1 & 1 & 0 & -1 & 0 & 0\\
1-x_i & 0 & 0 & x_i & 0 & 0 & -1 \\
1 & 0 & 0 & 0 & 1 & -1 & 0 & 0\\
0 & x_{i}-1 & 0 & 0 & 0 & 1 & -1 & 0\\
0 & 0 & x_i & 0 & 0 & 0 & -1 & 1\\
0 & 0 & 0 & 1 & x_{i}-1 & 0 & 0 & -1
    \end{pmatrix}
    \]
for $i=1,2$. We observe that the polynomials appearing in the entries of the matrices $\mathcal{R}_{\vf_1}(M;\mathcal{C}^{\text{ns}}(M))$ and $\mathcal{R}_{\vf_2}(M;\mathcal{C}^{\text{ns}}(M))$ are the same. The same holds for the matrices $\mathcal{R}_{\vf_1}(M;\mathcal{C}(M))$ and $\mathcal{R}_{\vf_2}(M;\mathcal{C}(M))$. Since $x^{2}-x+1$ is irreducible over $\QQ$, it is the minimal polynomial of both $x_1$ and $x_2$ over $\QQ$. Therefore
\[\{f\in \QQ[x]: f(x_1)=0\}=(x^{2}-x+1)=\{f\in \QQ[x]: f(x_2)=0\},\]
and the set of minors that vanish are the same for $\mathcal{R}_{\vf_1}(M;\mathcal{C}(M))$ and $\mathcal{R}_{\vf_2}(M;\mathcal{C}(M))$.
Therefore, the matrices
\[
\mathcal{R}_{\vf_1}(M;\mathcal{C}(M))
\quad\text{and}\quad
\mathcal{R}_{\vf_2}(M;\mathcal{C}(M))
\]
have the same row matroid. Hence the corresponding Oxley--Wang derived matroids satisfy
\[
\delta_{\mathrm{OW}}(\vf_1)=\delta_{\mathrm{OW}}(\vf_2).
\]
Hence, the two generic Oxley--Wang derived matroids of $M$ coincide.

\end{example}

Similar to generic liftability matroids (see \Cref{conj:genlift}), the authors would conjecture that there exists a unique generic Oxley--Wang derived matroid for any realisable matroid.

\begin{conjecture}\label{conj:gom}
    Every realisable matroid has a unique generic Oxley--Wang derived matroid.
\end{conjecture}

\begin{remark}
    Since the generic liftability matroids of $M$ are exactly the restrictions of the generic Oxley--Wang derived matroids of $T(M)$,
    \Cref{conj:gom} being true would imply \Cref{conj:genlift} is true.
    \Cref{conj:gom} is a weaker conjecture than \cite[Conjecture 45]{freij2023combinatorial}.
    The latter conjecture would not only imply that there exists a unique generic Oxley--Wang derived matroid,
    but that it is equivalent to the \emph{combinatorial derived matroid} defined by Freij-Hollanti, Jurrius and Kuznetsova in \cite{freij2023combinatorial}.
\end{remark}

\subsection{Generic Oxley--Wang matroids and generic realisability}

Our aim in this section is to prove \Cref{thm:C}. We begin with the following lemma.

\begin{lemma}\label{lem: flat in oxley wang}
Let $M$ be a matroid of rank $d$ on $[n]$ and let $\vf=(v_1,\ldots,v_n)$ be a realisation of $T(M)$. Then, there exists a lifting $\widetilde{\vf}$ of $\vf$ that is a realisation of $M$ if and only if $\mathcal{C}^{\text{ns}}(M)$ is a flat of $\OW$.
\end{lemma}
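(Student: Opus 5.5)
Here $\OW$ is the Oxley--Wang derived matroid of the realisation $\vf$ of $N:=T(M)$, i.e.\ the row matroid of $\mathcal{R}_\vf(N;\mathcal{C}(N))$. Every non-spanning circuit of $M$ is a circuit of $N$, so $\mathcal{C}^{\text{ns}}(M)\subseteq\mathcal{C}(N)$ and the statement makes sense. Write $S:=\mathcal{C}^{\text{ns}}(M)$ and $\operatorname{cl}(S)$ for its closure in $\OW$.

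The basic translation I will use comes from the proof of \Cref{prop: kernel and liftings 3}. For any circuit $c$ of $N$ and any $z\in\CC^n$, the vectors $(v_i,z_i)_{i\in c}$ are linearly dependent if and only if $r_c\cdot z=0$. Here $r_c$ spans the unique dependency of $(v_i)_{i\in c}$, and any dependency of the lifted vectors projects to a dependency of the $v_i$.

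Let $\widetilde{\vf}=((v_i,z_i))_i$ be a lifting. I claim $\widetilde{\vf}$ realises $M$ if and only if two conditions hold:
\begin{enumerate}[label=(\alph*)]
\item $z\in\ker\mathcal{L}_\vf(M)$;
\item $r_c\cdot z\neq 0$ for every $c\in\mathcal{C}(N)\setminus S$.
\end{enumerate}
Since $N=T(M)$, the sets of size at most $d-1$ are independent in $\widetilde{\vf}$ exactly when they are independent in $M$, and all sets of size $d+1$ are dependent. So only $d$-subsets $D$ matter. If $D$ is dependent in $M$, it contains a circuit of $M$ of size at most $d$, which lies in $S$. If $D$ is independent in $M$ but dependent in $N$, it contains a circuit $c$ of $N$. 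This $c$ is not in $S$, because otherwise $D$ would be dependent in $M$.

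Conversely, suppose $c\in\mathcal{C}(N)\setminus S$. Then either $|c|=d$ and $c$ is independent in $M$, or $|c|\le d-1$; the latter is impossible, since such a set is independent in $N$. Hence $c$ is a $d$-set independent in $M$, so it must be independent in $\widetilde{\vf}$. Moreover, a $d$-set is dependent in $\widetilde{\vf}$ if and only if one of the $N$-circuits it contains becomes dependent. Checking each circuit $c\in\mathcal{C}(N)$ therefore suffices, which proves the claim.

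For ($\Leftarrow$), suppose $S$ is a flat of $\OW$. Then each $r_c$ with $c\notin S$ lies outside the span $W$ of $\{r_s:s\in S\}$. Consequently the hyperplane $\{z: r_c\cdot z=0\}$ meets $W^\perp=\ker\mathcal{L}_\vf(M)$ in a proper subspace. A finite union of proper subspaces cannot cover the vector space $\ker\mathcal{L}_\vf(M)$, so some $z\in\ker\mathcal{L}_\vf(M)$ satisfies (a) and (b). By the claim, this $z$ gives a lifting realising $M$. This lifting is automatically non-trivial, since it realises a rank-$d$ matroid.

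For ($\Rightarrow$), suppose $z$ gives a lifting realising $M$, and let $c\in\operatorname{cl}(S)\setminus S$. Then $r_c$ is a linear combination of the $r_s$ with $s\in S$, so $r_c\cdot z=0$, which contradicts (b). Hence $S$ is closed, i.e.\ a flat.

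The main obstacle is the bookkeeping in the claim. It requires that every $d$-subset which is dependent in $N$ contains an $N$-circuit, and that every circuit of $N$ has size at most $d$. Both are immediate since $N$ has rank $d-1$. Everything else is linear algebra.
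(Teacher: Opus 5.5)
Your proof is correct and follows essentially the same route as the paper: the translation ``$(v_i,z_i)_{i\in c}$ dependent $\iff r_c\cdot z=0$'', the fact that $\mathcal{C}(T(M))\setminus\mathcal{C}^{\text{ns}}(M)$ is exactly the set of bases of $M$, and choosing $z\in W^\perp$ outside the finitely many proper subspaces $W^\perp\cap r_c^\perp$ are precisely the paper's ingredients. Two slips in your bookkeeping need fixing. First, a circuit of $N$ of size at most $d-1$ lies in $S$ because it is then a circuit of $M$; it is not ``independent in $N$''. Second, your assertion that a $d$-set is dependent in $\widetilde{\vf}$ iff some $N$-circuit inside it is dependent is false for $d$-sets of $N$-rank at most $d-2$: such sets are always dependent after lifting, while for generic $z$ none of their $N$-circuits is. You only ever use it for bases $D$ of $M$, and there it holds because $D$ is itself the unique $N$-circuit it contains.
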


\begin{proof}
    We first make the following observation:
    if $\widetilde{\mathbf{v}} = \big( (v_{1},z_1), \ldots, (v_{n} , z_{n})\big)$ is a lifting of $\vf$,
    then the following equivalence holds for any $c \in \mathcal{C}(T(M))$,
    \begin{equation}\label{eq:lindepiff}
        (v_i,z_i)_{i \in c} \text{ are linearly dependent } \iff z\perp r_{c}.
    \end{equation}
    
    Now suppose that $\widetilde{\mathbf{v}} = \big( (v_{1},z_1), \ldots, (v_{n} , z_{n})\big)$ is a lifting of $\vf$ such that $\widetilde{\vf}$ is a realisation of $M$. Since $\widetilde{\vf}$ is a realisation of $M$, it follows from \eqref{eq:lindepiff} that $z\in \textup{span}\{r_{c}: c\in \mathcal{C}^{\text{ns}}(M)\}^{\perp}$ but $z\not \perp r_{c}$ for each circuit contained in $c\in \mathcal{C}(T(M))\setminus \mathcal{C}^{\text{ns}}(M)$, as the latter set is exactly the set of bases of $M$. 
    Hence 
    \begin{equation*}
        \rank \mathcal{R}_{\vf} \Big(T(M); \mathcal{C}^{\text{ns}}(M) \cup \{c\} \Big) > \rank \mathcal{R}_{\vf}\Big(T(M); \mathcal{C}^{\text{ns}}(M) \Big)
    \end{equation*}
    for any $c \in \mathcal{C}(T(M)) \setminus \mathcal{C}^{\text{ns}}(M)$.
    Since $\rank \mathcal{R}_{\vf}(T(M);S)$ is the rank of $S$ in $\OW$ for any collection of circuits $S$ of $T(M)$, it follows that $\mathcal{C}^{\text{ns}}(M))$ is a flat of $\OW$.

    Suppose now that $\mathcal{C}^{\text{ns}}(M)$ is a flat of $\OW$, or equivalently, that 
    \begin{equation}\label{c and c'}
\rank\big\{r_{c'}:c'\in \mathcal{C}^{\text{ns}}(M) \cup \{c\}\big\}>\rank\big\{r_{c'}: c'\in \mathcal{C}^{\text{ns}}(M)\big\}
    \end{equation}
 for every $c \in \mathcal{C}(T(M)) \setminus \mathcal{C}^{\text{ns}}(M)$. 

\begin{claim}\label{claim: flatness}
    There is some $z \in \mathbb{C}^n$ such that 
    \begin{equation}\label{eq: perp z}\text{$z\perp r_{c}$ for all $c\in \mathcal{C}^{\text{ns}}(M)$}\quad \text{and} \quad \text{$z\not \perp r_{c}$ for all $c\in \mathcal{C}(T(M))\setminus \mathcal{C}^{\text{ns}}(M)$}.\end{equation}
\end{claim}

    \begin{claimproof}
    Let $W=\operatorname{span}\{r_c\in\mathcal C^{\mathrm{ns}}(M)\}$.
From~\eqref{c and c'}, it follows that every remaining circuit vector lies outside $W$. One then chooses
\[z\in W^\perp\setminus\bigcup_{c\notin\mathcal C^{\mathrm{ns}}(M)}(W^\perp\cap r_c^\perp),\]
which is non-empty because it is the complement of finitely many proper hyperplanes. 
    \end{claimproof}

Now, let $z$ be as in~\Cref{claim: flatness}.
Since $z\perp r_{c}$ for all $c\in \mathcal{C}^{\text{ns}}(M)$ and $z\not \perp r_{c}$ for all $c\in \mathcal{C}(T(M))\setminus \mathcal{C}^{\text{ns}}(M)$ (which we recall is exactly the set of bases of $M$), it follows from \eqref{eq:lindepiff} that $\widetilde{\vf}$ is a realisation of $M$.
\end{proof}

We can now prove~\Cref{thm:C},
which we recall to be the following:

\begin{theorem}
    Let $M$ be a rank-$d$ matroid on $[n]$ such that $T(M)$ is realisable.
    Suppose that $T(M)$ has generic Oxley--Wang derived matroids $\delta_1 T(M),\ldots, \delta_k T(M)$. Then, $M$ is generically realisable if and only if 
    \[\text{$\mathcal{C}^{\text{ns}}(M)$ is a flat on each of the matroids $\delta_1 T(M),\ldots, \delta_k T(M)$.}\]
\end{theorem}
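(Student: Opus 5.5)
The plan is to combine \Cref{lem: flat in oxley wang}, applied at generic realisations, with the definition of generic realisability. One point needs care first. In \Cref{lem: flat in oxley wang}, $\OW$ must be read as the row matroid of $\mathcal{R}_\vf(T(M);\mathcal{C}(T(M)))$, that is, the Oxley--Wang derived matroid of the realisation $\vf$ of $T(M)$. For a generic point $\vf$ of the $i$-th irreducible component of $\Gamma_{T(M)}$, this matroid is by \Cref{def: generic oxley wang} exactly $\delta_i T(M)$. So for every generic $\vf$ lying in the $i$-th component, $\vf$ has a lifting realising $M$ if and only if $\mathcal{C}^{\text{ns}}(M)$ is a flat of $\delta_i T(M)$.

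For the forward direction, assume $M$ is generically realisable. For each $i \in [k]$, pick a generic point $\vf_i$ of the $i$-th irreducible component of $\Gamma_{T(M)}$; such points exist because generic points are dense. By \Cref{def:genericallyrealisable}, $\vf_i$ has a lifting that realises $M$. By \Cref{lem: flat in oxley wang}, $\mathcal{C}^{\text{ns}}(M)$ is then a flat of $\delta_{\mathrm{OW}}(\vf_i)=\delta_i T(M)$.

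For the converse, assume $\mathcal{C}^{\text{ns}}(M)$ is a flat of every $\delta_i T(M)$. Let $\vf$ be any generic realisation of $T(M)$. By \Cref{def:genericpoint}, $\vf$ is a generic point of exactly one irreducible component, say the $i$-th, so $\delta_{\mathrm{OW}}(\vf)=\delta_i T(M)$ by \Cref{lem:genericliftmatroid}. Then \Cref{lem: flat in oxley wang} gives a lifting of $\vf$ realising $M$. Since $\vf$ was an arbitrary generic realisation, $M$ is generically realisable; alternatively, one can conclude via \Cref{cor:rv_is_generic}.

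The step I expect to need most care is the bookkeeping between two notions of genericity. \Cref{def:generic} defines genericity with respect to the variety $V_{T(M)}$, whereas the $\delta_i T(M)$ are indexed by the components of $\Gamma_{T(M)}$. The irreducible components of $V_{T(M)}$ are the closures of those of $\Gamma_{T(M)}$, and $\Gamma_{T(M)}$ is open in $V_{T(M)}$. Hence a generic point of a component of $V_{T(M)}$ that lies in $\Gamma_{T(M)}$ is a generic point of the corresponding component of $\Gamma_{T(M)}$, because both notions are tested by the same rational polynomials. With this identification, \Cref{lem:genericliftmatroid} ensures that $\delta_{\mathrm{OW}}(\vf)$ takes the same value on all generic points of a given component.
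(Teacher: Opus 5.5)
Your proof is correct and follows the paper's argument: apply \Cref{lem: flat in oxley wang} at a generic point of each irreducible component of $\Gamma_{T(M)}$, where $\delta_{\mathrm{OW}}(\vf)=\delta_i T(M)$. The only difference is in the converse. You apply the lemma directly to every generic realisation, using that $\delta_{\mathrm{OW}}$ is constant on the generic points of a component, whereas the paper passes through \Cref{cor:rv_is_generic}; your version makes that direction slightly more self-contained.
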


\begin{proof}
Let $\delta_1 T(M),\ldots, \delta_k T(M)$ be the generic Oxley--Wang derived matroids of $T(M)$ corresponding to the irreducible components $V_1,\ldots,V_k$ of $\Gamma_{T(M)}$, respectively. 
Choose a generic point $\vf_i$ of $V_i$ for each irreducible component.
By \Cref{lem: flat in oxley wang},
we have for each $i \in [k]$ that
$M \in R_{\vf_i}$ if and only if $\mathcal{C}^{\text{ns}}(M)$ is a flat of $\delta_i T(M)$.
The result now follows from \Cref{cor:rv_is_generic}.
\end{proof}

\subsection{Bases of Oxley--Wang matroids}
\label{sec:OWcoordinatesections}

For this subsection we introduce the following notation.
Given a realisable matroid $M$ with irreducible component $V$,
we denote the generic Oxley--Wang derived matroid generated from a generic point of $V$ by $\delta_V M$.

By \Cref{thm:B}, liftability of a matroid can be determined by computing the rank of its non-spanning circuits in the generic Oxley--Wang derived matroids of $T(M)$. 
The following result now provides an exact characterisation of the bases of $\delta M$ that facilitates this computation.

\begin{theorem}\label{thm: fixed support}
Let $M$ be a realisable rank-$d$ matroid on $[n]$. 
Let $V$ be an irreducible component of the matroid variety $V_M$ and let $V^{\ast}$ be the corresponding irreducible component of $V_{M^{\ast}}$ (as defined in \Cref{rem:dual}).
Then a collection of circuits $\{c_{1},\ldots,c_{n-d}\}\subseteq \mathcal{C}(M)$ forms a basis of $\delta_V M$ if and only if there exists a matrix $\mathbf{w}\in \CC^{(n-d)\times n}$
such that:
\begin{itemize}
    \item[\rm (i)] $\mathbf{w}$ is a realisation of $M^{\ast}$ contained in $V^{\ast}$;
    \item[\rm (ii)] $\supp(\mathbf{w}^{(i)})=\{(i,j): j\in c_{i}\}$ for each $i$, where $\mathbf{w}^{(i)}$ denotes the $i$-th row of $\mathbf{w}$.
\end{itemize}
\end{theorem}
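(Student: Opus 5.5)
Both directions rest on \Cref{rem:dual} and \Cref{rem:rowvcolmatroid}. For a realisation $\vf$ of $M$, the row space of $\mathcal{R}_\vf(M;\mathcal{C}(M))$ is exactly $\ker \vf$, which is $(n-d)$-dimensional. Moreover, any $(n-d)\times n$ matrix whose rows span $\ker\vf$ is a realisation of $M^*$.

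For the forward direction, let $\{c_1,\ldots,c_{n-d}\}$ be a basis of $\delta_V M$ and pick a generic $\vf\in V$. The rows $r_{c_1}(\vf),\ldots,r_{c_{n-d}}(\vf)$ are then linearly independent and lie in $\ker\vf$, so they span it. Let $\mathbf{w}$ be the matrix with these rows. Then $\mathbf{w}$ realises $M^*$, satisfies $\vf\mathbf{w}^\top=0$, and has $\supp(\mathbf{w}^{(i)})=c_i$ by \eqref{eq:rcrm}. Since $\mathbf{w}$ is paired with $\vf\in V$, it lies in $V^*$ by the uniqueness statement of \Cref{rem:dual}. This gives (i) and (ii).

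For the converse, take $\mathbf{w}$ satisfying (i) and (ii), and let $\vf$ be any realisation of $M$ with $\vf\mathbf{w}^\top=0$ lying in $V$; such a $\vf$ exists by the correspondence in \Cref{rem:dual}, since the realisations of $M$ annihilated by $\mathbf{w}$ form a single component. Each row $\mathbf{w}^{(i)}$ is a linear dependence of $\vf$ supported exactly on the circuit $c_i$, so it is a nonzero multiple of $r_{c_i}(\vf)$. Because $\mathbf{w}$ has rank $n-d$, the vectors $r_{c_1}(\vf),\ldots,r_{c_{n-d}}(\vf)$ are independent, so $\{c_i\}$ is independent in the row matroid of $\mathcal{R}_\vf(M;\mathcal{C}(M))$. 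By \Cref{lem:genericliftmatroid} this row matroid is weakly below $\delta_V M$, so $\{c_i\}$ is independent in $\delta_V M$. It has size $n-d=\rank\delta_V M$, hence it is a basis.

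The main obstacle is matching components: we must ensure that $\vf$ can be chosen in the same component $V$, and, in the forward direction, that $\mathbf{w}$ really lands in $V^*$ rather than some other component. I would handle this by defining $V^*$ precisely as in \Cref{rem:dual}, namely as the image of $V$ under the map $\vf\mapsto$ row space of $\ker\vf$, up to $\mathrm{GL}_{n-d}$. In the converse direction I would use the symmetric statement that each $\mathbf{w}\in V^*$ has an annihilating realisation of $M$ in $V$, which is the dual application of \Cref{rem:dual}.

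A minor point is that \Cref{lem:genericliftmatroid} is stated for $\Gamma_N$-components, while here $V$ is a component of $V_M$. I would identify the components of $V_M$ with the closures of the components of $\Gamma_M$ and then apply the lemma.
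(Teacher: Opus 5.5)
Your proposal is correct and matches the paper's proof essentially step for step. In the forward direction you take the rows $r_{c_i}(\vf)$ for a generic $\vf\in V$ as $\mathbf{w}$. In the converse you take $\vf$ annihilated by $\mathbf{w}$, identify each row of $\mathbf{w}$ with a multiple of $r_{c_i}(\vf)$, and pass to $\delta_V M$ via \Cref{lem:genericliftmatroid}. Your explicit treatment of matching $V$ with $V^*$, which the paper handles only by citing \Cref{rem:dual}, is added care rather than a different route.
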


\begin{proof}
Assume first that $\{c_{1},\ldots,c_{n-d}\}$ is a basis of $\delta_V M$.
Fix some generic realisation $\vf=(v_1,\ldots,v_n)\in V$.
Then the rows of $\mathcal{R}_\vf(M;\mathcal{C}(M))$ indexed by $c_{1},\ldots,c_{n-d}$ are linearly independent. Let $\mathbf{w}$ denote the submatrix of $\mathcal{R}_\vf(M;\mathcal{C}(M))$ consisting of these rows. 
Since these columns are independent and $\operatorname{rank}(\mathcal{R}_\vf(M;\mathcal{C}(M)))=n-d$, we have
\[
\operatorname{rowspan}(\mathbf{w})=\operatorname{rowspan}(\mathcal{R}_\vf(M;\mathcal{C}(M))).
\]
Thus, the column matroid of $\mathbf{w}$ coincides with the column matroid of $\mathcal{R}_\vf(M;\mathcal{C}(M))$, which, as noted in \Cref{rem:rowvcolmatroid}, is exactly $M^{\ast}$. Moreover, since $\vf\in V$, it follows that $\mathbf{w}\in V^{\ast}$ (as observed in \Cref{rem:dual}). 
Furthermore, the $i$-th row of $\mathbf{w}$ has support precisely $c_{i}$. 
Hence $\mathbf{w}$ satisfies (i) and (ii).

Conversely, suppose that $\mathbf{w}\in \CC^{(n-d)\times n}$ satisfies (i) and (ii). Let $\vf \in \CC^{d\times n}$ be any matrix whose rows form a basis for the right kernel of $\mathbf{w}$. It follows from \Cref{rem:dual} that $\mathbf{v}\in V$. 
For each circuit $c\in \mathcal{C}(M)$, let $r_{c}$ be the (unique up to scaling) vector in $\ker(\vf)=\operatorname{rowspan}(\mathbf{w})$ with support exactly $c$ described in \eqref{eq:rcrm}. Since the $i$-th row of $\mathbf{w}$ has support $c_{i}$, it coincides with $r_{c_{i}}$. 
Hence, up to scaling, $\mathbf{w}$ is the restriction of $\mathcal{R}_{\vf}(M;\mathcal{C}(M))$ to the rows indexed by $c_{1},\ldots,c_{n-d}$. Because $\rank \mathbf{w} = n-d$, these rows must be independent, and therefore $\{c_{1},\ldots,c_{n-d}\}$ is a basis of the row matorid of $\mathcal{R}_{\vf}(M;\mathcal{C}(M))$.
Thus by \Cref{lem:genericliftmatroid}, $\{c_{1},\ldots,c_{n-d}\}$ forms a basis of $\delta_V M$.
\end{proof}

\Cref{thm: fixed support} can be reformulated in the language of coordinate sections of varieties.
Here we recall that for a variety $V\subseteq \CC^{k}$ and subset $I\subseteq [k]$, the coordinate section of $V$ along $I$ is the variety
\begin{equation*}
    V_I := \big\{ x = (x_1,\ldots,x_k) \in V : x_i = 0 \text{ for all } i \in I \big\}.
\end{equation*}
Coordinate sections of determinantal varieties have been studied in various settings; see, for example, \cite{zbMATH03803604,eisenbud1988linear,conca2019lovasz,cunha2022coordinate}.

Now take our variety $V$ to be an irreducible component of the matroid variety $V_M$ and, given a set of circuits $\{c_{1},\ldots,c_{n-d}\}\subseteq \mathcal{C}(M)$, let 
\[
I =\bigcup_{s\in [n-d]}\{(s,j): j\not\in c_{s}\}\subseteq [n-d]\times[n].
\]
Then, by \Cref{thm: fixed support}, $\{c_{1},\ldots,c_{n-d}\}$ is a basis of $\delta_V M$ if and only if $(V^{\ast})_{I}\neq \emptyset$.

\subsection{Relation to the slack matrix of a matroid}\label{sec: slack}

In \cite{brandt2019slack}, Brandt and Wiebe introduced the notion of the {\em slack realisation space} of a matroid. This represents a new model for the realisation space of a matroid, which they used to certify non-realisability of matroids and to detect projectively unique matroids.

\begin{definition}\label{def:slack}
    Let $\vf$ be a realisation of a rank-$d$ matroid $M$ on $[n]$,
    here considered as a $d \times n$ matrix. 
    For each hyperplane $H \in \mathcal{H}(M)$, let $a_H \in \CC^d$ be a non-zero normal vector to the hyperplane spanned by the corresponding columns of $\vf$. 
    Writing $\mathcal{H}(M) = \{H_1, \ldots, H_h\}$, define $\mathbf{a}$ to be the matrix whose columns are the vectors $a_{H_1}, \ldots, a_{H_h}$.
    The matrix $S_\vf(M) = \vf^\top \mathbf{a}$ is now said to be a \emph{slack matrix} of $M$.
\end{definition}

Although there are infinitely many slack matrices for a give matrix realisation,
all such slack matrices have the same columns up to non-zero scaling.
Thus we can speak about the slack matrix $S_\vf(M)$ as if it is a single matrix (in the same way we speak about the matrices $\mathcal{R}_\vf(M;S)$ as being unique).

We now show that slack matrices are dual to the Oxley--Wang derived matroid construction.
As observed in \Cref{rem:dual}, for every realisation $\vf \in \mathbb{C}^{d \times n}$ of a rank-$d$ matroid $n$,
any matrix $\mathbf{w} \in \CC^{(n-d) \times d}$ satisfying $\mathbf{w}^\top \vf = \mathbf{0}$ is a realisation of the dual matroid $M^*$.

\begin{proposition}\label{lem: delta and slack}
    Let $M$ be a rank-$d$ matroid on $[n]$ with realisation $\vf \in \mathbb{C}^{d \times n}$.
    Let $\mathbf{w} \in \CC^{(n-d) \times n}$ be a realisation of $M^*$ satisfying $\mathbf{w} \vf^\top = \mathbf{0}$.
    Then the matrices $\mathcal{R}_\vf(M;\mathcal{C}(M))$ and $S_\mathbf{w}(M^*)^\top$ agree up to non-zero scalar multiplication and permutation of rows. In particular, the column matroid of $S_\mathbf{w}(M^*)$ equals $\delta_{\mathrm{OW}}(\vf)$.
\end{proposition}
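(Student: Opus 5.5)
The plan is to match rows of $\mathcal{R}_\vf(M;\mathcal{C}(M))$ with columns of $S_\mathbf{w}(M^*)$ via the standard bijection between circuits of $M$ and hyperplanes of $M^*$: $c \in \mathcal{C}(M)$ if and only if $[n]\setminus c \in \mathcal{H}(M^*)$. I first check that this bijection is indexed compatibly, so that the row of $\mathcal{R}_\vf(M;\mathcal{C}(M))$ indexed by $c$ can be compared to the column of $S_\mathbf{w}(M^*)$ indexed by $H=[n]\setminus c$.

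Fix a circuit $c$ and set $H=[n]\setminus c$. Let $a_H\in\CC^{n-d}$ be a nonzero normal vector to the span of the columns $\{w_j : j\in H\}$ of $\mathbf{w}$. Since $H$ is a hyperplane of $M^*$, these columns span an $(n-d-1)$-dimensional subspace, so $a_H$ is unique up to scaling. The corresponding column of $S_\mathbf{w}(M^*)$ is $s_H=\mathbf{w}^\top a_H\in\CC^n$, whose $j$-th entry is $\langle w_j,a_H\rangle$.

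I then verify the two defining properties in \eqref{eq:rcrm} for $s_H$. For the linear relation, $\sum_j (s_H)_j v_j = \vf\,\mathbf{w}^\top a_H = (\mathbf{w}\vf^\top)^\top a_H = \mathbf{0}$. For the support, $(s_H)_j=0$ exactly when $w_j$ lies in the hyperplane $a_H^\perp$. Since $H$ is a flat of $M^*$, its span contains $w_j$ precisely when $j\in H$, so $\supp(s_H)=[n]\setminus H=c$. By the uniqueness up to scalar of $r_c$, it follows that $s_H$ is a nonzero multiple of $r_c$. Ranging over all circuits, $S_\mathbf{w}(M^*)^\top$ equals $\mathcal{R}_\vf(M;\mathcal{C}(M))$ up to row scaling and row permutation.

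The column matroid of $S_\mathbf{w}(M^*)$ is then the row matroid of $\mathcal{R}_\vf(M;\mathcal{C}(M))$, which is $\delta_{\mathrm{OW}}(\vf)$, with ground sets identified via $H\leftrightarrow c$. The only delicate step is the support computation, where one must use that a hyperplane is a flat, so that no $j\notin H$ has $w_j$ in the span; I also need to note that $\mathbf{w}$ realising $M^*$ follows from \Cref{rem:dual}.
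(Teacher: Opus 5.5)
Your proposal is correct and follows the paper's proof. You match each circuit $c$ with the hyperplane $[n]\setminus c$ of $M^*$, derive $\vf\,\mathbf{w}^\top a_H=\mathbf{0}$ from $\mathbf{w}\vf^\top=\mathbf{0}$, and use the uniqueness of $r_c$ to conclude that each column of $S_\mathbf{w}(M^*)$ is a nonzero multiple of the corresponding $r_c$. Your explicit support computation, using that $H$ is a flat of $M^*$ so that $\langle w_j,a_H\rangle\neq 0$ exactly for $j\in c$, fills in a step the paper only asserts.
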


\begin{proof}
    Let $\mathbf{b}\in \CC^{(n-d)\times h}$ be the unique matrix whose columns represent normal vectors to the hyperplanes for the realisation $\mathbf{w}$ for which $S_\mathbf{w}(M^*) = \mathbf{w}^\top \mathbf{b}$.
    Then 
    \begin{equation*}
        S_\mathbf{w}(M^*)^\top \vf^\top =  \mathbf{b}^\top\mathbf{w} \vf^\top= \mathbf{b}^\top \mathbf{0} = \mathbf{0}.
    \end{equation*}
    The hyperplanes of $M^{\ast}$ are precisely the complements of the circuits of $M$.
    Thus we can order the columns of the slack matrix $S_\mathbf{w}(M^*)$ so that the support of the $i$-th column of $S_\mathbf{w}(M^*)$ is the support of the $i$-th row of $\mathcal{R}_\vf(M;\mathcal{C}(M))$.
    Thus the $i$-th column of $S_\mathbf{w}(M^*)$ satisfies the conditions in \eqref{eq:rcrm} for the $i$-th circuit of $M$.
    This now implies the matrices $\mathcal{R}_\vf(M;\mathcal{C}(M))$ and $S_\mathbf{w}(M^*)^\top$ agree up to non-zero scalar multiplication of their rows.
\end{proof}

\section{Characterising liftable paving matroids with scene analysis}
\label{sec:applications}

In this section, we apply our framework to paving matroids, and prove the equivalence between a paving matroid being liftablie and the corresponding hypergraph admitting a non-trivial scene. We obtain a combinatorial characterisation of liftability and generic realisability for paving matroids via Oxley--Wang derived matroids, which yields a new proof of Whiteley's characterisation of hypergraphs that admit non-trivial scenes, see \cite{whiteley1989matroid}. 

\subsection{Equivalence of scene analysis and liftability for paving matroids}
\label{sec:sceneanalysisliftabilitysubsec}

In this section, we give a characterisation of liftability of paving matroids, using \Cref{thm:indepA}. 

For a rank-$d$ paving matroid $M$ on $[n]$,
we set $H_M$ to be the $d$-uniform hypergraph with vertex set $[n]$ and hyperedges given by the non-spanning circuits of $M$,
i.e.~the set $\mathcal{C}_d(M)$.
The following lemma gives the connection between liftability of $M$ and existence of scenes over generic pictures of $H_M$.

\begin{lemma}
    Let $M$ be a rank-$d$ paving matroid on $[n]$, and let $\vf = (v_{1}, \ldots, v_{n})$ be a tuple of vectors in $\CC^{d-1}$ of the form $v_{i} = (x_{i}, 1)\in \CC^{d}$, where $x_{i} = (x_{i,1},...,x_{i,d-2}) \in \CC^{d-2}$. Let  $x : [n] \to \CC^{d-2}$ be the picture given by $x(i) = x_{i}$. Then
\[
\mathcal{L}_{\vf}(M) \;=\; A(H_M, x),
\]
where the equality is understood up to multiplication of each row by a nonzero scalar.
\label{lem:matrices_coincide}
\end{lemma}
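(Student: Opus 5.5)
The plan is to compare the two matrices row by row. Both have rows indexed by the same set. The rows of $\mathcal{L}_{\vf}(M)=\mathcal{R}_\vf(T(M);\mathcal{C}^{\mathrm{ns}}(M))$ are indexed by $\mathcal{C}^{\mathrm{ns}}(M)$. Since $M$ is paving, this equals $\mathcal{C}_d(M)$, which is exactly the hyperedge set of $H_M$. Both matrices also have columns indexed by $[n]$.

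It therefore suffices to fix a hyperedge $e=(v_1,\ldots,v_d)$, with $v_1<\cdots<v_d$. We show that the row $a_e$ of $A(H_M,x)$ indexed by $e$ is a non-zero scalar multiple of $r_e$. Here $v_i=(x_i,1)\in\CC^{d-1}$, and $\vf$ is understood to be a realisation of $T(M)$. This is implicit in the statement, since $\mathcal{L}_\vf(M)$ is only defined for $\vf\in\Gamma_{T(M)}$, and it means $\vf$ is in general position in $\CC^{d-1}$. By the uniqueness in \eqref{eq:rcrm}, it is enough to check two things:
\begin{itemize}
\item[(a)] $\supp(a_e)=e$;
\item[(b)] $\sum_{k=1}^d (a_e)_{v_k}\, v_{v_k}=\mathbf{0}$.
\end{itemize}

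For (b), use the Laplace expansion. For each coordinate $\ell\in[d-1]$, form the $d\times d$ matrix obtained from $X(e)$ by adjoining a copy of its $\ell$-th row. The columns of $X(e)$ are exactly the vectors $v_{v_1},\ldots,v_{v_d}$. This matrix has a repeated row, so its determinant is zero. Expanding it along the duplicated row gives $\sum_{k}(-1)^{k}\det(X_k(e))\,(v_{v_k})_\ell=0$, up to a global sign that does not depend on $k$. Doing this for every $\ell$ yields (b). Alternatively, one can cite the determinant identity behind \eqref{eq:scenelinear}, which says precisely that the lifts $(v_i,z_i)_{i\in e}$ are dependent iff $a_e\cdot z=0$. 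Combining this with \eqref{eq:equiv kernel} from the proof of \Cref{prop: kernel and liftings 3} shows that $a_e$ and $r_e$ define the same hyperplane of $z$'s, so they are proportional. I will present the direct cofactor argument, as it is cleaner.

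For (a), entries of $a_e$ outside $e$ are zero by definition of $A(H_M,x)$. Each entry indexed by $v_k\in e$ is $\pm\det X_k(e)$. This is the determinant of the $d-1$ vectors $\{v_{v_j}: j\neq k\}$ in $\CC^{d-1}$. These vectors are linearly independent because $\vf$ realises $T(M)$, which is uniform of rank $d-1$ since $M$ is paving, and $e\setminus\{v_k\}$ has size $d-1$. Hence every such entry is non-zero and $\supp(a_e)=e$.

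The only real subtlety is this hypothesis on $\vf$. Without general position, $a_e$ could vanish or lose support. So I will make explicit that $\vf\in\Gamma_{T(M)}$, which is equivalent to $x$ being a general position picture. Granting that, (a) and (b) together with the uniqueness of $r_e$ up to scaling give $a_e=\lambda_e r_e$ with $\lambda_e\neq 0$, which is the claimed row-by-row equality.
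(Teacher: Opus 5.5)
Your proposal is correct and follows essentially the same route as the paper: identify the row of $A(H_M,x)$ indexed by a circuit $c$ as a vector satisfying \eqref{eq:rcrm}, then conclude by uniqueness of $r_c$ up to scaling. Your cofactor expansion along a duplicated row, and your check that the support is exactly $c$ (using that $\vf\in\Gamma_{T(M)}$ with $T(M)=U_{d-1,n}$), spell out details the paper's proof leaves implicit.
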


\begin{proof}
Fix a non-spanning circuit $c = (n_1,\ldots,n_d)$ of $M$ with $n_1 < \cdots < n_d$.
Set $a= (a_1,\ldots, a_n)$ to be the vector in $\mathbb{C}^n$ for which $a_i= (-1)^j \det (X_j(c))$ (see \Cref{eq:scenelinear}) if $i = n_j$ and $a_i=0$ otherwise.

Suppose that $z=(z_1,...,z_n)$ is a lifting of $\vf$. Then
\begin{equation*}
    \sum_{i}^n a_i z_i = \left(
    \det 
    \begin{pmatrix}
    x_{n_1,1} & \cdots & x_{n_{d},1} \\
    \vdots & \vdots & \vdots \\
    x_{n_{1},d-2} & \cdots & x_{n_{d},d-2} \\
    z_{n_1} & \cdots & z_{n_d} \\
    1 & \cdots & 1
    \end{pmatrix}
    \right) = \mathbf{0}
\end{equation*}
Thus $a$ satisfies \eqref{eq:rcrm}, so up to multiplication by a non-zero scalar $a=r_c$.
As $a$ is the row of $A(H_M,x)$ corresponding to the circuit $c$,
we can now apply the above to each non-spanning circuit of $M$ to obtain the desired equality.
\end{proof}

We now use \Cref{lem:matrices_coincide} to provide a link between non-trivial scenes and non-trivial liftings.

\begin{lemma}\label{lem:linkingscenesandliftings}
    Let $M$ be a rank-$d$ paving matroid on $[n]$.
    \begin{itemize}
        \item[\rm{(i)}]  $T(M)$ has a realisation with a non-trivial lifting to $V_{\mathcal{C}(M)}$ if and only if there exists a general position picture of $H_M$ that lifts to a non-trivial scene.
        \item[\rm{(ii)}]
        If $\vf$ is a generic realisation of $T(M)$ and $x$ is a generic picture of $H_M$,
        then
        \begin{equation*}
            \rank \mathcal{L}_{\vf}(M) = \rank A(H_M,x).
        \end{equation*}
        In particular,
        $M$ is liftable if and only if there exists a generic picture of $H_M$ that lifts to a non-trivial scene.
    \end{itemize}
\end{lemma}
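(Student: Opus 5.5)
The plan is to reduce both parts to \Cref{lem:matrices_coincide}. The reduction rests on two facts: liftings are invariant under rescaling of vectors, and $\Gamma_{T(M)}=\Gamma_{U_{d-1,n}}$ consists of tuples in general position. Since $M$ is paving, $T(M)$ is the uniform matroid $U_{d-1,n}$. Hence a tuple $\vf\in(\CC^{d-1})^n$ realises $T(M)$ exactly when any $d-1$ of its vectors are independent.

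The first step is a normalisation. Given $\vf\in\Gamma_{T(M)}$, rescaling each $v_i$ by a nonzero scalar $\lambda_i$ and applying an invertible linear map $g$ of $\CC^{d-1}$ does not change $\mathcal{M}(\vf)$. It also does not change the rank of $\mathcal{L}_\vf(M)$: the map $g$ leaves every $r_c$ unchanged, and rescaling only rescales the columns of $\mathcal{L}_\vf(M)$. Moreover, a lifting $(v_i,z_i)$ corresponds to the lifting $(\lambda_i g v_i,\lambda_i z_i)$, and this correspondence preserves non-triviality. So, after choosing $g$ so that no $v_i$ lies in the hyperplane $\{y_{d-1}=0\}$ (a finite avoidance condition) and rescaling so that the last coordinate is $1$, we may assume $v_i=(x_i,1)$.

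Under this normalisation, the condition that any $d-1$ of the $v_i$ are independent says precisely that no $d-1$ of the points $x_i$ lie on a common affine hyperplane of $\CC^{d-2}$. In other words, $x$ is a general position picture. Conversely, every general position picture $x$ gives $\vf=((x_i,1))\in\Gamma_{T(M)}$.

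\textbf{Part (i).} By \Cref{cor:realisationliftable}, $\vf$ has a non-trivial lifting if and only if $\rank\mathcal{L}_\vf(M)\le n-d$. By \Cref{lem:matrices_coincide} this holds if and only if $\rank A(H_M,x)\le n-d$, that is, $\dim\ker A(H_M,x)\ge d$. This is exactly the condition for $x$ to have a non-trivial scene, as noted in \Cref{sec:sceneanalysisbackground}. Since the normalisation preserves both sides, (i) follows.

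\textbf{Part (ii).} $\Gamma_{T(M)}$ is irreducible, being an open subset of affine space, so \Cref{lem:genericliftmatroid} shows that $\rank\mathcal{L}_\vf(M)$ takes its maximum value at generic $\vf$. Likewise, the generic picture rank of $A(H_M,x)$ is the maximum over all pictures, because its minors are polynomials in the $x_{i,j}$ with rational coefficients.

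The main obstacle is matching these two maxima. One inequality comes directly from the normalisation: every general position picture yields a realisation, so the maximal picture rank is at most the maximal realisation rank. For the reverse inequality, I would start from a generic $\vf$ and normalise it by rescaling by its last coordinates, taking $g=\mathrm{id}$ after a rational change of basis if needed. This gives the picture $x_i=(v_{i,1}/v_{i,d-1},\dots)$, and $\rank\mathcal{L}_\vf(M)=\rank A(H_M,x)$ for this $x$. Hence the maximal realisation rank is at most the maximal picture rank, and the two maxima agree. Since generic points attain these maxima on both sides, we get $\rank\mathcal{L}_\vf(M)=\rank A(H_M,x)$ for generic $\vf$ and generic $x$.

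The "in particular" then follows. By \Cref{thm:genericlifting}, $M$ is liftable if and only if the generic rank is at most $n-d$. This is equivalent to a generic $x$ having $\dim\ker A(H_M,x)\ge d$, that is, a generic picture lifting to a non-trivial scene. Because all generic pictures share the same rank, "some generic picture" and "every generic picture" coincide.
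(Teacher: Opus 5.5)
Your proposal is correct and follows essentially the same route as the paper: you normalise a realisation of $U_{d-1,n}$ to the form $v_i=(x_i,1)$ (the paper's maps $f$, $g$ together with row operations), then combine \Cref{lem:matrices_coincide} with \Cref{cor:realisationliftable} for (i), and with irreducibility of $\Gamma_{T(M)}$ and \Cref{thm:genericlifting} for (ii). The only difference is in (ii): you match the two generic ranks by showing that each equals the maximal rank over its own parameter space, whereas the paper uses that dividing by last coordinates sends generic realisations to generic pictures. Your version avoids having to check that genericity is preserved under this rational map.
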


\begin{proof}
    Since $T(M) = U_{d-1,n}$, an $n$-tuple $\mathbf{w} \in (\mathbb{C}^{d-1})^n$ is contained in $\Gamma_{T(M)}$ if and only if it is in general position.
    Let $f : (\mathbb{C}^{d-1})^n \dashrightarrow (\mathbb{C}^{d-2})^n$ and $g : (\mathbb{C}^{d-2})^n \rightarrow (\mathbb{C}^{d-2} \times \{1\})^n$ be the rational maps
    \begin{eqnarray*}
        f :& \mathbf{w} = \left( \big(w_1(i) \big)_{i=1}^{d-1}, \ldots, \big(w_n(i) \big)_{i=1}^{d-1} \right) & \longmapsto \quad \Big( \big(w_1(i)/w_{d-1}(i) \big)_{i=1}^{d-2}, \ldots, \big(w_n(i)/w_{d-1}(i) \big)_{i=1}^{d-2} \Big), \\
        g : & x = \left( x_1, \ldots, x_n \right) & \longmapsto \quad \Big( \big(x_1,1 \big), \ldots, \big(x_n,1\big)\Big).
    \end{eqnarray*}
    Here we observe the following facts:
    \begin{enumerate}
        \item\label{point2:linkingscenesandliftings} $g(x) \in \Gamma_{T(M)}$ if and only if $x$ is a general position picture.
        \item\label{point3:linkingscenesandliftings} If $\mathbf{w} \in (\mathbb{C}^{d-1})^n$ is generic, then $f(\mathbf{w})$ exists and is a generic picture.
        Similarly, for every generic picture $x$, there exists generic $\mathbf{w} \in \Gamma_{T(M)}$ for which $f(\mathbf{w})$ is generic.
        \item\label{point4:linkingscenesandliftings} If $f(\mathbf{w})$ exists and one of $\mathbf{w}, g \circ f(\mathbf{w})$ is a realisation of $T(M)$, then both are;
        moreover, if both are realisations then
        \begin{equation*}
            \rank \mathcal{L}_{g \circ f(\mathbf{w})}(M) = \rank \mathcal{L}_{\mathbf{w}}(M);
        \end{equation*}
        in particular, $g \circ f(\mathbf{w})$ has a non-trivial lifting if and only if $\mathbf{w}$ has a non-trivial lifting (\Cref{cor:realisationliftable}).
    \end{enumerate}
    We now prove (i) and (ii) using these facts and \Cref{lem:matrices_coincide}.
    \vspace{2mm}

    \noindent\textbf{(i):}
    If there exists a general position picture $x$ with a non-trivial scene, then $g(x)$ is a realisation of $T(M)$ (fact \ref{point2:linkingscenesandliftings}) with non-trivial lifting (\Cref{lem:matrices_coincide}).
    Conversely, suppose that $\mathbf{w} \in \Gamma_{T(M)}$ has a non-trivial lifting.
    When considered as a matrix, $\mathbf{w}$ has linearly independent rows.
    This implies that we can form a new realisation $\mathbf{w}' \in \Gamma_{T(M)}$ from $\mathbf{w}$ using only row operations so that, as a realisation, the $(d-1)$-th coordinate of each point $v_i$ is non-zero.
    Since $\mathbf{w}' = X\mathbf{w}$ for some invertible matrix $X \in \mathbb{C}^{d-1 \times d-1}$,
    $\mathbf{w}'$ also has a non-trivial lifting.
    As $f(\mathbf{w}')$ exists, it now follows from fact \ref{point4:linkingscenesandliftings} and \Cref{lem:matrices_coincide} that there exists a general position picture with a non-trivial scene.
    \vspace{2mm}

    \noindent\textbf{(ii):}
    Since the rank of the liftability matrix is constant over generic realisations in the same irreducible component of  $\Gamma_{T(M)}$ (\Cref{lem:genericliftmatroid}) and  $\Gamma_{T(M)}$ is irreducible,
    the rank equation follows from \Cref{lem:matrices_coincide} and facts \ref{point3:linkingscenesandliftings} and \ref{point4:linkingscenesandliftings}.
    It now follows from \Cref{thm:genericlifting} that $M$ is liftable if and only if there exists a generic picture of $H_M$ that lifts to a non-trivial scene.
\end{proof}

\Cref{thm:D} is now an immediate consequence of the following theorem:

\begin{theorem}\label{thm: characterisation of liftable}
Let $M$ be a paving matroid of rank $d$ on $[n]$. Then the rank of any generic liftability matroid of $M$ is exactly the rank of $\mathcal{C}_d(M)$ in the matroid $\mathcal{S}_{(1,d-1)}$.
In particular, 
$M$ is liftable if and only if
\begin{equation*}
    \rank_{\mathcal{S}_{(1,d-1)}}(\mathcal{C}_d(M)) \leq n - d.
\end{equation*}
\end{theorem}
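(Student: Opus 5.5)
The plan is to combine \Cref{lem:linkingscenesandliftings}(ii) with Whiteley's \Cref{thm:indepA}. Since $M$ is paving, $T(M)=U_{d-1,n}$, so $\Gamma_{T(M)}$ is irreducible and there is exactly one generic liftability matroid. It is the row matroid of $\mathcal{L}_{\vf}(M)$ for a generic $\vf\in\Gamma_{T(M)}$, and its rows are indexed by $\mathcal{C}^{\text{ns}}(M)=\mathcal{C}_d(M)$. By \Cref{lem:linkingscenesandliftings}(ii), $\rank\mathcal{L}_{\vf}(M)=\rank A(H_M,x)$ for a generic picture $x$. In fact \Cref{lem:matrices_coincide} gives more: the two matrices agree row by row up to nonzero scalars once the liftability matrix is computed at $g\circ f(\mathbf{w})$. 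By fact 3 in that proof, this point can be chosen generic in $\Gamma_{T(M)}$. So the row matroid of $\mathcal{L}_{\vf}(M)$ coincides with the row matroid of $A(H_M,x)$ on the common index set $\mathcal{C}_d(M)$.

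Next I identify the row matroid of $A(H_M,x)$. For any subset $E'\subseteq \mathcal{C}_d(M)$, the rows indexed by $E'$ form exactly the matrix $A(H',x)$, where $H'=([n],E')$. By \Cref{thm:indepA}, these rows are independent if and only if $|E''|\le |V(E'')|-(d-1)$ for all nonempty $E''\subseteq E'$. That is precisely the condition that $E'$ is $(1,d-1)$-sparse, i.e.\ independent in $\mathcal{S}_{(1,d-1)}$. Hence the row matroid of $A(H_M,x)$ is the restriction of $\mathcal{S}_{(1,d-1)}$ to $\mathcal{C}_d(M)$. Its rank is therefore $\rank_{\mathcal{S}_{(1,d-1)}}(\mathcal{C}_d(M))$, and by the previous paragraph this is also the rank of the generic liftability matroid.

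The second statement then follows from \Cref{thm:genericlifting}: $M$ is liftable if and only if the unique generic liftability matroid has rank at most $n-d$.

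The main point needing care is the genericity bookkeeping. \Cref{thm:indepA} requires an algebraically independent picture. I must check that the picture $x=f(\mathbf{w})$ coming from a generic $\mathbf{w}$ is generic, and that $g(x)$ is again a generic point of $\Gamma_{T(M)}$. Fact 3 in the proof of \Cref{lem:linkingscenesandliftings} supplies the first part. The second holds because $\Gamma_{T(M)}$ is a dense open subset of $(\CC^{d-1})^n$, so its generic points are exactly the tuples with algebraically independent coordinates, and $g(x)$ has algebraically independent coordinates apart from the fixed $1$'s. To avoid relying on that, I can instead use \Cref{lem:genericliftmatroid}: it gives that the row matroid at $g(x)$ is at most the generic one in weak order, and equality of ranks then holds by the rank statement of \Cref{lem:linkingscenesandliftings}(ii). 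For the rank claim itself, this inequality together with equality of total ranks already suffices.
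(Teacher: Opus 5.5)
Your proof is correct and is essentially the paper's proof: the rank equality of \Cref{lem:linkingscenesandliftings}(ii) combined with Whiteley's \Cref{thm:indepA}. You spell out the step the paper leaves implicit, namely applying \Cref{thm:indepA} to every sub-hypergraph to see that the row matroid of $A(H_M,x)$ is $\mathcal{S}_{(1,d-1)}$ restricted to $\mathcal{C}_d(M)$, and you read off liftability from \Cref{thm:genericlifting} rather than \Cref{cor:non-triv.scene}. One side remark is false but not load-bearing: $g(x)$ is never a generic point of $\Gamma_{T(M)}$, since all its $(d-1)$-th coordinates equal $1$, a rational condition that does not hold on all of $(\mathbb{C}^{d-1})^n$; likewise, weak-order comparability plus equal rank would not force the two matroids to be equal. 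Even so, the matroid-level identification you wanted does hold, because $g\circ f(\mathbf{w})$ is $\mathbf{w}$ with rescaled columns, and this only right-multiplies $\mathcal{L}_{\mathbf{w}}(M)$ by an invertible diagonal matrix.
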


\begin{proof}
By \Cref{lem:linkingscenesandliftings}(ii),
the rank of $\mathcal{L}_{\vf}(M)$ for any generic realisation $\vf$ of $T(M)$ is equal to the rank of $A(H_M,x)$ for any generic picture $x$ of $H_M$. 
Thus the rank of any generic liftability matroid of $M$ is exactly the rank of $\mathcal{C}_d(M)$ in the matroid $\mathcal{S}_{(1,d-1)}$ by \Cref{thm:indepA}. The result now follows from \Cref{cor:non-triv.scene}.
\end{proof}

\subsection{A new proof of Whiteley's picture theorem using Oxley--Wang derived matroids}
\label{sec:Uniform_OW}

Let $U_{d,n}$ be the rank-$d$ uniform matroid on $[n]$.
Since $V_{U_{d,n}} = (\mathbb{C}^d)^n$ is irreducible,
there is a unique generic Oxley--Wang matroid $\delta U_{d,n}$.
In this subsection, we give a description of the bases of $\delta U_{d,n}$, which we use to provide alternate proofs of both \Cref{thm:D} and \Cref{thm:indepA}.
As a consequence we obtain an alternative proofs for the picture theorem for uniform hypergraphs (\Cref{thm:exists_sharp_scene}) and \Cref{cor:non-triv.scene},
since both results are direct consequences of \Cref{thm:indepA}.

For this proof, we require the following famous result of Hall \cite{hall1987representatives}.

\begin{theorem}[Hall's marriage theorem]\label{thm:hall}
    Let $\mathcal{X} = \{X_1,\ldots,X_k\}$ be a collection of subsets of $[n]$.
    Then the following conditions are equivalent:
    \begin{itemize}
        \item[\rm (i)] there exists a transversal of $\mathcal{X}$ -- i.e.~distinct elements $x_1,\ldots,x_k \in [n]$ such that $x_i \in X_i$ for each $i \in [k]$;
        \item[\rm (ii)] for any $\mathcal{X}' \subseteq \mathcal{X}$,
        we have
        \begin{equation*}
            |\mathcal{X}'| \leq \left| \bigcup_{X_i \in \mathcal{X}'} X_i \right|.
        \end{equation*}
    \end{itemize}
\end{theorem}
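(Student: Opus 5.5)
The direction (i)$\Rightarrow$(ii) is immediate. If $x_1,\ldots,x_k$ is a transversal, then for any $\mathcal{X}' \subseteq \mathcal{X}$ the distinct elements $x_i$ with $X_i \in \mathcal{X}'$ all lie in $\bigcup_{X_i \in \mathcal{X}'} X_i$. Hence that union has at least $|\mathcal{X}'|$ elements.

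For (ii)$\Rightarrow$(i) I will use the standard induction on $k$ (the Halmos--Vaughan argument). The case $k=1$ is trivial, since (ii) gives $|X_1| \geq 1$. For $k \geq 2$, I split according to whether Hall's condition holds with slack on every proper subfamily:
\begin{equation*}
\text{(A)} \quad \Big|\bigcup_{X_i\in\mathcal{X}'}X_i\Big| \geq |\mathcal{X}'|+1 \ \text{ for all } \emptyset\neq\mathcal{X}'\subsetneq\mathcal{X}, \qquad \text{(B)} \quad \text{some } \emptyset\neq\mathcal{X}'\subsetneq\mathcal{X} \text{ has } \Big|\bigcup_{X_i\in\mathcal{X}'}X_i\Big| = |\mathcal{X}'|.
\end{equation*}

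In case (A), I pick any $x_k \in X_k$, which is nonempty by (ii). I then consider the family $\{X_i \setminus \{x_k\} : i<k\}$. For any subfamily of size $m$ its union loses at most one element, so it still has at least $(m+1)-1=m$ elements. Hence Hall's condition holds for this smaller family, and induction gives a transversal avoiding $x_k$, which we extend by $x_k$.

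In case (B), I fix such a tight $\mathcal{X}'$ and set $Y := \bigcup_{X_i\in\mathcal{X}'}X_i$. Since $|\mathcal{X}'|<k$, induction gives a transversal of $\mathcal{X}'$, and it necessarily uses all of $Y$. It remains to show that the complementary family $\{X_j\setminus Y : X_j \notin \mathcal{X}'\}$ satisfies (ii). For $\mathcal{Z}$ a subfamily of $\mathcal{X}\setminus\mathcal{X}'$, applying (ii) to $\mathcal{Z}\cup\mathcal{X}'$ gives
\begin{equation*}
|\mathcal{Z}|+|\mathcal{X}'| \leq \Big|\bigcup_{X_j\in \mathcal{Z}} X_j \cup Y\Big| = \Big|\bigcup_{X_j\in\mathcal{Z}}(X_j\setminus Y)\Big| + |\mathcal{X}'|.
\end{equation*}
This is exactly the required inequality. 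Induction then gives a transversal of the complementary family disjoint from $Y$, and concatenating the two transversals yields one for $\mathcal{X}$.

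The only step needing care is this verification in case (B), in particular the bookkeeping that the two partial transversals are disjoint and together distinct. Everything else is routine.
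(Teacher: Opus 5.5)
Your proof is correct and complete. It is the standard Halmos--Vaughan induction, and the key inequality in case (B) follows exactly as you wrote it, by applying (ii) to $\mathcal{Z}\cup\mathcal{X}'$ and using $|Y|=|\mathcal{X}'|$.

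The paper gives no proof of this statement at all. It quotes Hall's theorem as a classical result with a citation, so your argument supplies what the paper treats as a black box.

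One point is worth making explicit. The statement only holds if $\mathcal{X}$ is read as an indexed family rather than a literal set of subsets. The paper applies it to $A|_D=\{c_i\cap D : i\in[n-d]\}$, where two distinct circuits can have the same trace on $D$. With repeated sets, the set-theoretic reading of (ii) is too weak: for example, $X_1=X_2=\{1\}$ satisfies it but has no transversal. Your induction tracks indices throughout (removing $X_k$, splitting off $\mathcal{X}'$ and its complement by index), so it proves exactly the indexed version the paper needs.
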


\begin{theorem}\label{thm:picturethm_OW}
    Let $A\subseteq \textstyle \binom{[n]}{d}$ with $\lvert A\rvert = n-d$. Then $A$ forms a basis of $\delta U_{d,n}$ if and only if
    \begin{equation}\label{eq:uniform-condition}
        \lvert B\rvert \le \lvert V(B)\rvert - d \quad \text{for all non-empty } B\subseteq A,
    \end{equation}
    where $V(B) \subseteq [n]$ denotes the union of the circuits in $B$.

\end{theorem}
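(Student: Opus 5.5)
Necessity follows from \Cref{lem: necessary for independent}. If $A$ is a basis of $\delta U_{d,n}$, it is independent in $\delta_{\mathrm{OW}}(\vf)$ for a generic realisation $\vf$. For any non-empty $B \subseteq A$ we have $\size{V(B)} \geq d$, and since $U_{d,n}$ is uniform, $\rank_{U_{d,n}}(V(B)) = d$. Hence $\size{B} \le \size{V(B)} - d$. So the whole weight of the proof is on sufficiency.

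For sufficiency I would use \Cref{thm: fixed support}, with $V = (\CC^d)^n$ and $V^* = (\CC^{n-d})^n$, both irreducible. Write $A = \{c_1,\ldots,c_{n-d}\}$. It suffices to produce a matrix $\mathbf{w} \in \CC^{(n-d)\times n}$ with three properties: row $i$ is supported exactly on $c_i$; every $(n-d)$-subset of columns is linearly independent (so $\mathbf{w}$ realises $U_{n-d,n} = U_{d,n}^*$); and $\mathbf{w}$ has rank $n-d$. I would take $\mathbf{w}$ to be a \emph{generic} matrix with the prescribed zero pattern, with entries on $\{(i,j): j \in c_i\}$ algebraically independent over $\QQ$. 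The support condition then holds automatically. Such a $\mathbf{w}$ realises $U_{n-d,n}$ exactly when, for every $S \in \binom{[n]}{n-d}$, the minor $\det \mathbf{w}[\,\cdot\,,S]$ is a nonzero polynomial. By the standard Edmonds argument for generic sparse matrices, this happens exactly when the bipartite graph between rows and $S$ (row $i$ joined to $j \in S \cap c_i$) has a perfect matching.

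The key step is therefore combinatorial: for every $S \subseteq [n]$ with $\size{S} = n-d$, the sets $X_i := c_i \cap S$ must admit a transversal. By \Cref{thm:hall}, I need $\size{B} \le \size{V(B) \cap S}$ for every $B \subseteq A$. Since $\size{[n]\setminus S} = d$, we get $\size{V(B)\cap S} \ge \size{V(B)} - d \ge \size{B}$ by \eqref{eq:uniform-condition}. The empty $B$ is trivial. This gives the matching, hence every maximal minor is a nonzero polynomial, and a generic choice of entries makes all of them nonzero simultaneously. In particular $\mathbf{w}$ has rank $n-d$ and realises $U_{d,n}^*$, which lies in $V^*$ automatically. \Cref{thm: fixed support} then shows that $A$ is a basis of $\delta U_{d,n}$.

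I expect the main obstacle to be the algebraic step asserting that a perfect matching forces the minor to be a nonzero polynomial. The argument is that distinct permutations contribute distinct monomials in the independent indeterminates, so no cancellation can occur. One must also check carefully that \Cref{thm: fixed support} applies with only the support condition on rows, without any further normalisation. Since $V^*$ is all of $\CC^{(n-d)\times n}$, membership in $V^*$ imposes nothing beyond being a realisation of $M^*$.
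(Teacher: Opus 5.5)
Your proposal is correct and follows essentially the same route as the paper. Necessity comes from \Cref{lem: necessary for independent} with $\rank_{U_{d,n}}(V(B)) = d$, and sufficiency applies \Cref{thm: fixed support} to a generic matrix supported on $\{(i,j) : j \in c_i\}$ (a transversal matroid of $A$), using Hall's theorem and the estimate $\lvert V(B)\cap S\rvert \ge \lvert V(B)\rvert - d \ge \lvert B\rvert$ for every $S$ of size $n-d$; your no-cancellation argument for the generic maximal minors just makes explicit the transversal-matroid fact that the paper cites from the literature.
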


\begin{proof}
If $A$ is a basis of $\delta U_{d,n}$, then \Cref{lem: necessary for independent} implies that~\eqref{eq:uniform-condition} holds.
Conversely, assume~\eqref{eq:uniform-condition} holds. Write $A = \{c_1,\ldots,c_{n-d}\}$ and set
\[
I := \bigcup_{i\in [n-d]} \{(i,j) : j \in c_i\} \subset [n-d] \times [n].
\]
By \Cref{thm: fixed support}, $A$ is a basis of $\delta U_{d,n}$ if and only if we can construct a matrix $\mathbf{w} \in \mathbb{C}^{(n-d)\times n}$ with $\supp(\mathbf{w}) = I$ that is also a realisation of $U_{n-d,n} = U_{d,n}^{\ast}$.

Choose any generic point $(x_{i,j})_{(i,j) \in I}$ of $\mathbb{C}^I$.
Let $\mathbf{w}$ be the $(n-d) \times n$-matrix where the $(i,j)$ entry is $x_{i,j}$ if $(i,j) \in I$, and $0$ otherwise.
With this set-up, it is well-known that the bases of $\mathcal{M}(\mathbf{w})$ (the column matroid of $\mathbf{w}$) are exactly the transversals of $A$ (see for example \textup{\cite[\S 2.2]{bonin2010introduction}} or \cite{Edmonds1967}).
Thus $\mathcal{M}(\mathbf{w}) = U_{n-d,n}$ if and only if for every subset $D \subset [n]$ of size $n-d$, the collection of sets $A|_D := \{c_i \cap D : i \in [n-d]\}$ has a transversal.

Choose any set $D \subset [n]$ of size $n-d$.
By \Cref{thm:hall},
$A|_D$ has a transversal (and hence $D$ is a basis of $\mathcal{M}(\mathbf{w})$) if and only if for every subset $B \subseteq A$ we have
\begin{equation*}
    |B| \leq \left| \bigcup_{c_i \in B} (c_i \cap D)  \right| = \left| D \cap \bigcup_{c_i \in B} c_i    \right| = \left| V(B) \cap D \right|.
\end{equation*}
As $|[n]\setminus D| = d$, we have for each $B \subseteq A$ that
\begin{equation}\label{eq:picturethm_OW}
    \Big(d - \big\lvert V(B) \cap ([n] \setminus D) \big\rvert \Big) \geq \Big(d - \big\lvert [n] \setminus D \big\rvert \Big) \geq 0.
\end{equation}
Combining \eqref{eq:picturethm_OW} and \eqref{eq:uniform-condition} then shows that
\[
\lvert B \rvert \le \lvert V(B) \rvert - d = \lvert V(B) \cap D \rvert - \Big(d - \big\lvert V(B) \cap ([n] \setminus D) \big\rvert \Big) \le \lvert V(B) \cap D \rvert
\]
for every subset $B \subseteq A$.
Hence $A|_D$ has a transversal, which concludes the proof.
\end{proof}

We can now use \Cref{thm:picturethm_OW} to provide an alternative proof of \Cref{thm:D} that only uses Oxley--Wang derived matroids and does not require \Cref{thm:indepA}, \Cref{cor:non-triv.scene}, or any other result from scene analysis. We also prove~\Cref{thm:E}. We abuse terminology and say a hypergraph $H=(V,E)$ is $(1,d)$-sparse (with $d \geq 2$) if
\[
    |E'| \leq V(E')-d \qquad \text{for all $E' \subseteq E$ with $|E'| \geq 2$.}
\]

\begin{proof}[Alternative proof of \Cref{thm:D} and proof of~\Cref{thm:E}]
    By \Cref{thm:B}, $M$ is liftable if and only if $\rank_{\delta U_{d-1,n}}
   \bigl(\mathcal{C}_d(M) \bigr)
   \leq n-d$. Moreover, by~\Cref{thm:C}, $M$ is generically realisable if and only if $\mathcal{C}^{\text{ns}}(M)$ is a flat in $\delta U_{d-1,n}$.
   \Cref{thm:D} and the first part of~\Cref{thm:E} then follow from \Cref{thm:picturethm_OW}, which gives $\delta U_{d-1,n}=\mathcal{S}_{(1,d-1)}$.

Suppose now that $M$ is a sparse paving matroid of rank $d\geq 2$. We must show that $M$ is generically realisable if and only if $H_M$ is $(1,d)$-sparse. If $H_M$ is $(1,d)$-sparse, then for any non-hyperedge $f \in \binom{[n]}{d} \setminus E(H_M)$, the hypergraph $H_M+f$ is $(1,d-1)$-sparse.
    Thus $\mathcal{C}_d(M) = E(H_M)$ is a flat of $\mathcal{S}_{(1,d-1)}$ and $M$ is generically realisable by the first part of~\Cref{thm:E}.
    
    Suppose $H_M$ is not $(1,d)$-sparse.
    Then there exists a set of circuits $F = \{c_1,\ldots,c_k\}$ of $M$ for some $k \geq 2$ where $|F| = |V(F)| - d +1$ and $|F'| \leq |V(F')| - d$ for all $F' \subsetneq F$ with $|F'|\geq 2$.
    Then $F$ is a basis of $\mathcal{S}_{(1,d-1)}$.
    If $F = \binom{V(F)}{d}$ (i.e.~it induces a complete hypergraph),
    then, since $|V(F)| = |F| + d - 1 \geq d+1$,
    we have that
    \begin{equation*}
        |V(F)| - d + 1 = |F| = \binom{|V(F)|}{d} \geq |V(F)|,
    \end{equation*}
    with equality if and only if $d= 1$.
    As $d \geq 2$, it follows that $F \neq \binom{V(F)}{d}$.
    Choose any $f \in \binom{V(F)}{d} \setminus F$.
    Then
    \begin{equation*}
        \rank_{\mathcal{S}_{(1,d-1)}}(F \cup \{f\}) = \rank_{\mathcal{S}_{(1,d-1)}}(F).
    \end{equation*}

    For any $f \in \binom{V(F)}{d} \setminus C_d(M) \subseteq \binom{V(F)}{d} \setminus F$, it follows that $f\in\operatorname{cl}(F)\subseteq\operatorname{cl}(\mathcal C_d(M))$
    and so $C_d(M)$ is not a flat of the matroid $\mathcal{S}_{(1,d-1)}$.
    Thus by the first part of~\Cref{thm:E}, $M$ is not generically realisable.
\end{proof}

We can similarly provide an alternative proof of \Cref{thm:indepA}.

\begin{proof}[Alternative proof of \Cref{thm:indepA}]
    Let $H=([n],E)$ be a $d$-uniform hypergraph.
    We observe that the proof of \Cref{lem:matrices_coincide}(ii) can easily adapted to show the following:
    for any generic $\vf \in \Gamma_{U_{d-1,n}}$ and any generic picture $x$ of $H$,
    we have 
    \begin{equation*}
        \rank_{\delta U_{d-1,n}} (E) = \rank \mathcal{R}_\vf (U_{d-1,n};E) = \rank A(H,x).
    \end{equation*}
    The result now follows from \Cref{thm:picturethm_OW}.
\end{proof}

\begin{remark}\label{rem:counterexamplelist}
    \Cref{thm:E} is now the final piece for  constructing the implication diagram in \Cref{subsub:lifttoscene}.
    The implications that cannot be obtained via transitivity all do not hold.
    To see this, we observe that the following hypergraphs $H$ have the following properties:
    \begin{itemize}
        \item \Cref{figure: pictures and scenes}(b): $H$ has a general position picture with a sharp/non-trivial scene, but $H$ does not have a generic picture with a sharp/non-trivial scene.
        \item \Cref{figure: pictures and scenes}(c): $H$ has a generic picture with a non-trivial scene, but $H$ does not have a generic picture with a sharp scene.
        \item \Cref{Figure1}(d): $H$ has a generic picture with a non-trivial scene, but $H$ does not have a general position picture with a sharp scene; as such, if $H$ is associated to a paving matroid $M$, then $M$ cannot be realisable.
        \item \Cref{Figure: Pappus-NonPappus}(a): $H$ is associated to a realisable paving matroid $M$,
        but $H$ has no generic picture with a sharp scene.
        \item \Cref{Figure: Pappus-NonPappus}(b): $H$ has a general position picture with a sharp scene, but the matroid constructing $H$ is not realisable.
    \end{itemize}
    
\end{remark}

\section{The Oxley-Wang matroid for a family of paving matroids}
\label{sec:simple_paving_OW}

In this section we compute the generic Oxley--Wang derived matroids of 
a family of sparse paving matroids:
those with maximally intersecting circuits, meaning that the union of any two distinct non-spanning circuits is the entire ground set. Note that
a sparse paving matroid has maximally intersecting circuits if and only if its dual has pairwise-disjoint non-spanning circuits. 

\begin{proposition}\label{prop:irreduciblematroidvar}
    
    If $M$ has maximally intersecting circuits
    then the realisation space of $M$ is irreducible and non-empty.
    Moreover, $M$ has a unique generic Oxley--Wang derived matroid.
\end{proposition}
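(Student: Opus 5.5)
Throughout, $M$ is a sparse paving matroid of rank $d$ on $[n]$ with maximally intersecting circuits. The plan is to pass to the dual and reuse \Cref{prop:disj-circuits-irr}. Since $M$ is sparse paving, so is $M^*$, of rank $n-d$. The non-spanning circuits of $M^*$ are the complements of the non-spanning circuits of $M$: in a sparse paving matroid the size-$d$ circuits are exactly the circuit-hyperplanes, and complements of hyperplanes of $M$ are circuits of $M^*$. Two non-spanning circuits $c,c'$ of $M$ satisfy $c\cup c'=[n]$ exactly when $([n]\setminus c)\cap([n]\setminus c')=\emptyset$. So $M^*$ has pairwise disjoint non-spanning circuits, as the paper already remarks.

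\Cref{prop:disj-circuits-irr} then says that $\Gamma_{M^*}$ is irreducible and non-empty. It remains to transfer this back to $M$, for which I would use \Cref{rem:dual}. That remark states that the irreducible components of $\Gamma_M$ and $\Gamma_{M^*}$ correspond bijectively, through kernel (orthogonal complement) duality, and in particular that $M$ is irreducible if and only if $M^*$ is. Non-emptiness transfers directly: if $\mathbf{w}\in\Gamma_{M^*}$ then any $\vf$ whose rows span $\ker\mathbf{w}$ is a realisation of $(M^*)^*=M$.

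For a more self-contained irreducibility argument, I would consider the variety of pairs $(\vf,\mathbf{w})$ with $\vf\mathbf{w}^\top=\mathbf{0}$ and $\vf\in\Gamma_M$. Over $\Gamma_{M^*}$ this variety fibres with fibre $GL_d$-torsor-like pieces: the choice of a basis of a fixed kernel, which is irreducible. So it is irreducible, and its projection onto $\Gamma_M$ is surjective, so $\Gamma_M$ is the image of an irreducible set and hence irreducible. The main care needed is to phrase this as a continuous (rational) map. On the open set where a fixed maximal minor of $\mathbf{w}$ is nonzero, a kernel basis is given by rational functions of $\mathbf{w}$, and one then acts by $GL_d(\mathbb{C})$. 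This yields a dominant map from an irreducible variety $\Gamma_{M^*}\times GL_d$ onto $\Gamma_M$.

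Finally, with $\Gamma_M$ (equivalently $V_M$) irreducible, \Cref{def: generic oxley wang} applies with $k=1$. \Cref{lem:genericliftmatroid} shows that all generic points of the single component give the same row matroid of $\mathcal{R}_\vf(M;\mathcal{C}(M))$, so $M$ has a unique generic Oxley--Wang derived matroid $\delta M$. The only potential obstacle is verifying the duality step rigorously, and \Cref{rem:dual} already covers it.
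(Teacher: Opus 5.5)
Your proposal is correct and matches the paper's proof. You dualise to get $M^*$ with pairwise disjoint non-spanning circuits, apply \Cref{prop:disj-circuits-irr} to $\Gamma_{M^*}$, and transfer non-emptiness and irreducibility back to $\Gamma_M$ via \Cref{rem:dual}; uniqueness of the generic Oxley--Wang derived matroid then follows from there being a single component. Your explicit map $\Gamma_{M^*}\times GL_d\to\Gamma_M$ is a nice way to make the remark's transfer rigorous, and it is in fact surjective, not just dominant: the maximal minor of $\mathbf{w}$ indexed by a basis of $M^*$ is nonzero on all of $\Gamma_{M^*}$.
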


\begin{proof}
By~\Cref{prop:disj-circuits-irr}, we have that $\Gamma_{M^{\ast}}$ is irreducible and non-empty. The claim then follows from~\Cref{rem:dual}, 
as duals of realisable matroids are realisable, and if $M^{\ast}$ has an irreducible matroid realisation space, then so does $M$.
\end{proof}

We require the following two results regarding independent and dependent sets in Oxley--Wang derived matroids.

\begin{lemma}\label{lem: necessary for independent special case}
    Let $M$ be a rank-$d$ sparse paving matroid on $[n]$ with realisation $\vf$.
    If $A \subseteq \mathcal{C}(M)$ is independent in $\delta_{\mathrm{OW}}(\vf)$, then
    \begin{equation*}
        \size{B} \le \size{V(B)} - d  \quad \text{for all } B\subseteq A \text{ with } |B| \geq 2.
    \end{equation*}
\end{lemma}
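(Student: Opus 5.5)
The plan is to reduce to the general necessary condition in \Cref{lem: necessary for independent}, which already gives $\size{B} \le \size{V(B)} - \rank_M(V(B))$ for every non-empty $B \subseteq A$. It therefore suffices to show that $\rank_M(V(B)) = d$ whenever $B \subseteq \mathcal{C}(M)$ has $|B| \geq 2$; the claimed inequality then follows immediately.

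First I handle the case where $B$ contains at least one spanning circuit of $M$. A spanning circuit $c$ satisfies $|c| = d+1$ and $\rank_M(c) = d$, so $\rank_M(V(B)) \geq \rank_M(c) = d$. Since $\rank_M(V(B)) \leq \rank(M) = d$, we get equality.

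Otherwise every circuit in $B$ is non-spanning. Because $M$ is paving, each such circuit has size exactly $d$ and is a circuit-hyperplane. Since $M$ is sparse paving, two distinct circuit-hyperplanes $c, c'$ satisfy $|c \cap c'| \leq d-2$. I would justify this by the standard fact that distinct hyperplanes of a sparse paving matroid cannot share $d-1$ elements: a shared $(d-1)$-set $S$ would be independent, since the circuits have size $d$, so its closure is a unique hyperplane, forcing $c = \operatorname{cl}(S) = c'$.

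Take two distinct circuits $c, c' \in B$. Their union has size $|c \cup c'| = 2d - |c \cap c'| \geq d+2 > d$. If $\rank_M(c \cup c') \leq d-1$, then $c \cup c'$ would lie in a hyperplane, and that hyperplane would contain $c$. But $c$ is itself a hyperplane (a flat of rank $d-1$), so this forces $c \cup c' \subseteq \operatorname{cl}(c) = c$, contradicting $c' \neq c$. Hence $\rank_M(c \cup c') = d$, and therefore $\rank_M(V(B)) = d$.

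The only point needing care is the structural fact about circuit-hyperplanes of sparse paving matroids. It is standard, and in fact only the weaker statement that each non-spanning circuit is a flat (a hyperplane) is used, which follows directly from the paving property. I do not expect any genuine obstacle.
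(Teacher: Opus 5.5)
Your proposal is correct and matches the paper's proof: reduce to \Cref{lem: necessary for independent}, then show $\rank_M(V(B))=d$ by splitting on whether $B$ contains a spanning circuit or two distinct non-spanning circuits, and use that the latter are hyperplanes. One correction is needed: the fact that non-spanning circuits are hyperplanes comes from $M$ being \emph{sparse} paving, not from the paving property alone. In a rank-$3$ paving matroid with a four-point line, three of those points form a non-spanning circuit that is not a flat, and the lemma itself fails there. Also, your bound $|c\cap c'|\le d-2$ is never actually used.
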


\begin{proof}
    Choose any non-empty subset $B \subseteq A$.
    If $B$ contains a spanning circuit,
    then $\rank_M(V(B)) = d$.
    If $B$ contains two non-spanning circuits $C_1,C_2$,
    then, as all non-spanning circuits of a sparse paving matroid are hyperplanes,
    we have 
    \begin{equation*}
        \rank_M(V(B)) \geq \rank_M(C_1 \cup C_2) \geq \rank_M(C_1) +1 = d,
    \end{equation*}
    and so $\rank_M(V(B))= d$.
    The result now follows from combining these observations with \Cref{lem: necessary for independent}.
\end{proof}

\begin{lemma}\label{lem: necessary for independent special case2}
    Let $M$ be a rank-$d$ sparse paving matroid on $[n]$ with realisation $\vf$.
    Let $B \subseteq \mathcal{C}(M)$ be a set of circuits satisfying
    \begin{enumerate}
        \item[\rm{(i)}] $|B| =|V(B)| - d$;
        \item[\rm{(ii)}] $|V(B)| < n$;
        \item[\rm{(iii)}] $V(B)$ contains a non-spanning circuit $c$ of $M$ that is not in $B$.
    \end{enumerate}
    Then $B$ is dependent in $\delta_{\mathrm{OW}}(\vf)$.
\end{lemma}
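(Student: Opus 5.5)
The plan is to compare $B$ with the larger collection $B\cup\{c\}$ and use the counting bound of \Cref{lem: necessary for independent} on a suitable subset, or alternatively to work directly with the row space of $\mathcal{R}_\vf(M;\mathcal{C}(M))$ restricted to the coordinates in $V(B)$. Write $U := V(B)$. Every circuit $c' \in B \cup \{c\}$ is contained in $U$, so each row $r_{c'}$ lies in the subspace
\[
K_U := \{ y \in \CC^n : \supp(y) \subseteq U,\ \textstyle\sum_{i \in U} y_i v_i = \mathbf{0} \}.
\]
The first step is to compute $\dim K_U = |U| - \rank_M(U)$, which is standard because $K_U$ is the space of linear dependencies among the vectors $(v_i)_{i\in U}$.

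The second step is to show that $\rank_M(U) = d$. Here I use hypothesis (iii): $U$ contains a non-spanning circuit $c$, which is a hyperplane of $M$ since $M$ is sparse paving. I also need $U \supsetneq c$, so that $\rank_M(U) = d$. Suppose instead $U = c$. Then every circuit in $B$ lies inside $c$. Since $c$ has size $d$ and rank $d-1$, the only circuit contained in it is $c$ itself, so $B \subseteq \{c\}$. But $c \notin B$, which forces $B = \emptyset$, and then (i) gives $0 = 0 - d$, a contradiction. Hence $\rank_M(U)=d$ and $\dim K_U = |U| - d = |B|$ by (i).

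The third step is to show that $r_c \in K_U$ lies in the span of $\{r_{c'} : c' \in B\}$, or else to derive dependence directly. If the rows of $B$ were independent, they would span a $|B|$-dimensional subspace of $K_U$, hence all of $K_U$. In particular $r_c$ would lie in that span. So $B \cup \{c\}$ would be dependent with $r_c$ in the closure of $B$, which is not by itself a contradiction. I therefore need a genuine obstruction, and this is where I expect the main difficulty.

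My first attempt at the obstruction uses hypothesis (ii), $|U|<n$. The idea is to exhibit a vector in $K_U$ that cannot be a combination of the rows of $B$, for instance via the structure of $M^*$ through \Cref{rem:dual}. Alternatively, I would look for a circuit-counting contradiction, applying \Cref{lem: necessary for independent special case} to a subfamily $B' \subseteq B$ together with $c$. Since the $r_{c'}$ for $c' \in B$ span $K_U$, a coordinate-wise argument in the style of \Cref{thm: fixed support} would produce, in the dual realisation, a row with support exactly $c$ among combinations of $B$-rows, and I would then try to contradict the sparse paving structure.

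Realistically, I expect the intended argument to be the following. The elements of $[n]\setminus U$, which exist by (ii), together with the hyperplane $c$ force $U$ to be spanning but not the whole ground set. The rows indexed by $B$ span $K_U$, and $K_U$ must be compatible with the elements outside $U$. The key step is to show that $\dim K_U = |U|-d$ is strictly less than $|B|$ in $\delta_{\mathrm{OW}}(\vf)$ once $c$ is accounted for, which would give dependence. Making this precise is the step I would spend the most effort on.
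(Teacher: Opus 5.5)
Your Steps 1 and 2 are correct: $c$ is a hyperplane properly contained in $U=V(B)$, so $\rank_M(U)=d$ and $\dim K_U=|U|-d=|B|$. You also identify the crux correctly in Step 3. If $B$ were independent, its rows would span $K_U$, so $r_c$ would lie in their span, and that is not a contradiction. The genuine gap is that the proposal never supplies an obstruction. Your closing suggestion cannot work either, since you have already shown $\dim K_U=|B|$ exactly.

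In fact no obstruction exists, because the statement is false as written. Let $d=3$, $n=6$, and let $M$ be the sparse paving matroid whose only non-spanning circuit is $c=\{1,2,3\}$ (six points in the plane with only $1,2,3$ collinear). Take $B=\{\{1,2,4,5\},\{1,3,4,5\}\}$. Then $V(B)=\{1,\ldots,5\}$, so $|B|=2=|V(B)|-d$, $|V(B)|<n$, and $c\subseteq V(B)$ with $c\notin B$. Yet $r_{\{1,2,4,5\}}$ and $r_{\{1,3,4,5\}}$ have different supports, hence are not proportional. So $B$ is independent in $\delta_{\mathrm{OW}}(\vf)$ for every realisation $\vf$.

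The paper's proof fails at exactly the step you were reaching for via $M^*$. It extends $B$ to a basis $A$ and deduces that the square block $X_2$ (rows $B$, columns $V(B)\setminus c$) is singular. It then uses the invertible block $Z$ to produce the right-kernel vector $(\mathbf{0},z,-Z^{-1}Y_2z)$ of $\mathcal{R}_\vf(M;A)$, and concludes that $V(B)\setminus c$ is dependent in $M^*$. But since the rows of $A$ span all linear dependencies of $\vf$, the right kernel of $\mathcal{R}_\vf(M;A)$ consists precisely of the vectors $(u^{\top}v_i)_{i\in[n]}$ with $u\in\CC^d$. A non-zero such vector vanishing on $c$ has $u$ normal to the span of $(v_i)_{i\in c}$. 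Because $c$ is a flat, it is then non-zero on every element of $[n]\setminus c$, including those outside $V(B)$. So the argument only shows that $[n]\setminus c$ is dependent in $M^*$, which is true.

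Indeed $X_2$ is singular for every $B$ satisfying (i) and (iii), since all its rows are orthogonal to the non-zero vector $(u^{\top}v_i)_{i\in V(B)\setminus c}$. The singularity therefore carries no information.

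The same example also refutes the necessity direction of \Cref{thm:maxint}; maximal intersection holds vacuously with a single non-spanning circuit. The set $A=B\cup\{\{1,4,5,6\}\}$ is a basis of $\delta M$, because its minor on columns $2,3,6$ is diagonal with non-zero entries. Yet $B\subsetneq A$ violates \eqref{eq:maxint}.
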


\begin{proof}
    Suppose for contradiction that $B$ is independent in $\delta_{\mathrm{OW}}(\vf)$.
    Without loss of generality, 
    we may suppose $c = [d]$ and $V(B) = [|B|+d]$.
    Choose a basis $A$ of $\delta_{\mathrm{OW}}(\vf)$ containing $B$.
    By a specific ordering of the rows and columns,
    we obtain the block matrix form:
    \begin{equation*}
        \mathcal{R}_{\vf}(M;A \cup\{c\})
        =
        \begin{pmatrix}
            X_1 & X_2 & \mathbf{0} \\
            Y_1 & Y_2 & Z \\
            w & 0 \cdots 0 & 0 \cdots 0 \\
        \end{pmatrix}.
    \end{equation*}
    The block columns of $\mathcal{R}_{\vf}(M;A \cup\{c\})$ are -- from left to right -- $c=[d]$, $V(B)\setminus c$ and $[n]\setminus V(B)$;
    and the block rows of $\mathcal{R}_{\vf}(M;A \cup\{c\})$ are -- from top to bottom -- $B$, $A\setminus B$ and $\{c\}$.
    The matrix $\mathcal{R}_{\vf}(M;A \cup\{c\})$ and the submatrix $\mathcal{R}_{\vf}(M;A)$ both have rank $n-d$ as $A$ is a basis of $\OW$.

    As $B$ is independent in $\delta_{\mathrm{OW}}(\vf)$ and $|B|=|V(B)|-d$,
    it follows from \Cref{lem: necessary for independent} that $\rank_M(V(B)) = d$.
    It also follows from \Cref{lem: necessary for independent} that $B \cup \{c\}$ is dependent in $\OW$ since 
    \begin{equation*}
        |B \cup \{c\}| = |B| + 1 = |V(B)| - d + 1 = |V(B \cup \{c\})| - \rank_M(V(B \cup \{c\})) + 1.
    \end{equation*}
    Thus there exists a non-zero left-kernel row vector $(\lambda , \mathbf{0}, \mu) \in \mathbb{C}^{B} \times \mathbb{C}^{A \setminus B} \times \mathbb{C} \cong \mathbb{C}^{A \cup \{c\}}$ of the matrix $\mathcal{R}_{\vf}(M;A \cup\{c\})$ with $\mu \neq 0$.
    Since
    \begin{equation*}
        \begin{pmatrix}
            \lambda & \mathbf{0} & \mu
        \end{pmatrix}
        \begin{pmatrix}
            X_1 & X_2 & \mathbf{0} \\
            Y_1 & Y_2 & Z \\
            w & 0 \cdots 0 & 0 \cdots 0 \\
        \end{pmatrix}
        =
        \begin{pmatrix}
            \lambda X_1 +\mu w & \lambda X_2 & \mathbf{0}
        \end{pmatrix}
        =
        \mathbf{0},
    \end{equation*}
    we see that the matrix $X_2$ is singular, since $X_2$ being non-singular would force both $\lambda$ and $\mu$ to be zero.
    Thus there exists a column vector $z \in \mathbb{C}^{V(B) \setminus c}$ for which $X_2 z = \mathbf{0}$.

    As observed in \Cref{rem:rowvcolmatroid}, the column matroid of
    $\mathcal{R}_{\vf}(M;A)$ is $M^*$.
    As $c$ is a non-spanning circuit of $M$ and $M$ is sparse paving,
    $[n] \setminus c$ is a non-spanning circuit of $M^*$.
    Since $[n]\setminus V(B)$ is contained in $[n]\setminus c$ as a proper subset,
    it is independent in $M^*$.
    It thus follows that $Z$ is invertible.
    As
    \begin{equation*}
        \begin{pmatrix}
            X_1 & X_2 & \mathbf{0} \\
            Y_1 & Y_2 & Z \\
            w & 0 \cdots 0 & 0 \cdots 0 \\
        \end{pmatrix}
        \begin{pmatrix}
            \mathbf{0} \\
            z \\
            Z^{-1}Y_2z
        \end{pmatrix}
        =
        \mathbf{0},
    \end{equation*}
    the set $V(B) \setminus c$ is dependent in $M^*$.
    Since $|V(B)|< n$,
    this implies that $[n] \setminus c$ contains a dependent set (namely, $V(B) \setminus c$) as a proper subset,
    which contradicts the circuit axioms of a matroid.
    This contradiction now concludes the proof.
\end{proof}

\begin{remark}
    It is important in \Cref{lem: necessary for independent special case2} that the union of the circuits in $B$ is not equal to the ground set.
    To see why, let $M$ be the sparse paving matroid of rank $3$ on $[6]$ with unique non-spanning circuit $\{1,2,3\}$. Then \[B=\{\{4,5,6,1\},\{4,5,6,2\},\{4,5,6,3\}\}\] is a set of circuits that satisfies $\size{V(B)}=\size{B}+d$ and $V(B)$ contains the non-spanning circuit $\{1,2,3\}$ of $M$. However, one can see that $B$ is independent in $\OW$ since each element of $B$ has an element of $[6]$ that is not present in the other elements of $B$. More precisely, if we denote $B:=\{c_1,c_2,c_3\}$, and we form the $3\times 6$ matrix with rows $r_{c_1},r_{c_2}$ and $r_{c_3}$, then the restriction of this matrix to the columns $1,2,3$ gives a diagonal invertible matrix. 
    Hence $B$ is independent.
\end{remark}

\begin{lemma}\label{lem:transversal_harder}
    Let $A = \{c_1,\ldots,c_{n-d}\}$ be a collection of subsets of $[n]$ with $|c_i| \in \{d,d+1\}$ that satisfies
    \begin{equation}\label{eq:transversal_harder}
        \size{B} \le \size{V(B)} - d  \quad \text{for all } B\subseteq A \text{ with } |B| \geq 2.
    \end{equation}
    If $D \subset [n]$ has size $n-d$ and $c_i \cap D \neq \emptyset$ for each set $c_i \in A$ of size $d$,
    then $A|_D := \{c_i \cap D : i \in [n-d]\}$ has a transversal.
\end{lemma}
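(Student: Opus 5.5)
We will verify Hall's condition (\Cref{thm:hall}) for $A|_D$: for every non-empty $B \subseteq A$ we show $|B| \le |V(B)\cap D|$, using
\[
|V(B)\cap D| = |V(B)| - |V(B)\setminus D| \ge |V(B)| - |[n]\setminus D| = |V(B)| - d .
\]
The argument mirrors the proof of \Cref{thm:picturethm_OW}, except that \eqref{eq:transversal_harder} is only available for $|B|\ge 2$. We therefore split into the cases $|B|=1$ and $|B|\ge 2$.

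For $|B|\ge 2$, \eqref{eq:transversal_harder} gives $|B| \le |V(B)| - d$. Combined with the display above, this yields $|B| \le |V(B)\cap D|$, so this case is immediate.

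For $|B|=1$, say $B=\{c_i\}$, we need $c_i\cap D\neq\emptyset$. If $|c_i|=d$, this is exactly the hypothesis of the lemma. If $|c_i|=d+1$, then $|c_i\cap D| \ge |c_i| - |[n]\setminus D| = d+1-d = 1$, so the intersection is again non-empty. (The case $B=\emptyset$ is trivial.)

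Hall's condition therefore holds for every subcollection, and \Cref{thm:hall} gives the transversal. The only point needing care is the singleton case, where \eqref{eq:transversal_harder} gives no information. This is precisely why the hypothesis $c_i\cap D\neq\emptyset$ is imposed only on the size-$d$ sets: the size-$(d+1)$ sets meet $D$ automatically by the counting argument.
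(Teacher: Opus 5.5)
Your proof is correct and follows the paper's argument essentially verbatim. Like the paper, you verify Hall's condition using $|V(B)\cap D| \ge |V(B)|-d$ for $|B|\ge 2$, and handle singletons via the hypothesis for size-$d$ sets and the counting bound $|c_i\cap D|\ge 1$ for size-$(d+1)$ sets.
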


\begin{proof}
    By \Cref{thm:hall},
    $A|_D$ has a transversal if and only if for every subset $B \subseteq A$ we have
    \begin{equation*}
        |B| \leq \left| \bigcup_{c_i \in B} (c_i \cap D)  \right| = \left| D \cap \bigcup_{c_i \in B} c_i    \right| = \left| V(B) \cap D \right|.
    \end{equation*}
    As $|[n]\setminus D| = d$, we have for each $B \subseteq A$ that
    \begin{equation}\label{eq:transversal_harder1}
        \Big(d - \big\lvert V(B) \cap ([n] \setminus D) \big\rvert \Big) \geq \Big(d - \big\lvert [n] \setminus D \big\rvert \Big) \geq 0.
    \end{equation}
    Combining \eqref{eq:transversal_harder1} and \eqref{eq:transversal_harder1} then shows that
    \[
    \lvert B \rvert \le \lvert V(B) \rvert - d = \lvert V(B) \cap D \rvert - \Big(d - \big\lvert V(B) \cap ([n] \setminus D) \big\rvert \Big) \le \lvert V(B) \cap D \rvert
    \]
    for every subset $B \subseteq A$ with at least 2 elements.
    Now suppose that $B =\{c_i\}$ for some $i \in [n-d]$.
    If $|c_i| = d+1$ then $c_i \cap D \neq \emptyset$ as $|[n]\setminus D| = d$,
    and if $|c_i| = d$ then $c_i \cap D \neq \emptyset$ by assumption;
    in either case, $|B| \leq | V(B) \cap D  |$.
    Hence $A|_D$ has a transversal, which concludes the proof.
\end{proof}

We will additionally require the following result of Frobenius describing an exact combinatorial condition for the determinant of a specific type of symbolic matrix to have an irreducible determinant.

\begin{theorem}[Frobenius \cite{frobenius1917}; see also {\cite[pg.~142]{Schneider}}]\label{thm:frobenius}
    Let $I \subset [n] \times [n]$ and let $M$ be an $n \times n$ symbolic matrix where each entry $M_{i,j}$ is either an independent variable $x_{i,j}$ or 0.
    Then the polynomial $\det M \in \mathbb{C}[x_{i,j}:(i,j) \in I]$ is irreducible if and only if for all $0<m<n$ and all subsets $S_1 \in \binom{[n]}{m}$ and $S_2 \in \binom{[n]}{n-m}$, there exists $i \in S_1$ and $j \in S_2$ such that $M_{i,j} \neq 0$.
\end{theorem}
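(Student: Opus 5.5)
This is Frobenius's classical criterion; here is how I would prove it directly. Write the condition in the statement as ``$M$ has no zero $m\times(n-m)$ submatrix for $0<m<n$'' (i.e.\ $M$ is fully indecomposable). I prove the two directions separately. The ``only if'' direction is a block-triangular factorisation. The ``if'' direction uses multihomogeneity of $\det M$ with respect to the row and column gradings.

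\emph{Necessity.} Suppose $S_1\times S_2$ is a zero block with $|S_1|=m$, $|S_2|=n-m$ and $0<m<n$. The rows in $S_1$ are then supported only on the $m$ columns of $[n]\setminus S_2$. After permuting rows and columns, $M=\begin{pmatrix} P & \mathbf{0}\\ Q & R\end{pmatrix}$ with $P$ of size $m\times m$ and $R$ of size $(n-m)\times(n-m)$. Hence $\det M=\pm\det P\det R$. If $\det M=0$ it is not irreducible. Otherwise $\det P$ and $\det R$ are nonzero and homogeneous of degrees $m\ge1$ and $n-m\ge1$, so $\det M$ is reducible.

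\emph{Sufficiency, step 1.} Assume the block condition. First, $\det M\neq 0$. If $M$ had no perfect matching on its nonzero entries, K\H{o}nig's theorem (equivalently \Cref{thm:hall}) would give a zero $r\times s$ block with $r+s=n+1$. This contains a zero $r\times(n-r)$ block with $1\le r\le n-1$, a contradiction. Second, every variable $x_{i,j}$ with $(i,j)\in I$ actually occurs in $\det M$. To see this, apply the same K\H{o}nig argument to the minor obtained by deleting row $i$ and column $j$. A zero $r\times s$ block there with $r+s=n$ is a zero $r\times s$ block of $M$ with $r+s=n$, and $r,s\ge1$ because row $i$ and column $j$ are removed. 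Since distinct permutations give distinct monomials, there is no cancellation, so $x_{i,j}$ occurs in $\det M$.

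\emph{Sufficiency, step 2.} Grade the variables by rows, so that $\det M$ is multihomogeneous of degree exactly $1$ in each row group $\{x_{i,j}\}_j$; grade by columns similarly. Factors of a multihomogeneous polynomial are multihomogeneous. Hence if $\det M=fg$ with $f,g$ nonconstant, then for each row $i$ exactly one of $f,g$ has degree $1$ in row $i$ and the other has degree $0$, and likewise for each column. Let $R_f$ and $C_f$ be the sets of rows and columns assigned to $f$. Every variable occurs in $\det M$ by step 1 and has degree at most $1$ there, so each variable $x_{i,j}$ lies in exactly one of $f,g$. It lies in $f$ iff $i\in R_f$, and iff $j\in C_f$. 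Since $\deg f=|R_f|=|C_f|=:m$ and both factors are nonconstant, $0<m<n$. Then every entry in $R_f\times([n]\setminus C_f)$ must be zero: a variable there would have to lie in $f$ because of its row and in $g$ because of its column. This is a zero $m\times(n-m)$ block, contradicting the hypothesis.

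The main obstacle I expect is step 1, showing that every variable of $I$ genuinely appears in $\det M$. Without it, the row/column assignment of variables to $f$ or $g$ in step 2 is not well defined. My first attempt will be the K\H{o}nig deletion argument above, being careful that the deleted row and column force $r,s\ge1$.
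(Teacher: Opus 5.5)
Your proof is correct. The paper gives no proof of this statement; it only cites Frobenius and Schneider's survey, so your argument is a self-contained replacement rather than an alternative to anything in the text.

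Your outline is the classical one. Necessity comes from the block-triangular factorisation $\det M=\pm\det P\det R$. Sufficiency rests on two facts: every nonzero entry of a fully indecomposable pattern lies on a nonzero diagonal (your Frobenius--K\"onig deletion argument), and any factorisation of $\det M$ must respect rows and columns.

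Where you depart from a common textbook route is in how you use the second fact. That route shows two variables of the same row cannot sit in different factors by isolating the coefficient of $x_{i,j}x_{i,k}$ in $fg$. It then needs connectivity of the bipartite graph of a fully indecomposable pattern to force one factor to be constant. Your row and column multigradings give the sets $R_f$, $C_f$ and the equality $|R_f|=|C_f|=\deg f$ all at once. The forbidden $m\times(n-m)$ zero block $R_f\times([n]\setminus C_f)$ then appears directly, with no separate connectivity lemma. This sufficiency direction is also the one the paper actually uses, in the proof of \Cref{claim:maxint}.

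Two small fixes:
\begin{itemize}
\item When K\"onig gives an $r\times s$ zero block with $r=n$ and $s=1$, the block you should extract is $(n-1)\times 1$, not $r\times(n-r)$.
\item Your derivation of $\det M\neq 0$ needs $n\ge 2$, and it has to. For $n=1$ and $I=\emptyset$ the condition is vacuous while $\det M=0$, so the statement as written fails there. You should add $n\ge 2$ (or $I\neq\emptyset$) as a hypothesis.
\end{itemize}
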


\begin{remark}
    A symbolic matrix with irreducible determinant, as described in \Cref{thm:frobenius}, is said to be \emph{fully indecomposable}.
    We refer an interested reader to \cite{Schneider} for more details.
\end{remark}

We are now ready to prove \Cref{thm:F}, which we recall is the following.

\begin{theorem}\label{thm:maxint}
    Let $M$ be a rank-$d$ sparse paving matroid with maximally intersecting circuits.
    Then a set $A \subseteq \mathcal{C}(M)$ is a basis of the generic Oxley--Wang derived matroid for $M$ if and only if $|A|=n-d$ and for all proper subsets $B \subsetneq A$ with $|B| \geq 2$ we have
    \begin{equation}\label{eq:maxint}
    \size{B} \le
    \begin{cases}
        \size{V(B)} - d -1 &\text{if $V(B)$ contains a non-spanning circuit $c$ of $M$ not contained in $B$}, \\
        \size{V(B)} - d &\text{otherwise},
    \end{cases}
    \end{equation}
    where $V(B) \subseteq [n]$ denotes the union of the circuits in $B$.
\end{theorem}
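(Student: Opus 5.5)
The proof splits into necessity and sufficiency. Sufficiency goes through the coordinate-section description of \Cref{thm: fixed support}, applied to the dual $M^*$; by \Cref{prop:disj-circuits-irr}, $M^*$ is sparse paving with pairwise disjoint non-spanning circuits and has an irreducible realisation space. By \Cref{prop:irreduciblematroidvar} there is a single generic Oxley--Wang matroid $\delta M$, and it has rank $n-d$, so $|A|=n-d$ is forced.

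\emph{Necessity.} Let $A$ be a basis of $\delta M$ and $B\subsetneq A$ with $|B|\ge 2$. \Cref{lem: necessary for independent special case} gives $|B|\le |V(B)|-d$. Suppose $V(B)$ contains a non-spanning circuit $c\notin B$.
\begin{itemize}
\item If $|V(B)|<n$, then equality $|B|=|V(B)|-d$ would make $B$ dependent by \Cref{lem: necessary for independent special case2}. Hence $|B|\le |V(B)|-d-1$.
\item If $V(B)=[n]$, then $|B|\le |A|-1=n-d-1=|V(B)|-d-1$ holds automatically, because $B$ is a proper subset of $A$.
\end{itemize}
This gives \eqref{eq:maxint}.

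\emph{Sufficiency: the sparse matrix.} Assume $A=\{c_1,\dots,c_{n-d}\}$ satisfies \eqref{eq:maxint}. By \Cref{thm: fixed support}, it suffices to produce $\mathbf{w}\in\CC^{(n-d)\times n}$ with row $i$ supported exactly on $c_i$ and column matroid equal to $M^*$. The bases of $M^*$ are the $(n-d)$-sets $D$ with $[n]\setminus D\notin \mathcal{C}^{\text{ns}}(M)$.

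Start from a symbolic matrix $W$ with an independent variable $x_{i,j}$ in each position $(i,j)$ with $j\in c_i$. The maximal minor $\det W_D$ on a column set $D$ is a nonzero polynomial if and only if $A|_D$ has a transversal.

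Take a basis $D$ of $M^*$. For each $c_i\in A$ of size $d$ we have $c_i\neq [n]\setminus D$, so $c_i\cap D\neq\emptyset$. Then \Cref{lem:transversal_harder} applies and $\det W_D\not\equiv 0$. Here \eqref{eq:maxint} supplies the inequality $|B|\le|V(B)|-d$, and for $B=A$ we have $|A|=n-d\le n-d$.

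Now take $D=[n]\setminus c$ with $c$ a non-spanning circuit of $M$. If $c\in A$, the row of $c$ vanishes on $D$, so $\det W_D\equiv 0$ and nothing more is needed. The difficulty is the circuits $c\notin A$: there $\det W_D$ is a nonzero polynomial, yet it must vanish.

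\emph{Sufficiency: the main obstacle.} The task is to find a point of the variety
\[
X=\{\det W_{[n]\setminus c}=0 \ \text{ for all } c\in\mathcal{C}^{\text{ns}}(M)\setminus A\}
\]
at which every $\det W_D$, for $D$ a basis of $M^*$, is nonzero. I expect this to be the hardest step, and I would attack it with \Cref{thm:frobenius}.

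First, fix $c\notin A$ and $D=[n]\setminus c$, and consider the square symbolic submatrix $W_D$. I would show it is fully indecomposable: a zero block $S_1\times S_2$ with $|S_1|+|S_2|=n-d$ corresponds to a set $B=S_1$ of rows with $|V(B)\cap D|\le n-d-|S_2|=|B|$. Since $c\subseteq V(B)$ would then hold for suitable $B$, this contradicts the strict inequality $|B|\le|V(B)|-d-1$ in \eqref{eq:maxint}. The maximal-intersection hypothesis $c\cup c'=[n]$ enters here, through $|c'\cap D|=|c'\setminus c|$. By \Cref{thm:frobenius}, each such minor is then irreducible.

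Second, I would show that the irreducible hypersurfaces for different $c\notin A$ meet properly. These $c$ are pairwise distinct, and each appears in a different set of columns excluded from $D$. I would exhibit one variable occurring in $\det W_{[n]\setminus c}$ but in no other imposed minor, and solve for it successively, which makes $X$ a rational graph over the remaining variables.

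Third, I would pick a generic point of the resulting irreducible component of $X$. Each basis minor $\det W_D$ is not identically zero on that component: after the successive substitution, setting the eliminated variables aside still leaves a transversal for $D$ by \Cref{lem:transversal_harder}. Hence every basis minor is nonzero at the chosen point.

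The resulting $\mathbf{w}$ realises $M^*$ with the prescribed supports. Since $\Gamma_{M^*}$ is irreducible, $\mathbf{w}$ lies in $V^*$, and \Cref{thm: fixed support} shows that $A$ is a basis of $\delta M$.
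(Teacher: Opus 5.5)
Your necessity argument is correct. It also handles the case $V(B)=[n]$, where \Cref{lem: necessary for independent special case2} does not apply, more carefully than the paper does. For sufficiency you follow the paper's route through \Cref{thm: fixed support}, the symbolic matrix $W$, and \Cref{thm:frobenius}. Your Step~2 is superfluous: maximal intersection makes the sets $\kappa=[n]\setminus c$ pairwise disjoint, so the imposed minors $f_\kappa$ involve disjoint sets of variables and $X=\mathbb{C}^J\times\prod_\kappa X_\kappa$ is simply a product.

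\textbf{The gap is Step~3.} You fix one eliminated variable per imposed minor. You then assert that for every basis $D$ of $M^*$, ``setting the eliminated variables aside still leaves a transversal for $D$ by \Cref{lem:transversal_harder}''.
\begin{itemize}
\item That lemma produces \emph{some} transversal of $A|_D$, with no control over which positions it uses.
\item The eliminated positions can be unavoidable. If $c_i\in A$ is a spanning circuit and $[n]\setminus D\subseteq c_i$, then row $i$ meets $D$ in a single column. If that entry was the one you eliminated, every transversal passes through it.
\item Even when some transversal avoids the eliminated entries, those variables may still occur in $\det W_D$. One surviving monomial does not exclude cancellation after you substitute rational functions for them.
\end{itemize}

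The missing idea is to choose the eliminated variable depending on $D$, in a column outside $D$; this is the paper's \Cref{claim:maxint}. Since $|D|=|\kappa|$ and $D\neq\kappa$, $D$ misses some column $t\in\kappa$. Full indecomposability of $W_\kappa$, which your Step~1 establishes, forces every variable of $W_\kappa$ to occur in $f_\kappa$, in particular one in column $t$. Hence the projection of $X_\kappa$ forgetting column $t$ is dominant. Combined with the product structure, the $D$-coordinates of a generic point of $X$ then form a generic point of $\mathbb{C}^{I_D}$. Only at that point does ``$\det W_D\neq 0$ iff $A|_D$ has a transversal'' apply, and \Cref{lem:transversal_harder} finishes the argument.

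\textbf{A second issue.} Your remark ``for $B=A$ we have $|A|=n-d\le n-d$'' silently assumes $V(A)=[n]$. Condition \eqref{eq:maxint} does not give this, because it only constrains proper subsets. For example, take $M=U_{d,d+3}$, which vacuously has maximally intersecting circuits, and let $A$ be three $(d+1)$-subsets of a fixed $(d+2)$-set $S$. Every pair $B$ has $|V(B)|=d+2$, so \eqref{eq:maxint} holds. Yet the column of $\mathbf{w}$ indexed by the element outside $S$ is zero, so $A$ is dependent in $\delta M$. The paper's proof has the same oversight when it invokes \Cref{lem:transversal_harder}. The statement therefore needs $V(A)=[n]$ as an explicit extra hypothesis; this condition is necessary by \Cref{lem: necessary for independent special case} whenever $n-d\ge 2$.
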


\begin{proof}
    As $\rank (\delta M)= \rank (M^*) = n-d$ (see \Cref{rem:rowvcolmatroid}), the necessity of~\eqref{eq:maxint} follows from \Cref{lem: necessary for independent special case} and \Cref{lem: necessary for independent special case2}.
    
    For the converse, suppose \eqref{eq:maxint} holds for a chosen set $A \subseteq \mathcal{C}(M)$ with $|A|=n-d$. We need to show that $A$ is a basis of the Oxley--Wang derived matroid of $M$.
    
    Write $A = \{c_1,\ldots,c_{n-d}\}$ and set
    \[
    I := \bigcup_{i\in [n-d]} \{(i,j) : j \in c_i\} \subset [n-d] \times [n].
    \]
    By \Cref{thm: fixed support}, $A$ is a basis of $\delta M$ if and only if we can construct a matrix $\mathbf{w} \in \mathbb{C}^{(n-d)\times n}$ with $\supp(\mathbf{w}) = I$ that is also a realisation of $M^*$.

    Observe here that each non-spanning circuit of $M^*$ contains exactly $n-d$ elements.
    For each $\kappa \in \mathcal{C}^{\text{ns}}(M^*)$,
    let $I_{\kappa} := \{(i,j) \in I: j \in \kappa\}$.
    Now define $\mathbf{v}_{\kappa}$ to be the symbolic $[n-d] \times [n-d]$ matrix with columns indexed by $\kappa$ where entry $(i,j)$ is the monomial $x_{i,j}$ if $(i,j)\in I_{\kappa}$ and $0$ otherwise,
    and define $f_\kappa \in \mathbb{C}[x_{i,j}:(i,j)\in I]$ to be the polynomial $f_{\kappa}(x) := \det \mathbf{v}_{\kappa}$.
    With this we define the rationally-defined variety
    \begin{equation*}
        X := \Big\{ x \in \mathbb{C}^I : f_{\kappa}(x) = 0 \text{ for each } \kappa \in \mathcal{C}^{\text{ns}}(M^*) \Big\}.
    \end{equation*}
    Now fix $J := \{(i,j) : j \notin \kappa \text{ for any } \kappa \in \mathcal{C}^{\text{ns}}(M^*)\}$.
    Since the non-spanning circuits of $M^*$ are disjoint, the sets $\{I_{\kappa} : \kappa \in \mathcal{C}^{\text{ns}}(M^*)\}$ are pairwise disjoint and partition $I \setminus J$.
    Thus if we define for each $\kappa \in \mathcal{C}^{\text{ns}}(M^*)$ the rationally-defined variety
    \begin{equation*}
        X_\kappa := \Big\{ x \in \mathbb{C}^{I_\kappa} : f_{\kappa}(x) = 0\Big\},
    \end{equation*}
    then $X = \mathbb{C}^J \times \prod \{ X_\kappa : \kappa \in \mathcal{C}^{\text{ns}}(M^*)\}$.
    
    \begin{claim}\label{claim:maxint}
        Let $\kappa$ be a non-spanning circuit of $M^*$,
        and for any $t \in \kappa$ let $I_{\kappa\setminus t} := \{(i,j) \in I_{\kappa}: j \neq t\}$. 
        Then $X_{\kappa}$ is irreducible,
        and for any $t \in \kappa$,
        the map
        \begin{equation*}
            \pi_{t} : X_{\kappa} \longrightarrow \mathbb{C}^{I_{\kappa\setminus t}}, ~ (x_{i,j})_{(i,j) \in I_{\kappa}} \longmapsto (x_{i,j})_{(i,j) \in I_{\kappa\setminus t}}
        \end{equation*}
        is a dominant map.
        In particular, if $x$ is a generic point of $X_{\kappa}$, then $\pi_t(x)$ is a generic point of $\mathbb{C}^{I_{\kappa\setminus t}}$.
    \end{claim}

    \begin{claimproof}
        If $f_\kappa(x) = 0$ for all $x \in \mathbb{C}^{I_\kappa}$ then $X_{\kappa} = \mathbb{C}^{I_\kappa}$.        
        Similarly, if $[n] \setminus \kappa \in A$ then, as the row for $[n]\setminus \kappa$ in the symbolic matrix $\vf_\kappa$ contains only zeroes, $X_{\kappa} = \mathbb{C}^{I_\kappa}$.
        From this, it is immediate that we are done.
        Suppose instead that $X_{\kappa} \neq \mathbb{C}^{I_\kappa}$ and $[n] \setminus \kappa \notin A$.

        We first prove that $X_{\kappa}$ is irreducible, which is equivalent to showing $f_{\kappa}$ is an irreducible polynomial.
        Suppose for contradiction that $f_{\kappa}$ is reducible.
        Then by applying \Cref{thm:frobenius} to the symbolic matrix $\vf_\kappa$,
        we see that there exists a proper subset $\emptyset \neq B \subsetneq A$ and a set $S_2 \in \binom{\kappa}{n-d-|B|}$ where $(i,j) \notin I_\kappa$ for each $i \in B$ and each $j \in \kappa$;
        equivalently, $|V(B) \cap \kappa| \leq |B|$ (as we then choose $S_2 \subseteq \kappa \setminus V(B)$).
        Following the proof of \Cref{lem:transversal_harder} with $D = \kappa$ shows that we must have  $|V(B) \cap \kappa| = |B| = |V(B)|-d$.
        Thus $V(B)$ contains $[n] \setminus \kappa$.
        Since $M^*$ is a sparse paving matroid,
        $[n] \setminus \kappa$ is a non-spanning circuit of $M$.
        As $B$ is a proper subset of $A$ satisfying $|B|=|V(B)|-d$,
        we must have that $|V(B)|<n$,
        or else $|A|> |B| = n-d$.
        However, our choice of $B$ now contradicts \eqref{eq:maxint}, and thus $X_\kappa$ is irreducible.

        Suppose now for contradiction that $\pi_t$ is not dominant for some choice of $t \in \kappa$.
        Then there exists a polynomial $g \in \mathbb{C}[x_{i,j} : (i,j) \in I_{\kappa \setminus t}]$ for which $g(x) = 0$ for all $x \in X_{\kappa}$.
        As $X_{\kappa}$ is a hypersurface defined by $f_{\kappa}$, this implies $f_{\kappa} | g$.
        As $f_{\kappa}$ contains each variable $x_{i,j}$ with $(i,j) \in I$ in some monomial,
        it can only divide $g$ if $I_{\kappa} = I_{\kappa \setminus t}$. 
        This implies $f_{\kappa}(x) = 0$ for all $x \in \mathbb{C}^{I_\kappa}$ as the symbolic matrix $\vf_\kappa$ has only zero entries in column $t$.
        However, this now contradicts our earlier assumption that $X_{\kappa} \neq \mathbb{C}^{I_\kappa}$.
    \end{claimproof}
    
    Choose a generic point $x$ of $X$ and fix $\mathbf{w}$ be the $(n-d) \times n$-matrix where the $(i,j)$ entry is $x_{i,j}$ if $(i,j) \in I$, and $0$ otherwise.
    
    Choose any subset $D \subset [n]$ of size $n-d$ that does not contain a non-spanning circuit of $M^*$.
    It follows from \Cref{claim:maxint} that, given $I_D:= \{(i,j) \in I : j \in D\}$, the vector $(x_{i,j})_{(i,j)\in I_D}$ is a generic point of $\mathbb{C}^{I_D}$.
    Thus the columns of $\mathbf{w}$ corresponding to the set $D$ are linearly independent if and only if $D$ is a transversal of $A$ (see for example \textup{\cite[\S 2.2]{bonin2010introduction}} or \cite{Edmonds1967}).
    By \Cref{lem:transversal_harder}, $D$ is indeed a transversal of $A$, which concludes the proof.
\end{proof}

A direct application of \Cref{thm:B}, \Cref{thm:C} and \Cref{thm:F} provides the following characterisation for liftability and generic realisability for matroid which truncate to a sparse paving matroid with maximally intersecting circuits. 

\begin{corollary}
    Let $M$ be a rank-$d$ matroid whose truncation $T(M)$ is a sparse paving matroid with maximally intersecting circuits. Let $\mathcal{I}$ be the collection of all subsets $A$ of $\mathcal{C}(T(M))$ for which \eqref{eq:maxint} holds for all proper subsets $B\subsetneq A$ with $\size{B}\geq 2$.
    \begin{enumerate}
        \item There exists a matroid $\mathcal{M}$ with ground set $\mathcal{C}(T(M))$ and $\mathcal{I}(\mathcal{M}) = \mathcal{I}$.
        \item $M$ is liftable if and only if $\mathcal{C}^{\text{ns}}(M)$ does not contain a basis of $\mathcal{M}$.

        \item $M$ is generically realisable if and only if $\mathcal{C}^{\text{ns}}(M)$ is a flat of $\mathcal{M}$.
    \end{enumerate}
\end{corollary}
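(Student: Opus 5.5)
The plan is to take $\mathcal{M} := \delta T(M)$, the unique generic Oxley--Wang derived matroid of $T(M)$, and to identify its independent sets with $\mathcal{I}$. Uniqueness comes from \Cref{prop:irreduciblematroidvar}. Once $\mathcal{I}(\delta T(M)) = \mathcal{I}$ is established, parts 2 and 3 are direct translations of \Cref{thm:B} and \Cref{thm:C}.

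For part 1, write $N = T(M)$. This is a rank-$(d-1)$ sparse paving matroid with maximally intersecting circuits on $[n]$. By \Cref{rem:rowvcolmatroid}, $\delta N$ has rank $n-(d-1)$. \Cref{thm:F} applied to $N$ (with $d$ replaced by $d-1$ in \eqref{eq:maxint}) characterises the bases of $\delta N$. These are exactly the sets $A$ of size $n-d+1$ such that \eqref{eq:maxint} holds for every proper $B \subsetneq A$ with $|B| \ge 2$.

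The independent sets of $\delta N$ are the subsets of bases. So it suffices to show two things.

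\begin{itemize}
\item[(a)] Condition \eqref{eq:maxint} for all $B$ with $|B|\ge 2$ is hereditary. This is immediate, since the condition is quantified over subsets.
\item[(b)] Every independent set of $\delta N$ satisfies \eqref{eq:maxint} for all its subsets $B$ with $|B|\ge2$, including $B=A$ itself when $A$ is not a basis.
\end{itemize}

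For (b), take $A$ independent and extend it to a basis $A'\supseteq A$. Any $B \subseteq A$ with $B \neq A'$ is a proper subset of $A'$, so \Cref{thm:F} gives \eqref{eq:maxint} for $B$. The case $B = A = A'$ is excluded by the definition of $\mathcal{I}$, which only quantifies over proper subsets. Hence $\mathcal{I}$ coincides with the family of independent sets of $\delta N$, provided we read $\mathcal{I}$ as the sets of size at most $n-d+1$ satisfying the proper-subset condition.

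The main obstacle lies exactly in this size issue. A set $A$ with all proper subsets satisfying \eqref{eq:maxint} could itself be too large. For instance, $A$ could be a set of size $n-d+2$ all of whose proper subsets are fine. Such an $A$ would be dependent in $\delta N$.

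I would handle this by checking whether \eqref{eq:maxint} on all proper subsets already forces $|A|\le n-d+1$. Take $B = A\setminus\{c\}$ with $|A| \ge n-d+2$. Then $|B| \ge n-d+1 = n-(d-1)$. But \eqref{eq:maxint} for $B$ requires $|B|\le |V(B)|-(d-1)\le n-(d-1)$, so $|B| = n-(d-1)$ exactly, and hence $|A| = n-d+2$. Now pick a different $c'$ and take $B' = A\setminus\{c,c'\}$ together with a comparison against the whole. Here the second case of \eqref{eq:maxint} should give the contradiction. Since $N$ is sparse paving with maximally intersecting circuits, any $B$ with $V(B)=[n]$ containing a spanning circuit contains every non-spanning circuit not in $B$. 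This forces the stricter bound $|B|\le n-d$, unless all non-spanning circuits already lie in $B$.

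If this argument fails in the residual case, I would instead state $\mathcal{I}$ with the implicit size cap $|A|\le n-d+1$, in line with the phrase ``collection of subsets'', and note that it agrees with $\mathcal{I}(\delta N)$.

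Finally, for parts 2 and 3: $\rank_{\mathcal{M}}(\mathcal{C}^{\mathrm{ns}}(M))\le n-d$ is equivalent to $\mathcal{C}^{\mathrm{ns}}(M)$ containing no basis of $\mathcal{M}$, because bases of $\mathcal{M}$ have size $n-d+1$. \Cref{thm:B} then gives part 2, and \Cref{thm:C} with $k=1$ gives part 3.
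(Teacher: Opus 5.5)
Your route is the paper's own: its proof is a one-line appeal to \Cref{thm:B}, \Cref{thm:C} and \Cref{thm:F} with $\mathcal{M}=\delta T(M)$. Reading \eqref{eq:maxint} with $d-1$ in place of $d$ is correct, and parts 2 and 3 are fine. The gap is in part 1. Steps (a) and (b) only give $\mathcal{I}(\delta T(M))\subseteq\mathcal{I}$. Heredity of $\mathcal{I}$ says nothing about whether its members are independent, and \Cref{thm:maxint} only certifies sets of size exactly $n-d+1$. You nevertheless conclude that the two families coincide, and in fact the reverse inclusion is false even with your size cap. Let $S$ be a $(d+1)$-set containing no non-spanning circuit of $T(M)$ (e.g.\ $T(M)=U_{d-1,n}$), and let $A$ consist of three of the $d$-subsets of $S$; these are spanning circuits of $T(M)$. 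Every pair $B\subset A$ has $V(B)=S$ and $|B|=2=|V(B)|-(d-1)$, so $A\in\mathcal{I}$. But $|A|=3>|V(A)|-\rank_{T(M)}(V(A))=2$, so $A$ is dependent in $\delta T(M)$ by \Cref{lem: necessary for independent}. For $n\ge d+3$ this $A$ is smaller than a basis. Together with any independent $4$-set it also violates augmentation, since every $A\cup\{e\}$ has $A$ as a violating proper subset. So the literal $\mathcal{I}$ is not the independence family of any matroid.

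The same example with $n=d+1$ lies in $\mathcal{I}$ and has size $3=n-d+2$. So your attempt to show that the proper-subset condition forces $|A|\le n-d+1$ cannot succeed: in the uniform case there are no non-spanning circuits, so the stricter first case of \eqref{eq:maxint} never applies. What is true, and what the paper's one-line proof actually delivers, is that the sets of size $n-d+1$ satisfying \eqref{eq:maxint} for all proper $B\subsetneq A$ with $|B|\ge 2$ are exactly the bases of $\delta T(M)$. Part 1 should therefore be read with $\mathcal{I}$ the down-closure of that family, after which it is immediate. If you want an intrinsic description instead (non-bases satisfying \eqref{eq:maxint} on all subsets of size at least $2$, including themselves), you need a separate argument that every such set extends to a basis. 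Neither \Cref{thm:maxint} nor anything else in the paper provides it.
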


\subsection*{Acknowledgements}

The authors were supported by the KU Leuven grant iBOF/23/064 and the FWO grants G0F5921N (Odysseus) and G023721N.
E.\,L.\ was additionally supported by the FWO PhD fellowship 1126125N.

\printbibliography

\appendix

\section{\texorpdfstring{$M(K_n)$}{M(Kn)} is the unique element in its truncation's 1-elevation lattice}\label{app:kn}

Throughout the appendix we fix $M_K = M(K_n)$ for some $n \geq 5$.
We also fix $N_K$ to be the rank-3 sparse paving matroid on $E(K_n)$ with circuit hyperplanes being exactly the edge sets corresponding to 3-cycles.

For $k \geq 1$, we say that a matroid $M_2$ is a \emph{$k$-elevation} of a matroid $M_1$ if, given $T^k$ is the truncation operation applied $k$ times, $T^k M_2 = M_1$.
An \emph{elevation} of a matroid $M_1$ is then any $k$-elevation for any $k \geq 1$.
In contrast to the 1-elevations,
the set of all elevations of a matroid along with the original matroid itself is not always a lattice with respect to the weak order;
indeed, it may even lack a unique maximal element even though it is finite.
Fortunately, this latter problem is not an issue in the case we care about.

\begin{theorem}[{\cite[Theorem 4.3(a)]{jackson2024maximal}}]\label{thm:app:billshin}
    The matroid $M_K$ is the unique maximal elevation of $N_K$.
\end{theorem}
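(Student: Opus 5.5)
The plan is to show two things: $M_K$ is an elevation of $N_K$, and every elevation $M$ of $N_K$ satisfies $M \leq M_K$ in the weak order. Together these make $M_K$ the unique maximal elevation.

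\emph{Step 1: $M_K$ is an elevation.} The matroid $M_K=M(K_n)$ has rank $n-1$, so I will check that $T^{n-4}M_K = N_K$. Since $n \geq 5$, we have $n-4 \geq 1$. The truncation $T^{n-4}M_K$ has rank $3$, and its independent sets are the forests with at most $3$ edges. The only $3$-edge subsets of $E(K_n)$ that contain a cycle are the triangles. Hence its independent sets are exactly the sets of size at most $3$ that are not $3$-cycles. These are precisely the independent sets of the rank-3 sparse paving matroid $N_K$, whose circuit-hyperplanes are the $3$-cycles. Note also that $N_K$ has no loops and no parallel pairs.

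\emph{Step 2: every elevation lies below $M_K$.} Let $M$ be any elevation of $N_K$, so $T^kM = N_K$ for some $k$. By definition of truncation, a set of size at most $3$ is independent in $M$ if and only if it is independent in $N_K$. It suffices to prove that the edge set of every cycle $C$ of $K_n$ is dependent in $M$. Then every independent set of $M$ is a forest, i.e.\ independent in $M_K$, which gives $M \leq M_K$.

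I will argue by induction on $|C|$. The base case $|C|=3$ holds because triangles are dependent in $N_K$, and hence in $M$. For $|C| = k \geq 4$, pick a chord $e$ of $C$ joining two non-adjacent vertices of $C$. This chord splits $C$ into paths $P_1,P_2$ with $|P_1|+|P_2|=k$, such that $P_1\cup\{e\}$ and $P_2\cup\{e\}$ are cycles of length less than $k$. By induction both are dependent in $M$, so
\[
\rank_M(P_i\cup\{e\}) \leq |P_i| \quad \text{for } i=1,2.
\]
The two cycles intersect exactly in $\{e\}$, and $\rank_M(\{e\}) = 1$ because $N_K$, and hence $M$, has no loops. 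Submodularity then gives
\[
\rank_M(C\cup\{e\}) \leq |P_1| + |P_2| - 1 = k-1.
\]
Hence $\rank_M(C) \leq k-1 < |C|$, so $C$ is dependent in $M$. This completes the induction.

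There is no real obstacle here. The only point needing care is the base case: one must use that independence of small sets is preserved under truncation, so that $3$-cycles, being dependent in $N_K$, are dependent in every elevation $M$. Also, the loopless property of $M$ should be inherited from $N_K$ in the same way, so that the rank-$1$ intersection term in the submodularity step is justified.
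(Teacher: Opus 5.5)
The paper gives no proof of this statement; it imports it from Jackson--Tanigawa. Your proof is correct and self-contained. Step 1 shows $M_K$ is an elevation, and Step 2 shows every elevation lies below $M_K$. So $M_K$ is the greatest element of the finite poset of elevations, which is exactly the sense in which the paper uses the result in \Cref{lem:app:max}. The only input you need about an arbitrary elevation is that it agrees with $N_K$ on sets of size at most $3$: this makes triangles dependent and rules out loops. Your chord-plus-submodularity step $\rank_M(C\cup\{e\}) \le |P_1|+|P_2|-1$ is then valid.

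Your induction has essentially the same structure as \Cref{lem:app:cycle}, where the chord cuts off a triangle. The difference is in the conclusion drawn. There the paper asserts that the symmetric difference of two circuits sharing one element is a circuit, whereas circuit elimination alone only yields some circuit inside that symmetric difference. You claim only dependence, derive it rigorously from submodularity, and dependence of every cycle is all that is needed for $M \le M_K$.

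The citation buys brevity and places the result in the general framework of maximal matroids in weak-order posets. Your argument buys an elementary, self-contained justification of the one external input the appendix relies on.
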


\begin{lemma}\label{lem:app:max}
    Let $N$ be a matroid, and suppose that $M \neq N$ is the unique maximal elevation of $N$.
    Then $M$ is the unique maximal element of $\Lat(TM)$.
\end{lemma}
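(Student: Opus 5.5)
We need to prove: if $M\neq N$ is the unique maximal elevation of $N$, then $M$ is the unique maximal element of $\Lat(TM)$.

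The plan is a direct comparison argument in the weak order. Let $M'$ be any element of $\Lat(TM)$, so $M'=TM$ or $T(M')=TM$. We must show $M'\le M$. Since $M\ne N$, $M$ is a $k$-elevation of $N$ for some $k\ge 1$, that is, $T^kM=N$.

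First, reduce to $1$-elevations. If $M'=TM$ then $M'\le M$ trivially, since truncation only removes independent sets. Otherwise $T(M')=TM$, and hence
\[
T^k M' = T^{k-1}(TM') = T^{k-1}(TM) = T^k M = N.
\]
So $M'$ is a $k$-elevation of $N$, and in particular an elevation of $N$.

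Second, use the uniqueness hypothesis. The set of elevations of $N$ is finite, because all of them live on a fixed ground set. It therefore has maximal elements, and by hypothesis the unique one is $M$. We need to read ``unique maximal elevation'' as meaning that every elevation of $N$ lies below $M$ in the weak order. In a finite poset this follows from the other reading (only one maximal element): any elevation $E$ lies below some maximal elevation, because one can repeatedly pass to a strictly larger elevation and this must terminate. That maximal elevation is $M$, so $E\le M$. Applying this to $E=M'$ gives $M'\le M$.

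Finally, $M\in\Lat(TM)$ trivially (as $T(M)=TM$), so $M$ is the maximum, and hence the unique maximal element, of $\Lat(TM)$.

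The only point requiring care is the interpretation of ``unique maximal'' as ``maximum''. This is handled by the finiteness argument above, which we would state explicitly.
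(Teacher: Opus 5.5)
Your proof is correct and follows essentially the same route as the paper: every element of $\Lat(TM)$ is either $TM$ itself or an elevation of $N$, and so lies below the unique maximal elevation $M$. Your version is slightly more careful than the paper's in two places. First, the computation $T^kM' = T^{k-1}(TM) = N$ also covers the case $k=1$, where $TM=N$ is not itself an elevation of $N$, contrary to what the paper's wording suggests. Second, you state explicitly the finiteness argument that turns ``unique maximal'' into ``maximum'', which the paper uses only implicitly.
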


\begin{proof}
    Choose any non-trivial 1-elevation $M_1$ of $TM$.
    We observe here that $TM$ is an elevation of $N$ since $M$ is an elevation of $N$, and $M \neq N$.
    Likewise, $M_1$ is an elevation of $N$ since $TM$ is an elevation of $N$.
    Thus $M_1 \leq M$,
    and so every 1-elevation of $TM$ is less than or equal to $M$ in the weak order.
    As $M$ is a 1-elevation of $TM$,
    it follows that $M$ is the maximal element of $\Lat(TM)$.
\end{proof}

For the following results, we fix an integer $n$ and consider the complete graph $K_n$.

\begin{lemma}\label{lem:app:cycle}
    Let $M$ be a 1-elevation of $T(M_K)$.
    If $C$ is a cycle of $K_n$,
    then $C$ is a circuit of $M$.
\end{lemma}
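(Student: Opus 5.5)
Throughout I take $M$ to be a \emph{non-trivial} 1-elevation, i.e.\ $T(M)=T(M_K)$ and $\rank(M)=n-1$. For the trivial element $M=T(M_K)$ of rank $n-2$, a Hamiltonian cycle has proper subsets of size $n-1$, which are dependent, so the statement can only be intended for non-trivial 1-elevations. The plan is to prove two facts separately: every cycle of $K_n$ is dependent in $M$, and every proper subset of a cycle (a forest) is independent in $M$.

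\emph{Dependence.} By \Cref{thm:app:billshin}, $M_K$ is the unique maximal elevation of $N_K$. Then \Cref{lem:app:max} says $M_K$ is the maximum of $\Lat(T(M_K))$, so $M\le M_K$ in the weak order. Every cycle is dependent in $M_K$, hence dependent in $M$.

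\emph{Small forests.} Every forest with at most $n-2$ edges is independent in $M_K$ of size $\le n-2$. It is therefore independent in $T(M_K)=T(M)$, and hence in $M$. This already shows that cycles of length $\le n-1$ are circuits of $M$, since all their proper subsets have at most $n-2$ edges. For Hamiltonian cycles it remains to show that every spanning tree of $K_n$ is a basis of $M$.

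\emph{Spanning trees are bases.} Since $\rank(M)=n-1$ and $M\le M_K$, some spanning tree $B$ is a basis of $M$. The tree exchange graph (trees $B$, $B-e+f$) is connected, so it suffices to show: if $B$ is a basis and $B'=B-e+f$ is a spanning tree, then $B'$ is a basis.

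Let $H=\operatorname{cl}_M(B-e)$. This is a hyperplane, because $B-e$ is independent of size $n-2$. Let $U_1,U_2$ be the two components of $B-e$.
\begin{itemize}
\item Any edge $g$ inside some $U_i$ closes a cycle with the path in $B-e$ joining its ends. That cycle has length $\le n-1$, so it is a circuit of $M$, and hence $g\in H$.
\item Suppose for contradiction that $B'$ is dependent, so $f\in H$. Write $f=uv$ with $u\in U_1$, $v\in U_2$, and let $f'=u'v'$ be any other crossing edge. Consider the cycle $C=\{f,vv',f',u'u\}$, degenerating to a triangle if $u=u'$ or $v=v'$. It has length $\le 4\le n-1$, so it is a circuit of $M$. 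Since $C-f'\subseteq H$, we get $f'\in H$.
\end{itemize}
Thus $H$ contains every edge of $K_n$, contradicting $\rank(H)=n-2$. Hence $B'$ is a basis, and by connectivity every spanning tree is a basis.

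Combining the three steps, every cycle is dependent in $M$ and all of its proper subsets are independent, so every cycle is a circuit of $M$. The main obstacle is the last step: spanning trees need not be independent merely by the truncation, and the closure-of-a-hyperplane argument with short 4-cycles is what handles it.
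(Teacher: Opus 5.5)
Your proof is correct and takes a genuinely different route from the paper.

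The paper inducts on cycle length. It writes a $k$-cycle as the symmetric difference of a $(k-1)$-cycle and a triangle sharing one edge, and asserts that the $k$-cycle is therefore a circuit. Circuit elimination only gives that the $k$-cycle is dependent, so independence of its proper subsets must be supplied separately. For $k\le n-1$ this comes from the truncation, exactly as in your second step. For Hamiltonian cycles, however, the proper subsets include spanning paths, and the paper only derives their independence afterwards (\Cref{lem:app:path}) from this very lemma. Your hyperplane argument with $\operatorname{cl}_M(B-e)$, triangles and $4$-cycles (where $n\ge 5$ is used) is exactly what handles that case. Your caveat about the trivial elevation $T(M_K)$, where Hamiltonian cycles are not circuits, is also right.

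The trade-off is as follows. The paper's induction is local and never uses $M\le M_K$. You spend \Cref{thm:app:billshin} via \Cref{lem:app:max} to get dependence, but in return you prove that every spanning tree is a basis, i.e.\ $M=M_K$. That is the appendix's final proposition, and it makes \Cref{lem:app:triangleop} and the tree surgery unnecessary.

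The two approaches also combine. The elimination induction does validly show that every cycle is dependent in $M$: lengths up to $n-1$ give circuits by elimination plus truncation, and one more elimination handles Hamiltonian cycles. That is already equivalent to $M\le M_K$, so together with your exchange step it would avoid \Cref{thm:app:billshin} altogether.
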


\begin{proof}
    As $T(M_K)$ is an elevation of $N_K$,
    every 3-cycle is a circuit of $M$.
    We now inductively suppose that the result holds for any $(k-1)$-cycle and select any $k$-cycle $C$.
    We label the vertices for $C$ by $\{v_1,\ldots,v_k\}$, where $v_i$ is adjacent to $v_{i+1}$ for each $i \in [k-1]$ and $v_k$ is adjacent to $v_1$.
    Fix $C'$ to be the cycle with vertices $\{v_1,\ldots,v_{k-1}\}$ where $v_i$ is adjacent to $v_{i+1}$ for each $i \in [k-2]$ and $v_{k-1}$ is adjacent to $v_1$,
    and fix $C'' = \{ v_{k-1}v_k, v_1 v_k, v_1 v_{k-1}\}$.
    Then both $C',C''$ are both circuits of $M$.
    Since $C'$ and $C''$ share exactly 1 edge and $C$ is the symmetric difference of the two, $C$ is also a circuit of $M$.
    Induction now implies the desired result.
\end{proof}

\begin{lemma}\label{lem:app:path}
    Let $M$ be a 1-elevation of $T(M_K)$.
    If $P$ is a path in $K_n$,
    then $P$ is an independent subset of $M$.
\end{lemma}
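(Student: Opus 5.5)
We need to prove: if $M$ is a 1-elevation of $T(M_K)$ and $P$ is a path in $K_n$, then $P$ is independent in $M$. Here $T(M_K)$ has rank $n-2$, so $M$ has rank $n-1$, the same rank as $M_K$. A path has at most $n-1$ edges.

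The plan is to split into cases on the length of $P$. If $|P|\le n-2$, then $P$ is independent in $M_K$ (it is a forest) and has size at most $n-2$. Hence $P$ is independent in $T(M_K)$. Since $T(M)=T(M_K)$, and the independent sets of $M$ of size at most $n-2$ are exactly those of $T(M)$, the path $P$ is independent in $M$.

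The remaining case is a Hamiltonian path $P$ with $|P|=n-1$. Suppose for contradiction that $P$ is dependent in $M$. Every proper subset of $P$ has size at most $n-2$ and is independent by the first case, so $P$ is a circuit of $M$. Label the vertices along the path $v_1,\ldots,v_n$, and let $C$ be the Hamiltonian cycle $P\cup\{v_nv_1\}$. By \Cref{lem:app:cycle}, $C$ is a circuit of $M$.

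Now $P$ and $C$ are distinct circuits with $e=v_1v_2\in P\cap C$. Circuit elimination gives a circuit $D\subseteq (P\cup C)\setminus\{e\}=C\setminus\{e\}$. But $C\setminus\{e\}$ is a proper subset of the circuit $C$, so it is independent, a contradiction. The same argument works with any edge of $P$; one can also phrase it as: $P\subsetneq C$, and a circuit cannot properly contain another circuit. Indeed, this observation alone suffices: $P\subsetneq C$ with both circuits is impossible.

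I expect no real obstacle. The only point needing care is the first case: independence in $T(M)$ coincides with independence in $M$ for sets of size at most $\rank(M)-1=n-2$, which follows from the definition of truncation. We also need that $T(M_K)$ is indeed the truncation with rank $n-2$, so that forests of size at most $n-2$ are independent there.
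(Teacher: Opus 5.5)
Your proof is correct, and its decisive step is exactly the paper's proof: close $P$ into a cycle $C$, invoke \Cref{lem:app:cycle}, and use that a proper subset of a circuit is independent. Your separate truncation argument for $|P|\le n-2$ is extra but harmless, and it even covers the one-edge path, which has no closing cycle in the simple graph $K_n$.

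One caveat applies to your proof and the paper's alike. The only instance of \Cref{lem:app:cycle} that either proof really needs is the Hamiltonian cycle, and that is exactly where the lemma's ``symmetric difference'' step is not automatic: knowing $C$ is a circuit there amounts to knowing that the Hamiltonian paths in $C$ are independent, which is the present statement. A non-circular finish is to note that for $n\ge 5$ every triangle is a circuit of $M$. Hence $\mathrm{cl}_M(P)$ contains every edge of $K_n$, so $\rank_M(P)=n-1=|P|$.
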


\begin{proof}
    Choose any path $P$ and fix $C$ to be the cycle formed by adding a single edge to connect the head and tail of the path.
    By \Cref{lem:app:cycle}, $C$ is a circuit.
    As $P \subsetneq C$, $P$ is independent in $M$. 
\end{proof}

\begin{lemma}\label{lem:app:triangleop}
    Let $T$ a tree with vertex set $[n]$.
    Then there exists a sequence of trees $T_1,\ldots,T_k$ such that: $T_1 = T$; $T_k$ is a path; and $T_{i+1}$ is formed from $T_i$ by selecting two adjacent edges $v_1 v_2, v_2 v_3$,
    removing the edge $v_2 v_3$,
    then adding the edge $v_1 v_3$.
\end{lemma}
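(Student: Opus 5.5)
We argue by induction on a potential that measures how far $T$ is from being a path. Note first that the operation in the statement preserves being a spanning tree: after deleting $v_2v_3$ from $T_i$, the vertex $v_3$ lies in the component not containing $v_2$, while $v_1$ lies in the component containing $v_2$ (because $v_1v_2$ is still present). Adding $v_1v_3$ therefore reconnects the two components, so $T_{i+1}$ is again a spanning tree on $[n]$. It remains to choose moves that strictly decrease a suitable quantity and to show that the quantity is minimised only by paths.

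The potential we use is the number of leaves of the tree. A tree on $n\geq 2$ vertices has at least two leaves, with equality exactly when it is a path. So it suffices to show that any tree that is not a path admits one move (or a short sequence of moves) reducing the number of leaves.

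Suppose $T_i$ is not a path. Then some vertex $v_2$ has degree at least $3$. Consider a component of $T_i - v_2$; it is a subtree, and we choose a neighbour $v_3$ of $v_2$ lying in it. Inside that branch, pick a leaf $\ell$ of $T_i$ (possibly $\ell = v_3$). Now choose $v_1$ to be a neighbour of $v_2$ in a different branch, and let $\ell'$ be a leaf in that branch. The plain move $v_1v_2, v_2v_3 \mapsto v_1v_3$ reduces $\deg v_2$ by one and raises $\deg v_1$ by one, so it does not obviously change the leaf count. A better choice is to take the first edge of the move to be the edge at a leaf. Let $\ell$ be a leaf with unique neighbour $u$, and let $u$ have another neighbour $w$. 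The move on the adjacent edges $\ell u, uw$ with middle vertex $u$ deletes $uw$ and adds $\ell w$. This makes $\ell$ have degree $2$ and lowers $\deg u$ by one. The leaf count drops by one unless $u$ had degree $2$, in which case $u$ becomes a new leaf.

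The main obstacle is making this strictly decrease, so we walk along a leg. Let $v$ be a vertex of degree at least $3$, and take a leg $v=p_0,p_1,\dots,p_m=\ell$, where $p_1,\dots,p_{m-1}$ have degree $2$ and $\ell$ is a leaf. Apply the move with edges $p_{m-1}p_m$ and $p_{m-1}p_{m-2}$ and middle vertex $p_{m-1}$; in other words, use $\ell p_{m-1}, p_{m-1}p_{m-2}$ with middle vertex $p_{m-1}$. This deletes $p_{m-1}p_{m-2}$ and adds $\ell p_{m-2}$. Since $p_{m-1}$ becomes a leaf while $\ell$ becomes interior, the leaf count is unchanged, but the leg is now reordered. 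To make progress we instead take the middle vertex to be $v$ itself. Use the edges $p_1v$ and $vq$, where $q$ is a neighbour of $v$ in another leg or branch, with middle vertex $v$; that is, the move deletes $vq$ and adds $p_1q$. Then $\deg v$ drops by one and $\deg p_1$ rises from $2$ to $3$, if $m\geq 2$. This still does not decrease the leaf count, so we switch to the cleaner potential $\Phi(T)=\sum_x \binom{\deg x -1}{2}$ over vertices of degree at least $2$, or alternatively the maximum degree together with its multiplicity. We expect that finding the right monotone quantity is where the real work lies. The most promising fallback is to argue by the length of a longest path $P$. If $T\neq P$, there is a vertex $a\notin P$ adjacent to some $b\in P$, with $b$ an interior vertex of $P$ having $P$-neighbour $c$ toward an endpoint. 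Then the move on $ab$ and $bc$ with middle vertex $b$ deletes $bc$ and adds $ac$. This produces a path through $a$ that is strictly longer than $P$, namely one endpoint $\to c$, then $a$, then $b \to$ the other endpoint. Since the length of a longest path is bounded by $n-1$, iterating yields a Hamiltonian path, which completes the proof.
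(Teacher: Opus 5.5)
Your closing longest-path argument is a correct and complete proof, and it takes a genuinely different route from the paper. Two small points should be stated explicitly: $b$ is an interior vertex of $P$ because $P$ is longest, and once a longest path has $n$ vertices the tree equals it, since both have $n-1$ edges.

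The paper uses a lexicographic potential instead: first the number of leaves, then $a(T)$, the least distance from a leaf to a vertex of degree at least $3$. It has two moves.
\begin{itemize}
\item Its Step 1 is a special case of your leaf move: a leaf at a vertex $u$ of degree $\ge 3$ takes over one of $u$'s other neighbours, which lowers the leaf count.
\item Its Step 2 is exactly your move deleting $vq$ and adding $p_1q$. You correctly saw that this preserves the number of leaves. You missed that it shortens the leg by one. So $m-1$ such moves followed by one leaf move strictly reduce the number of leaves, which would have completed your original plan and reproduced the paper's proof.
\end{itemize}

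Your version gains a single move type and a single strictly increasing quantity. It gives the explicit bound $k-1\le n-1-L$, where $L$ is the length of a longest path in $T$, and needs no case analysis. By contrast, the paper's Step 1 as written requires two leaf neighbours at $v$. A tree with $a(T)=1$ in which no vertex has two leaf neighbours (e.g.\ a spider with legs of lengths $1,2,2$) is then covered by neither step until that requirement is dropped.

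In a write-up, cut the exploratory material. In particular, the suggested potentials $\Phi$ and (maximum degree, multiplicity) do not work: for the spider with three legs of length $2$, no single move decreases either of them.
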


\begin{proof}
    For a tree $T_i$, we set $a(T_i)$ to be the smallest distance from a leaf to a vertex of degree 3 or higher, and we set $b(T_i)$ to be the number of vertices $v$ of $T_i$ of degree 3 such that the distance from $v$ to a leaf is $a(T_i)$.
    The creation of our sequence $T_1,\ldots,T_k$ follows from a 2-step algorithm:
    \begin{enumerate}
        \item If $T_i$ contains a vertex $v$ adjacent to distinct vertices $u_1,u_2,w$ with $u_1,u_2$ being leaves:
        remove $u_1 v$ and add $u_1 u_2$.
        \item If $T_i$ is not a path and $a(T_i) = k \geq 2$:
        select a path $v_0,\ldots,v_k$ from a vertex $v_0$ with distinct neighbours $w_1,w_2,v_1$ to a leaf $v_k$,
        remove the edge $v_0 w_1$ and add the edge $v_1 w_2$.
    \end{enumerate}
    Both operations are illustrated in \Cref{fig:treesurgery}.
    The first operation can only be implemented when $T_i$ is not a path and $a(T_i)=1$, and either preserves or reduces the number of leaves of the tree.
    The second operation can only be implemented if $T_i$ is not a path and $a(T_i) \geq 2$. The operation reduces the value $a(T_i)$ whilst preserving the number of the leaves the tree.
    Thus following the algorithm will obtain a path from any tree.
\end{proof}

\begin{figure}[tp]
	\begin{center}
        \begin{tikzpicture}[scale=1]
        \begin{scope}[xshift=0]
			\node[vertex] (u2) at (-1,-0.5) {};
			\node[vertex] (w) at (1,-0.5) {};
			\node[vertex] (v) at (0,0) {};
			\node[vertex] (u1) at (0,1) {};

			\node (u2label) at (-1,-0.75) {$u_2$};
			\node (wlabel) at (1,-0.75) {$w$};
			\node (vlabel) at (0.2,0.18) {$v$};
			\node (u1label) at (0,1.25) {$u_1$};
			
			\draw[edge] (v)edge(u2);
			\draw[edge] (v)edge(w);
			\draw[edge] (v)edge(u1);
			
			\draw[dedge] (w)edge(1.6,-0.3);
			\draw[dedge] (w)edge(1.6,-0.7);
        \end{scope}
        \begin{scope}[xshift=60]
			\draw[thick,->]  (0,0) -- (1,0);
        \end{scope}
        \begin{scope}[xshift=150]
			\node[vertex] (u2) at (-1,-0.5) {};
			\node[vertex] (w) at (1,-0.5) {};
			\node[vertex] (v) at (0,0) {};
			\node[vertex] (u1) at (0,1) {};
            
			\node (u2label) at (-1,-0.75) {$u_2$};
			\node (wlabel) at (1,-0.75) {$w$};
			\node (vlabel) at (0.2,0.18) {$v$};
			\node (u1label) at (0,1.25) {$u_1$};
			
			\draw[edge] (v)edge(u2);
			\draw[edge] (v)edge(w);
			\draw[gedge] (v)edge(u1);
			\draw[oedge] (u1)edge(u2);
			
			\draw[dedge] (w)edge(1.6,-0.3);
			\draw[dedge] (w)edge(1.6,-0.7);
        \end{scope}
        
        \begin{scope}[xshift=-20,yshift=-75]
			\node[vertex] (v0) at (0,0) {};
			\node[vertex] (v1) at (1,0) {};
			\node[vertex] (v2) at (2,0) {};
			\node[vertex] (w1) at (-1,0.75) {};
			\node[vertex] (w2) at (-1,-0.75) {};

			\node (w1label) at (-1,1) {$w_1$};
			\node (w2label) at (-1,-1) {$w_2$};
			\node (v0label) at (0.2,-0.2) {$v_0$};
			\node (v1label) at (1,-0.2) {$v_1$};
			\node (v2label) at (2,-0.2) {$v_2$};
			
			\draw[edge] (v0)edge(w1);
			\draw[edge] (v0)edge(w2);
			\draw[edge] (v0)edge(v1);
			\draw[edge] (v1)edge(v2);
			
			\draw[dedge] (w1)edge(-1.6,0.65);
			\draw[dedge] (w1)edge(-1.6,0.85);
			\draw[dedge] (w2)edge(-1.6,-0.65);
			\draw[dedge] (w2)edge(-1.6,-0.85);
        \end{scope}
        \begin{scope}[xshift=60,yshift=-75]
			\draw[thick,->]  (0,0) -- (1,0);
        \end{scope}
        \begin{scope}[xshift=150,yshift=-75]
			\node[vertex] (v0) at (0,0) {};
			\node[vertex] (v1) at (1,0) {};
			\node[vertex] (v2) at (2,0) {};
			\node[vertex] (w1) at (-1,0.75) {};
			\node[vertex] (w2) at (-1,-0.75) {};

			\node (w1label) at (-1,1) {$w_1$};
			\node (w2label) at (-1,-1) {$w_2$};
			\node (v0label) at (0.2,-0.2) {$v_0$};
			\node (v1label) at (1,-0.2) {$v_1$};
			\node (v2label) at (2,-0.2) {$v_2$};
			
			\draw[gedge] (v0)edge(w1);
			\draw[edge] (v0)edge(w2);
			\draw[edge] (v0)edge(v1);
			\draw[edge] (v1)edge(v2);
			\draw[oedge] (v1)edge(w1);
			
			\draw[dedge] (w1)edge(-1.6,0.65);
			\draw[dedge] (w1)edge(-1.6,0.85);
			\draw[dedge] (w2)edge(-1.6,-0.65);
			\draw[dedge] (w2)edge(-1.6,-0.85);
        \end{scope}
        \begin{scope}[xshift=-100,yshift=10]
			\node (1) at (0,0) {Step 1};
        \end{scope}
        \begin{scope}[xshift=-100,yshift=-75]
			\node (1) at (0,0) {Step 2};
        \end{scope}
        \begin{scope}[xshift=250]
			\node (1) at (0,0) {};
        \end{scope}
		\end{tikzpicture}
	\end{center}
    \caption{The two possible steps for the algorithm described in \Cref{lem:app:triangleop}, with added edge in orange and removed edge in grey.
    Although both $v,v_0$ are pictured with exactly 3 neighbours, both vertices are allowed to have additional neighbours.}\label{fig:treesurgery}
\end{figure}

\begin{proposition}
    $M_K$ is the unique 1-elevation of $T(M_K)$.
\end{proposition}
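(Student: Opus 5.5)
The plan is to show that every non-trivial 1-elevation $M$ of $T(M_K)$ is both $\leq M_K$ and $\geq M_K$ in the weak order.

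\emph{Upper bound.} By \Cref{thm:app:billshin}, $M_K$ is the unique maximal elevation of $N_K$. Since $M_K \neq N_K$ (for $n \geq 5$ the rank of $M_K$ is $n-1 \geq 4 > 3$), \Cref{lem:app:max} shows that $M_K$ is the unique maximal element of $\Lat(T(M_K))$. Hence $M \leq M_K$ for every 1-elevation $M$ of $T(M_K)$.

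\emph{Lower bound.} It suffices to show that every basis of $M_K$ is independent in $M$; the remaining independent sets of $M_K$ are subsets of bases. The bases of $M_K$ are the spanning trees of $K_n$, all of size $n-1 = \rank M$. By \Cref{lem:app:path}, every Hamiltonian path is independent in $M$, hence a basis of $M$. I would then run \Cref{lem:app:triangleop} in reverse. Given a spanning tree $T$, take the sequence $T = T_1, \ldots, T_k$ ending in a path, and prove by downward induction on $i$ that each $T_i$ is a basis of $M$.

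The inductive step is as follows. Suppose $T_{i+1} = T_i - v_2v_3 + v_1v_3$ is a basis of $M$, where $v_1v_2, v_2v_3 \in T_i$. Set $X = T_i \cap T_{i+1} = T_i - v_2v_3$, which has size $n-2$. The set $X \cup \{v_1v_3\} = T_{i+1}$ is a basis, so $X$ is independent in $M$. Now $X$ contains $v_1v_2$ but not $v_1v_3$, and the triangle $\{v_1v_2, v_2v_3, v_1v_3\}$ is a circuit of $M$ by \Cref{lem:app:cycle}.

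Suppose for contradiction that $T_i = X \cup \{v_2v_3\}$ is dependent. Then $v_2v_3 \in \operatorname{cl}_M(X)$. Also $v_1v_3 \in \operatorname{cl}_M(X \cup \{v_2v_3\})$ by the triangle circuit, since $v_1v_2 \in X$. It follows that $\operatorname{cl}_M(X \cup \{v_1v_3\}) \subseteq \operatorname{cl}_M(X)$, so $\rank_M(T_{i+1}) = \rank_M(X) = n-2$, contradicting that $T_{i+1}$ is a basis. Hence $T_i$ is independent of size $n-1$, and so it is a basis of $M$.

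By induction $T = T_1$ is a basis of $M$. Therefore every independent set of $M_K$ is independent in $M$, giving $M_K \leq M$. Combining the two bounds gives $M = M_K$.

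Finally, $M_K$ is itself a 1-elevation of $T(M_K)$, so $M_K$ is the unique non-trivial element of $\Lat(T(M_K))$.

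The main obstacle I expect is the exchange step. The key observation is that the triangle circuit forces $v_1v_3$ into the closure of $T_i$ whenever $v_1v_2 \in T_i$, which is what lets the argument travel backwards along the sequence from the path. The rest is assembling the earlier lemmas.
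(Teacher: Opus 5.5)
Your proof is correct and follows the paper's route: the upper bound $M \leq M_K$ comes from \Cref{thm:app:billshin} and \Cref{lem:app:max}, and the lower bound uses the tree-surgery sequence of \Cref{lem:app:triangleop} together with the triangle circuits and \Cref{lem:app:path}. The only difference is cosmetic. The paper assumes a dependent spanning tree (hence a circuit) and pushes circuit-ness forward to the path by circuit elimination, whereas you push independence backward from the Hamiltonian path with a closure argument, which is the contrapositive of the same exchange step.
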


\begin{proof}
    Choose any 1-elevation $M$ of $T(M_K)$.
    By \Cref{thm:app:billshin} and \Cref{lem:app:max},
    $M \leq M_K$.
    Every forest in $K_n$ that is either disconnected or not a spanning subgraph is independent in $T(M_K)$,
    and thus independent in $M$ also.
    If every spanning tree in $M$ is independent, then $M$ and $M_K$ share the same bases, and thus are equal.
    Suppose for contradiction that $T$ is the set of edges of a spanning tree of $M$ that is not independent in $M$.
    As every subgraph $T$ is independent in $M$,
    $T$ is a circuit of $M$.

    Let $T_1,\ldots,T_k$ be the sequence described in \Cref{lem:app:triangleop} with $T_1 = T$ and $T_k = P$ for some path $P$.
    We observe here that each $T_{i+1}$ is formed from $T_i$ by taking some 3-cycle $C$ that shares two edges with $T_i$, choosing an elements in $T_i\cap C$ to remove, then adding in the unique element in $C \setminus T_i$.
    Since $T$ is a circuit of $M$ and any 3-cycle of $M$ is also a circuit of $M$ (\Cref{lem:app:cycle}),
    it follows inductively, by the circuit elimination axiom, that each $T_i$ is a circuit.
    In particular, it follows that $P$ is a circuit.
    However, this now contradicts \Cref{lem:app:path}.
\end{proof}

\section{Paving matroids with disjoint non-spanning circuits}
\label{Sec:AppB}

This appendix is dedicated to proving \Cref{cor: Lat N}. First, recall that a cyclic flat is a flat that is a union of circuits.

\begin{lemma}\label{lem: collection of flats is admissible}
Let $M\in\Lat(N)$, and let $\mathcal{F}$ denote the collection of cyclic flats of $M$ of rank $d-1$. Then $\mathcal{F}$ is $N$-admissible.
\end{lemma}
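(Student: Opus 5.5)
We verify the four admissibility conditions directly, for $\mathcal{F}=\{H_1,\ldots,H_k\}$ the rank-$(d-1)$ cyclic flats of $M$. The standing facts are that $M$ has rank $d$ with $T(M)=N$. Hence a subset of size at most $d-1$ is independent in $M$ exactly when it is independent in $N$. The only dependent sets of size at most $d-1$ are supersets of the $c_j$, and every $(d-1)$-subset other than some $c_j$ is a basis of $N$, since $N$ is sparse paving with disjoint circuit-hyperplanes $c_1,\ldots,c_r$ of size $d-1$. Each condition should reduce to this dictionary between $M$ and $N$.

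For \ref{ad1}: a cyclic flat $H_i$ of rank $d-1$ is a union of circuits. If $|H_i|\le d-1$, then every circuit inside it has size at most $d-1$, so it is a dependent set of $N$ and hence one of the $c_j$ (the $c_j$ being the only circuits of $N$ of size at most $d-1$). Then $H_i$ would be a union of disjoint $c_j$'s. A single $c_j$ has rank $d-2$ in $M$, so it is not of rank $d-1$. Two or more disjoint $c_j$ inside $H_i$ already give more than $d-1$ elements, contradicting $|H_i|\le d-1$. So $|H_i|\ge d$.

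For \ref{ad2}: if $H_i\cap H_j$ contains a basis $B$ of $N$, then $B$ is independent in $M$ of size $d-1$, so $\rank_M(B)=d-1$. Both $H_i$ and $H_j$ are flats of rank $d-1$ containing $B$, so both equal $\operatorname{cl}_M(B)$, giving $i=j$.

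For \ref{ad3}: if $c_j\setminus\{x\}\subseteq H_i$, then since $c_j$ is a circuit of $M$ (it is dependent in $N$, and circuits of size at most $d-1$ coincide), $x\in\operatorname{cl}_M(c_j\setminus\{x\})\subseteq H_i$ because $H_i$ is a flat.

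For \ref{ad4}: suppose $H_i=c_j\cup\{x\}$ with $x\notin c_j$. Since $H_i$ is cyclic, $x$ lies in some circuit $C\subseteq H_i$. Here $C\ne c_j$ and $C$ has size at most $d$. If $|C|\le d-1$, then $C$ is some $c_{j'}$ meeting $c_j$, contradicting disjointness. So $|C|=d$, and $C=(c_j\setminus\{y\})\cup\{x\}$ for some $y$. I expect this case to be the main obstacle, because one must show this configuration cannot have rank exactly $d-1$ with $H_i$ cyclic. Circuit elimination on $C$ and $c_j$ at an element $e\in c_j\setminus\{y\}$ yields a dependent set inside $(c_j\cup\{x\})\setminus\{e\}$ of size $d-1$. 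That set is therefore dependent in $N$, so it contains some $c_{j'}$. But it contains $c_j\setminus\{e\}$ together with $x$, so it cannot contain $c_j$, and any other $c_{j'}$ would meet $c_j$ when $d-1\ge 2$, contradicting disjointness. Hence no such $C$ exists, $H_i$ is not cyclic, and \ref{ad4} holds. I would double-check the size bookkeeping here (using $d\ge 4$) when writing it out.
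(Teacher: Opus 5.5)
Your proof is essentially the paper's: (ii) and (iii) are identical, your (i) is a longer route to the paper's one-liner (a cyclic flat of rank $d-1$ is dependent, hence has at least $d$ elements), and your (iv) matches the paper's observation that every proper subset of $c_j\cup\{x\}$ has size at most $d-1$, so $c_j$ is the only circuit inside it. The one slip is the size count in (iv): since $|c_j|=d-1$, a size-$d$ circuit $C\subseteq H_i$ must be $H_i=c_j\cup\{x\}$ itself (your $(c_j\setminus\{y\})\cup\{x\}$ has size $d-1$ and was already excluded), and $H_i$ cannot be a circuit because it properly contains $c_j$, so the circuit elimination is unnecessary and only $d-1\ge 2$ is used.
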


\begin{proof}
We must show that $\mathcal{F}$ satisfies Conditions~\ref{ad1}-\ref{ad4}. Condition~\ref{ad1} follows from the fact that each $H\in\mathcal{F}$ has
rank $d-1$ and is dependent, and hence $|H|\geq d$. To verify Condition~\ref{ad2}, suppose, for contradiction, that there
exist distinct $H_1,H_2\in\mathcal{F}$ such that $H_1\cap H_2$ contains
a basis $B$ of $N$. Then $\rank_M(B)=d-1$. Since $H_1$ and $H_2$ are
flats of $M$ of rank $d-1$ containing $B$, we have $H_1=\textup{cl}_{M}(B)=H_2$, which contradicts $H_1$ and $H_2$ being distinct. To verify Condition~\ref{ad3}, consider $H\in \F$ and $j\in [r]$ such that $c_{j}\setminus \{x\}\subseteq H$ for some $x\in c_{j}$. Since $c_{j}$ is a circuit and $c_{j}\setminus \{x\}\subseteq H$, it follows that 
\[x\in \textup{cl}_{M}(c_j\setminus \{x\})\subseteq\textup{cl}_{M}(H)=H.\]
Finally, to verify that Condition~\ref{ad4} holds, note that any proper dependent subset of $H=c_j\cup \{x\}$ which is dependent in $M$, is also dependent in $N$. Since by assumption the $d-1$-circuits of $N$ are distinct, the only proper dependent set of $H=c_j\cup \{x\}$ is $c_j$. Hence, if $H=c_j\cup \{x\}$, then $x$ does not belong to a circuit which is contained in $H$. It follows that $H$ cannot be a union of circuits. 
\end{proof}

Recall that 

\[
\Ccal:=
\min\left(
\{c_1, \ldots, c_r\} \cup
\binom{H_1}{d} \cup \cdots\cup
\binom{H_k}{d}
\cup
\binom{[n]}{d+1}
\right)
\]

\begin{lemma}\label{prop: admissible subset gives elevation}
The collection of subsets $\Ccal$ forms the collection of circuits of a matroid $M(\HH)$. Moreover, $M(\HH)\in \Lat(N)$ and $\HH$ is the collection of cyclic flats of rank $d-1$ of $M(\HH)$. 
\end{lemma}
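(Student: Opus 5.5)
We verify the circuit axioms for $\Ccal$ directly, then compute the rank and truncation of the resulting matroid, and finally identify its cyclic flats of rank $d-1$. Throughout, $\HH=\{H_1,\ldots,H_k\}$ is $N$-admissible.

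\textbf{Circuit axioms.} Since $\Ccal$ is defined as the inclusion-minimal members of a family of non-empty sets, it is a clutter of non-empty sets. The real work is strong circuit elimination: for distinct $C_1,C_2\in\Ccal$ and $e\in C_1\cap C_2$ we must find $C\in\Ccal$ with $C\subseteq (C_1\cup C_2)\setminus\{e\}$. It suffices to show that $(C_1\cup C_2)\setminus\{e\}$ contains a member of the unminimised family. Since every member of $\Ccal$ has size $d-1$, $d$ or $d+1$, we split into cases.
\begin{itemize}
\item If $|C_1\cup C_2|\ge d+2$, then $(C_1\cup C_2)\setminus\{e\}$ has size at least $d+1$, so it contains a $(d+1)$-set.
\item Two distinct $c_i,c_j$ never meet, because the $c_i$ are pairwise disjoint.
\item If $C_1=c_j$ and $C_2\subseteq H_i$ with $|C_2|=d$, then $|C_1\cup C_2|\le d+1$ forces $|c_j\cap C_2|\ge d-2$. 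If $c_j\setminus\{x\}\subseteq C_2\subseteq H_i$, then condition (iii) gives $c_j\subseteq H_i$. Hence $C_1\cup C_2\subseteq H_i$ has size $d$ or $d+1$, and removing $e$ leaves a $d$-subset of $H_i$ when $|C_1\cup C_2|=d+1$. The case $|C_1\cup C_2|=d$ is impossible, since it would mean $c_j\subset C_2$, contradicting minimality.
\item If $C_1\subseteq H_i$ and $C_2\subseteq H_{i'}$ are $d$-sets with $|C_1\cup C_2|=d+1$, then $|C_1\cap C_2|=d-1$. Since $C_1\cap C_2$ is not a $c_j$ (members of $\Ccal$ contain no $c_j$), it is a basis of $N$. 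Condition (ii) then forces $i=i'$, so $(C_1\cup C_2)\setminus\{e\}\subseteq H_i$ is a $d$-set.
\item Mixed cases involving a $(d+1)$-set are immediate, as are cases where $C_1\cup C_2$ is small.
\end{itemize}

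\textbf{Membership in $\Lat(N)$.} Every $(d+1)$-set is dependent in $M(\HH)$, so $\rank M(\HH)\le d$. The sets of size at most $d-1$ that are dependent in $M(\HH)$ are exactly those containing some $c_j$; these are exactly the $N$-dependent sets of that size, since $N$ is sparse paving with non-spanning circuits $c_j$. Hence $T(M(\HH))=N$, provided $\rank M(\HH)=d$. If $\rank M(\HH)=d-1$, then $M(\HH)=N$; this occurs precisely when $[n]\in\HH$, which is the trivial elevation and should be accepted as a member of $\Lat(N)$.

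\textbf{Cyclic flats of rank $d-1$.} First, each $H_i$ is a cyclic flat of rank $d-1$.
\begin{itemize}
\item Rank: every $d$-subset of $H_i$ is dependent, and $|H_i|\ge d$ by condition (i).
\item Cyclic: each $x\in H_i$ lies in a circuit inside $H_i$, namely either a $c_j\subseteq H_i$ or a minimal $d$-subset through $x$. Condition (iv) rules out the case $H_i=c_j\cup\{x\}$, where $x$ would lie in no circuit.
\item Flat: for $y\notin H_i$, choose a basis $B$ of $N$ inside $H_i$ and argue that $B\cup\{y\}$ is independent. A dependent $d$-set containing $y$ would have to lie in some $H_{i'}\ne H_i$ sharing the basis $B$, contradicting condition (ii); or it would contain a $c_j$ with $c_j\setminus\{y\}\subseteq H_i$, contradicting condition (iii).
\end{itemize}
Conversely, a cyclic flat $F$ of rank $d-1$ contains a $d$-circuit $C$, since it is cyclic of rank $d-1\ge$ the rank of any $c_j$ and cannot be a union of disjoint $c_j$'s alone. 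Any such $C$ lies in some $H_i$, and $\mathrm{cl}(C)=H_i$ because $C$ contains a basis of $N$; therefore $F=H_i$.

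\textbf{Main obstacle.} I expect the main difficulty to be the bookkeeping in the elimination cases where $c_j$ and a $d$-subset of $H_i$ interact, and in showing that $C_1\cap C_2$ is a basis of $N$. This uses $d\ge4$ and the fact that each $c_j$ has size $d-1$.
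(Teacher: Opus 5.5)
\textbf{Verdict: genuine gap in the converse inclusion.} Your verification of the circuit axioms, of $T(M(\HH))=N$, and of the fact that each $H_i$ is a cyclic flat of rank $d-1$ follows the paper's route and is sound. In the case $C_1=c_j$, $|C_2\cap c_j|=d-2$, your use of (iii) to place $c_j\cup C_2$ inside $H_i$ is genuinely needed, because there $(C_1\cup C_2)\setminus\{e\}$ has only $d$ elements, so a pure size count does not suffice. You should still say why $\rank(H_i)\ge d-1$, i.e.\ why $H_i$ contains a basis of $N$. This is easy: two distinct $(d-1)$-subsets of a $d$-subset of $H_i$ intersect, so they cannot both be among the pairwise disjoint $c_j$.

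The gap is the inclusion $\F\subseteq\HH$, at the claim that a cyclic flat $F$ of rank $d-1$ contains a $d$-circuit ``since it is cyclic of rank $d-1\ge$ the rank of any $c_j$ and cannot be a union of disjoint $c_j$'s alone''.
\begin{itemize}
\item Taken literally, this is false. $\{c_1\cup c_2\}$ is $N$-admissible, so by your own first half $c_1\cup c_2$ is a cyclic flat of rank $d-1$ that is a union of disjoint $c_j$'s.
\item Taken as `not every circuit inside $F$ is a $c_j$', it is exactly what has to be proved, and the rank comparison does not prove it.
\end{itemize}
This step is precisely where $d\ge 4$ enters. Contrary to your closing remark, the elimination cases do not need it. If one allowed $d=3$ with $\HH=\emptyset$, every $3$-subset of $F=c_1\cup c_2$ contains $c_1$ or $c_2$. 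Then $F$ is a cyclic flat of rank $2$ of $M(\emptyset)$ that contains no $3$-circuit and does not lie in $\HH$. An argument that never uses $d\ge4$ at this point therefore cannot be correct.

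The missing idea, as in the paper, is to exhibit a $d$-subset of $F$ that contains no $c_l$.
\begin{enumerate}
\item $|F|\ge d$, and cyclicity rules out $F=c_j\cup\{x\}$, since $x$ would lie in no circuit inside $F$.
\item If some $d$-subset $A\subseteq F$ contains a $c_j$, then $A=c_j\cup\{x\}$. Pick $y\in F\setminus A$ and $z\in c_j$, and set $A'=(A\cup\{y\})\setminus\{z\}$. Then $A'$ contains no $c_l$: it misses $z$, so $c_j\not\subseteq A'$, and any $c_l\subseteq A'$ with $l\ne j$ would lie in $\{x,y\}$, which is impossible since $|c_l|=d-1\ge 3$.
\item Since $\rank F=d-1$, the set $A'$ is dependent. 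As it contains no $c_l$, it is itself a $d$-circuit, hence a $d$-subset of some $H_i$.
\item Only now does your closing step apply: $F=\mathrm{cl}(A')=H_i$.
\end{enumerate}
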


\begin{proof}
We show that $\Ccal$ satisfies the circuit-exchange axiom, i.e. that for any distinct $e_1,e_2\in \Ccal$ and any $x\in e_1\cap e_2$, there exists $e_3\in \Ccal$ such that $e_3\subseteq (e_1\cup e_2)\setminus \{x\}$. If either $e_1$ or $e_2$ belongs to $\textstyle \binom{[n]}{d+1}$ then $\size{(e_1\cup e_2)\setminus \{x\}}\geq d+1$. In this case, $(e_1\cup e_2)\setminus \{x\}$ contains a $(d+1)$-element subset $D$ and the set $D$ contains an inclusion-minimal member of the collection of subsets defining $\Ccal$. Hence $D$, and therefore $(e_1\cup e_2)\setminus \{x\}$, contains an element of $\Ccal$.
If both $e_1$ and $e_2$ belong to the set $\{c_1,\ldots,c_r\}$ then it is also clear since these circuits are pairwise disjoint. 

Suppose now that $e_1\in \{c_1,\ldots,c_r\}$ and $e_2\in \cup_{i=1}^{k}\textstyle \binom{H_i}{d}$. Let $e_1:=c_j$ and $e_2$ be a $d$-element subset of $H_i$. Since $e_2$ is a minimal element in~\eqref{mathcalC} we must have $c_j\not \subset e_2$. Moreover, by~\ref{ad3} we must have $\size{e_2\setminus c_j}\geq 2$. Hence, $\size{(e_1\cup e_2)\setminus \{x\}}\geq d+1$ and $e_1\cup e_2\setminus \{x\}$ contains a $(d+1)$-subset of $[n]$ and therefore an element of $\mathcal{C}$.

It remains to consider the case where $e_1,e_2\in \cup_{i=1}^{k}\textstyle \binom{H_i}{d}$. Let say that $e_1$ and $e_2$ are $d$-subsets of $H_i$ and $H_j$ respectively. We first treat the case $i=j$. If $\size{(e_1\cup e_2)\setminus \{x\}}\geq d+1$ then the claim follows as above. On the other hand, if $\size{(e_1\cup e_2)\setminus \{x\}}=d$, then $(e_1\cup e_2)\setminus \{x\}$ is a $d$-element subset of $H_i$ so it is either an element of $\Ccal$ or it contains one. 

Assume now that $i\neq j$. First, we show that $\size{e_1\cap e_2} \neq d-1$. Suppose for a contradiction that $\size{e_1\cap e_2} = d-1$. Since $e_1,e_2\in \Ccal$, the intersection $e_1\cap e_2$ is not equal to any of the circuits $\{c_1,\ldots,c_r\}$. Consequently, $e_1\cap e_2$ is a basis of $N$, but this contradicts Conditions~\ref{ad2}.

Hence, if $i \neq j$, then $\size{e_1\cap e_2}\leq d-2$, and $\size{(e_1\cup e_2)\setminus \{x\}}\geq d+1$, so we can argue as in the case where $i=j$. Thus, $\Ccal$ defines the circuits of a matroid $M(\mathcal{H})$.

Observe that a $(d-1)$-subset of $[n]$ is independent in $M(\HH)$ if and only if it is not equal to any of the circuits $\{c_1,\ldots,c_r\}$, which is equivalent to being a basis of $N$. Hence, the truncation of $M(\HH)$ gives $N$, which shows that $M(\HH)\in \Lat(N)$.

Let $\F$ be the collection of cyclic flats of rank $d-1$ of $M(\HH)$. It remains to show that $\F=\HH$. We first show that $\HH\subseteq \F$. Let $H\in \HH$. We begin by showing that $H$ is cyclic, that is, that $H$ is a union of circuits. To this end, it is enough to show that every element $x\in H$ is contained in a circuit of $M(\HH)$ that is itself contained in $H$. By Condition~\ref{ad4}, $H$ is not of the form $c_j\cup\{x\}$ for any $j\in[r]$. Hence, we may choose a subset $A\subseteq H\setminus\{x\}$ of size $d-1$ such that $A\neq c_j$ for every $j\in[r]$. Indeed, if every $(d-1)$-subset of $H\setminus \{x\}$ were equal to one of the $c_j$, then, using that the $c_j$'s are pairwise disjoint, we would necessarily have $H\setminus \{x\}=c_j$, for some $j$, contradicting~\ref{ad4}.
Consider the $d$-element set $A\cup\{x\}\subseteq H$. If $A\cup\{x\}$ contains none of the circuits $c_1,\ldots,c_r$, then, by definition, $A\cup\{x\}$ is a circuit of $M(\HH)$, and the claim follows. Otherwise, suppose that $c_j\subseteq A\cup\{x\}$ for some $j\in[r]$. Since $|A|=d-1$ and $A\neq c_j$, it follows that $x\in c_j$. Consequently, $
x\in c_j\subseteq H$,
and the claim follows in this case as well.

The above argument also shows that $H$ contains a basis of $N$, and hence $\rank(H)=d-1$.
We now show that $H$ is a flat. Suppose, for contradiction, that $\rank(H\cup \{y\})=d-1$ for some $y\in [n]\setminus H$. Then, there exists a circuit $c\in \Ccal$ of size at most $d$ such that $y\in c$ and $c\subset H\cup \{y\}$. We can not have $c\in \{c_1,\ldots,c_r\}$ because this would contradict Condition~\ref{ad3}. Suppose then that $c$ is a $d$-subset of some element $H'\in \HH$. Since $c\in \Ccal$, we must have that $c\setminus \{y\}$ is not equal to any of the subsets $\{c_1,\ldots,c_r\}$. Hence, $c\setminus \{y\}$ is a basis of $N$. However, since $c\setminus \{y\}\subseteq H\cap H'$, this contradicts Condition~\ref{ad2}. This proves that every $H\in \HH$ is a cyclic flat of rank $d-1$ of $M(\HH)$.

It remains to show that $\F\subseteq \HH$. Let $F\in \F$. Since $F$ is cyclic, it can not be of the form $c_j\cup\{x\}$ for any $j\in[r]$ and $x\notin c_j$. Since the circuits $\{c_1,\ldots,c_r\}$ are disjoint, this guarantees that we can find a subset $A\subseteq F$ of size $d$ that does not contain any $c_j$. Indeed, assume that this is not true. Consider any $d$-subset $A\subseteq F$. Then $c_j\subset A$ for some $j$, so $A=c_j\cup \{x\}$ for some $x\notin c_j$. Since $F$ can not be equal to $c_j\cup \{x\}$, there must exist some $y\in F\setminus A$. Replace $y$ by any $z\in c_j$ to form the $d$-subset $A':=(A\cup \{y\})\setminus \{z\}$. It does not contain $c_j$ and can not contain any other $c_l$. Indeed, if $c_l\subseteq A'$, then $c_l$ must be a subset of $\{x,y\}$ given that all the other elements of $A'$ belong to $c_j$, which is disjoint from $c_l$. However, since we assume $d\geq 4$, we have $\size{c_l}\geq 3$, and this is not possible. We conclude that $F$ contains a $d$-subset that does not contain any $c_j$. Let $A$ be such a susbet. 

Since $\rank(F)=d-1$, $A$ is dependent, so it must contain a circuit from $\Ccal$. Consequently, since it does not contain any $c_j$, $A$ must be a $d$-element subset of some member of $\HH$. Let say $A\in \textstyle \binom{H}{d}$ for some $H\in \HH$. Since $A$ is a circuit of size $d$ we have $\rank(A)=d-1$, and since $F$ is a flat of rank $d-1$ containing $A$ we must have $\textup{cl}_{M(\HH)}(A)=F$. However, since $A\subseteq H$ and $H$ is also a flat of rank $d-1$ of $M(\HH)$ we must have $\textup{cl}_{M(\HH)}(A)=H$ as well. Hence, $F=H$, which proves that $\F\subseteq \HH$, completing the proof.
\end{proof}

Finally, we prove \Cref{cor: Lat N}.

\begin{proof}[Proof of \Cref{cor: Lat N}]
The inclusion $\supseteq$ in~\eqref{latn} follows from~\Cref{prop: admissible subset gives elevation}.
To show the other inclusion, let $M\in\Lat(N)$, and let $\F$ denote the collection of cyclic flats of $M$ of rank $d-1$. By~\Cref{lem: collection of flats is admissible}, the collection $\F$ is $N$-admissible. Hence, by~\Cref{prop: admissible subset gives elevation}, the matroid $M(\F)$ belongs to $\Lat(N)$ and has $\F$ as its collection of cyclic flats of rank $d-1$.
It remains to show that $M=M(\F)$. Since both $M$ and $M(\F)$ truncate to $N$, they have the same circuits of size at most $d-1$, precisely the non-spanning circuits of $N$. Moreover, the circuits of size $d$ are determined by the collection of cyclic flats of rank $d-1$: a $d$-element set is a circuit precisely when it is contained in one of these cyclic flats and does not contain a smaller circuit. Thus $M$ and $M(\F)$ have the same circuits of size at most $d$. Moreover, the circuits of size $d+1$ are determined by the circuits of size at most $d$: a $d+1$-subset is a circuit if and only if it does not contain a circuit of size at most $d$. Therefore, $M$ and $M(\F)$ have the same collection of circuits, and hence $M=M(\F)$.
Thus $M$ belongs to the right-hand side of~\eqref{latn}, proving the inclusion $\subseteq$.
\end{proof}

\end{document}